\DeclareMathOperator{\Map}{Map}
\DeclareMathOperator{\Nat}{Nat}
\newcommand{\bfn}{{\mathbf n}}
\newcommand{\bs}{\blacksquare}
\newcommand{\bu}{\bullet}
\renewcommand{\P}{{\mathcal P}}
\newcommand{\bfe}{{\mathbf e}}
\newcommand{\sE}{{\mathcal E}}
\newcommand{\bA}{{\mathbf A}}
\newcommand{\bB}{{\mathbf B}}
\newcommand{\bL}{{{\mathbf L}_{\mathrm{sym}}}}
\newcommand{\bP}{{\mathbf P}}
\newcommand{\bQ}{{\mathbf Q}}
\newcommand{\bR}{{\mathbf R}}
\newcommand{\bS}{{\mathbf S}}
\newcommand{\bM}{{\mathbf M}}
\newcommand{\bMc}{{\mathbf M}^{\mathrm{comm}}}
\newcommand{\bX}{{\mathbf X}}
\newcommand{\bY}{{\mathbf Y}}
\newcommand{\bW}{{\mathbf W}}
\newcommand{\bZ}{{\mathbf Z}}
\renewcommand{\S}{{\mathcal S}}
\newcommand{\fS}{{\mathfrak S}}
\newcommand{\bbO}{{\mathbb O}}
\newcommand{\bbA}{{\mathbb A}}
\newcommand{\bbP}{{\mathbb P}}
\newcommand{\bbC}{{\mathbb C}}
\newcommand{\bfd}{{\mathbf d}}
\newcommand{\A}{{\mathcal A}}
\newcommand{\Ar}{{\A_{\text{rel}}^\Z}}
\newcommand{\Ag}{{\A_{e,*,1}}}
\newcommand{\Cell}{{\mathcal C}ell}
\newcommand{\cl}{\mathrm{cl}}
\newcommand{\id}{{\mathrm{id}}}
\renewcommand{\O}{{\cal O}}
\newcommand{\oS}{{\overline{S}}}
\newcommand{\boS}{{\overline{\mathbf{S}}}}
\newcommand{\owedge}{\mathbin{{\overline{\wedge}}}}
\newcommand{\ssk}{{{ss}} \S_k}
\newcommand{\D}{{\mathcal C}h_{hf}}
\newcommand{\Set}{\S}
\renewcommand{\O}{{\mathcal O}}
\renewcommand{\ss}{\Sigma{{ss}}\S } 
\newcommand{\ad}{{\mathrm{ad}}}
\newcommand{\pre}{{\mathrm{pre}}}
\newcommand{\inj}{{\mathrm{inj}}}
\newcommand{\Sig}{{\mathrm{sig}}}
\newcommand{\Inv}{{\mathrm{Inv}}}
\newcommand{\Sigr}{{\mathrm{sig}_{\mathrm{rel}}}}
\newcommand{\STop}{{\mathrm{STop}}}
\newcommand{\STopFun}{{\mathrm{STopFun}}}
\newcommand{\op}{\bar\omega}
\newcommand{\pp}{\bar\psi}
\DeclareMathOperator*{\colim}{colim}
\newcommand{\bx}{\mathbin{\square}}
\newcommand{\sch}{\mathrm{sch}}
\newcommand{\Rel}{\mathrm{Rel,sch}}
\newcommand {\R}{{\mathcal R}}
\newcommand {\T}{{\mathcal T}}
\newcommand{\Msym} {{\bM}_{\mathrm{sym}}}
\newcommand{\Mgeom} {{\bM}_{\mathrm{geom}}}
\newcommand {\Z}{{\mathbb Z}}
\newcommand {\C}{{\cat C}}
\newcommand {\B}{{\cat B}}
\newcommand{\ra}{\rightarrow}                   
\newcommand{\lra}{\longrightarrow}              
\newcommand{\sss}{\S p_{{mss }}}
\newcommand{\cat}{\mathcal}    
\newfont{\german}       {eufm10 at 12pt}
\DeclareMathOperator{\Hom}{Hom} 
\numberwithin{equation}{section}
\newtheorem{thm}{Theorem}[section]
\newcounter{numerierer}
\newcounter{leer}
\newtheorem{defn}[thm]{Definition}
\newtheorem{prop}[thm]{Proposition}
\newtheorem{cor}[thm]{Corollary}
\newtheorem{lemma}[thm]{Lemma}
\theoremstyle{definition}  
\newenvironment{definition}{\begin{defn}\rm}{\end{defn}}
\newtheorem{example}[thm]{Example}
\newtheorem{notation}[thm]{Notation}
\newtheorem{remark}[thm]{Remark}
\begin{document}

\title{Commutativity properties of Quinn spectra}
\author{Gerd Laures}
\address{ Fakult\"at f\"ur Mathematik,  Ruhr-Universit\"at Bochum, NA1/66,
  D-44780 Bochum, Germany}
\author {James E.\ {McClure}}
\address{
Department of Mathematics, Purdue University, 150 N.\ University
Street, West Lafayette, IN 47907-2067}

\begin{abstract}
We give a simple sufficient condition for Quinn's ``bordism-type'' spectra to
be weakly equivalent to commutative symmetric ring spectra.  We also
show that the symmetric signature is (up to weak equivalence) a monoidal
transformation between symmetric monoidal functors, which implies that the
Sullivan-Ranicki orientation of topological bundles is represented by a ring 
map between commutative symmetric ring spectra.  In the course of proving these
statements we give a new description of symmetric L theory which may be of
independent interest.
\end{abstract}
\keywords{$L$-theory, bordism type $E_\infty$ spectra}
\maketitle
\bigskip

\section{Introduction}
In \cite{MR1388303}, Frank Quinn gave a general machine for constructing
spectra from ``bordism-type theories.''  In our paper \cite{LM12} we gave
axioms for a structure we call an {\it ad theory} and showed that when these
axioms are satisfied (as they are for all of the standard examples) the Quinn
machine can be improved to give a symmetric spectrum $\bM$.  We also showed 
that when
the ad theory is multiplicative (that is, when its ``target category'' is
graded monoidal) the symmetric spectrum $\bM$ is a symmetric ring
spectrum.  Finally, we showed that there are monoidal functors to the category
of symmetric spectra which represent
Poincar\'e bordism over $B\pi$ (considered as a functor of $\pi$) and 
symmetric L-theory (considered as a functor of a ring $R$ with involution).

In this paper we consider commutativity properties.  It relies on the conventions and results of the paper \cite{LM12}; the relevant sections are
3, 6, 7, 9, 10, 17, 18 and 19.

A ``commutative ad theory'' is (essentially) an ad theory whose target 
category is graded symmetric monoidal (the precise definition is given in
Section \ref{comsec}).  Our first main result is

\begin{thm} 
\label{Nov12.3} 
Let $\bM$ be the symmetric ring spectrum associated to a commutative ad theory. 
There is a commutative symmetric ring spectrum $\bMc$ which is 
weakly equivalent in the category of symmetric ring spectra to $\bM$ and depends on it in a natural way.
\end{thm}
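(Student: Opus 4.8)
The plan is not to rigidify the product on $\bM$ itself, but to recognize $\bM$ as an algebra over a suitable $E_\infty$-operad $\mathcal{E}$ in symmetric spectra and then to invoke the standard rectification machinery for $E_\infty$ ring spectra. In the construction of $\bM$ recalled from \cite{LM12}, the pairing $\bM_m \wedge \bM_n \to \bM_{m+n}$ furnished by the multiplicative structure is assembled from a choice, for each pair of parametrizing cells, of a way to reorganize their product into cells of the same type --- a shuffle-type subdivision of the kind familiar from the Eilenberg--Zilber theorem. The graded \emph{symmetric} monoidal structure on the target category makes this pairing commute, after composition with the block permutation, up to a homotopy; but the space of the relevant choices is contractible and carries a free action of the pertinent symmetric group, and, running through all arities, these spaces form an operad.

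The first task is therefore to make this precise: to build from these choices an $E_\infty$-operad $\mathcal{E}$ --- each $\mathcal{E}(k)$ contractible with free $\Sigma_k$-action, so that $\mathcal{E}\to\mathcal{C}omm$ is a weak equivalence of operads --- and to check, cell by cell and using the symmetry isomorphisms of the target to identify the iterated products coming from different orderings of the inputs, that $\mathcal{E}$ acts on $\bM$ by maps of symmetric spectra, associatively, unitally and $\Sigma$-equivariantly. (It may be convenient to first pass to a weakly equivalent model of $\bM$ in which the parametrizing cells are allowed more room, e.g.\ to sit in $\mathbb{R}^\infty$, so that a Steiner-type operad does the job.)

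Granting the $\mathcal{E}$-algebra structure, the rest is formal. Equip symmetric spectra with the positive stable model structure (required for commutative symmetric ring spectra to form a well-behaved model category), so that extension of scalars along $\mathcal{E}\to\mathcal{C}omm$ is the left adjoint of a Quillen equivalence between $\mathcal{E}$-algebras and commutative symmetric ring spectra. Pick a cofibrant replacement $Q\bM\xrightarrow{\ \sim\ }\bM$ in $\mathcal{E}$-algebras, form $\mathcal{C}omm\circ_{\mathcal{E}}Q\bM$, and fibrantly replace to obtain a commutative symmetric ring spectrum $\bMc$ sitting in a chain of weak equivalences of symmetric ring spectra
\[
\bM\ \xleftarrow{\ \sim\ }\ Q\bM\ \xrightarrow{\ \sim\ }\ \mathcal{C}omm\circ_{\mathcal{E}}Q\bM\ \xrightarrow{\ \sim\ }\ \bMc ,
\]
the middle arrow being the adjunction unit, an equivalence on cofibrant $\mathcal{E}$-algebras. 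Because the positive stable model structure has the same weak equivalences as the ordinary stable one, this is the required equivalence in the category of symmetric ring spectra.

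The main obstacle is the first task. The crux is to reconcile two distinct uses of the symmetric groups: the ``internal'' permutations of the sphere coordinates that define the $\Sigma_n$-actions on the spaces $\bM_n$, and the ``external'' operadic permutations of the inputs of the $k$-fold product. One must show, using precisely the graded symmetric monoidal structure of the target, that these assemble into the single coherent system of homotopies encoded by $\mathcal{E}$ --- and verify the associativity, unit and equivariance identities directly from the cell-by-cell definition of the ad product. Once $\bM$ is known to be an $E_\infty$ ring spectrum, producing $\bMc$ is routine.
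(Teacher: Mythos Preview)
Your high-level strategy --- exhibit an $E_\infty$-type structure on $\bM$ and then rectify --- is exactly the philosophy of the paper, and your second step (rectification) is indeed routine once the first is done. But the first task is where all the content lies, and your proposal does not carry it out; the parenthetical suggestion of a Steiner-type operad acting after enlarging the ambient space is not the mechanism that works here, and it is not clear how to make it work. The multiplication on $\bM$ is defined cell-by-cell on products $\Delta^{\bfn_1}\times\cdots\times\Delta^{\bfn_j}$, and the non-commutativity is that swapping inputs changes the value on each such cell by an \emph{isomorphism} in the target category (coming from the permutative structure), not by a homotopy coming from moving cells around in some ambient Euclidean space. There is no evident contractible space of geometric configurations parametrizing these choices.

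What the paper does instead is to stay at the multisemisimplicial level, before geometric realization. The key observation is that one can interpolate between the permuted $j$-fold products $m_\eta$ by allowing a \emph{different} permutation for each cell: for any function $a:U(\Delta^{\bfn_1}\times\cdots\times\Delta^{\bfn_j})\to\Sigma_j$ there is an operation $a_*$ on ads, and these assemble into objects $\O(j)$ in a category $\sss$ of multisemisimplicial symmetric spectra, with $(\O(j)_k)_{\bfn}=\Map(U(\Delta^{\bfn}),\Sigma_j)_+$. These $\O(j)$ are not an operad in symmetric spectra in the usual sense (the composition uses a degreewise smash $\owedge$, not the monoidal smash $\wedge$), but they do give a monad $\bbP$ on $\sss$ acting on the multisemisimplicial model $\bR$ of $\bM$, together with a map of monads $\Xi:\bbP\to\bbP'$ to the free-commutative-monoid monad which is a levelwise weak equivalence because each $\O(j)_k$ has weakly contractible realization. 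The rectification is then May's monadic bar construction
\[
\bR \xleftarrow{\ \varepsilon\ } B_\bullet(\bbP,\bbP,\bR) \xrightarrow{\ \Xi_\bullet\ } B_\bullet(\bbP',\bbP,\bR),
\]
and $\bMc$ is the realization of the right-hand side. This gives an explicit zig-zag of length two, avoiding cofibrant replacement and the positive model structure entirely.

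So the gap in your proposal is concrete: you have correctly identified that the problem reduces to exhibiting a suitable ``$E_\infty$'' action, but you have not identified what the parametrizing objects are, and the guess you offer (Steiner/little-disks style) does not match the actual source of the homotopies, which is combinatorial (cell-by-cell permutation choices) rather than geometric. Once you replace that guess with the $\O(j)$ above and work one level down in $\sss$, your outline becomes the paper's proof, modulo the choice of rectification machinery.
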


More precisely, we construct a symmetric spectrum $\bM$ together with an action of a specific $E_\infty$-operad which naturally depends on the underlying ad theory.  Then we use  the bar construction to obtain a zig-zag of weak equivalences to  the commutative symmetric ring spectrum $\bMc$. We only claim the naturality for strict morphisms. A refined statement of naturally and what happens with 2-morphisms under these functors  is worth being further investigated but it is not part of the paper. 

As a consequence, Theorem \ref{Nov12.3} shows that the L-theory spectrum of a 
commutative ring can be realized as a commutative symmetric ring 
spectrum.\footnote{Lurie \cite{Lurie} has explained another way to prove that 
the L-theory spectrum of a commutative ring can be realized as a commutative 
symmetric ring spectrum.  The method used in \cite{Lurie} does not include 
other examples we consider such as Poincar\'e bordism and (in \cite{BLM12}) 
Witt and IP bordism.}

For the ad theory $\ad_\STop$ of oriented topological bordism (\cite[Section 
6]{LM12}), we showed in \cite[Section 17 and Appendix B]{LM12} that the
underlying spectrum of 
$\bM_\STop$ is weakly equivalent to the usual Thom spectrum $M\STop$. 
It is well-known that $M\STop$ is a commutative symmetric ring spectrum, and 
we have

\begin{thm}
\label{m96}
$(\bM_\STop)^{\mathrm{comm}}$ and $M\STop$ are weakly equivalent in the
category of commutative symmetric ring spectra.
\end{thm}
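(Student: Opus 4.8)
The plan is to produce a weak equivalence of commutative symmetric ring spectra between $(\bM_\STop)^{\mathrm{comm}}$ and $M\STop$ (as a single map, or as a short zig-zag through commutative symmetric ring spectra) by upgrading the comparison of \cite[Section 17 and Appendix B]{LM12} so that it is compatible not merely with the multiplications but with the full commutative structures. Observe first that Theorem \ref{Nov12.3}, applied to the commutative ad theory $\ad_\STop$, together with the weak equivalence of underlying spectra $\bM_\STop\simeq M\STop$ from \cite{LM12}, already connects $(\bM_\STop)^{\mathrm{comm}}$ and $M\STop$ by a zig-zag of weak equivalences of \emph{symmetric} ring spectra, once one checks that the comparison of \cite{LM12} is multiplicative. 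The content of Theorem \ref{m96} beyond this is exactly the passage to the category of \emph{commutative} symmetric ring spectra, which is not detected by zig-zags of merely associative ring spectra; so the work is in arranging a zig-zag through commutative ones.

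First I would recall the comparison map of \cite{LM12}: it is the Pontryagin--Thom collapse, sending an ad over a cell---a topological manifold with $\STop$-structure, suitably embedded, equipped with a normal microbundle---to a point of the Thom space of the universal $\STop$-microbundle obtained by collapsing the complement of a tubular neighborhood; this assembles into a map $\bM_\STop\to M\STop$. I would then check compatibility with products: the multiplication of the multiplicative ad theory $\ad_\STop$ is Cartesian product of manifolds, the multiplication of $M\STop$ is the smash product of Thom spaces induced by Whitney sum of microbundles, and the collapse of a product $W\times W'$ is naturally the smash of the collapses of $W$ and of $W'$, so the map is one of symmetric ring spectra. To enter the commutative category I would run the same construction on the commutative model. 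Recall that $(\bM_\STop)^{\mathrm{comm}}$ is assembled from $\ad_\STop$ by the symmetric-monoidal device of Section \ref{comsec}, which records the $n$-fold Cartesian products of families of manifolds together with their $\Sigma_n$-symmetries, while $M\STop$ is a commutative symmetric ring spectrum because the isomorphisms $\mathbb{R}^m\oplus\mathbb{R}^n\cong\mathbb{R}^n\oplus\mathbb{R}^m$ permuting Whitney summands form a coherent family. Applying the collapse levelwise gives a map of symmetric spectra $(\bM_\STop)^{\mathrm{comm}}\to M\STop$, and since the collapse carries a Cartesian product of families of manifolds to the smash of their collapse maps, $\Sigma_n$-equivariantly for permutation of the factors on the source and of the Whitney summands on the target, this map respects the full commutative-ring structure.

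It then remains to see that this map is a weak equivalence, which follows because on underlying spectra it agrees---up to the zig-zag of Theorem \ref{Nov12.3} and the identifications of \cite{LM12}---with the equivalence of \cite{LM12}; equivalently, it realizes the Pontryagin--Thom isomorphism on homotopy groups, both sides computing oriented topological bordism. The main obstacle is the coherence in the middle step: one must match the combinatorial bookkeeping that makes $(\bM_\STop)^{\mathrm{comm}}$ strictly commutative with the coordinate-permutation structure on $M\STop$ and verify that the Thom collapse is natural enough to intertwine them on the nose, not merely up to coherent homotopy. Depending on how the choices of embedding and tubular neighborhood are rigidified in \cite{LM12}, it may be cleanest to factor the map through the intermediate geometric (Thom-type) model used there, establish strict commutativity at that stage, and only then compare with $M\STop$; the overall shape of the argument is unchanged.
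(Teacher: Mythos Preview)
Your proposal has a genuine gap at two linked points: the nature of the comparison map from \cite{LM12}, and the structure of $(\bM_\STop)^{\mathrm{comm}}$.

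First, the comparison in \cite[Appendix B]{LM12} is not a Pontryagin--Thom collapse $\bM_\STop\to M\STop$. An object of $\ad_\STop$ is simply an oriented topological manifold; it carries no embedding and no normal microbundle, so there is no collapse map to write down. The actual comparison goes in the opposite direction: one takes singular simplices in the Thom space that are transverse to the zero section and sends them to their inverse images, which are ads. This already forces a zig-zag (one must include the transverse simplices into all simplices), and the ``inverse image'' map $f_1$ is not a ring map on the nose: permuting the factors of $\Delta^\bfn$ and of $T(\STop(k))$ interacts with taking inverse images in a way that requires an explicit correction (the $\Inv$ construction in the appendix). So your hope that the comparison ``intertwines on the nose'' with the commutative structures is exactly what fails.

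Second, $(\bM_\STop)^{\mathrm{comm}}$ is defined as the realization $||B_\bu(\bbP',\bbP,\bR_\STop)||$ of a monadic bar construction; it is not a model whose levels are ``$n$-fold products of manifolds with $\Sigma_n$-symmetries'' on which one could apply a collapse levelwise. Producing a map of commutative ring spectra out of it therefore requires producing a map of $\bbP'$-algebras (or equivalently of $\bbP$-algebras into a $\bbP'$-algebra) at the multisemisimplicial level, not a map of manifolds.

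The paper's route is to build an intermediate \emph{strictly commutative} multisemisimplicial ring spectrum $\bW$ out of transverse multisingular simplices in $T(\STop(k))$, so that $|\bW|=\bY$ is a commutative symmetric ring spectrum mapping by ring maps to $M\STop$. One then shows that the two-colored monad $\bbP$ of Sections \ref{colored}--\ref{msss} acts on the pair $(\bW,\bR_\STop)$, with the $u$-colored action coming from the honest commutative product on $\bW$ and the $v$-colored action built from the inverse-image functor $\Inv$. Running the rectification of Section \ref{rect2} on this pair yields a zig-zag of commutative symmetric ring spectra connecting $(\bM_\STop)^{\mathrm{comm}}$ to $\bY$, and hence to $M\STop$. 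The step you flagged as ``the main obstacle'' is precisely what this machinery handles, and it is not bypassed by naturality of a collapse map.
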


The proof gives a specific chain of weak equivalences between them.

We also prove a multiplicative property of the symmetric signature.  The 
symmetric signature is a basic tool in surgery theory.  In its simplest form, 
it assigns to an oriented Poincar\'e complex $X$ an element of the symmetric 
L-theory of $\pi_1(X)$; this element determines the surgery obstruction up to 
8-torsion.  Ranicki proved that the symmetric signature of a Cartesian 
product is the product of the symmetric signatures (\cite[Proposition 
8.1(i)]{MR566491}).  The symmetric signature gives a map of spectra from 
Poincar\'e bordism to L-theory (\cite[Proposition 7.10]{KMM}), and we showed 
in \cite{LM12} that it gives a map of symmetric spectra.  In order to 
investigate the multiplicativity of this
map, we give a new (but equivalent) description of the L-spectrum, using
``relaxed'' algebraic Poincar\'e complexes (the relation between these and 
the usual algebraic Poincar\'e complexes is similar to the relation between
$\Gamma$-spaces and $E_\infty$ spaces).  For a ring with involution $R$ there 
is an ad theory $\ad_{\mathrm{rel}}^R$, and the associated spectrum
$\bM_{\mathrm{rel}}^R$
is equivalent to the usual L-spectrum.  The symmetric signature gives a map
$\Sigr$ from the Poincar\'e bordism spectrum (which we denote by $\bM_{e,*,1}$; 
see \cite[Section 7]{LM12}) to $\bM_{\mathrm{rel}}^R$.  We 
prove that this map is weakly 
equivalent to a ring
map between
commutative symmetric ring spectra:

\begin{thm}
\label{m54}
There are
symmetric ring spectra 
$\mathbf A$ and $\mathbf B$, 
commutative symmetric ring spectra
$\mathbf C$ and $\mathbf D$, 
and a commutative diagram 
\[
\xymatrix{
\bM_{e,*,1}
\ar[d]_\Sigr
&
\mathbf A
\ar[l]
\ar[r]
\ar[d]
&
\mathbf C
\ar[d]
\\
\bM_{\mathrm{rel}}^\Z
&
\mathbf B
\ar[l]
\ar[r]
&
\mathbf D
}
\]
in which the horizontal arrows and the right vertical arrow are ring maps and 
the horizontal arrows are weak equivalences.
\end{thm}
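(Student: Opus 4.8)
The plan is to obtain the diagram by applying the construction behind Theorem~\ref{Nov12.3} to each of the ad theories $\ad_{e,*,1}$ and $\ad_{\mathrm{rel}}^\Z$ and then feeding in the symmetric signature, viewed in the relaxed model as a morphism of commutative ad theories $\ad_{e,*,1}\to\ad_{\mathrm{rel}}^\Z$. So there are three things to do: check that the two ad theories are commutative; promote the symmetric signature to a morphism compatible with their symmetric monoidal structures; and use the naturality of the Theorem~\ref{Nov12.3} construction to assemble the commuting square.

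First I would verify that $\ad_{e,*,1}$ and $\ad_{\mathrm{rel}}^\Z$ are commutative ad theories in the sense of Section~\ref{comsec}. For $\ad_{e,*,1}$ the target category is built from spaces over $Be=\mathrm{pt}$, and its graded symmetric monoidal structure comes from the Cartesian product, the symmetry being the coordinate-interchange homeomorphism together with the Koszul sign dictated by the geometric degrees. For $\ad_{\mathrm{rel}}^\Z$ this is precisely the point of passing to relaxed algebraic Poincar\'e complexes: the product of relaxed complexes is graded symmetric monoidal on the nose, whereas the Ranicki product on ordinary algebraic Poincar\'e complexes is symmetric only up to a choice of chain homotopies, reflecting that the tensor product of complexes together with the Eilenberg--Zilber and Alexander--Whitney maps is not strictly commutative. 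Granting this, Theorem~\ref{Nov12.3} applies to each ad theory and yields commutative symmetric ring spectra $\mathbf C:=(\bM_{e,*,1})^{\mathrm{comm}}$ and $\mathbf D:=(\bM_{\mathrm{rel}}^\Z)^{\mathrm{comm}}$, together with the explicit length-$2$ chains of ring maps and weak equivalences $\bM_{e,*,1}\xleftarrow{\ \sim\ }\mathbf A\xrightarrow{\ \sim\ }\mathbf C$ and $\bM_{\mathrm{rel}}^\Z\xleftarrow{\ \sim\ }\mathbf B\xrightarrow{\ \sim\ }\mathbf D$ given by that proof, where $\mathbf A$ and $\mathbf B$ are the intermediate symmetric ring spectra of the construction; these are the two rows of the diagram.

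Next I would promote the symmetric signature to a morphism of commutative ad theories $\ad_{e,*,1}\to\ad_{\mathrm{rel}}^\Z$ whose underlying map of symmetric spectra is $\Sigr$. Here the relaxed description does the work: an ordinary algebraic Poincar\'e complex assigns to a Poincar\'e space $X$ the single complex $C_*(X)$ with its symmetric Poincar\'e structure, so multiplicativity of $\sigma^*$ depends on a choice of Eilenberg--Zilber equivalence $C_*(X)\otimes C_*(Y)\to C_*(X\times Y)$, and Ranicki's identity $\sigma^*(X\times Y)=\sigma^*(X)\otimes\sigma^*(Y)$ holds only up to such a choice; a relaxed complex, by contrast, carries all these products and their coherences at once, so that the symmetric signature is compatible with the symmetric monoidal structures in the structured way needed to feed it into the Theorem~\ref{Nov12.3} machine. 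Applying that machine to this morphism produces the middle and right columns, with $\mathbf C\to\mathbf D$ a ring map of commutative symmetric ring spectra; tracking the weak equivalences shows that the induced map $\mathbf A\to\mathbf B$ is compatible with $\Sigr$, so the outer square commutes.

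I expect the principal obstacle to be this middle step: organizing the Eilenberg--Zilber and Alexander--Whitney data, the Koszul signs, and Ranicki's product formula into the strictly commuting coherence diagrams demanded by the notion of a morphism of commutative ad theories into $\ad_{\mathrm{rel}}^\Z$, and then checking that the length-$2$ zigzags of Theorem~\ref{Nov12.3} can be chosen naturally in that morphism so that all the pieces fit together. Throughout one must keep careful track of which arrows are ring maps; the essential output --- and what the statement records --- is that the right-hand vertical $\mathbf C\to\mathbf D$ between the commutative replacements is a genuine ring map, since this, together with the equivalences forming the two rows, is exactly what expresses the multiplicativity of the symmetric signature up to a chain of weak equivalences of ring spectra, and, in the topological-bundle case, what represents the Sullivan--Ranicki orientation by a ring map between commutative symmetric ring spectra.
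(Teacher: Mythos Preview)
Your overall plan has a genuine gap at the ``middle step,'' and it is precisely the obstacle you flag but do not resolve. The relaxed model does make the \emph{target} category $\Ar$ strictly symmetric monoidal, so Theorem~\ref{Nov12.3} applies to $\ad_{\mathrm{rel}}^\Z$ separately. What it does \emph{not} do is make $\Sigr$ a strict morphism of commutative ad theories. Concretely, the two functors
\[
(\Ag)^{\times l}\xrightarrow{\Sigr^{\times l}}(\Ar)^{\times l}\xrightarrow{\otimes}\Ar
\qquad\text{and}\qquad
(\Ag)^{\times l}\xrightarrow{\boxtimes}\Ag\xrightarrow{\Sigr}\Ar
\]
are related only by the cross-product natural transformation, which is a quasi-isomorphism but not an identity. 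So $\Sigr$ is only lax monoidal, and the rectification in Theorem~\ref{Nov12.3} is not functorial for such maps: the bar construction $B_\bu(\bbP',\bbP,\bR)$ involves choosing, cell by cell, a permutation of the factors, and a merely lax monoidal map of ad theories does not intertwine these choices on the nose. This is why in the paper $\mathbf D$ is only weakly equivalent to $(\bM_{\mathrm{rel}}^\Z)^{\mathrm{comm}}$ rather than equal to it (Remark~\ref{m80}); naive naturality would force equality.

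The paper's fix is substantially more work than ``choose the zigzags naturally.'' One builds a \emph{two-colored} monad $\bbP$ on $\sss\times\sss$ (Sections~\ref{colored}--\ref{msss}) whose operations on the second color are indexed not just by permutations but by the preorders $P_{r;v}$ of ``data'' $(h,\eta)$: these record, for each cell, both a permutation of the inputs \emph{and} a choice of how to partition the $\Ag$-inputs into blocks before applying $\Sigr$. The order relation on $P_{r;v}$ encodes the cross-product transformation between different such choices, and one checks that each $P_{r;v}$ has a maximal element so that the associated spaces are contractible. Only after this enlargement does one get a monad acting on the pair $(\bR_{e,*,1},\bR_{\mathrm{rel}}^\Z)$, and the rectification argument of Section~\ref{rect} can be rerun (Section~\ref{rect2}) to produce $\mathbf A,\mathbf B,\mathbf C,\mathbf D$ simultaneously. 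The upshot is that you cannot separate the problem into ``rectify each side, then map''; the interpolation for commutativity and the interpolation for lax monoidality of $\Sigr$ have to be performed in a single combined structure.
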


In fact $\mathbf C$ is $(\bM_{e,*,1})^{\mathrm{comm}}$, and $\mathbf D$ is 
weakly
equivalent to $(\bM_{\mathrm{rel}}^\Z)^{\mathrm{comm}}$ in the category of
commutative symmetric ring spectra (see Remark
\ref{m80}).

As far as we are aware, there is no previous result in the literature showing
multiplicativity of the symmetric signature at the spectrum level.

In Section \ref{Nov12.1} we prove a stronger statement, that up to weak 
equivalence the symmetric signature is a monoidal transformation between 
symmetric monoidal functors.  In \cite{BLM12} we proved the analogous
statement about the symmetric signature for Witt and IP bordism, using the 
methods of the present paper.

\begin{remark}
The {\it Sullivan-Ranicki orientation} for topological bundles
(\cite{MR2162361}, \cite{MR548575}, \cite[Remark 16.3]{MR1211640}, \cite[Section
13.5]{KMM}) is 
the following composite in the homotopy category of spectra
\[
M\STop \simeq \bQ_{\STop} \xrightarrow{\Sig} L^\Z,
\]
where $\bQ_{\STop}$ denotes the Quinn spectrum of oriented topological bordism
(which was shown to be equivalent to $M\STop$ in \cite[Appendix B]{LM12}).
Combining Theorems \ref{m54} and \ref{m96} shows that the Sullivan-Ranicki
orientation is represented by a ring map of commutative symmetric ring spectra. In  \cite[Section 8]{LM12} a zig-zag between the ad theories $\mbox{ad}_{STop}$ and $\mbox{ad}_{e,*,1}$ is constructed where all maps are multiplicative. 
It follows from the results below that the map from topological to Poincar\'e bordism can be refined to an $E_\infty$-map.
\end{remark}

The results of the present work were already used amongst others in \cite{MR3624098} in connection with  singularities of Baas-Sullivan type.

Here is an outline of the paper.  In Sections \ref{redef} and \ref{comsec} we
give the definition of commutative ad theory.  
The proof of Theorem \ref{Nov12.3} occupies Sections \ref{Nov12.4}--\ref{rect}.
We begin 
in Sections  \ref{Nov12.4} and \ref{gr} by giving a multisemisimplicial 
analogue $\sss$ 
of the category of symmetric spectra.  We observe that an
ad theory gives rise to an object $\bR$ of $\sss$ whose
realization is the symmetric spectrum $\bM$ mentioned above.
Section \ref{m4}
explains the key idea of the proof, which is to interpolate between the
various permutations of the
multiplication map by allowing a different order of
multiplication for each cell.  In Sections \ref{mm} and \ref{monadsss} we use
this idea to create a monad in the category $\sss$
which acts on $\bR$, and in Section \ref{rect} (after a brief technical
interlude in Section \ref{l1}) we use a standard rectification argument (as in 
\cite{MR0420610}) to convert $\bR$ with this action to a strictly commutative
object of $\sss$; passage to geometric realization gives $\bM^{\mathrm{comm}}$.
Next we turn to the proof of Theorem \ref{m54}.
In Sections \ref{relaxed}--\ref{revis}  we introduce the relaxed symmetric
Poincar\'e ad theory and the corresponding version of the symmetric signature.
In Sections \ref{colored}--\ref{msss} we create a monad in the category
$\sss\times\sss$ which acts on the pair $(\bR_{e,*,1},\bR_{\mathrm{rel}}^\Z)$,
and in Section \ref{rect2} we adapt the argument of Section \ref{rect} to
prove Theorem \ref{m54}.  Section \ref{Nov12.1} gives the statement of the
stronger version of Theorem \ref{m54} mentioned above, and Section \ref{l2}
gives the proof.  Section \ref{Thom} gives the proof of Theorem \ref{m96}. 
There is a property of the smash product in symmetric spectra which is needed to prove the main results of the paper. It is taken care of in Appendix \ref{free}.
Appendix \ref{ff14} investigates the functorial properties of the ad theories of relaxed quasi-symmetric complexes. 
\subsection*{Notations}
We adopt the notations of the articles \cite{LM12} \cite{BLM12}\cite{MR3624098}: the ambient categories, ad theories, symmetric spectra and Quinn spectra are called $\A_x$, $\ad_x$, $\bM_x$ and $\bQ_x$ respectively, with $x$ some parameter depending on the objects we work with. Note that the ambient category does not determine the ad theory. The latter does determine the associated spectra in a functorial way but we write $\bM_x$ instead of $\bM(ad_x)$ to keep the notation simpler. 
\subsection*{Acknowledgments} The authors benefited from a workshop on
forms of homotopy theory held at the Fields Institute. They would like to
thank Matthias Kreck for suggesting the problem to the first author and also
Carl-Friedrich B\"odigheimer,
Jim Davis,
Steve Ferry,
John Harper,
Mike Mandell,
Frank Quinn,
Andrew Ranicki,
John Rognes,
Stefan Schwede,
Michael Weiss
and
Bruce Williams
for useful hints and helpful discussions. The first
author is  grateful to the Max Planck Institute in Bonn for its hospitality. 
The authors also like to thank the first referee for pointing out  the need for Appendix \ref{free}.  They are also grateful to the second referee for many useful suggestions and improvements.
\section{Some redefinitions}
\label{redef}

One of the ingredients in the definition of ad theory in \cite{LM12} is the
target ``$\Z$-graded category'' $\A$ (see \cite[Definitions 3.3 and 
3.10]{LM12}).  For the purposes of that paper, there was no reason to allow 
morphisms in $\A$ between objects of the same dimension (except for identity 
maps).  For the present paper, we do need such morphisms
(see Definition \ref{perm} below and the proof of 
Theorem \ref{Nov12.3}).  We 
therefore begin by giving modified versions of some of the definitions of 
\cite{LM12}.

\begin{definition} 
\label{Z}
(cf. Definition 3.3 of \cite{LM12})\
Let $\Z$ be the poset of integers regarded as a category with one morphism for each relation. By an involution we mean an endofunctor $i$ which strictly satisfies $i^2=id$. Give $\Z$ the trivial involution.
A {\em $\Z$-graded category} is a small
category $\A$ with involution, together with 
involution-preserving functors
$ d: \A \lra \Z$ (called the {\it dimension function}) and $\emptyset : \Z
\lra \A$
such that 
$d \, \emptyset$ is equal to the identity functor. We will use the notation $|a|$ instead of $da$. Note that  if $|a|>|b|$ then there are no morphisms from $a$ to $b$.

%
\end{definition}

The definition of a strict monoidal structure on a $\Z$-graded category
(\cite[Definition 18.1]{LM12}) needs no change, provided that one uses the
new definition of $\Z$-graded category. 

Next we explain how to modify the specific examples of target categories in
\cite{LM12} by adding morphisms which preserve dimension.

For the category $\A_{\mathrm{STop}}$ (\cite[Example 3.5]{LM12}) the morphisms 
between objects of the same dimension are the orientation-preserving 
homeomorphisms.

For the category $\A_{\pi,Z,w}$ (\cite[Definition 7.3]{LM12}) the morphisms
between objects $(X,f,\xi)$ and $(X',f',\xi')$ of the same dimension are the
maps $g:X\to X'$ such that $f'\circ g=f$ and $g_*(\xi)=\xi'$.

We do not need the analogous modification for the category $\A^R$ 
(\cite[Definition 9.5]{LM12}) because we will be using the version in Section
\ref{relaxed}.

The definition of an ad theory  \cite[Definition
3.10]{LM12} stays the same. In section \ref{revis}, however,  we will restrict our attention to ad theories which are only defined on {\em strict} ball complexes. A ball complex is called strict if  each component of the intersection of two cells  is a single cell. This assumption does not affect the constructions or results of \cite{LM12}.

\section{Commutative ad theories}
\label{comsec}

\begin{definition}
\label{perm}
Let $\A$ be a $\Z$-graded category. 
A {\it permutative structure} on $\A$ is a strict monoidal 
structure 
$(\boxtimes, \varepsilon)$ (\cite[Definition 18.1]{LM12}) together with 
a natural isomorphism
\[
\gamma_{x,y}:x\boxtimes y\to i^{|x||y|} y\boxtimes x
\]
such that 

(a) $i\gamma_{x,y}=\gamma_{ix,y}=\gamma_{x,iy}$, 

(b) each of the maps $\gamma_{\emptyset,y}$, $\gamma_{x,\emptyset}$
is the identity map of $\emptyset$,

(c) the composite
\[
\xymatrix{
x\boxtimes y
\ar[r]^-{\gamma_{x,y}}
&
i^{|x||y|}y\boxtimes x
\ar[rr]^-{i^{|x||y|}(\gamma_{y,x})}
&
&
x\boxtimes y
}
\]
is the identity.

(d) $\gamma_{x,\varepsilon}$ is the identity, and

(e) the diagram
\[
\xymatrix{
&
x\boxtimes y\boxtimes z 
\ar[ld]_{1\boxtimes\gamma}
\ar[rd]^\gamma
&
\\
i^{|y||z|}x\boxtimes z\boxtimes y 
\ar[rr]^-{i^{|y||z|}\gamma\boxtimes 1}
&&
i^{|z|(|x|+|y|)}z\boxtimes x\boxtimes y 
}
\]
commutes. A {\it strict map} of $\Z$-graded categories with permutative structures is a map $f:\A \ra \mathcal{B}$ of $\Z$-graded categories for which
$f(x\boxtimes y )= f(x)\boxtimes f(y)$ holds for objects and morphisms. Moreover, such a functor takes $\epsilon$ to $\epsilon$ and the diagram
\[ 
\xymatrix{ 
f(x \boxtimes y )\ar[r]^{f(\gamma)} \ar[d]^= & f( i^{|x||y|} y\boxtimes x)\ar[d]^= \\
f(x)\boxtimes f( y) \ar[r]^{\gamma} &i^{|fx||fy|} f(y)\boxtimes f(x )
}
\]
commutes.
\end{definition}

\begin{remark} 
\label{Nov12.2}
The analogue of the coherence theorem for symmetric monoidal categories 
\cite{MR0170925}
holds in this context with essentially the same proof.
\end{remark}

\begin{definition}
\label{comad}
A {\it commutative ad theory} is a multiplicative ad theory \cite[Definitions
3.10 and 18.4]{LM12}, 
with the extra property that every pre $K$-ad which is isomorphic to a $K$-ad 
is a $K$-ad,
together with a permutative structure on the target 
category $\A$. A {\it strict map} of commutative ad theories is a strict map of the ambient categories which takes ads to ads.
\end{definition}

Examples are $\ad_C$ when $C$ is a commutative DGA (see \cite[Example 
3.12]{LM12}), $\ad_\STop$ (see \cite[Section 6]{LM12}), $\ad_{e,*,1}$ 
(see \cite[Section 7]{LM12}),
$\ad_\STopFun$ (see
\cite[end of Section 8]{LM12}), $\ad_{\text{rel}}^R$ when $R$ is commutative
(see Section \ref{relaxed} below), $\ad_{\mathrm{IP}}$ (\cite[Section
4]{BLM12}) and 
$\ad_{\mathrm{IPFun}}$ (\cite[Section 6.1]{BLM12}).

\begin{remark}
The extra property in Definition \ref{comad} is used in the proof of Lemma
\ref{m3} below.
\end{remark}

For later use we record some notation for iterated products.  

\begin{definition}
\label{Nov13}
(i) For a permutation $\eta\in\Sigma_j$, let $\epsilon(\eta)$ denote $0$ if
$\eta$ is even and $1$ if $\eta$ is odd.

(ii)
Let $\A$ be a $\mathbb Z$-graded category with a permutative 
structure.
Let $\eta\in \Sigma_j$.  Define a functor 
\[
\eta_{\bigstar}:\A^{\times j}\to \A
\]
(where $\A^{\times j}$ is the $j$-fold Cartesian product)
by 
\[
\eta_{\bigstar}(x_1,\ldots,x_j)
=
i^{\epsilon(\bar{\eta})} (x_{\eta^{-1}(1)}\boxtimes\cdots\boxtimes 
x_{\eta^{-1}(j)})
\]
where $\bar{\eta}$ is the block permutation that takes blocks 
${\mathbf b}_1$, \dots, ${\mathbf b}_j$
of size
$|x_1|$, \dots, $|x_j|$ into the order ${\mathbf b}_{\eta^{-1}(1)}$, \dots,
${\mathbf b}_{\eta^{-1}(j)}$.
\end{definition}

\begin{remark}
\label{Nov13.1}
Note that, by Remark \ref{Nov12.2}, $\eta_{\bigstar}(x_1,\ldots,x_j)$ is 
canonically isomorphic to $x_1\boxtimes\cdots\boxtimes x_j$.
\end{remark}

\section{Multisemisimplicial symmetric spectra}
\label{Nov12.4}

In this section we define a category
$\sss$ (the ss stands for ``semisimplicial'')
which is a multisemisimplicial version of the category $\S p$
of symmetric spectra.  The motivation for the definition is that the sequence 
$R_k$ in \cite[Definition 17.2]{LM12} should give an object of $\sss$.

Recall that we write $\Delta_\inj$ for the category whose objects are the sets
$\{0,\ldots,n\}$ and whose morphisms are the monotonically increasing
injections. 

A based $k$-fold {\it multisemisimplicial set} is a 
contravariant functor from the Cartesian product $(\Delta_\inj)^{\times k}$ 
to the category $\Set_*$ of based 
sets.  In particular, a based $0$-fold multisemisimplicial set is just a 
based set.

Next note that given a category $\C$ with a left action of a group $G$ one can
define a category $G\ltimes \C$ whose objects are those of $\C$ and whose
morphisms are pairs $(\alpha,f)$ with $\alpha\in G$ and $f$ a morphism 
of $\C$; the domain of $(\alpha,f)$ is the domain of $f$ and the target is
$\alpha^{-1}$ applied to the target of $f$.
Composition is defined by 
\[
(\alpha,f)\circ(\beta,g)=(\alpha\beta,\beta^{-1}(f)\circ g)
\]

\begin{remark}
\label{Aug27.1}
(i) $\C$ is imbedded in $G\ltimes \C$ by taking the morphism $f$ of $\C$ to
the morphism $(e,f)$ of $G\ltimes \C$, where $e$ is the identity element of $G$.

(ii)
The morphism $(\alpha,f)$ is the composite $(\alpha,\id)\circ(e,f)$.
\end{remark}

\begin{definition}  
Let $\Sigma_k$ act on $(\Delta_\inj^{\text{op}})^{\times k}$ by permuting the 
factors 
(when $k=0$, $\Sigma_0$ is the trivial group).  For each subgroup $H$ of 
$\Sigma_k$ let $H\ssk$ be the category of functors from 
$ H \ltimes (\Delta_\inj^{\text{op}})^{\times k}$ to $\Set_*$.
\end{definition}

By Remark \ref{Aug27.1}, an object of $H\ssk$ can be thought of as a based 
$k$-fold multisemisimplicial set with a 
left ``action'' of $H$ in which $H$ also acts on the multidegrees.

\begin{definition}
(i) A multisemisimplicial symmetric sequence $\bX$ is a sequence 
$X_k$, $k\geq 0$, such that $X_k$ is an object of $\Sigma_k\ssk$. 

(ii)
A {\it morphism} of multisemisimplicial symmetric sequences from $\bX$ to $\bY$
is a sequence of morphisms $f_k:X_k\to Y_k$ in $\Sigma_k\ssk$.
\end{definition}

The category of multisemisimplicial symmetric sequences will be denoted by 
$\ss$.

For our next definition, recall \cite[Definitions 17.2 and 17.3]{LM12}.

\begin{definition}
\label{Aug29}
(i)
For each $k\geq 0$ extend the object $R_k=\ad^k(\Delta^\bullet)$ of $\ssk$ to an object 
of 
$\Sigma_k\ssk$ by letting
\[
(\alpha,\mathrm{id})_*(F)=i^{\epsilon(\alpha)}\circ F\circ \alpha_\#
\]
(where $\alpha\in \Sigma_k$ and $F\in\ad^k(\Delta^{\mathbf n})$).

(ii)
Let $\bR$ denote the object of $\ss$ whose $k$-th term is $R_k$. 
\end{definition}

Next we assemble the ingredients needed to define a symmetric monoidal
structure on $\ss$.

\begin{definition} 
\label{Aug29.4}
Given $A\in \Sigma_k\ssk$ and $B\in 
\Sigma_l{ss}\S_l$, 
define the object $A\wedge B\in (\Sigma_k\times\Sigma_l){{ss}}\S_{k+l}$ by
\[
(A\wedge B)_{{\mathbf m},\bfn}=
A_{\mathbf m}\wedge B_{\bfn}
\]
(where $\mathbf m$ is a $k$-fold multi-index and $\bfn$ is an $l$-fold
multi-index).
\end{definition}

\begin{definition}
Given $H\subset G\subset \Sigma_k$, define a functor
\[
I_H^G: H\ssk\to G\ssk 
\]
by letting
$I_H^G A$ 
be the left Kan extension of $A$ along 
$H\ltimes (\Delta_\inj^{\text{op}})^{\times k}
\to G\ltimes (\Delta_\inj^{\text{op}})^{\times k}$.
\end{definition}

\begin{remark}
\label{Aug29.2}
For later use we give an explicit description of $I_H^G A$.
For each multi-index $\bfn$, we have
\[
(I_H^G A)_\bfn=
\Bigl(
\bigvee_{\alpha\in G} A_{\alpha^{-1}(\bfn)}
\Bigr)
/H
\]
where the action of $H$ is defined as follows: if $\beta\in H$ and 
$x$ is an
element in the $\alpha$-summand then $\beta$ takes $x$ to 
the element $(\beta,\mathrm{id})_*(x)$ in the $\alpha\beta^{-1}$-summand.
\end{remark}

\begin{notation}
\label{m6}
We denote the equivalence class of an element $x$ in 
the $\alpha$-summand of $I_H^G A$ by $[\alpha,x]$; note that $[\alpha,x]=
(\alpha,\mathrm{id})_*[e,x]$.
\end{notation}

\begin{definition} 
\label{m7}
Given $\bX,\bY\in \ss$, define $\bX\otimes \bY\in \ss$ by
\[
(\bX\otimes \bY)_k=\bigvee_{j_1+j_2=k}\,
I^{\Sigma_k}_{\Sigma_{j_1}\times \Sigma_{j_2}}\, 
\bigl(\bX_{j_1}\wedge \bY_{j_2}\bigr).
\]
\end{definition}

The proof that $\otimes$ is a symmetric monoidal product is essentially the 
same as the corresponding proof in \cite[Section 2.1]{MR1695653}. The 
symmetry map
\[
\tau:\bX\otimes\bY\to \bY\otimes\bX
\]
is given (as in \cite{{MR1695653}}) by
\begin{equation}
\label{m8}
\tau([\alpha,x\wedge y])
=[\alpha\rho_{l,k},y\wedge x]
\end{equation}
where $x\in (X_k)_{\mathbf{m}}$, $y\in (Y_l)_\bfn$, and $\rho_{l,k}$ is the 
permutation of $\{1,\ldots,k+l\}$ which moves the first $l$ elements to the
end and the last $k$ elements to the front.

Next we will give the definition of the category $\sss$ and its
symmetric monoidal product.  First we need a sphere object.

\begin{definition}
\label{z1}
(i) Let $S^1$ be the based semisimplicial set that consists of the base 
point together with a 1-simplex $s$.  

(ii) Let $S^k$ be the object of $\Sigma_k\ssk$ obtained from $(S^1)^{\wedge
k}$ by letting 
\[
(\alpha,\mathrm{id})_*(s\wedge\cdots\wedge s)=
s\wedge\cdots\wedge s
\]

(iii) Let 
$\bS$ be the object of $\ss$ whose $k$-th term is $S^k$.  
\end{definition}

It is easy to check
that $\bS$ is a commutative monoid in  $\ss$.

\begin{definition} 
\label{m23}
$\sss$ is the category of modules over $\bS$.
\end{definition}

\begin{remark}  
One can give a more explicit version of this definition:
an object of $\sss$ consists of an object $\bX$ of $\ss$ together with 
suspension maps
\[
\omega:S^1\wedge X_k\to X_{k+1}
\]
for each $k$, such that the iterates of the $\omega$'s satisfy appropriate
equivariance conditions.
\end{remark}

\begin{example} 
\label{Aug29.1}
The object $\bR$ of Definition \ref{Aug29} can be given suspension maps as
follows:
with the notation of 
\cite[Definition 17.4(i)]{LM12}, define 
\[
\omega:S^1\wedge R_k \to R_{k+1}
\]
by 
\[
\omega(s\wedge F)=\lambda^*(F)
\]
(where $s$ is the 1-simplex of $S^1$ and $F\in\ad^k(\Delta^{\mathbf n})$).
The resulting object of $\sss$ will also be denoted $\bR$.
\end{example}

\begin{definition}\label{smash} (cf.\ \cite[Definition 2.2.3]{MR1695653})
\label{m19}
For $\bX,\bY\in \sss$, define the {\it smash product} $\bX\wedge \bY$ to be the 
coequalizer of the diagram
\[
\bX\otimes \bS\otimes \bY \rightrightarrows
\bX\otimes \bY 
\]
where the right action of $\bS$ on $\bX$ is the composite
\[
\bX\otimes \bS\to \bS\otimes \bX\to \bX.
\]
\end{definition}

The proof that $\wedge$ is a symmetric monoidal product is essentially the
same as the corresponding proof in \cite[Section 2.2]{MR1695653}.


%

\section{Geometric realization}
\label{gr}

Let $G$ be a subgroup of $\Sigma_k$.
By Remark \ref{Aug27.1}(i), an object of $G\ssk$ has an underlying $k$-fold 
multisemisimplicial set.

\begin{definition}  The {\it geometric realization} $|A|$ 
of an object $A\in G\ssk$ is the geometric realization of its 
underlying $k$-fold multisemisimplicial set where, additionally, the realization of the base points are collapsed to a single point.
\end{definition}

\begin{definition} 
(i)
A map in $G\ssk$ is a {\it weak equivalence} if it 
induces a weak equivalence of realizations.

(ii) A map $\bX\to \bY$ in $\ss$ or in $\sss$ is a {\it weak equivalence} if 
each 
map $X_k\to Y_k$ is a weak equivalence.
\end{definition}

\begin{prop}  
\label{m98}
For $A\in \Sigma_k\ssk$, the following formula
gives a natural left $\Sigma_k$ action on $|A|$:
\[
\alpha([u_1,\ldots,u_k,a])
=
[u_{\alpha^{-1}(1)}, \ldots, u_{\alpha^{-1}(k)}, (\alpha,\mathrm{id})_*(a)];
\]
here $(u_1,\ldots,u_k)\in 
\Delta^\bfn$, $a\in A_\bfn$, and $[u_1,\ldots,u_k,a]$ denotes the class of 
$(u_1,\ldots,u_k,a)$ in $|A|$.
\qed
\end{prop}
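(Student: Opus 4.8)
The plan is to verify directly that the assignment in Proposition~\ref{m98} is well-defined on equivalence classes and defines a functorial left action. First I would recall the standard construction of the geometric realization of a $k$-fold multisemisimplicial set $A$: it is the quotient of $\coprod_{\bfn} \Delta^\bfn\times A_\bfn$ by the relations $(\theta^*(u),a)\sim (u,\theta_*(a))$ for each morphism $\theta$ in $(\Delta_\inj)^{\times k}$, where $\Delta^\bfn=\Delta^{n_1}\times\cdots\times\Delta^{n_k}$ and $\theta^*$ is the induced map on products of topological simplices. Here the only subtlety relative to the simplicial case is bookkeeping the $k$ coordinates; since $\Sigma_k$ acts on $(\Delta_\inj^{\text{op}})^{\times k}$ by permuting factors, a permutation $\alpha$ also permutes the simplex coordinates $(u_1,\ldots,u_k)$ accordingly, which is exactly what the formula records.

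The key steps, in order. Step 1: well-definedness. I would take a generating relation $(\theta^*(u),a)\sim(u,\theta_*(a))$ in $|A|$, where $\theta=(\theta_1,\ldots,\theta_k)$, and apply the proposed formula to both sides. On the left one gets $[\theta^*(u)_{\alpha^{-1}(1)},\ldots,\theta^*(u)_{\alpha^{-1}(k)},(\alpha,\mathrm{id})_*(a)]$; on the right $[u_{\alpha^{-1}(1)},\ldots,u_{\alpha^{-1}(k)},(\alpha,\mathrm{id})_*\theta_*(a)]$. The point is that in $\Sigma_k\ltimes(\Delta_\inj^{\text{op}})^{\times k}$ one has the identity $(\alpha,\mathrm{id})\circ(e,\theta)=(\alpha,\alpha^{-1}(\theta))$ in the notation of the composition rule $(\alpha,f)\circ(\beta,g)=(\alpha\beta,\beta^{-1}(f)\circ g)$ (Remark~\ref{Aug27.1}(ii)), so functoriality of $A$ gives $(\alpha,\mathrm{id})_*\theta_* = (\alpha^{-1}(\theta))_*(\alpha,\mathrm{id})_*$, where $\alpha^{-1}(\theta)$ is the reindexed tuple $(\theta_{\alpha^{-1}(1)},\ldots,\theta_{\alpha^{-1}(k)})$. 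Matching this against the permuted simplex coordinates shows the two classes agree, because $(\alpha^{-1}(\theta))^*$ acts on $(u_{\alpha^{-1}(1)},\ldots,u_{\alpha^{-1}(k)})$ coordinatewise exactly as $\theta^*$ does before permuting. Step 2: the action axioms. That the identity of $\Sigma_k$ acts as the identity is immediate since $(e,\mathrm{id})_*$ is the identity. For composites $\alpha\beta$, one applies the formula twice and uses the functor equation $(\alpha,\mathrm{id})_*(\beta,\mathrm{id})_* = (\alpha\beta,\mathrm{id})_*$ together with the straightforward identity $(uv)_{\gamma^{-1}(i)}$-type reindexing for coordinate permutations, i.e.\ $(u_{\beta^{-1}(j)})$ reindexed by $\alpha^{-1}$ is $u_{(\alpha\beta)^{-1}(j)}$. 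Step 3: naturality and continuity. Naturality in $A$ is formal: a morphism in $\Sigma_k\ssk$ commutes with all the $(\alpha,\mathrm{id})_*$, hence with the formula. Continuity is clear since the action is induced by the evident self-homeomorphisms of each $\Delta^\bfn$ (permuting factors) together with the bijections on the discrete sets $A_\bfn$.

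I expect Step~1, the well-definedness check, to be the main obstacle, though "obstacle" overstates it — it is really just a careful matching of two pieces of indexing data (the permutation of simplex coordinates versus the reindexing of the tuple $\theta$ of injections), and the only thing one must get exactly right is the compatibility encoded by the composition rule in $G\ltimes\C$. Once that single computation is pinned down, Steps 2 and 3 are routine. I would present the proof as: (1) recall the realization as a coequalizer/quotient; (2) do the generating-relation computation using the identity $(\alpha,\mathrm{id})\circ(e,\theta)=(\alpha,\alpha^{-1}(\theta))$ and functoriality; (3) dispatch the two action axioms and naturality in a sentence each.
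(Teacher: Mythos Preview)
Your approach is correct and is precisely the routine verification the authors have in mind; the paper gives no proof of this proposition at all (it is stated with a terminal \qed).

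One small slip worth fixing: the equation $(\alpha,\mathrm{id})\circ(e,\theta)=(\alpha,\alpha^{-1}(\theta))$ is not right --- the composition rule $(\alpha,f)\circ(\beta,g)=(\alpha\beta,\beta^{-1}(f)\circ g)$ with $\beta=e$ gives $(\alpha,\mathrm{id})\circ(e,\theta)=(\alpha,\theta)$. What your well-definedness check actually needs is the \emph{other} factorization of $(\alpha,\theta)$, namely
\[
(\alpha,\theta)=(e,\alpha(\theta))\circ(\alpha,\mathrm{id}),
\]
where $\alpha(\theta)=(\theta_{\alpha^{-1}(1)},\ldots,\theta_{\alpha^{-1}(k)})$ is exactly the reindexed tuple you wrote down. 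Applying the functor $A$ to this identity yields the commutation relation
\[
(\alpha,\mathrm{id})_*\circ\theta_*=(\alpha(\theta))_*\circ(\alpha,\mathrm{id})_*,
\]
which is what you then use correctly to match the two sides of the generating relation. With that correction in the connecting step, the rest of your argument (action axioms, naturality, continuity) goes through exactly as you describe.
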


\begin{prop} 
\label{m38}
For $H\subset G\subset \Sigma_k$ and $A\in H\ssk$ 
there is a natural isomorphism of based $G$-spaces
\[
|I_H^G A|\cong G_+\wedge_H\, |A|.
\]
\end{prop}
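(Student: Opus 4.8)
The plan is to verify the claimed isomorphism $|I_H^G A|\cong G_+\wedge_H |A|$ by combining the explicit level-wise description of $I_H^G A$ given in Remark \ref{Aug29.2} with the fact that geometric realization of multisemisimplicial sets commutes with wedges and with quotients by group actions. First I would record that, by Remark \ref{Aug29.2}, for each multi-index $\bfn$ we have $(I_H^G A)_\bfn = \bigl(\bigvee_{\alpha\in G} A_{\alpha^{-1}(\bfn)}\bigr)/H$. Since the underlying multisemisimplicial set of $I_H^G A$ is obtained by applying this formula in each multidegree, and since geometric realization is a left adjoint (hence preserves the colimits defining the wedge and the quotient), we obtain a natural homeomorphism
\[
|I_H^G A| \cong \Bigl(\bigvee_{\alpha\in G} \bigl|\, \bfn\mapsto A_{\alpha^{-1}(\bfn)}\,\bigr|\Bigr)\big/ H.
\]

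The next step is to identify the realization of the reindexed multisemisimplicial set $\bfn\mapsto A_{\alpha^{-1}(\bfn)}$ with $|A|$. Reindexing the multidegrees by the permutation $\alpha^{-1}$ corresponds on realizations to permuting the simplex coordinates $(u_1,\ldots,u_k)$; concretely the map $[u_1,\ldots,u_k,a]\mapsto [u_{\alpha(1)},\ldots,u_{\alpha(k)},a]$ (or its inverse, depending on conventions) is a homeomorphism from $|A|$ onto $|\bfn\mapsto A_{\alpha^{-1}(\bfn)}|$. Thus $\bigvee_{\alpha\in G}|\bfn\mapsto A_{\alpha^{-1}(\bfn)}|$ is identified with $\bigvee_{\alpha\in G}|A|$, which, together with the basepoint, is exactly $G_+\wedge |A|$ — the wedge of copies of $|A|$ indexed by $G$.

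It then remains to check that, under this identification, the $H$-action described in Remark \ref{Aug29.2} — where $\beta\in H$ sends $x$ in the $\alpha$-summand to $(\beta,\mathrm{id})_*(x)$ in the $\alpha\beta^{-1}$-summand — corresponds to the standard $H$-action on $G_+\wedge |A|$ whose quotient is $G_+\wedge_H |A|$. On realizations, by Proposition \ref{m98}, the effect of $(\beta,\mathrm{id})_*$ on $|A|$ is precisely the action of $\beta$ coming from the $\Sigma_k$-action there, combined with the coordinate permutation; after composing with the coordinate bookkeeping that went into identifying each summand with $|A|$, the two permutation contributions conspire so that $\beta$ acts simply by $[\alpha,v]\mapsto[\alpha\beta^{-1},v]$ for $v\in|A|$. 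That is the defining action on $G_+\wedge|A|$ (translation in the $G$-coordinate), so passing to $H$-orbits yields $G_+\wedge_H|A|$. Finally one checks that the resulting homeomorphism is $G$-equivariant, using the formula for the $G$-action on $|I_H^G A|$ from Proposition \ref{m98} and the fact that $G$ acts on $G_+\wedge_H|A|$ by left translation; this is a direct comparison of the two formulas.

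I expect the main obstacle to be purely bookkeeping: keeping the permutation conventions straight so that the coordinate-permutation homeomorphisms, the $(\beta,\mathrm{id})_*$ operators, and the left versus right translations all compose correctly and yield the clean action $[\alpha,v]\mapsto[\alpha\beta^{-1},v]$ rather than something twisted by an extra permutation. There is no serious topology here — realization preserving wedges and quotients is standard — so the entire content is in Remark \ref{Aug29.2} plus Proposition \ref{m98} and careful index chasing; once the conventions are pinned down the isomorphism and its naturality in $A$ are immediate.
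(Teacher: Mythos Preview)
Your approach is correct and is essentially the one the paper intends: the paper's proof is literally the sentence ``The proof is easy, using Remark \ref{Aug29.2},'' and you have spelled out what that entails.

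There is, however, a slip in your description of the $H$-action. You claim the coordinate bookkeeping makes the action collapse to $[\alpha,v]\mapsto[\alpha\beta^{-1},v]$. It does not: if you trace it through, $\beta$ sends $(\alpha,v)$ to $(\alpha\beta^{-1},\,\beta\cdot v)$, where $\beta\cdot v$ is the $H$-action on $|A|$ supplied by Proposition \ref{m98}. Concretely, the identification $\Phi_\alpha:|\bfn\mapsto A_{\alpha^{-1}(\bfn)}|\to|A|$ sends $[u,a]$ to $[(u_{\alpha(1)},\ldots,u_{\alpha(k)}),a]$, and a direct comparison shows $\Phi_{\alpha\beta^{-1}}\bigl([u,(\beta,\mathrm{id})_*a]\bigr)=\beta\cdot\Phi_\alpha([u,a])$. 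This is exactly the diagonal action $(g,v)\sim(g\beta^{-1},\beta v)$ that one quotients by to form $G_+\wedge_H|A|$, so the argument still goes through --- but the ``two permutation contributions'' do not cancel; they combine to give the expected $H$-action on the $|A|$ factor. Since $A$ lies in $H\ssk$ and need not have trivial $H$-action, the action you wrote down would in general give the wrong quotient.
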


\begin{proof}
The proof is easy, using Remark \ref{Aug29.2}.
\end{proof}

\begin{cor}  Geometric realization induces a symmetric monoidal
functor from $\sss$ to the category of symmetric spectra $\S p$; in 
particular, the realization of a (commutative) monoid in $\sss$ is a 
(commutative) monoid in $\S p$.
\qed
\end{cor}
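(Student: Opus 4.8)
The plan is to show that geometric realization is a strong symmetric monoidal functor from $(\ss,\otimes,\bS)$ to the category of symmetric sequences of based spaces equipped with its smash product, and then to pass to the associated module categories.

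First I would observe that geometric realization of a based $k$-fold multisemisimplicial set commutes with the external smash product of Definition \ref{Aug29.4}: this is the standard fact that realization of semisimplicial sets takes finite products to products, applied in each of the $k$ coordinates, so that $|A\wedge B|\cong|A|\wedge|B|$ naturally. Combined with Proposition \ref{m98}, which supplies a natural $\Sigma_k$-action on $|X_k|$, this shows that $|\cdot|$ sends an object of $\ss$ to a symmetric sequence of based spaces. Since realization is a left adjoint it preserves the wedges appearing in Definition \ref{m7}, and by Proposition \ref{m38} it carries $I^{\Sigma_k}_{\Sigma_{j_1}\times\Sigma_{j_2}}$ to $\Sigma_{k+}\wedge_{\Sigma_{j_1}\times\Sigma_{j_2}}(-)$. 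Assembling these facts yields a natural isomorphism
\[
|\bX\otimes\bY|_k
\;\cong\;
\bigvee_{j_1+j_2=k}\Sigma_{k+}\wedge_{\Sigma_{j_1}\times\Sigma_{j_2}}
\bigl(|\bX_{j_1}|\wedge|\bY_{j_2}|\bigr),
\]
which is exactly the $k$-th term of the smash product $|\bX|\otimes|\bY|$ of symmetric sequences.

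Next I would check that this isomorphism respects the associativity, unit, and symmetry constraints. Associativity and unitality are formal, given that realization preserves the relevant colimits and external smashes. For the symmetry it suffices to observe, in the coordinates provided by Notation \ref{m6} and the action formula of Proposition \ref{m98}, that the realization of the map $\tau$ of \eqref{m8} is precisely the block-transposition symmetry of symmetric sequences; this is a direct comparison of the two formulas. Finally, $|S^1|$ is the topological circle, so $|S^k|\cong(S^1)^{\wedge k}$ with its permutation action, i.e.\ $|\bS|$ is the sphere symmetric spectrum, and one checks from the definitions that the commutative-monoid structure carried by $\bS\in\ss$ realizes to the standard one. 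Hence $|\cdot|$ is strong symmetric monoidal.

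Since $\sss$ is the category of $\bS$-modules (Definition \ref{m23}) and its smash product is the coequalizer of Definition \ref{m19}, which realization preserves, the functor $|\cdot|$ carries an object of $\sss$ to a module over $|\bS|=\bS$, that is, to a symmetric spectrum, and remains strong symmetric monoidal for $\wedge$. A lax symmetric monoidal functor sends monoids to monoids and commutative monoids to commutative monoids, which gives the final assertion. I expect the only genuinely non-formal point to be the claim that realization of multisemisimplicial sets commutes with finite external smash products; this is the classical statement for semisimplicial sets applied coordinatewise, and it is where the compatibility with the various $\Sigma_k$-actions must be tracked with care.
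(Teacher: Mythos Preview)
Your argument is essentially the one the paper has in mind (the Corollary is marked \qed\ and is meant to follow from Proposition \ref{m38} together with the evident compatibility of realization with the external smash of Definition \ref{Aug29.4}), and the overall structure is correct.

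There is one point that needs correction. You justify the isomorphism $|A\wedge B|\cong|A|\wedge|B|$ for the external smash by invoking ``the standard fact that realization of semisimplicial sets takes finite products to products.'' That fact is \emph{false} for semisimplicial sets: realization of semisimplicial sets does not in general commute with internal products or diagonals, precisely because there are no degeneracies---this is the whole point of Section \ref{l1}, where the map $\kappa:|A\owedge B|\to|A|\wedge|B|$ is only a weak equivalence under the hypothesis of compatible degeneracies (Proposition \ref{Aug29.3}). The external smash of Definition \ref{Aug29.4} is a different animal: here $A$ is $k$-fold, $B$ is $l$-fold, and $A\wedge B$ is $(k+l)$-fold, so no diagonal is taken. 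The correct justification is the Fubini isomorphism for coends together with the fact that in compactly generated based spaces the smash product commutes with colimits in each variable:
\[
|A\wedge B|
=\int^{(\mathbf m,\bfn)}(\Delta^{\mathbf m}\times\Delta^{\bfn})_+\wedge A_{\mathbf m}\wedge B_{\bfn}
\cong
\Bigl(\int^{\mathbf m}\Delta^{\mathbf m}_+\wedge A_{\mathbf m}\Bigr)
\wedge
\Bigl(\int^{\bfn}\Delta^{\bfn}_+\wedge B_{\bfn}\Bigr)
=|A|\wedge|B|.
\]
With this correction in place, the rest of your proof goes through as written.
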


\section{A family of multiplication maps}
\label{m4}

In this section we begin the proof of Theorem \ref{Nov12.3}.
We shall construct a monad $\bbP$ together with maps $$\bbA=\bbA ssoc\ra \bbP \ra \bbC omm=\bbP'$$ such that a) the functor $\bR$ factors over $\bbP$-algebras, b) the transformation $\bbP \ra \bbP' $ is a weak equivalence in a suitable sense so $\bR$ is weakly equivalent as a functor to $\bbP$-algebras to another functor that factors over $\bbP'$-algebras.

From now until the end of Section \ref{rect} we fix a $\Z$-graded permutative 
category $\A$ and a commutative ad theory with values in $\A$.  Let $\bR$ be 
the object of $\sss$ constructed from this ad theory as in Example
\ref{Aug29.1}.

Let $\bM$ be the symmetric ring spectrum associated to the ad theory
(\cite[Proposition 17.5 and Theorem 18.5]{LM12}).  By definition, $M_k=|R_k|$.
The multiplication of $\bM$
is induced by the collection of maps
\[
\mu:(R_k)_{\mathbf{m}}\wedge(R_l)_\bfn\to (R_{k+l})_{\mathbf{m},\bfn}
\]
defined by
\begin{equation}
\label{m15}
\mu(F\wedge G)(\sigma_1\times \sigma_2,o_1\times o_2)
=
i^{l \dim \sigma_1}F(\sigma_1,o_1)\boxtimes G(\sigma_2,o_2)
\end{equation}
(this is well-defined because, 
by \cite[Definition 18.1(b)]{LM12},
reversing the orientations $o_1$ and $o_2$ 
does not change the right-hand side). 
These maps give $\bR$ the structure of a monoid in $\sss$ (the proof is 
is essentially the same as for \cite[Theorem 18.5]{LM12}).  

In general, even though the ad theory is commutative, 
$\bR$ is not a commutative monoid (this would require the 
product in the target category $\A$ to be {\it strictly} graded commutative).
Instead we have the following.
Recall Definition \ref{Nov13} and Notation \ref{m6}.

\begin{lemma}
\label{m10}
Let $m:\bR\wedge\bR\to\bR$ be the product and
let $\eta\in\Sigma_j$.  Then the composite
\[
m_\eta:\bR^{\wedge j}
\xrightarrow{\eta}
\bR^{\wedge j}
\xrightarrow{m}
\bR
\]
is determined by the formula
\begin{multline*}
m_\eta([e,F_1\wedge\cdots\wedge F_j])
(\sigma_1\times\cdots\times\sigma_j,o_1\times\cdots\times 
o_j)
\\
=
i^{\epsilon(\zeta)}
\eta_\bigstar
(F_1(\sigma_1,o_1), \ldots, F_j(\sigma_j,o_j)),
\end{multline*}
where $e$ is the identity element of the relevant symmetric group
and $\zeta$ is the block permutation that takes blocks
${\mathbf b}_1$, \dots, ${\mathbf b}_j$, ${\mathbf c}_1$, \dots, ${\mathbf
c}_j$
of size
$\deg F_1$, \dots, $\deg F_j$, $\dim \sigma_1$, \dots, $\dim \sigma_j$ into 
the order
${\mathbf b}_1$, ${\mathbf c}_1$, \dots, ${\mathbf b}_j$, ${\mathbf c}_j$.
\end{lemma}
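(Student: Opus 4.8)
The strategy is to compute $m_\eta$ by unwinding the two maps in the composite: first the permutation action of $\eta$ on $\bR^{\wedge j}$, then the iterated product $m$. First I would reduce to the case $j=2$ is not quite available (the formula is genuinely $j$-fold), so instead I would proceed by induction on $j$, using associativity of $m$ and the compatibility of the $\Sigma_j$-action with the monoidal structure on $\sss$. The base case $j=1$ is the definition of $\eta_\bigstar$ for $\eta=e\in\Sigma_1$, which is the identity. For the inductive step, I would factor $\eta\in\Sigma_j$ through a permutation fixing the last block together with a transposition-type move, using the explicit description of the symmetry map $\tau$ in \eqref{m8} and the formula \eqref{m15} for $\mu$ on a single product.

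The key computation is to track signs. I would start from $[e,F_1\wedge\cdots\wedge F_j]\in(\bR^{\wedge j})$ and apply the permutation $\eta$; by the definition of the action on $I_H^G$ (Remark \ref{Aug29.2}) and Notation \ref{m6}, this produces an element of the form $[\eta',F_{\eta^{-1}(1)}\wedge\cdots\wedge F_{\eta^{-1}(j)}]$ for an appropriate $\eta'$, possibly with an involution $i$ applied coming from the $i^{\epsilon(\alpha)}$ twist in Definition \ref{Aug29}(i). Then I would apply the iterated product $m$, which by \eqref{m15} (iterated) introduces the sign $i$ raised to the power given by the standard ``shuffle'' exponent $\sum_{p<q}\deg F_{\eta^{-1}(q)}\dim\sigma_{\eta^{-1}(p)}$ — but evaluated in the permuted order. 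Finally I would evaluate the resulting $K$-ad at $(\sigma_1\times\cdots\times\sigma_j,o_1\times\cdots\times o_j)$, remembering that the $(\alpha,\id)_*$ action reindexes the simplices, so the $\sigma$'s get permuted along with the $F$'s and one must permute them back. The bookkeeping of which permutation acts on the degree-blocks ${\mathbf b}_p$ versus the dimension-blocks ${\mathbf c}_p$ is exactly what the block permutation $\zeta$ in the statement records, and the definition of $\eta_\bigstar$ already absorbs the sign $i^{\epsilon(\bar\eta)}$ coming from reordering the $F$-values themselves.

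The main obstacle will be precisely this sign accounting: one must show that the accumulated power of $i$ — coming from (a) the twists $i^{\epsilon(\alpha)}$ in the $\Sigma_k$-action on $R_k$, (b) the factors $i^{l\dim\sigma_1}$ in the iterated $\mu$, (c) the factor $i^{\epsilon(\bar\eta)}$ inside $\eta_\bigstar$ once the simplices are restored to standard order — collapses exactly to $i^{\epsilon(\zeta)}$ for the block permutation $\zeta$ described in the statement. I expect this to follow from the elementary fact that $\epsilon$ is a homomorphism $\Sigma_N\to\Z/2$ together with a careful identification of $\zeta$ as the composite of the block reshuffle $\bar\eta\boxtimes\bar\eta$ (acting on degree-blocks and dimension-blocks in parallel) with the ``interleaving'' permutation that moves from the order ${\mathbf b}_1,\dots,{\mathbf b}_j,{\mathbf c}_1,\dots,{\mathbf c}_j$ to ${\mathbf b}_1,{\mathbf c}_1,\dots,{\mathbf b}_j,{\mathbf c}_j$; the interleaving contribution is independent of $\eta$ and matches the iterated $\mu$-sign, and the reshuffle contribution matches (a) and (c). Once $\zeta$ is identified this way, the sign identity is immediate from multiplicativity of $\epsilon$, and the rest is a routine check that the values match. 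A final remark: since by Remark \ref{Nov13.1} both sides land in objects canonically isomorphic to $x_1\boxtimes\cdots\boxtimes x_j$, coherence (Remark \ref{Nov12.2}) guarantees there is no further ambiguity in the bracketing of the products.
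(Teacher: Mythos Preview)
Your plan is sound and would lead to a correct proof, but the paper takes a shorter route. Rather than inducting on $j$, the paper observes that it suffices to verify the formula when $\eta$ is a transposition: transpositions generate $\Sigma_j$, and both sides of the claimed identity behave compatibly under composition of permutations (the left side because $m_{\eta_1\eta_2}=m\circ\eta_1\circ\eta_2$ by definition, the right side by the coherence built into $\eta_\bigstar$ via Remark~\ref{Nov12.2}). For a single transposition the calculation is then a direct application of \eqref{m8}, \eqref{m15}, and \cite[Definition 17.3]{LM12}, and the sign bookkeeping collapses to one elementary block swap rather than the full decomposition of $\zeta$ you outline.

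Your approach has the virtue of making the sign identity
\[
\epsilon(\zeta)\equiv\epsilon(\text{interleave})+\epsilon(\bar\eta\ \text{on degree-blocks})+\epsilon(\bar\eta\ \text{on dimension-blocks})\pmod 2
\]
completely explicit, which is conceptually clarifying even if more laborious. The paper's reduction to transpositions sidesteps this by never needing to name $\zeta$ globally: for a transposition the relevant block permutation is a single adjacent block swap, and the sign is read off directly from \eqref{m15} and Definition~\ref{Aug29}(i). Either route works; the paper's is shorter, yours is more transparent about where each power of $i$ originates.
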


\begin{proof}
It suffices to prove this when $\eta$ is a transposition, and in this case the
proof is an easy calculation using Equations \eqref{m8} and
\eqref{m15} and \cite[Definition 17.3]{LM12}.
\end{proof}

The key idea in the proof of Theorem \ref{Nov12.3} is that 
there is a family of operations which can be used 
to interpolate between the various 
$m_\eta$.  To construct this family,
we allow a different permutation of the factors for each cell 
of $\Delta^{\bfn_1}\times\cdots\times \Delta^{\bfn_j}$, as explained in our 
next definition.  We begin by defining the operations for pre-ads 
(see \cite[Definitions 3.8(i) and 3.10(ii)]{LM12}).

\begin{definition}  
\label{m22}
(i) Given a ball complex $K$, let $U(K)$ denote the set of all cells of $K$.

(ii) 
Let $k_1,\ldots,k_j$ be non-negative integers and let $\bfn_i$ be a
$k_i$-fold multi-index for $1\leq i\leq j$.  For any map
\[
a: U(\Delta^{\bfn_1}\times\cdots\times \Delta^{\bfn_j})\to \Sigma_j
\]
define a map on preads (i.e.\ functors $F$ from the oriented cells $(\sigma, o)$ to the ambient category)
\[
a_*:
\pre^{k_1}(\Delta^{\bfn_1})
\times
\cdots
\times
\pre^{k_j}(\Delta^{\bfn_j})
\to
\pre^{k_1+\cdots+k_j}(\Delta^{(\bfn_1,\ldots,\bfn_j)})
\]
by 
\begin{multline*}
a_*(F_1,\ldots, F_j)(\sigma_1\times\cdots\times\sigma_j,o_1\times\cdots\times 
o_j))
\\
=
i^{\epsilon(\zeta)}
(a(\sigma_1\times\cdots\times\sigma_j))_\bigstar
(F_1(\sigma_1,o_1), \ldots, F_j(\sigma_j,o_j)),
\end{multline*}
where $\zeta$ is the block permutation that takes blocks 
${\mathbf b}_1$, \dots, ${\mathbf b}_j$, ${\mathbf c}_1$, \dots, ${\mathbf c}_j$
of size
$k_1$, \dots, $k_j$, $\dim \sigma_1$, \dots, $\dim \sigma_j$ into the order 
${\mathbf b}_1$, ${\mathbf c}_1$, \dots, ${\mathbf b}_j$, ${\mathbf c}_j$.
\end{definition}

\begin{lemma}
\label{m3}
If $F_i\in \ad^{k_i}(\Delta^{\bfn_i})$ for $1\leq i\leq j$ then 
$a_*(F_1,\ldots, F_j)
\in \ad^{k_1+\cdots+k_j}(\Delta^{(\bfn_1,\ldots,\bfn_j)})$.
\end{lemma}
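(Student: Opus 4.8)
The plan is to reduce the statement to a check that can be performed one cell at a time, using the permutative coherence from Remark~\ref{Nov12.2} to replace each $(a(\sigma_1\times\cdots\times\sigma_j))_\bigstar$ by the standard iterated product $\boxtimes$ in a way that is natural in the cell. First I would observe that, by the multiplicativity axiom for ad theories (\cite[Definitions 3.10 and 18.4]{LM12}), the map $m_\bigstar\colon \ad^{k_1}(\Delta^{\bfn_1})\times\cdots\times\ad^{k_j}(\Delta^{\bfn_j})\to\ad^{k_1+\cdots+k_j}(\Delta^{(\bfn_1,\ldots,\bfn_j)})$ defined by the same formula as $a_*$ but with the constant function $e\colon U(\cdots)\to\Sigma_j$ in place of $a$ already lands in $\ad$ (this is essentially the content of \cite[Theorem 18.5]{LM12}, and is the $\eta=e$ case of Lemma~\ref{m10}). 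So it suffices to compare $a_*(F_1,\ldots,F_j)$ with $e_*(F_1,\ldots,F_j)$.

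The key point is that these two pre-ads differ by an isomorphism of pre-ads. Indeed, by Remark~\ref{Nov13.1}, for each permutation $\eta\in\Sigma_j$ the functor $\eta_\bigstar$ is canonically isomorphic to $x_1\boxtimes\cdots\boxtimes x_j$, and hence to $e_\bigstar$; moreover by the coherence theorem (Remark~\ref{Nov12.2}) there is a \emph{unique} such canonical isomorphism, so these isomorphisms are compatible with the boundary and face maps used to assemble a pre-ad. Concretely, for a cell $\sigma=\sigma_1\times\cdots\times\sigma_j$ one has a canonical isomorphism in $\A$
\[
\theta_\sigma\colon
(a(\sigma))_\bigstar(F_1(\sigma_1,o_1),\ldots,F_j(\sigma_j,o_j))
\xrightarrow{\ \cong\ }
e_\bigstar(F_1(\sigma_1,o_1),\ldots,F_j(\sigma_j,o_j)),
\]
and the collection $\{i^{\epsilon(\zeta)}\theta_\sigma\}$ assembles to an isomorphism of pre-ads $a_*(F_1,\ldots,F_j)\xrightarrow{\cong} e_*(F_1,\ldots,F_j)$. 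Checking that this really is a morphism of pre-ads — i.e.\ that it commutes with the restriction maps to faces and with the involution — is where one uses coherence in an essential way, together with conditions (a) and the definition of $\Z$-graded category from Definition~\ref{Z} which now permits dimension-preserving morphisms like $\theta_\sigma$ in $\A$ (recall from Section~\ref{redef} that this is exactly why those morphisms were added).

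Finally I would invoke the extra hypothesis in the definition of commutative ad theory (Definition~\ref{comad}): every pre $K$-ad isomorphic to a $K$-ad is itself a $K$-ad. Since $e_*(F_1,\ldots,F_j)$ is a $K$-ad by multiplicativity and $a_*(F_1,\ldots,F_j)$ is isomorphic to it as a pre-ad, $a_*(F_1,\ldots,F_j)$ is a $K$-ad, which is the claim. The main obstacle I anticipate is the bookkeeping in verifying that $\{i^{\epsilon(\zeta)}\theta_\sigma\}$ is natural with respect to face inclusions $\tau\subset\sigma$ — one must match the sign prefactors $i^{\epsilon(\zeta)}$ for $\sigma$ and for $\tau$ against the signs built into the restriction maps of a pre-ad, and confirm that the block permutation $\zeta$ behaves correctly under passage to a face (where some $\dim\sigma_i$ drops by one); the permutative coherence theorem guarantees the underlying $\A$-isomorphisms are compatible, so this reduces to a sign check that is routine but must be done carefully.
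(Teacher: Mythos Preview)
Your proposal is correct and follows essentially the same approach as the paper: the paper's proof says precisely that the lemma is ``immediate from the extra property in Definition~\ref{comad}, Remark~\ref{Nov13.1}, and \cite[Definition 18.4(b)]{LM12},'' which are exactly the three ingredients you identify (the isomorphism-invariance of ads, the canonical isomorphism $\eta_\bigstar\cong\boxtimes$, and multiplicativity). Your version is more explicit about the naturality and sign checks that the paper leaves implicit, but the logical structure is the same.
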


\begin{proof}
This is immediate from 
the extra property 
in Definition \ref{comad},
Remark \ref{Nov13.1}, and \cite[Definition 18.4(b)]{LM12}.
\end{proof}

Recalling that $(R_k)_{\bfn}=\ad^k(\Delta^\bfn)$ with basepoint at the trivial
ad (see \cite[Definitions 3.8(ii), 3.10(b) and 18.1(c)]{LM12}), we have now 
constructed an operation
\[
a_*:
(R_{k_1})_{\bfn_1}
\wedge\cdots\wedge
(R_{k_j})_{\bfn_j}
\to
(R_{k_1+\cdots+k_j})_{\bfn_1,\ldots,\bfn_j}
\]
for each 
\[
a: U(\Delta^{\bfn_1}\times\cdots\times \Delta^{\bfn_j})\to \Sigma_j.
\]

For later use, we give the relation between $a_*$ and the suspension map
$\omega:S^1\wedge R_k\to R_{k+1}$.

\begin{definition}
\label{m24}
For a ball complex $K$, let
\[
\Pi:U(\Delta^1\times K)\to U(K)
\]
be the map which takes $\sigma\times\tau$ to $\tau$, where $\sigma$ is a
simplex of $\Delta^1$ and $\tau$ is a simplex of $K$.
\end{definition}

\begin{lemma}
\label{m5}
Let $s$ be the 1-simplex of $S^1$, let $F_i\in (R_{k_i})_{\bfn_i}$ for $1\leq
i\leq j$, and let $a: U(\Delta^{\bfn_1}\times\cdots\times \Delta^{\bfn_j})\to
\Sigma_j$.  Then
\[
\omega(s\wedge a_*(F_1\wedge\cdots\wedge F_j))
=
(a\circ \Pi)_*(\omega(s\wedge F_1)\wedge\cdots\wedge F_j).
\]
\end{lemma}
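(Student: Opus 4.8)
The plan is to verify the identity by unwinding both sides on an arbitrary generating cell and checking that the resulting ads in $\ad^{k_1+\cdots+k_j+1}(\Delta^{(1,\bfn_1,\ldots,\bfn_j)})$ agree. First I would recall from Example \ref{Aug29.1} that $\omega(s\wedge F)=\lambda^*(F)$, where $\lambda$ is the map of \cite[Definition 17.4]{LM12} that collapses the $\Delta^1$-coordinate; concretely, $\omega(s\wedge F)$ evaluated on a cell $\sigma_0\times\tau$ of $\Delta^1\times\Delta^\bfn$ (with an orientation) is, up to a sign coming from the degree shift, the value $F(\tau,o)$ when $\sigma_0$ is the top cell and the trivial ad otherwise. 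The point of Definition \ref{m24} and the map $\Pi$ is precisely that pre-composing the permutation-assignment $a$ with $\Pi$ makes the choice of permutation on a cell $\sigma_0\times\sigma_1\times\cdots\times\sigma_j$ depend only on $\sigma_1\times\cdots\times\sigma_j$, which is exactly what is needed for compatibility with $\lambda^*$ in the first variable.

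Next I would evaluate the right-hand side $(a\circ\Pi)_*(\omega(s\wedge F_1)\wedge F_2\wedge\cdots\wedge F_j)$ on a cell $\sigma_0\times\sigma_1\times\cdots\times\sigma_j$ with a product orientation. By Definition \ref{m22}(ii) this is $i^{\epsilon(\zeta')}$ times $(a\circ\Pi)(\sigma_0\times\sigma_1\times\cdots\times\sigma_j)_{\bigstar}$ applied to $\bigl(\omega(s\wedge F_1)(\sigma_0\times\sigma_1,o_0\times o_1),F_2(\sigma_2,o_2),\ldots,F_j(\sigma_j,o_j)\bigr)$, where $\zeta'$ is the appropriate block permutation for the degrees $k_1+1,k_2,\ldots,k_j,1,\dim\sigma_1,\ldots,\dim\sigma_j$ (note $\dim\sigma_0$ is either $0$ or $1$). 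Since $(a\circ\Pi)(\sigma_0\times\cdots\times\sigma_j)=a(\sigma_1\times\cdots\times\sigma_j)$, this permutation is independent of $\sigma_0$; and when $\sigma_0$ is not the top simplex the first entry is the trivial ad so $\eta_{\bigstar}$ returns the trivial ad by \cite[Definition 18.1(c)]{LM12}, matching $\omega(s\wedge a_*(\cdots))$ on that cell, which is also trivial there. On the top simplex, $\omega(s\wedge F_1)(\sigma_0\times\sigma_1,o_0\times o_1)$ is $F_1(\sigma_1,o_1)$ up to a sign, and I would substitute this in and compare with the left-hand side $\omega(s\wedge a_*(F_1\wedge\cdots\wedge F_j))$ evaluated there, which by Example \ref{Aug29.1} and Definition \ref{m22}(ii) equals $\lambda^*$ applied to $i^{\epsilon(\zeta)}a(\sigma_1\times\cdots\times\sigma_j)_{\bigstar}(F_1(\sigma_1,o_1),\ldots,F_j(\sigma_j,o_j))$.

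So the whole proof reduces to a sign bookkeeping: one must check that the Koszul sign $\epsilon(\zeta)$ produced on the left (from the block permutation interleaving the degree-blocks $k_i$ with the dimension-blocks $\dim\sigma_i$, then shifted by $\lambda^*$) equals $\epsilon(\zeta')$ produced on the right (where the extra degree-$1$ block sits next to the $k_1$-block and there is an extra $\dim\sigma_0\in\{0,1\}$ block), together with whatever sign $\omega$ itself contributes in the first slot. I expect this sign comparison to be the only real content, and the main obstacle is simply keeping the block-permutation conventions of Definition \ref{Nov13}, Definition \ref{m22} and \cite[Definition 17.3--17.4]{LM12} straight; once the blocks are listed in the correct order on both sides the signs agree by inspection. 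I would therefore phrase the proof as: ``Evaluate both sides on a cell $\sigma_0\times\cdots\times\sigma_j$; both are trivial unless $\sigma_0$ is the top $1$-simplex, in which case the equality of values follows from Example \ref{Aug29.1}, Definition \ref{m22}, the identity $(a\circ\Pi)(\sigma_0\times\cdots\times\sigma_j)=a(\sigma_1\times\cdots\times\sigma_j)$, and a comparison of block-permutation signs,'' leaving the sign check as a routine verification.
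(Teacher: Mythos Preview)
Your approach is correct and is precisely the second of the two proofs the paper indicates: the paper remarks that the lemma ``can also be proved by a straightforward calculation using Definitions \ref{Nov13} and \ref{Aug29.1} and \cite[Definitions 17.4 and 3.7(ii)]{LM12},'' which is exactly the cell-by-cell unwinding and sign comparison you describe.

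The paper's \emph{primary} argument is slightly different and more conceptual: it appeals to Lemma \ref{m10}, observing that each permuted multiplication $m_\eta$ is a map in $\sss$ and hence automatically commutes with the suspension $\omega$. Since the value of $a_*(F_1,\ldots,F_j)$ on a cell $\sigma_1\times\cdots\times\sigma_j$ is given by the same formula as $m_{a(\sigma_1\times\cdots\times\sigma_j)}$ on that cell, the compatibility with suspension follows cell-by-cell from the compatibility of the $m_\eta$, and the role of $\Pi$ drops out immediately. This avoids having to track the block-permutation signs $\epsilon(\zeta)$ and $\epsilon(\zeta')$ by hand, at the cost of relying on the structural fact that $m_\eta$ is already known to be a spectrum map. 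Your direct calculation is more self-contained but requires the bookkeeping you flag; both are valid.
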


\begin{proof}
This follows from Lemma \ref{m10} (because the permuted
multiplication commutes with suspension).
It can also be proved by a straightforward calculation using Definitions
\ref{Nov13} and \ref{Aug29.1} and \cite[Definitions 17.4 and 3.7(ii)]{LM12}.
\end{proof}

In the remainder of this section, we show that the action of the 
operations $a_*$ can be described in a way that begins to resemble the action 
of an operad; this resemblance will be developed further in the next two 
sections.

\begin{definition}
\label{m16}
(i) For $j,k\geq 0$ define
an object $\O(j)_k$ of $\Sigma_k\ssk$ by
\[
(\O(j)_k)_\bfn=\Map(U(\Delta^\bfn),\Sigma_j)_+
\]
(where the $+$ denotes a disjoint basepoint);  the morphisms in
$(\Delta_\inj^{\text{op}})^{\times k}$ act in the evident way, and the
morphisms of the form $(\alpha,\mathrm{id})$ with $\alpha\in \Sigma_k$ act by 
permuting the factors in $\Delta^\bfn$.

(ii) For $j\geq 0$ define $\O(j)$ to be the object of $\ss$ with $k$-th term
$\O(j)_k$.
\end{definition}

\begin{definition} 
(i)
For $A,B\in \Sigma_k\ssk$, define the {\it degreewise smash product}
\[
A\owedge B \in \Sigma_k\ssk
\]
by
\[
(A\owedge B)_\bfn=A_\bfn\wedge B_\bfn,
\]
with the diagonal action of $\Sigma_k$.

(ii)
For $\bX,\bY\in\ss$, define
$\bX\owedge \bY\in\ss$ by 
\[
(\bX\owedge \bY)_k=X_k\owedge Y_k.
\]
\end{definition}

\begin{remark}
The difference between the degreewise smash product $A\owedge B$ and the
previously defined smash product $A\wedge B$ is that the former is only 
defined when $A$ and $B$ are $k$-fold multisemisimplicial sets for the same 
$k$, and the result is again a $k$-fold multisemisimplicial set, whereas 
$A\wedge B$ is defined when $A$ is $k$-fold and $B$ is $l$-fold, and the result
is $(k+l)$-fold.  
\end{remark}

Our next definition assembles the operations $a_*$ for a given $j$ into a 
single map.  

\begin{definition}
\label{m17}
Let $j\geq 0$.
Define a map 
\[
\phi_j: \O(j)
\owedge 
\bR^{\otimes j}
\to
\bR
\]
in $\ss$ by the formulas
\[
\phi_j(a\wedge [e, F_1\wedge\cdots\wedge F_j])
=
a_*(F_1\wedge\cdots\wedge F_j)
\]
(where $e$ denotes the identity element of the relevant symmetric group)
and
\[
\phi_j(a\wedge [\alpha, F_1\wedge\cdots\wedge F_j])
=
(\alpha,\mathrm{id})_*\phi_j((\alpha^{-1},\mathrm{id})_*a\wedge [e,
F_1\wedge\cdots\wedge F_j]).
\]
\end{definition}

\begin{lemma}
\label{m18}
The map $\phi_j$ induces a map
\[
\psi_j: \O(j)
\owedge 
\bR^{\wedge j}
\to
\bR
\]
in $\ss$.
\end{lemma}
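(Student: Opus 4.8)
The plan is to show that $\phi_j$ is compatible with the quotient that produces $\bR^{\wedge j}$ from $\bR^{\otimes j}$, so that it descends to the claimed map $\psi_j$. Recall that $\bR^{\wedge j}$ is obtained from $\bR^{\otimes j}$ by coequalizing the two ways of inserting the $\bS$-module structure; concretely, a class in $(\bR^{\wedge j})_{\bfn_1,\ldots,\bfn_j}$ is represented by $[\alpha, F_1\wedge\cdots\wedge F_j]$ subject to the relations coming from moving suspension coordinates between adjacent tensor factors. So it suffices to check that $\phi_j$ sends the two composites
\[
\O(j)\owedge(\bR^{\otimes\,i-1}\otimes\bS\otimes\bR^{\otimes\,j-i})\owedge\cdots
\rightrightarrows
\O(j)\owedge\bR^{\otimes j}
\xrightarrow{\phi_j}\bR
\]
to the same map, for each $1\le i\le j-1$; equivalently, that $\phi_j$ respects the identification $\omega(s\wedge F_i)$ in slot $i$ with $F_{i+1}$ precomposed with a suspension in slot $i+1$, together with the bookkeeping permutation $\beta$ of Equation \eqref{m8} that records the reordering of the new simplicial coordinate.

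First I would reduce to the equivariance-free case: by the second formula in Definition \ref{m17}, $\phi_j$ on a class $[\alpha,-]$ is determined by its value on $[e,-]$ via $(\alpha,\mathrm{id})_*$, and both the source relations and $\phi_j$ itself are natural for the $(\alpha,\mathrm{id})_*$ action, so it is enough to treat representatives of the form $a\wedge[e,F_1\wedge\cdots\wedge F_j]$. Then the verification is a direct computation: unravel $\phi_j(a\wedge[e,F_1\wedge\cdots\wedge F_j]) = a_*(F_1\wedge\cdots\wedge F_j)$ using Definition \ref{m22}, and compare the two ways of moving a suspension coordinate across tensor slot $i$. On the left side one applies $\omega$ in slot $i$ before applying $a_*$; on the right side one applies $\omega$ in slot $i+1$ after reindexing $a$ along the appropriate projection of cell sets, and inserts the sign/permutation $\beta$. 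Lemma \ref{m5} is exactly the statement that $a_*$ commutes with $\omega$ in the first slot (up to the reindexing map $\Pi$ of Definition \ref{m24}), and by naturality of $a_*$ in each slot — which follows from its definition, since $(a(\sigma_1\times\cdots\times\sigma_j))_\bigstar$ is a functor applied coordinatewise — the analogous statement holds in every slot. The block permutation $\zeta$ in Definition \ref{m22} and the sign $i^{\epsilon(\zeta)}$ transform under this coordinate shift in precisely the way dictated by $\beta$ in \eqref{m8}, because $\zeta$ is defined by interleaving the degree-blocks with the simplex-dimension-blocks and $\beta$ permutes exactly the simplex blocks; the two bookkeeping conventions therefore cancel.

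I expect the only real obstacle to be the sign and permutation bookkeeping: one must check that the block permutation $\zeta$ recording the interleaving of the $\deg F_\ell$-blocks with the $\dim\sigma_\ell$-blocks, together with $i^{\epsilon(\zeta)}$, transforms correctly when a simplicial coordinate is transferred from slot $i$ to slot $i+1$ and the symmetry permutation $\beta$ of \eqref{m8} is applied. This is bookkeeping of the same flavor as the proof of Lemma \ref{m5} and of \cite[Theorem 18.5]{LM12}, so after reducing to the $[e,-]$ case it is a routine (if slightly tedious) check; I would relegate the details to a sentence noting that the computation is "straightforward from Definitions \ref{Nov13}, \ref{m22}, \ref{m24} and \cite[Definition 17.4]{LM12}, together with Lemma \ref{m5}." Once the two composites agree, the universal property of the coequalizer in Definition \ref{m19} produces the unique $\psi_j$ making the evident square commute, and $\psi_j$ is a map in $\ss$ because $\phi_j$ is and the quotient map $\bR^{\otimes j}\to\bR^{\wedge j}$ is a map in $\ss$.
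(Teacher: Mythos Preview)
Your proposal is correct and follows the same approach as the paper: recognize that $\bR^{\wedge j}$ is a coequalizer (Definition \ref{m19}) and verify that $\phi_j$ equalizes the two $\bS$-action maps, reducing via the second clause of Definition \ref{m17} to the $[e,-]$ case and then unwinding the sign/permutation bookkeeping. The paper's proof is likewise just a one-line citation of the relevant definitions together with \cite[Definition 17.3 and Lemma 18.7]{LM12}.

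One small remark: your invocation of Lemma \ref{m5} is slightly tangential. That lemma compares the \emph{external} suspension $\omega(s\wedge a_*(F_1\wedge\cdots))$ with suspension in the first input slot, whereas the coequalizer relation is purely \emph{internal} --- it compares the right $\bS$-action on slot $i$ with the left $\bS$-action on slot $i{+}1$, with no reference to suspending the output. The paper's citation of \cite[Lemma 18.7]{LM12} (the compatibility of the underlying multiplication $\mu$ with suspension, which is also what powers the proof of Lemma \ref{m5}) is the more direct ingredient. Your route still works --- the external and internal statements are linked by the $\Sigma_k$-equivariance encoded in Definition \ref{Aug29}(i) and \cite[Definition 17.3]{LM12} --- but it is a mild detour rather than the shortest path.
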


\begin{proof}
This is a straightforward calculation using Example \ref{Aug29.1}, 
Definition \ref{m19}, and 
\cite[Definition 17.3 and Lemma 18.7]{LM12}.
\end{proof}

\section{A monad in $\ss$}
\label{mm}

In the next section we will show 
that there is a monad $\bbP$ in $\sss$
with the property that the maps $\psi_j$ constructed in Lemma \ref{m18}
give an action of $\bbP$ on $\bR$.   As preparation, in this section we
prove the analogous result in $\ss$; that is, we
show that there is a monad $\bbO$ in $\ss$ for which the maps $\phi_j$ of
Definition \ref{m17} give an action of $\bbO$ on $\bR$.

First we observe that the collection of objects $\O(j)$ has a composition map
analogous to that of an operad.   Recall that May defines an operad $\mathcal 
M$ in the category of sets with ${\mathcal M}(j)=\Sigma_j$ (\cite[Definition 
3.1(i)]{MR0420610}).  Let $\gamma_{\mathcal M}$ denote the composition 
operation in $\mathcal M$.  Also recall Definition \ref{m7} and Notation 
\ref{m6}. 

\begin{definition}
Given $j_1,\ldots,j_i\geq 0$ define a map 
\[
\gamma:
\O(i)
\owedge
\bigl(\O(j_1)\otimes\cdots\otimes O(j_i)\bigr)
\to
\O({j_1+\cdots+j_i})
\]
in $\ss$ by the formulas
\[
\gamma(a\wedge [e, b_1\wedge \ldots\wedge b_i])
(\sigma_1\times\cdots\times \sigma_i)
=
\gamma_{\mathcal M}(a(\sigma_1\times\cdots\times
\sigma_i),b_1(\sigma_1),\ldots, b_i(\sigma_i))
\]
(where $e$ is the identity element of the relevant symmetric group)
and
\[
\gamma(a\wedge [\alpha, b_1\wedge \ldots\wedge b_i])
=
(\alpha,\mathrm{id})_*\gamma((\alpha^{-1},\mathrm{id})_*a\wedge [e, b_1\wedge
\ldots\wedge b_i]).
\]
\end{definition}

In order to formulate the associativity property of $\gamma$, we 
note  that
for $\bX_1,\ldots,\bX_i, \bY_1,\ldots,\bY_i \in \ss$ there is a natural map
\[
\chi:(\bX_1\owedge \bY_1)\otimes\cdots\otimes(\bX_i\owedge \bY_i)
\to 
(\bX_1\otimes\cdots\otimes \bX_i)\owedge
(\bY_1\otimes\cdots\otimes \bY_i)
\]
given by 
\[
\chi([\alpha,x_1\wedge y_1\wedge \cdots\wedge x_i\wedge y_i])
=
[\alpha,x_1\wedge \cdots\wedge x_i]\wedge[\alpha,y_1\wedge\cdots\wedge y_i].
\]

\begin{lemma}
\label{m14}
The operation $\gamma$ has the following associativity property:
the composite
\begin{multline*}
\O(i)
\owedge
\Bigl(
\bigl(
\O(j_1)\owedge
\bigl(
\O(l_{11})\otimes\cdots\otimes\O(l_{1j_1})
\bigr)
\bigr)
\otimes\cdots\otimes
\bigl(\O(j_i)\owedge
\bigl(
\O(l_{i1})\otimes\cdots\otimes\O(l_{ij_i})
\bigr)
\bigr)
\Bigr)
\\
\xrightarrow{1\owedge(\gamma\otimes\cdots\gamma)}
\O(i)\owedge
\bigl(
\O(l_{11}+\cdots+l_{1j_1})
\otimes
\cdots\otimes
\O(l_{i1}+\cdots+l_{ij_i})
\bigr)
\\
\xrightarrow{\gamma}
\O(l_{11}+\cdots+l_{ij_i})
\end{multline*}
is the same as the composite
\begin{multline*}
\O(i)
\owedge
\Bigl(
\bigl(
\O(j_1)\owedge
\bigl(
\O(l_{11})\otimes\cdots\otimes\O(l_{1j_1})
\bigr)
\bigr)
\otimes\cdots\otimes
\bigl(\O(j_i)\owedge
\bigl(
\O(l_{i1})\otimes\cdots\otimes\O(l_{ij_i})
\bigr)
\bigr)
\Bigr)
\\
\xrightarrow{1\owedge \chi}
\O(i)\owedge
\bigl(
\O(j_1)\otimes \cdots\otimes  \O(j_i)
\bigr)
\owedge
\bigl(
\O(l_{11})\otimes\cdots\otimes\O(l_{i,j_i})
\bigr)
\\
\xrightarrow{\gamma\owedge 1}
\O(j_1+\cdots+j_i)\owedge
\bigl(
\O(l_{11})\otimes\cdots\otimes\O(l_{i,j_i})
\bigr)
\\
\xrightarrow{\gamma}
\O(l_{11}+\cdots+l_{ij_i}).
\end{multline*}
\qed
\end{lemma}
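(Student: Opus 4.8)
The plan is to verify the two composites agree by checking them on a general element, since $\gamma$ is specified by explicit formulas and both composites land in the set-valued object $\O(l_{11}+\cdots+l_{ij_i})$. By the second defining formula for $\gamma$ (the $(\alpha,\id)$-equivariance clause) and the fact that every element of an induced object $I_H^G A$ is of the form $(\alpha,\id)_*[e,-]$ (Notation \ref{m6}), it suffices to evaluate both composites on elements of the form $[e, \text{(stuff)}]$ and then invoke naturality of all the maps involved with respect to the $\Sigma_k$-action. Tracking the $[e,-]$ representatives through $\chi$ requires knowing that $\chi$ sends $[e,\cdots]$ to $[e,\cdots]\wedge[e,\cdots]$, which is immediate from its defining formula.

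First I would reduce to the $[e,-]$ case as above. Then, evaluating a section at a cell $\sigma_1\times\cdots\times\sigma_i$ of the appropriate product of simplices, the left-hand composite produces a value obtained by a single application of $\gamma_{\mathcal M}$ with inputs $a(\sigma)$ and the $j_r$-fold $\gamma_{\mathcal M}$-composites $\gamma_{\mathcal M}(b_r(\sigma_r), c_{r1}(\sigma_{r1}),\ldots)$; the right-hand composite first forms $\gamma_{\mathcal M}(a(\sigma), b_1(\sigma_1),\ldots,b_i(\sigma_i))$ and then feeds this into $\gamma_{\mathcal M}$ together with the $c_{rs}(\sigma_{rs})$. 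So after unwinding all the definitions, the identity we must prove is exactly the associativity axiom for the operad $\mathcal M$ in sets, applied cellwise. Since May proves $\mathcal M$ is an operad (\cite[Definition 3.1(i)]{MR0420610}), the two cellwise values coincide, and hence the two sections coincide.

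The one point requiring a little care is bookkeeping of which cell-coordinate indexes which operad input: the map $\chi$ permutes the tensor factors of the $\O(l_{rs})$'s relative to how they are grouped on the left, so I would set up notation once (indexing the outermost simplices by $r$ and the inner ones by $(r,s)$, matching the block structure) and check that both composites assign the operad input $c_{rs}(\sigma_{rs})$ to the same slot of the innermost $\gamma_{\mathcal M}$. I expect this indexing reconciliation to be the only real obstacle; it is a matter of transcription rather than mathematics, and once it is pinned down the proof is a direct appeal to operad associativity for $\mathcal M$ evaluated at each cell. Because the whole argument is a routine diagram chase reducing to a known axiom, I would state it as ``this follows by a direct calculation from the definition of $\gamma$ and the associativity of the operad $\mathcal M$,'' perhaps with a sentence indicating the reduction to $[e,-]$ representatives.
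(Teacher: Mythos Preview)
Your approach is correct and is exactly the natural verification; the paper itself gives no proof at all (the lemma is stated with a trailing \qed\ and no proof environment), so there is nothing to compare against beyond the observation that the authors evidently regard the statement as an immediate consequence of the cellwise definition of $\gamma$ in terms of $\gamma_{\mathcal M}$ together with operad associativity for $\mathcal M$. Your reduction to $[e,-]$ representatives via the equivariance clause, followed by unwinding both composites at a cell and invoking the associativity axiom for $\mathcal M$, is precisely how one would fill in the omitted details.
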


To formulate the unital property of $\gamma$ we first need to consider the unit
object for the operation $\owedge$.

\begin{definition}
\label{m1}
Let $\oS_k$ be the object of $\Sigma_k\ssk$ 
which has a copy of $S^0$ in each multidegree (with each morphism of
$\Sigma_k \ltimes (\Delta_\inj^{\text{op}})^{\times k}$  acting as the identity of $S^0$), and let $\boS$ be the 
object of $\ss$ with $k$-th term $\oS_k$. 
\end{definition}

\begin{remark}
\label{m13}
(i) 
$\boS\owedge \bX\cong \bX$ for any $\bX\in\ss$.

(ii)
$\O(0)$ and $\O(1)$ are both equal to $\boS$. 

(iii)
$\boS$ is a commutative monoid in $\ss$ with multiplication 
\[
m:\boS\otimes\boS\to \boS
\] 
given by
\[
m([\alpha,s_1\wedge s_2])=t,
\]
where $s_1$ and $s_2$ are any nontrivial simplices and $t$ is the nontrivial 
simplex in the relevant multidegree.
\end{remark}

\begin{lemma}
\label{m20}
The operation $\gamma$ has the following unital property:
the diagrams
\[
\xymatrix{
\O(j)\owedge \boS^{\otimes j}
\ar[r]^-=
\ar[d]_{1\owedge m}
&
\O(j)\owedge \O(1)^{\otimes j}
\ar[d]^\gamma
\\
\O(j)\owedge \boS
\ar[r]^-\cong
&
\O(j)
}
\]
and
\[
\xymatrix{
\boS\owedge\O(j)
\ar[r]^-\cong
\ar[d]_=
&
\O(j)
\\
\O(1)\owedge\O(j)
\ar[ru]_\gamma
}
\]
commute.
\qed
\end{lemma}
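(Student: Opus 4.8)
The plan is to reduce the commutativity of both diagrams to the two unit axioms of May's operad $\mathcal M$. Every arrow occurring in Lemma \ref{m20} is assembled from $\gamma$, the multiplication $m$ of $\boS$, and the canonical isomorphisms $\boS\owedge\bX\cong\bX$ of Remark \ref{m13}(i); moreover each of these maps is determined by its effect on elements of the form $[e,-]$, together with the fact (Notation \ref{m6}) that a general element $[\alpha,x]$ equals $(\alpha,\mathrm{id})_*[e,x]$ and that all the maps in question commute with the operators $(\alpha,\mathrm{id})_*$. So I would first check each diagram on elements of the form $[e,-]$; since the objects $\O(j)$ carry no signs, no block-permutation bookkeeping enters.

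For the first diagram, take $a\in(\O(j))_\bfn$, so $a$ is a map $U(\Delta^\bfn)\to\Sigma_j$, and recall $\O(1)=\boS$ (Remark \ref{m13}(ii)), so the unique non-basepoint element of $(\O(1))_\bfn$ in each multidegree is the map sending every cell to the identity element of $\Sigma_1$. Tracing $a\wedge[e,s_1\wedge\cdots\wedge s_j]$ (the $s_\ell$ the nontrivial simplices) along the top and right edges and evaluating on a cell $\sigma_1\times\cdots\times\sigma_j$ gives $\gamma_{\mathcal M}(a(\sigma_1\times\cdots\times\sigma_j);\mathrm{id},\ldots,\mathrm{id})$, which equals $a(\sigma_1\times\cdots\times\sigma_j)$ by the right unit axiom for $\mathcal M$. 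Tracing along the left and bottom edges, $1\owedge m$ sends the element to $a\wedge t$ with $t$ the nontrivial simplex, and the isomorphism $\O(j)\owedge\boS\cong\O(j)$ of Remark \ref{m13}(i) sends $a\wedge t$ to $a$. So both composites return $a$.

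For the second diagram, take $a$ the unique non-basepoint element of $(\O(1))_\bfn=\oS_\bfn$, i.e.\ $a$ sends every cell to $\mathrm{id}\in\Sigma_1$, and let $b\in(\O(j))_\bfn$. The isomorphism $\boS\owedge\O(j)\cong\O(j)$ sends $a\wedge[e,b]$ to $b$, while $\gamma(a\wedge[e,b])$ evaluated on a cell $\sigma$ is $\gamma_{\mathcal M}(a(\sigma);b(\sigma))=\gamma_{\mathcal M}(\mathrm{id};b(\sigma))=b(\sigma)$ by the left unit axiom for $\mathcal M$. Hence both composites return $b$. The one point requiring care is the reduction described in the first paragraph: I would need to confirm that $m$ (Remark \ref{m13}(iii)), the unit isomorphisms (Remark \ref{m13}(i)), and $\gamma$ (Section \ref{mm}) all intertwine the $(\alpha,\mathrm{id})_*$-operators in the way needed to conclude the full diagrams from their restriction to the $[e,-]$-parts. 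This is immediate from the relevant definitions, so I expect it to be the only real bookkeeping step and otherwise anticipate no obstacle.
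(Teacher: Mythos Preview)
Your proposal is correct and fills in exactly what the paper leaves implicit: the paper gives no proof of this lemma (it ends with a bare \qed), treating it as an immediate consequence of the unit axioms for May's operad $\mathcal M$, which is precisely the reduction you carry out. Your bookkeeping step about reducing to $[e,-]$-elements via $(\alpha,\mathrm{id})_*$-equivariance is the right way to handle the general case and matches how $\gamma$ and $m$ are defined in the paper.
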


To complete the analogy between $\gamma$ and the composition map of an operad,
we need an equivariance property.

\begin{definition}  
Define a right action of $\Sigma_j$ on $\O(j)$ by
\[
(a\alpha)(\sigma)=a(\sigma)\cdot\alpha,
\]
where $a\in \Map(U(\Delta^\bfn),\Sigma_j)_+$, $\sigma\in U(\Delta^\bfn)$, and
$\cdot$ is multiplication in $\Sigma_j$.
\end{definition}

\begin{lemma}
\label{m72}
(i) The following diagram commutes for all $\alpha\in \Sigma_i$.
\[
\xymatrix{
\O(i)\owedge \bigl(\O(j_1)\otimes\cdots\otimes\O(j_i)\bigr)
\ar[r]
\ar[dd]_{\alpha\owedge(\beta_1\otimes\cdots\otimes\beta_i)}
&
\O(i)\owedge
\bigl(\O(j_{\alpha^{-1}(1)})\otimes\cdots\otimes\O(j_{\alpha^{-1}(i)})\bigr)
\ar[d]_\gamma
\\
&
\O(j_1+\cdots+j_i)
\ar[d]_{\gamma_M(\alpha,\beta_1,\ldots,\beta_i)}
\\
\O(i)\owedge \bigl(\O(j_1)\otimes\cdots\otimes\O(j_i)\bigr)
\ar[r]^-\gamma
&
\O(j_1+\cdots+j_i)
}
\]
\qed
\end{lemma}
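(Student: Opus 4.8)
The plan is to reduce the statement to the corresponding equivariance axiom for May's operad $\mathcal M$ (\cite[Definition 3.1]{MR0420610}) together with a bookkeeping check that the two routes through the diagram permute the summands of the source $\owedge$-$\otimes$ expression in the same way. First I would unwind the definition of $\gamma$ on the ``base'' summand, i.e.\ on elements of the form $a\wedge[e,b_1\wedge\cdots\wedge b_i]$, where Definition (the $\gamma$ one) says
\[
\gamma(a\wedge[e,b_1\wedge\cdots\wedge b_i])(\sigma_1\times\cdots\times\sigma_i)
=\gamma_{\mathcal M}\bigl(a(\sigma_1\times\cdots\times\sigma_i),b_1(\sigma_1),\ldots,b_i(\sigma_i)\bigr).
\]
On such an element the left action of $\alpha\in\Sigma_i$ permutes the $b$-factors (with a simultaneous permutation of the $\O(j_\bullet)$-indices), and the right actions $\beta_r$ act on the values $b_r(\sigma_r)\in\Sigma_{j_r}$; meanwhile $\alpha$ also permutes the ``inner'' factors of $\Delta^{\mathbf n}$, hence permutes the tuple $(\sigma_1,\ldots,\sigma_i)$. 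So for each fixed cell $\sigma_1\times\cdots\times\sigma_i$ the commutativity of the diagram evaluated at that cell is exactly the equivariance relation
\[
\gamma_{\mathcal M}(\alpha;\beta_1,\ldots,\beta_i)\cdot\gamma_{\mathcal M}(c;d_1,\ldots,d_i)
=\gamma_{\mathcal M}(c\cdot\alpha;\, d_{\alpha^{-1}(1)}\beta_1,\ldots,d_{\alpha^{-1}(i)}\beta_i)
\]
in the operad $\mathcal M$ (with $c=a(\cdots)$, $d_r=b_r(\sigma_r)$), which holds by \cite[Definition 3.1]{MR0420610}. This handles the base summand.

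Next I would extend from the base summand to all of $\O(i)\owedge(\O(j_1)\otimes\cdots\otimes\O(j_i))$. By Notation \ref{m6} every element of the source is of the form $[\alpha',x]$ with $x$ in the ``$e$-summand,'' and $\gamma$ on a general summand is defined from the base case by the formula
\[
\gamma(a\wedge[\alpha',b_1\wedge\cdots\wedge b_i])
=(\alpha',\mathrm{id})_*\,\gamma\bigl((\alpha'^{-1},\mathrm{id})_*a\wedge[e,b_1\wedge\cdots\wedge b_i]\bigr),
\]
so it suffices to know that the two $\Sigma_i$-actions and the $\Sigma_{j_r}$-actions appearing in the diagram are compatible with the $(\alpha',\mathrm{id})_*$ operations used in the definitions of $I_H^G$ and of $\gamma$; this is a routine check once one writes out the action of $\Sigma_k$ on $\O(j)_k$ from Definition \ref{m16}(i) (it permutes the factors of $\Delta^{\mathbf n}$) and the right $\Sigma_j$-action from the preceding definition (it multiplies values in $\Sigma_j$ on the right), since these two actions clearly commute with each other and with the block-reindexing built into $I_{\Sigma_{j_1}\times\cdots\times\Sigma_{j_i}}^{\Sigma_{j_1+\cdots+j_i}}$.

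The main obstacle, and the only place any real care is needed, is matching the indexing conventions: the upper route first applies the permutation of the $\O(j_\bullet)$-factors (landing in $\O(j_{\alpha^{-1}(1)})\otimes\cdots\otimes\O(j_{\alpha^{-1}(i)})$) and then $\gamma$, whereas the lower route first applies $\gamma$ and then post-composes with $\gamma_{\mathcal M}(\alpha,\beta_1,\ldots,\beta_i)\in\Sigma_{j_1+\cdots+j_i}$; one must verify that the block permutation of $\{1,\ldots,j_1+\cdots+j_i\}$ induced by reordering the blocks of sizes $j_1,\ldots,j_i$ according to $\alpha$ is precisely the image of $\alpha$ under the canonical inclusion used by $\gamma_{\mathcal M}$. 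This is exactly the ``block permutation'' bookkeeping that already appears in \cite[Definition 3.1]{MR0420610}, so once the dictionary is set up the verification is immediate. Since all three properties (Lemmas \ref{m14}, \ref{m20}, \ref{m72}) together say precisely that $\O(\bullet)$ is an operad object in $(\ss,\owedge,\otimes)$ in the appropriate sense, the proof is complete, and I would simply record that the diagram commutes, deferring the elementary cell-by-cell verification. \qed
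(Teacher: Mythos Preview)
Your proposal is correct and follows exactly the approach the paper has in mind: the lemma is stated with a bare \qed\ and no proof, since the commutativity reduces cell-by-cell to the equivariance axiom for May's operad $\mathcal M$, which is precisely the reduction you carry out. Your write-up is therefore considerably more detailed than the paper's own treatment, but the underlying idea is the same.
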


Now we use the data defined so far to construct a monad in the category $\ss$.

\begin{definition}
\label{m25}
(i) For $\bX\in\ss$, give $\O(j)\owedge \bX^{\otimes j}$ the diagonal right
$\Sigma_j$ action.

(ii) Define a functor $\bbO:\ss\to\ss$ by 
\[
\bbO(\bX)=
\bigvee_{j\geq 0}
\bigl(
\O(j)\owedge \bX^{\otimes j}
\bigr)
/ \Sigma_j.
\]

(iii)
Define a natural transformation
\[
\iota:\bX\to \bbO \bX
\]
to be the composite
\[
\bX\xrightarrow{\cong}
\boS\owedge \bX
=
\O(1)\owedge \bX
\hookrightarrow
\bbO(\bX).
\]

(iv)
Define 
\[
\mu:\bbO\bbO \bX\to \bbO \bX
\]
to be the natural transformation induced by the maps
\begin{multline*}
\O(i)
\owedge
\Bigl(
\bigl(
\O(j_1)\owedge
\bX^{\otimes j_1}
\bigr)
\otimes\cdots\otimes
\bigl(\O(j_i)\owedge
\bX^{\otimes j_i}
\bigr)
\Bigr)
\\
\xrightarrow{1\owedge\chi}
\Bigl(\O(i)\owedge
\bigl(
\O(j_1)\otimes \cdots\otimes  \O(j_i)
\bigr)
\Bigr)
\owedge
\bX^{\otimes (j_1+\cdots+j_i)}
\\
\xrightarrow{\gamma\owedge 1}
\O(j_1+\cdots+j_i)\owedge
\bX^{\otimes (j_1+\cdots+j_i)}.
\end{multline*}
\end{definition}

\begin{prop}
\label{m27}
The transformations $\mu$ and $\iota$ define a monad
structure on $\bbO$.
\end{prop}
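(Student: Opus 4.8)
The plan is to verify the monad axioms for $\bbO$ by reducing them to the associativity, unital, and equivariance properties of $\gamma$ that have already been established in Lemmas \ref{m14}, \ref{m20}, and \ref{m72}, exactly as in May's treatment of operads and their associated monads (\cite[Section 2]{MR0420610}). First I would unwind Definition \ref{m25}(iv): a point of $\bbO\bbO\bX$ in the summand indexed by $i$ and then $(j_1,\dots,j_i)$ is passed by $1\owedge\chi$ into the degreewise-smash form where the $\O$-factors are separated from the $\bX^{\otimes(j_1+\cdots+j_i)}$-factor, and then $\gamma\owedge 1$ applies the operadic composition; one must check this descends through the various $\Sigma$-quotients, which is where the equivariance Lemma \ref{m72} enters (and the compatibility of $\chi$ with the diagonal actions, which is immediate from its formula).

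For the associativity square $\mu\circ\bbO\mu=\mu\circ\mu_{\bbO\bbO}$, I would expand both composites on the triple-indexed summand $\O(i)\owedge\bigl(\bigotimes_{r}(\O(j_r)\owedge(\bigotimes_s(\O(l_{rs})\owedge\bX^{\otimes l_{rs}}))\bigr)$. After shuffling all the $\bX$-factors to the outside using iterated $\chi$'s, both sides reduce to the two composites of $\gamma$'s appearing in Lemma \ref{m14}, smashed (degreewise) with the identity on $\bX^{\otimes(l_{11}+\cdots+l_{ij_i})}$; Lemma \ref{m14} then gives equality. The only real bookkeeping is to check that the two different ways of iterating $\chi$ (corresponding to the two bracketings) agree — this is a coherence statement for $\chi$ with respect to $\otimes$ and $\owedge$, and it follows directly from the explicit formula $\chi([\alpha,x_1\wedge y_1\wedge\cdots])=[\alpha,x_1\wedge\cdots]\wedge[\alpha,y_1\wedge\cdots]$ together with the associativity of $\otimes$.

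For the unit axioms $\mu\circ\bbO\iota=\id=\mu\circ\iota_{\bbO}$, I would trace $\iota$ through: on one side it inserts $\O(1)=\boS$ into the outer slot, so $\mu$ then applies $\gamma:\O(1)\owedge\O(j)\to\O(j)$, which is the identity by the second diagram of Lemma \ref{m20} together with Remark \ref{m13}(i)--(ii). On the other side $\bbO\iota$ inserts $\O(1)=\boS$ into each of the $j$ inner slots and uses the monoid multiplication $m:\boS^{\otimes j}\to\boS$ of Remark \ref{m13}(iii); then $\mu$ applies $\gamma:\O(j)\owedge\O(1)^{\otimes j}\to\O(j)$, which is the identity by the first diagram of Lemma \ref{m20}. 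Thus both unit triangles commute.

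The step I expect to be the main obstacle is not any single diagram but the verification that all the maps in Definition \ref{m25}(iv) are well-defined on the $\Sigma$-quotients — i.e.\ that $\mu$ genuinely descends from $\bbO\bbO\bX$ (which involves the outer $\Sigma_i$-quotient, the inner $\Sigma_{j_r}$-quotients built into each $\bbO(\bX)$, and the $\Sigma_{j_1+\cdots+j_i}$-quotient of the target) to a well-defined natural transformation. This is precisely the content that makes the passage from "operad" to "monad" work in \cite{MR0420610}, and the equivariance built into Lemma \ref{m72}, the equivariance of $\gamma$ in the inner variables (which one reads off the defining formulas via $\gamma_{\mathcal M}$ being the operad composition of $\mathcal M$), and the naturality of $\chi$ are exactly what is needed; once those are in place the monad identities are formal consequences of Lemmas \ref{m14} and \ref{m20}, and I would simply cite \cite[Section 2]{MR0420610} for the formal part rather than rewrite it.
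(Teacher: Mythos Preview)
Your approach is correct and is essentially the paper's approach, only far more fully spelled out: the paper's entire proof is the single sentence ``This is immediate from Lemmas \ref{m14} and \ref{m20},'' whereas you actually unpack why those lemmas yield the associativity and unit axioms and you also address the descent to the $\Sigma$-quotients via Lemma \ref{m72}, which the paper leaves implicit. Nothing you do diverges from the paper's route; you are just filling in the details it omits.
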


\begin{proof}  
This is immediate from Lemmas \ref{m14} and \ref{m20}.
\end{proof}

We conclude this section by giving the action of $\bbO$ on $\bR$.  
Observe that
the map 
\[
\phi_j: \O(j)
\owedge 
\bR^{\otimes j}
\to
\bR
\]
of Definition \ref{m17} induces a map
\begin{equation}
\label{m21}
\bigl(
\O(j)\owedge \bR^{\otimes j}
\bigr)
/ \Sigma_j
\to
\bR.
\end{equation}

\begin{definition} Define 
\label{m75}
\[
\nu: \bbO \bR\to \bR
\]
to be the map whose restriction to
$\bigl( \O(j)\owedge \bX^{\otimes j} \bigr) / \Sigma_j $ 
is the map \eqref{m21}.
\end{definition}

\begin{prop}
\label{m30}
$\nu$ is an action of $\bbO$ on $\bR$.
\end{prop}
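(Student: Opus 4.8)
The plan is to verify the two monad-action axioms for $\nu:\bbO\bR\to\bR$ directly from the definitions. Recall that $\nu$ is assembled from the maps $\psi_j$ of Lemma \ref{m18} (equivalently from the $\phi_j$ of Definition \ref{m17}, which pass to the quotient by $\Sigma_j$). The unit axiom states that $\nu\circ\iota_{\bR}=\id_{\bR}$. Since $\iota$ factors through the $j=1$ summand $\O(1)\owedge\bR=\boS\owedge\bR\cong\bR$ and $\O(1)=\boS$ by Remark \ref{m13}(ii), this reduces to the assertion that $\phi_1:\boS\owedge\bR\to\bR$ is the canonical isomorphism; this is immediate from the formula in Definition \ref{m17}, since for $j=1$ the only map $a$ is the constant map at the identity permutation and $a_*(F)=F$ (the block permutation $\zeta$ is trivial, and $\id_\bigstar$ is the identity functor).

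The associativity axiom states that the two composites $\bbO\bbO\bR\rightrightarrows\bbO\bR\to\bR$, namely $\nu\circ\mu_{\bR}$ and $\nu\circ\bbO\nu$, agree. First I would restrict to a typical summand indexed by $i$ and $j_1,\ldots,j_i$, where the argument is a class $[a\wedge(b_1\wedge F^{(1)})\wedge\cdots\wedge(b_i\wedge F^{(i)})]$ with $a\in\O(i)_{\mathbf n}$, $b_s\in\O(j_s)_{\mathbf n}$, and $F^{(s)}$ a wedge of ads $F^{(s)}_1\wedge\cdots\wedge F^{(s)}_{j_s}$. Using the explicit formulas for $\gamma$, $\chi$, and $\mu$ (Definition \ref{m25}) together with the formula for $a_*$ (Definition \ref{m22}), both composites evaluate on a product cell $\sigma_1\times\cdots\times\sigma_{j_1+\cdots+j_i}$ to an iterated $\boxtimes$-product of the values $F^{(s)}_t(\sigma,o)$, reassociated by the block permutation $\gamma_{\mathcal M}\bigl(a(\vec\sigma),b_1(\vec\sigma_1),\ldots,b_i(\vec\sigma_i)\bigr)$ applied to the appropriate blocks, times a sign $i^{\epsilon(\zeta)}$. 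The key point is that both the permutation of factors and the sign are governed by the same data because $\gamma$ was defined to use $\gamma_{\mathcal M}$, which is exactly the composition law of May's operad $\mathcal M=\{\Sigma_j\}$; so the bookkeeping of which factor goes where is literally the associativity of $\gamma_{\mathcal M}$, and the bookkeeping of signs reduces to the fact that $\epsilon$ is a homomorphism $\Sigma_m\to\Z/2$ compatible with block sums (so the sign of a composite block permutation is the sum of the signs). This is precisely the content packaged abstractly in Lemmas \ref{m14} and \ref{m20}; indeed, once one observes that $\phi_j$ (hence $\psi_j$ and $\nu$) is built from $\gamma$-type data in a way strictly parallel to the construction of $\mu$ for $\bbO$, the proposition follows formally.

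The cleanest way to organize the proof is therefore to exhibit $\bR$, equipped with the $\phi_j$, as an algebra over the ``operad-like'' object $(\O(j))_j$ in the same formal sense in which $\bbO$ is the associated monad, and then invoke the standard equivalence between such algebras and $\bbO$-algebras (the analogue in $\ss$ of \cite[Proposition 2.8]{MR0420610}). Concretely: one checks that the $\phi_j$ are $\Sigma_j$-equivariant (Lemma \ref{m72} and the definition of the $\Sigma_j$-action on $\O(j)$), that $\phi_1$ is the unit isomorphism (as above), and that the associativity square relating $\phi_i$, the $\phi_{j_s}$, and $\phi_{j_1+\cdots+j_i}$ via $\gamma$ commutes — and this last square, when one unwinds both sides on a cell, is exactly the equality of block permutations and signs asserted by the associativity of $\gamma_{\mathcal M}$, i.e. Lemma \ref{m14}. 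Passing to the coinvariants $\bigvee_j(\O(j)\owedge\bR^{\otimes j})/\Sigma_j$ then turns this into the statement that $\nu$ is a $\bbO$-action.

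The main obstacle is purely notational: one must track, through the maps $\chi$, $\gamma$, and the coequalizer defining $\wedge$ from $\otimes$, the precise block permutation $\zeta$ appearing in $a_*$ and confirm that the two ways of composing produce the \emph{same} element of $\Sigma_{\text{total}}$ acting on the \emph{same} reordering of the tensor factors of ads and simplices. Once this is set up carefully on a single cell, there is no genuine difficulty — the coherence needed is supplied by Remark \ref{Nov12.2} (coherence for permutative $\Z$-graded categories) so that all the reassociations of $\boxtimes$-products are canonical, and by the elementary fact that $\epsilon$ converts composition and block-sum of permutations into addition mod $2$. I would thus keep the written proof short: state that it is a diagram chase combining Lemmas \ref{m14}, \ref{m20}, \ref{m72}, the unit computation $\phi_1=\id$, and the compatibility of $\phi_j$ with $\gamma$, exactly as in the rectification of operad algebras to monad algebras.
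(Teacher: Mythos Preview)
Your approach is essentially the same as the paper's: reduce to the unit and associativity diagrams, observe the unit is immediate from $\O(1)=\boS$ and $\phi_1=\id$, and for associativity unwind both composites to the equality governed by the associativity of $\gamma_{\mathcal M}$. The paper's proof is in fact terser than yours --- it simply writes down the two composites that must agree and stops, leaving the cellwise verification implicit.

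One small correction: in this proposition $\nu$ is the action of $\bbO$ in $\ss$, assembled from the maps $\phi_j$ of Definition~\ref{m17} via \eqref{m21}, not from the $\psi_j$ of Lemma~\ref{m18} (those live in $\sss$ and involve the smash product $\wedge$ rather than $\otimes$). Correspondingly, there is no coequalizer to track here --- that only enters in the $\sss$ version (Proposition~\ref{m62}). Remove the references to $\psi_j$ and to the coequalizer defining $\wedge$, and your write-up is fine.
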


\begin{proof}
We need to show that the diagrams
\[
\xymatrix{
\bR
\ar[r]^\iota
\ar[rd]_=
&
\bbO \bR
\ar[d]^\nu
\\
&
\bR
}
\]
and
\[
\xymatrix{
\bbO\bbO \bR
\ar[r]^\mu
\ar[d]_{\bbO \nu}
&
\bbO \bR
\ar[d]_\nu
\\
\bbO \bR
\ar[r]^\nu
&
\bR
}
\]
commute.   The first is obvious and for the second it suffices to check that
the composite
\begin{multline*}
\O(i)
\owedge
\Bigl(
\bigl(
\O(j_1)\owedge
\bR^{\otimes j_1}
\bigr)
\otimes\cdots\otimes
\bigl(\O(j_i)\owedge
\bR^{\otimes j_i}
\bigr)
\Bigr)
\\
\xrightarrow{1\owedge\chi}
\O(i)\owedge
\bigl(
\O(j_1)\otimes \cdots\otimes  \O(j_i)
\bigr)
\owedge
\bR^{\otimes (j_1+\cdots+j_i)}
\\
\xrightarrow{\gamma\owedge 1}
\O(j_1+\cdots+j_i)\owedge
\bR^{\otimes (j_1+\cdots+j_i)}
\xrightarrow{\phi}
\bR
\end{multline*}
is the same as the composite
\begin{multline*}
\O(i)
\owedge
\Bigl(
\bigl(
\O(j_1)\owedge
\bR^{\otimes j_1}
\bigr)
\otimes\cdots\otimes
\bigl(\O(j_i)\owedge
\bR^{\otimes j_i}
\bigr)
\Bigr)
\xrightarrow{1\owedge(\phi\otimes\cdots\phi)}
\O(i)\owedge
\bR^{\otimes i}
\xrightarrow{\phi}
\bR.
\end{multline*}
\end{proof}

\section{A monad in $\sss$}
\label{monadsss}

We begin by giving $\O(j)\owedge \bX^{\wedge j}$ the structure of a
multisemisimplicial symmetric spectrum when $\bX\in \sss$.  
The definition is
motivated by Lemma \ref{m5}.  Recall Definition \ref{m24}.

\begin{definition}
\label{m29}
Let $j,k\geq 0$. 
Let $s$ be the 1-simplex of $S^1$.   
Define
\[
\omega:S^1\wedge (\O(j)\owedge \bX^{\wedge j})_k
\to
(\O(j)\owedge \bX^{\wedge j})_{k+1}
\]
as follows: for
$a\in (\O(j)_k)_\bfn$ and $x\in
((X^{\wedge j})_k)_\bfn$, let
\[
\omega(s\wedge (a\wedge x))
=
(a\circ \Pi)\wedge \omega(s\wedge x).
\]
The identification given in Definition \ref{smash} was omitted in this notation.
\end{definition}

\begin{definition}
(i) For $\bX\in\sss$, give $\O(j)\owedge \bX^{\wedge j}$ the diagonal right
$\Sigma_j$ action.

(ii) Define a functor $\bbP:\sss\to\sss$ by
\[
\bbP(\bX)=
\bigvee_{j\geq 0}
\bigl(
\O(j)\owedge \bX^{\wedge j}
\bigr)
/ \Sigma_j.
\]
\end{definition}

To give $\bbP$ a monad structure we need

\begin{lemma}
\label{m26}
The composite in Definition \ref{m25}(iv) induces a map
\begin{multline*}
\O(i)
\owedge
\Bigl(
\bigl(
\O(j_1)\owedge
\bX^{\wedge j_1}
\bigr)
\wedge\cdots\wedge
\bigl(\O(j_i)\owedge
\bX^{\wedge j_i}
\bigr)
\Bigr)
\\
\to
\O(j_1+\cdots+j_i)\owedge
\bX^{\wedge (j_1+\cdots+j_i)}.
\end{multline*}
in $\sss$.
\qed
\end{lemma}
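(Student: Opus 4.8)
The statement is that the natural transformation of Definition \ref{m25}(iv), which is visibly a map in $\ss$, first descends along the coequalizers defining the smash products (the outer $\wedge$ of the $i$ blocks and the inner $\bX^{\wedge(j_1+\cdots+j_i)}$) and then commutes with the suspension maps $\omega$ of Definition \ref{m29}, so that it is a map of $\bS$-modules, i.e.\ a map in $\sss$. The plan is to reduce both points to the explicit formulas for $\chi$, for $\gamma$, and for $\omega$, exactly as in the proof of Lemma \ref{m5}.

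For the descent, recall from Definition \ref{m19} that a smash product $\bX\wedge\bY$ is the coequalizer of $\bX\otimes\bS\otimes\bY\rightrightarrows\bX\otimes\bY$; thus it suffices to check that the two legs of Definition \ref{m25}(iv), namely $1\owedge\chi$ followed by $\gamma\owedge 1$, are compatible with the relevant left and right $\bS$-actions on the factors. For $\chi$ this is immediate from the formula $\chi([\alpha,x_1\wedge y_1\wedge\cdots\wedge x_i\wedge y_i])=[\alpha,x_1\wedge\cdots\wedge x_i]\wedge[\alpha,y_1\wedge\cdots\wedge y_i]$ together with the description of the $\bS$-module structure on $\O(j)\owedge\bX^{\wedge j}$ coming from Definition \ref{m29}; for $\gamma\owedge 1$ it is clear because $\gamma$ is the identity on the $\bX$-coordinates. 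Hence the composite coequalizes the appropriate pairs of maps and induces a map on the smash products as claimed.

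For compatibility with $\omega$ it suffices, by Remark \ref{Aug27.1}(ii) and Notation \ref{m6}, to verify the identity on elements of the form $a\wedge (b_1\wedge x_1)\wedge\cdots\wedge(b_i\wedge x_i)$ with all symmetric-group labels equal to $e$, where $b_a\in(\O(j_a)_{k_a})_{\bfn_a}$ and $x_a\in((\bX^{\wedge j_a})_{k_a})_{\bfn_a}$, and where $\omega$ is applied in (say) the first semisimplicial coordinate of the first block. On the codomain side, Definition \ref{m29} says $\omega$ precomposes the $\Sigma$-valued function produced by $\gamma$ with the projection $\Pi$ of Definition \ref{m24} and applies $\omega$ to the $\bX$-part; on the domain side $\omega$ precomposes $b_1$ with $\Pi$ and applies $\omega$ to $x_1$. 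Since the defining formula for $\gamma$ evaluates $a$ and the $b_a$ on cells, precomposing $a$ and $b_1$ with $\Pi$ commutes with applying $\gamma$; since $\gamma$ leaves the $\bX$-coordinates untouched and $\omega$ acts on them there as on $\bX^{\wedge(j_1+\cdots+j_i)}$, the two composites agree. This is the exact analogue for the composition operation $\gamma$ of the identity proved in Lemma \ref{m5}.

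The one place requiring genuine care is the interaction between the outer smash product and $\chi$: one must check that the copy of $\bS$ threaded between two consecutive blocks by the left and right module structures is carried by $\chi$ to the $\bS$-action on the corresponding adjacent factors of $\bX^{\otimes(j_1+\cdots+j_i)}$, so that it is then absorbed into the coequalizer defining $\bX^{\wedge(j_1+\cdots+j_i)}$; this is a bookkeeping matter with the block permutations implicit in $\chi$ and in the definition of $\otimes$. Once this is done the remainder is a direct consequence of the explicit formulas above, and no further ideas are needed.
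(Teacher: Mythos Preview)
Your proposal is correct and supplies exactly the routine verification the paper omits (the lemma is stated with a \qed and no argument). The two checks you make---descent along the coequalizers defining $\wedge$, and compatibility with the suspension maps $\omega$ of Definition \ref{m29}---are precisely what is needed, and your reduction of the $\omega$-compatibility to the explicit cellwise formula for $\gamma$ is the analogue of Lemma \ref{m5} that the authors clearly have in mind (they say Definition \ref{m29} is ``motivated by Lemma \ref{m5}''). One small expository slip: on the domain side you first say $\omega$ precomposes only $b_1$ with $\Pi$, but a sentence later correctly note that $a$ is precomposed with $\Pi$ as well (since the outer $\O(i)\owedge(-)$ also carries the $\sss$ structure of Definition \ref{m29}); the computation you sketch uses the latter, correct, description.
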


\begin{definition}
(i)
Define a natural transformation
\[
\iota:\bX\to \bbP \bX
\]
to be the composite
\[
\bX\xrightarrow{\cong}
\boS\owedge \bX
=
\O(1)\owedge \bX
\hookrightarrow
\bbP(\bX).
\]

(ii)
Define
\[
\mu:\bbP\bbP \bX\to \bbP \bX
\]
to be the natural transformation induced by the maps constructed in Lemma
\ref{m26}.
\end{definition}

\begin{prop}
The transformations $\mu$ and $\iota$ define a monad
structure on $\bbP$.
\end{prop}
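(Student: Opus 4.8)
The plan is to deduce the monad axioms for $\bbP$ from the ones for $\bbO$, which were established in Proposition \ref{m27}, by comparing the two constructions through the canonical maps $\bX^{\otimes j}\to\bX^{\wedge j}$. The first observation is that the forgetful functor $U\colon\sss\to\ss$ is faithful --- a morphism of $\sss$ is nothing but a morphism of underlying objects of $\ss$ compatible with the suspension maps --- so it is enough to verify the associativity and unit identities for $(\bbP,\mu,\iota)$ after applying $U$.

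Next I would produce, for each $\bX\in\sss$, a natural map $q_\bX\colon\bbO(U\bX)\to U(\bbP\bX)$ and show it is a levelwise surjection. Indeed, the underlying object of $\bX^{\wedge j}$ is a quotient of $(U\bX)^{\otimes j}$ (the iterated smash is built from $\otimes$ by a coequalizer, cf.\ Definition \ref{m19}); since $\O(j)\owedge(-)$, passage to $\Sigma_j$-orbits, and the wedge over $j\ge 0$ all preserve levelwise surjections of based multisemisimplicial sets, the summandwise quotients assemble to $q_\bX$, and on the $j=1$ summand $q_\bX$ is the identity. The functor $\bbO$ is itself assembled from $\owedge$, $\otimes$, $(-)/\Sigma_j$ and $\bigvee_j$, so it too preserves levelwise surjections; iterating $q$ therefore yields natural levelwise surjections $q^{(n)}\colon\bbO^n\circ U\Rightarrow U\circ\bbP^n$ for all $n\ge 0$.

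Now, inspecting the definitions --- and using Lemma \ref{m26}, which says precisely that the maps of Definition \ref{m25}(iv) descend through the $\wedge$-quotients --- one checks that $\iota$ and $\mu$ for $\bbP$ lift those for $\bbO$ along these maps; that is, $U\iota_{\bbP}=q\circ\iota_{\bbO}$ and $U\mu_{\bbP}\circ q^{(2)}=q\circ\mu_{\bbO}$, and analogously one level higher. Combined with the naturality of the $q^{(n)}$, each of the associativity and unit identities for $(\bbP,\mu,\iota)$ becomes, after applying $U$ and precomposing with the appropriate epimorphism $q^{(n)}$, the corresponding identity for $(\bbO,\mu,\iota)$ postcomposed with some $q^{(m)}$. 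Since those identities hold by Proposition \ref{m27} and the $q^{(n)}$ are epimorphisms in $\ss$, the identities hold for $U\bbP$, hence for $\bbP$ by faithfulness of $U$.

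The main obstacle is organizational rather than conceptual: one must check that the summandwise quotient maps cohere into a single system of natural transformations $q^{(n)}\colon\bbO^n U\Rightarrow U\bbP^n$ compatible with $\iota$ and $\mu$. This uses only the elementary facts that smash products, orbits, and wedges of based multisemisimplicial sets preserve surjections, so it is routine, but it is where the modest content of the proof lies. As an alternative one can argue directly, repeating the proofs of Lemmas \ref{m14} and \ref{m20} in $\sss$; the only additional input is that $\gamma$ and the interchange $\chi$ are compatible with the suspension maps of Definition \ref{m29}, which follows as in Lemmas \ref{m5} and \ref{m26}. The reduction to $\bbO$ seems the cleaner of the two routes.
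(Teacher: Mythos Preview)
Your proposal is correct and is essentially the paper's own argument spelled out in detail: the paper's proof reads in its entirety ``This follows from Proposition \ref{m27} by passage to quotients,'' and your construction of the levelwise surjections $q^{(n)}\colon\bbO^n U\Rightarrow U\bbP^n$ together with the epimorphism argument is precisely what ``passage to quotients'' means here. The organizational work you flag (coherence of the $q^{(n)}$ with $\iota$ and $\mu$) is real but routine, and the paper evidently regards it as such.
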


\begin{proof}
This follows from Proposition \ref{m27} by passage to quotients.
\end{proof}

Next we give the action of $\bbP$ on $\bR$.
By Definition \ref{m29} and Lemma \ref{m5},
the map
\[
\psi_j: \O(j)
\owedge 
\bR^{\wedge j}
\to
\bR
\]
of Lemma \ref{m18} is a map in $\sss$.  It
induces a map
\begin{equation}
\label{m28}
\bigl(
\O(j)\owedge \bR^{\wedge j}
\bigr)
/ \Sigma_j
\to
\bR.
\end{equation}
in $\sss$.

\begin{definition} Define
\[
\nu: \bbP \bR\to \bR
\]
to be the map whose restriction to
$\bigl( \O(j)\owedge \bX^{\wedge j} \bigr) / \Sigma_j $
is the map \eqref{m28}.
\end{definition}

\begin{prop}
\label{m62}
$\nu$ is an action of $\bbP$ on $\bR$.
\end{prop}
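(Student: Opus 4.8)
The plan is to deduce this from Proposition \ref{m30} in exactly the way the monad structure on $\bbP$ was obtained from that on $\bbO$: by passage to quotients. Each map occurring in the two action diagrams for $\nu$ in $\sss$ — the unit $\iota$ and multiplication $\mu$ of $\bbP$, the action map $\nu$ itself, and $\bbP\nu$ — is induced from its $\bbO$-counterpart along the canonical comparison maps $\bR^{\otimes j}\to\bR^{\wedge j}$ coming from Definition \ref{m19}, the induction being legitimate by Lemmas \ref{m18} and \ref{m26}. Now the coequalizer $\bR\otimes\bR\to\bR\wedge\bR$ of Definition \ref{m19} is a degreewise surjection of pointed sets, and iterating — using that the Kan extensions in Definition \ref{m7} and smashing with a fixed pointed set preserve surjections — one gets that $\bR^{\otimes j}\to\bR^{\wedge j}$, hence also $\O(j)\owedge\bR^{\otimes j}\to\O(j)\owedge\bR^{\wedge j}$, is a degreewise surjection, and the same holds for the larger objects appearing in the associativity square. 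So it suffices to verify the two $\sss$-diagrams, and each will follow from its $\ss$-counterpart.

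The unit axiom $\nu\circ\iota=\id_\bR$ is immediate: $\iota$ is the inclusion of the $j=1$ wedge summand $\O(1)\owedge\bR=\boS\owedge\bR\cong\bR$ (Remark \ref{m13}), on which $\nu$ is induced by $\psi_1$, which by Lemma \ref{m18} is induced by $\phi_1$; and Definition \ref{m22}(ii) with $j=1$ exhibits $\phi_1$ as the canonical isomorphism.

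For the associativity square I would argue as in the proof of Proposition \ref{m30}: it suffices to check that the two composites
\[
\O(i)\owedge\Bigl(\bigl(\O(j_1)\owedge\bR^{\wedge j_1}\bigr)\wedge\cdots\wedge\bigl(\O(j_i)\owedge\bR^{\wedge j_i}\bigr)\Bigr)\to\bR,
\]
one through $1\owedge\chi$, $\gamma\owedge 1$, $\psi_{j_1+\cdots+j_i}$ and the other through $1\owedge(\psi_{j_1}\otimes\cdots\otimes\psi_{j_i})$, $\psi_i$, agree. Both are determined by their precomposition with the epimorphism onto this domain from the corresponding $\otimes$-object, because $\phi_j$, $\gamma$ and $\chi$ are compatible with passage to the $\wedge$-quotients — which is precisely what Lemmas \ref{m18} and \ref{m26} record — and after that precomposition the two composites become the two $\ss$-composites that Proposition \ref{m30} asserts are equal. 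Hence the $\sss$-composites are equal, proving the proposition. The one point genuinely needing care is this compatibility of $\phi_j$, $\gamma$ and $\chi$ with the coequalizers of Definition \ref{m19}; but it has in effect already been established in Lemmas \ref{m18} and \ref{m26}, so the rest is formal.
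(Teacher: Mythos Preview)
Your proposal is correct and follows essentially the same approach as the paper: the paper's proof is the single sentence ``This follows from Proposition \ref{m30} by passage to quotients,'' and you have spelled out that passage in detail, invoking the surjectivity of $\bR^{\otimes j}\to\bR^{\wedge j}$ and the compatibility recorded in Lemmas \ref{m18} and \ref{m26}.
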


\begin{proof}
This follows from Proposition \ref{m30} by passage to quotients.
\end{proof}

For use in Section \ref{rect} we record a lemma.

\begin{lemma}
\label{m35}
(i) There is a functor $\Upsilon$ from $\bbP$ algebras to monoids in $\sss$ 
(with respect to $\wedge$) which is the identity on objects.

(ii) The geometric realization of $\Upsilon(\bR)$ is the symmetric ring
spectrum $\bM$ of \cite[Theorem 18.5]{LM12}.
\end{lemma}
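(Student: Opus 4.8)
The plan is to recognize that the operad-like object $\O=\{\O(j)\}$ contains a copy of May's set operad $\mathcal M$ (with $\mathcal M(j)=\Sigma_j$) via the constant maps $U(\Delta^\bfn)\to\Sigma_j$, and that $\mathcal M$-algebras are exactly monoids. Concretely, for each $j$ let $c_j:\boS\to\O(j)$ be the map of objects of $\ss$ which in multidegree $\bfn$ sends the nontrivial simplex to the constant function at the identity of $\Sigma_j$ (that $c_j$ is $\Sigma_k$-equivariant is immediate, since permuting the factors of $\Delta^\bfn$ fixes constant functions). For part (i), I would first check that the $c_j$ are compatible with the structure maps: the unit object $\O(1)=\boS$ is hit by $c_1=\id$, and the square in Lemma \ref{m20} together with the definition of $\gamma$ (which on the constant-function part is just $\gamma_{\mathcal M}$, i.e.\ the operad structure of $\mathcal M$) shows that the $c_j$ assemble into a map of "operads" from the one-point operad structure carried by $\mathcal M$ into $\O$. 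More precisely, restricting $\gamma$ along the $c_j$'s recovers $\gamma_{\mathcal M}$ in each multidegree. Hence any $\bbP$-algebra $(\bX,\xi)$, by pulling back the action along $c_j$, acquires maps $\boS\owedge\bX^{\wedge j}\cong\bX^{\wedge j}\to\bX$ (using Remark \ref{m13}(i)) satisfying the axioms of an $\mathcal M$-algebra; by May's identification of $\mathcal M$-algebras with monoids (\cite[Proposition 3.?]{MR0420610} — the statement that an algebra over the operad $j\mapsto\Sigma_j$ is precisely a monoid), $\bX$ becomes a monoid in $(\sss,\wedge)$, with unit the image of the basepoint and product coming from $c_2$. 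This construction is evidently functorial and identity-on-objects, giving $\Upsilon$.

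For part (ii), I would unwind which monoid structure on $\bR$ the functor $\Upsilon$ produces. The product of $\Upsilon(\bR)$ is the composite $\bR\wedge\bR\xrightarrow{c_2\owedge 1}\O(2)\owedge\bR^{\wedge 2}\xrightarrow{\psi_2}\bR$. By the formula for $\phi_2$ in Definition \ref{m17}, feeding in the constant function $a\equiv e$ gives $a_*(F_1\wedge F_2)=e_\bigstar(F_1(\cdot),F_2(\cdot))=F_1(\cdot)\boxtimes F_2(\cdot)$ up to the sign $i^{\epsilon(\zeta)}$ prescribed in Definition \ref{m22}(ii); comparing with Equation \eqref{m15} and Lemma \ref{m10} (the $\eta=e$ case) shows that this is exactly the multiplication map $\mu$ of \cite[Definition 18.1]{LM12} that defines the monoid structure on $\bR$ underlying $\bM$. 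Thus $\Upsilon(\bR)$ is the monoid in $\sss$ whose geometric realization is, by the Corollary at the end of Section \ref{gr} together with \cite[Theorem 18.5]{LM12}, the symmetric ring spectrum $\bM$. (One should also note the unit: $c_0$ picks out the unit map $\boS\to\bR$, i.e.\ the image of the trivial ads under the unit of the ad theory, matching the unit of $\bM$.)

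The main obstacle I expect is purely bookkeeping rather than conceptual: verifying that the sign/block-permutation conventions in Definition \ref{m22}(ii) and Definition \ref{Nov13} specialize, when the permutation data is constant and equal to the identity, to precisely the sign in Equation \eqref{m15}, and that this compatibility is stable under the suspension maps (which is where Lemma \ref{m5} and Definition \ref{m29} are needed — the monoid structure must be a map in $\sss$, not merely in $\ss$, but that is already packaged into the fact that $\psi_j$ is a $\sss$-map, Lemma \ref{m18} as upgraded in Section \ref{monadsss}). A secondary point to be careful about is that $\Upsilon$ lands in monoids for $\wedge$, not $\otimes$; but since the $\psi_j$ (as opposed to the $\phi_j$) are defined on the $\wedge$-powers, and the coequalizer defining $\wedge$ is respected by the operad structure, this is automatic. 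Everything else — associativity, unitality — transfers formally from the monad axioms (Propositions \ref{m27}, \ref{m30}, \ref{m62}) via the inclusion $\mathcal M\hookrightarrow\O$, so the proof is short once the comparison of products is in hand.
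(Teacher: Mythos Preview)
Your proposal is correct and takes essentially the same approach as the paper: the paper defines the constant-at-identity maps $\boS\to\O(j)$ exactly as your $c_j$ and assembles them into a map of monads ${\mathbb A}\to\bbP$ (where ${\mathbb A}(\bX)=\bigvee_{j\geq 0}\bX^{\wedge j}$ is the free-monoid monad), which is just the monad-language version of your operad inclusion; part~(ii) is then dismissed as ``an easy consequence of the definitions,'' which is what your unwinding of $\psi_2$ against Equation~\eqref{m15} verifies.
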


\begin{proof}  
Part (i).
Let ${\mathbb A}$ be the monad
\[
{\mathbb A}(\bX)=\bigvee_{j\geq 0} \bX^{\wedge j}.
\]
Then a monoid in $\sss$ is the same thing as an
$\mathbb A$-algebra, so it suffices to give a map of monads from $\mathbb A$ 
to $\bbP$.  

For each $j,k\geq 0$ and each $k$-fold multi-index $\bfn$, define an
element
\[
e_{j,k,\bfn}\in (\O(j)_k)_\bfn
\]
to be the constant function $U(\Delta^\bfn)\to\Sigma_j$ whose value is the
identity element of $\Sigma_j$.  Next define a map
\[
\boS\to \O(j)
\]
by taking the nontrivial simplex of $(\boS_k)_\bfn$ to $e_{j,k,\bfn}$.  

Now the composite
\[
{\mathbb A}(\bX)=\bigvee_{j\geq 0} \bX^{\wedge j}
\cong
\bigvee_{j\geq 0} \boS\owedge \bX^{\wedge j}
\to
\bigvee_{j\geq 0} \bigl(\O(j)\owedge \bX^{\wedge j}\bigr)/\Sigma_j
=\bbP(\bX)
\]
is a map of monads.

Part (ii) is an easy consequence of the definitions.
\end{proof}
It is worth mentioning that the map from $\bbA$ to $\bbP$ does not factor over the commutative monad because $e_{j,k,n}$ is not a fixed point for the $\Sigma_n$-action.

\section{Degreewise smash product and geometric realization}
\label{l1}

For the proof of Theorem \ref{Nov12.3} 
we need to know the relation between $\owedge$ and geometric realization.  

There is a natural map
\[
\kappa:|A\owedge B|\to |A|\wedge |B|
\]
defined by
\[
\kappa([u,x\wedge y])=[u,x]\wedge[u,y].
\]
The analogous map for multisimplicial sets is a homeomorphism, but the
situation for multisemisimplicial sets is more delicate.

\begin{definition} 
\label{m31}
A multisemisimplicial set {\it has compatible degeneracies} if it
is in the image of the
forgetful functor from multisimplicial sets to multisemisimplicial sets.
\end{definition}

\begin{example}
\label{m40}
(i) One can 
define compatible degeneracies on $\O(j)_k$ for each $j,k\geq 0$ by using the 
codegeneracy maps between the $\Delta^\bfn$.

(ii) If $\bX\in \sss$ and $X_k$ has compatible degeneracies for all $k$ then 
each $\bigl(\O(j)\owedge \bX^{\wedge j}\bigr)_k$ has compatible
degeneracies.
\end{example}

\begin{prop} 
\label{Aug29.3}
If the underlying multisemisimplicial sets of $A$ and
$B$ have compatible degeneracies then $\kappa$ is a weak equivalence.
\end{prop}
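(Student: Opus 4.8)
The claim is that for multisemisimplicial sets $A$ and $B$ carrying compatible degeneracies, the natural map $\kappa\colon |A\owedge B|\to|A|\wedge|B|$ is a weak equivalence. The strategy is to compare the semisimplicial realization with the simplicial realization. Write $A^+, B^+$ for the (multi)simplicial sets whose underlying multisemisimplicial sets are $A,B$ — these exist by the hypothesis of compatible degeneracies. There is a classical fact (see Ebert--Randal-Williams, or the standard argument that a semisimplicial set which underlies a simplicial set has $|A|\simeq |A^+|$ via the natural map $|A|\to |A^+|$, provided the simplicial set is ``good'' / proper) that the natural map from the thick (semisimplicial) realization to the thin (simplicial) realization is a weak equivalence when all degeneracy maps are cofibrations — and for simplicial \emph{sets} the degeneracies are always injective, hence cofibrations. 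So I would first record a lemma: for a multisimplicial set $C$, the natural map $|C^\flat|\to |C|$ from the realization of the underlying multisemisimplicial set to the simplicial realization is a weak equivalence. (One reduces to the one-variable case by induction on the number of simplicial directions, using that realization commutes with products up to homeomorphism in the simplicial case and up to the map $\kappa$ in the semisimplicial case, and that a product of weak equivalences between CW complexes is a weak equivalence.)

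Granting that lemma, the proof of the proposition is a diagram chase. Form the square whose vertical maps are the comparison maps $|{-}^\flat|\to|{-}|$ and whose horizontal maps are $\kappa$ (semisimplicial) on the bottom and the homeomorphism $|A^+\owedge B^+|\cong |A^+|\wedge|B^+|$ (the known multisimplicial statement, mentioned just before Definition \ref{m31}) on the top:
\[
\xymatrix{
|(A^+\owedge B^+)^\flat|
\ar[r]^-{\cong}
\ar[d]
&
|A^+|\wedge|B^+|
\ar[d]
\\
|A\owedge B|
\ar[r]^-{\kappa}
&
|A|\wedge|B|
}
\]
Here $(A^+\owedge B^+)^\flat$ is the underlying multisemisimplicial set of the multisimplicial smash product; since the forgetful functor preserves the degreewise smash product, this is exactly $A\owedge B$, so the left vertical map is the comparison map for the multisimplicial set $A^+\owedge B^+$, hence a weak equivalence by the lemma. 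The right vertical map is the smash product of the two comparison maps $|A^\flat|\to|A^+|$ and $|B^\flat|\to|B^+|$, each a weak equivalence by the lemma; since these are maps of CW complexes (well-pointed, after the usual cofibrant-replacement remarks), their smash product is again a weak equivalence. The top map is a homeomorphism by the cited multisimplicial fact. Two out of three sides plus commutativity force $\kappa$ to be a weak equivalence.

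The one point requiring care — and the step I expect to be the main obstacle — is verifying that the square actually commutes, i.e.\ that the semisimplicial $\kappa$ is compatible with the multisimplicial $\kappa$ under the comparison maps. This is where the explicit formula $\kappa([u,x\wedge y])=[u,x]\wedge[u,y]$ is used: one checks on representatives that both composites send the class of $(u, x\wedge y)$ (with $u$ a nondegenerate-simplex coordinate in $A\owedge B$) to $[u,x]\wedge[u,y]$ in $|A^+|\wedge|B^+|$, which is immediate from naturality of the realization-to-realization map in each variable. The second mild subtlety is well-pointedness, needed so that the smash product of weak equivalences on the right vertical map is again a weak equivalence; this follows because the realizations in question are CW complexes with the basepoint a subcomplex (the image of the basepoint simplex), so the inclusion of the basepoint is a cofibration. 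With those two checks in place the proof is complete.
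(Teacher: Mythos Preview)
Your approach is essentially the same as the paper's: lift $A,B$ to multisimplicial sets, use that the multisimplicial $\tilde\kappa$ is a homeomorphism, and that the thick-to-thin comparison maps are weak equivalences (the paper cites the multisimplicial analogue of \cite[Lemma A.5]{MR0353298}), then conclude by two-out-of-three. One small fix: your square is mislabeled---the top-left corner should be $|A^+\owedge B^+|$ (the \emph{simplicial} realization), not $|(A^+\owedge B^+)^\flat|$, and the vertical comparison maps should point from the semisimplicial row to the simplicial row; with that correction your diagram matches the paper's.
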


\begin{proof} 
Let $\tilde{A}$ and $\tilde{B}$ be multisimplicial sets whose underlying 
multisemisimplicial sets are $A$ and $B$.  Then the underlying 
multisemisimplicial set of the degreewise smash product 
$\tilde{A}\owedge\tilde{B}$ is $A\owedge B$.  Consider the following 
commutative diagram, where $|\ |$ in the bottom row denotes realization of 
multisimplicial sets, $\tilde{\kappa}$ is defined analogously to $\kappa$, 
and the vertical maps collapse the degeneracies:
\[
\xymatrix{
|A\owedge B|
\ar[r]^{\kappa}
\ar[d]
&
|A|\wedge |B|
\ar[d]
\\
|\tilde{A}\owedge\tilde{B}|
\ar[r]^{\tilde{\kappa}}
&
|\tilde{A}|\wedge|\tilde{B}|
}
\]
The map $\tilde{\kappa}$ is a homeomorphism, and the vertical arrows are weak 
equivalences by the multisimplicial analogue of \cite[Lemma A.5]{MR0353298}, so
$\kappa$ is a weak equivalence.
\end{proof}

Next we give a sufficient condition for a multisemisimplicial set to 
have compatible degeneracies.  Let $D^n$ denote the semisimplicial set 
consisting of the nondegenerate simplices of the standard simplicial 
$n$-simplex.  For a multi-index $\bfn$, let $D^\bfn$ denote the $k$-fold 
multisemisimplicial set 
\[
D^{n_1}\times \cdots
\times D^{n_k}.
\]

\begin{definition} 
(i)
A {\it horn} in $D^\bfn$ is a subcomplex $E$ which contains all 
elements of $D^\bfn$ except for the top-dimensional element and one of its 
faces.

(ii)
A $k$-fold multisemisimplicial set $A$ satisfies the 
{\it multi-Kan condition} if every map from a horn in $D^\bfn$ to $A$ 
extends to a map $D^\bfn\to A$.
\end{definition}

The following result is proved in  \cite{MC08}.

\begin{prop} 
\label{m32}
If $A$ satisfies the multi-Kan condition then it
has compatible degeneracies.
\end{prop}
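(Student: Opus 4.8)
The plan is to equip $A$ with degeneracy operators in each of the $k$ coordinate directions, each raising the corresponding multidegree by one, so that these operators together with the face operators of $A$ satisfy all the simplicial identities within each direction and commute across distinct directions; producing such operators is the same thing as exhibiting $A$ in the image of the forgetful functor from $k$-fold multisimplicial sets to $k$-fold multisemisimplicial sets, so this proves the proposition. It is worth noting at the outset that there is no shortcut via adjoint functors: the forgetful functor has a right adjoint (right Kan extension along $(\Delta_\inj)^{\times k}\hookrightarrow\Delta^{\times k}$), but the counit of this adjunction need not be an isomorphism on $A$ even when $A$ is multi-Kan --- already for $k=1$ a semisimplicial map out of the underlying semisimplicial set of the simplicial $1$-simplex records a loop at a vertex in addition to its value on the nondegenerate edge, so the counit is not injective. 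Thus the degeneracies have to be built by hand.

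I would construct them by induction on the total degree $|\mathbf n|=n_1+\cdots+n_k$. Assume the degeneracy operators have been defined, and all the required identities checked, on every simplex of total degree $<|\mathbf n|$. Consider a simplex $x$ of multidegree $\mathbf n'$ with $|\mathbf n'|+1=|\mathbf n|$ and a direction $d$; the task is to define the $d$-direction degeneracy of $x$ at a prescribed position, an element of $A_{\mathbf n'+\mathbf 1_d}$, where $\mathbf 1_d$ is the multi-index with a $1$ in position $d$. The simplicial identities force every facet, in every direction, of this sought simplex: each such facet is either $x$ itself or a degeneracy (in a suitable direction) of a facet of $x$, hence in every case a simplex of total degree $<|\mathbf n|$ that is already in hand. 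By the identities established at earlier stages these prescribed facets fit together, giving a map $\partial D^{\mathbf n'+\mathbf 1_d}\to A$. The multi-Kan condition then fills the horn obtained by omitting a suitably chosen facet, and --- exploiting the redundancy in the prescribed boundary (two of its $d$-direction facets equal $x$) --- a second application of the multi-Kan condition adjusts the filling so that the omitted facet also comes out right. The resulting element of $A_{\mathbf n'+\mathbf 1_d}$ is declared to be the degeneracy of $x$.

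The hard part will be global coherence. The fillings provided by the multi-Kan condition are highly non-unique, and the choices must be organized so that after the new stage \emph{all} the simplicial identities hold --- including $s_is_j=s_{j+1}s_i$ for $i\leq j$ in each direction and the commutation of degeneracies in distinct directions --- rather than merely that each new degenerate simplex has the prescribed facets. Achieving this requires performing the fillings in a carefully chosen order and, whenever two prescriptions would conflict, reconciling them by an auxiliary filling. For $k=1$ this coherent construction is exactly what is carried out in \cite{MC08}; the argument there adapts to general $k$, the only new ingredient being the bookkeeping needed to track the degeneracies in the $k$ separate directions and their mutual commutation. (One could also try to induct on $k$ by viewing a multi-Kan $k$-fold multisemisimplicial set as a semisimplicial object in $(k-1)$-fold ones, but checking that the relevant constituent objects again satisfy a suitable multi-Kan condition is a coherence problem of the same flavour, so this offers no real simplification.)
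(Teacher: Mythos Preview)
The paper gives no proof of its own here: immediately before the statement it says ``The following result is proved in \cite{MC08},'' and that is the entire argument. So there is nothing in the paper to compare your proposal against beyond the bare citation.

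Your sketch is a correct high-level outline of the standard construction---build degeneracies inductively on total degree, using the multi-Kan condition to fill the horn whose faces are forced by the simplicial identities, and then manage the coherence among the non-unique fillings so that the relations $s_is_j=s_{j+1}s_i$ (within a direction) and the cross-direction commutations hold. You are right that this last coherence step is where all the work lies and that it cannot be avoided by passing to the right adjoint. Since you ultimately defer that step to \cite{MC08} anyway, your write-up is best read as an expanded gloss on the citation rather than an independent proof; as such it is accurate and matches what the paper is pointing to.

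One small caution on your ``second application of the multi-Kan condition'' remark: the redundancy $d_j y = d_{j+1} y = x$ does allow an adjustment, but in the actual argument this correction is entangled with the $s_is_j=s_{j+1}s_i$ coherence (both issues live at the same inductive stage), so it is not cleanly separable into ``first fill, then fix one face.'' You seem aware of this, but the phrasing slightly undersells the difficulty.
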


Our next result is proved in the same way as \cite[Lemma 15.12]{LM12} and
does not require the ad theory to be commutative.
 
\begin{prop}  
\label{m33}
For each $k$, $R_k$ satisfies the multi-Kan condition.
\end{prop}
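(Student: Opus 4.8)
The plan is to verify the multi-Kan condition for $R_k$ by reducing it to the corresponding property for the ad theory itself, exactly mirroring the argument of \cite[Lemma 15.12]{LM12}. Recall that $(R_k)_\bfn=\ad^k(\Delta^\bfn)$, so a map from a horn $E\subset D^\bfn$ to $R_k$ is (by adjunction/Yoneda for multisemisimplicial sets) the same as a $k$-ad on the ball complex underlying $E$, and extending it across $D^\bfn$ amounts to extending a $K$-ad along the inclusion of a subcomplex $E\hookrightarrow\Delta^\bfn$. So the whole statement is an instance of the extension/gluing axiom for ad theories.

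First I would unwind the definitions: identify a map $D^\bfn\to R_k$ with an element of $\ad^k(\Delta^{\mathbf n})$ (using that $D^\bfn$ is the semisimplicial set of nondegenerate simplices of $\Delta^{\mathbf n}$, so maps out of it into a multisemisimplicial set are classified by the appropriate top cell), and likewise identify a map from a horn $E$ with a $k$-ad defined on the subcomplex of $\Delta^{\mathbf n}$ consisting of all cells except the top cell and one codimension-one face. Then I would invoke the relevant axiom from \cite{LM12} — the one guaranteeing that a $K$-ad defined on a subcomplex containing enough of $K$ extends to all of $K$ — applied to the pair $(\Delta^{\mathbf n}, \text{horn})$. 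Since the horn contains the full boundary except one face, the obstruction to extending lives in the appropriate relative term and vanishes by the excision/gluing property of the ad theory. The key point, as in \cite[Lemma 15.12]{LM12}, is that this inclusion is "collapsible" in the sense required by that lemma, so the extension exists.

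The bulk of the verification is purely formal bookkeeping: matching the combinatorics of horns in $D^\bfn$ against the collapsibility hypothesis used in \cite{LM12}, and checking that the multisemisimplicial structure maps in $R_k$ (the face operators, including the $\Sigma_k$-twisting from Definition \ref{Aug29}) are compatible with restriction of ads. None of this uses the permutative structure or Lemma \ref{m3}, which is why the statement is noted to hold without the commutativity hypothesis.

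\textbf{The main obstacle} I anticipate is not conceptual but organizational: one must state precisely which lemma of \cite{LM12} is being reused and check that the hypothesis "$E$ is a collapsible subcomplex of $D^\bfn$" is actually met — that is, that a horn in $D^\bfn$ (missing the top cell and one of its faces) can be built up from the rest of $D^\bfn$ by a sequence of elementary expansions, so that the ad-extension axiom applies one cell at a time. Once that collapsibility is in hand the extension is automatic, so I would simply cite \cite[Lemma 15.12]{LM12} and remark that the proof goes through verbatim with $D^\bfn$ in place of the simplicial complex considered there.
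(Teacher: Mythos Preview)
Your proposal is correct and takes essentially the same approach as the paper, which simply says the result ``is proved in the same way as \cite[Lemma 15.12]{LM12} and does not require the ad theory to be commutative.'' You have in fact fleshed out what that citation entails---the identification of horn maps with ads on subcomplexes and the appeal to the extension axiom via collapsibility---more explicitly than the paper does.
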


\section{Rectification}
\label{rect}

In this section we complete the proof of Theorem \ref{Nov12.3}.

First we consider a monad in $\sss$ which is simpler than $\bbP$.

\begin{definition}
(i) Define $\bbP'(\bX)$ to be $\bigvee_{j\geq 0} \,
\bX^{\wedge j}/\Sigma_j$.

(ii)  For each $j\geq 0$, let
\[
\xi_j:\O(j)\to \boS
\]
be the map which takes each nontrivial simplex of $\O(j)_k$ to the nontrivial
simplex of $\oS_k$ in the same multidegree.
Define a natural transformation 
\[
\Xi: \bbP\to\bbP'
\]
to be the wedge of the composites
\[
\bigl(\O(j)\owedge \bX^{\wedge j}\bigr)/\Sigma_j
\xrightarrow{\xi_j\owedge 1}
\bigl(\boS\owedge \bX^{\wedge j}\bigr)/\Sigma_j
\xrightarrow{\cong}
\bX^{\wedge j}/\Sigma_j.
\]
\end{definition}

\begin{prop}
\label{m34}
{\rm (i)} An algebra over $\bbP'$ is the same thing as a 
commutative monoid in $\sss$.

{\rm (ii)} $\Xi$ is a map of monads.

{\rm (iii)} Suppose that each $X_k$ has compatible degeneracies (see 
Definition \ref{m31}).  Let $\bbP^q$ denote the $q$-th iterate of $\bbP$. 
Then each map 
\[
\Xi:\bbP^q(\bX)\to \bbP'\bbP^{q-1}(\bX)
\]
is a weak equivalence.
\end{prop}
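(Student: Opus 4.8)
The plan is to prove the three parts in order, with part (iii) being the substantive one. For part (i), I would observe that $\bbP'(\bX)=\bigvee_{j\geq 0}\bX^{\wedge j}/\Sigma_j$ is the free commutative monoid monad on $\sss$ (with respect to $\wedge$), so an algebra over it is by definition a commutative monoid in $\sss$; this is a formal matter, identical to the analogous statement for symmetric spectra in \cite{MR1695653}. For part (ii), I would check directly that $\Xi$ is compatible with the units $\iota$ and the multiplications $\mu$ of the two monads. Compatibility with units is immediate from the definition of $\xi_1$ (recall $\O(1)=\boS$ by Remark \ref{m13}(ii), and $\xi_1$ is the identity). Compatibility with multiplications reduces, after unwinding Definition \ref{m25}(iv) and Lemma \ref{m26}, to the statement that the composition map $\gamma$ of the $\O(j)$'s is carried by the $\xi_j$'s to the (essentially trivial) composition map of the constant collection $\boS$; this in turn follows from the fact that $\gamma_{\mathcal M}$ on May's operad $\mathcal M$ becomes trivial after collapsing each $\Sigma_j$ to a point, together with the description of $m:\boS\otimes\boS\to\boS$ in Remark \ref{m13}(iii).

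The heart of the argument is part (iii). The key geometric input is that $\O(j)$ is, in each multidegree, $\Map(U(\Delta^\bfn),\Sigma_j)_+$, which is a finite \emph{discrete} (hence contractible-components-only, but more to the point, a disjoint union of points) based set; after geometric realization, $|\O(j)_k|$ is a disjoint union of copies of a point indexed by $\Map(U(\Delta^\bfn),\Sigma_j)$ — in particular $|\xi_j|:|\O(j)_k|\to |\oS_k|=S^0$ collapses a \emph{free} $\Sigma_j$-set (on the non-basepoint part) to a point, so $|\O(j)_k|_+\to S^0$ is a $\Sigma_j$-equivariant based map from a based free $\Sigma_j$-CW complex whose underlying space is (equivariantly) homotopy equivalent to $\Sigma_{j+}$ — no, more carefully: each component is a point, so $|\O(j)_k| \simeq \Map(U(\Delta^\bfn),\Sigma_j)$ as a free $\Sigma_j$-set. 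The standard way to exploit this is: the functor $\bX\mapsto \bbP(\bX)=\bigvee_j (\O(j)\owedge \bX^{\wedge j})/\Sigma_j$ agrees, after realization and up to weak equivalence, with $\bigvee_j (E\Sigma_{j+}\wedge_{\Sigma_j}|\bX|^{\wedge j})$-type expressions, and collapsing $\O(j)$ to $\boS$ is a weak equivalence precisely when the $\Sigma_j$-action on $|\bX|^{\wedge j}$ is free enough — but here we do NOT want that (we are \emph{not} assuming $\bX$ is cofibrant). Instead the point is subtler: we apply $\Xi$ to $\bbP^{q-1}(\bX)$, not to $\bX$ itself.

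So the real plan for (iii) is: write $\bY=\bbP^{q-1}(\bX)$ (for $q\geq 1$; the case $q=1$, i.e. $\Xi:\bbP(\bX)\to\bbP'(\bX)$, being included, where $\bbP^0=\mathrm{id}$). We must show $\Xi:\bbP(\bY)=\bigvee_j(\O(j)\owedge \bY^{\wedge j})/\Sigma_j \to \bigvee_j \bY^{\wedge j}/\Sigma_j$ is a weak equivalence. Filtering both sides by wedge-summands up to index $j$, it suffices to handle each summand. For each $j$, I would first use Example \ref{m40} and Proposition \ref{Aug29.3} to replace $|\O(j)\owedge \bY^{\wedge j}|$ by $|\O(j)|\wedge|\bY^{\wedge j}|$ up to weak equivalence (the compatible-degeneracies hypothesis propagates through $\bbP$ by Example \ref{m40}(ii), using that each $X_k$ has compatible degeneracies; note also $\O(j)$ has them). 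Then the map on realized $j$-th summands becomes $(|\O(j)|\owedge|\bY|^{\wedge j})/\Sigma_j \to (S^0\owedge|\bY|^{\wedge j})/\Sigma_j$, i.e., $(|\O(j)|\wedge|\bY|^{\wedge j})/\Sigma_j\to |\bY|^{\wedge j}/\Sigma_j$; the hard part is that since $\Sigma_j$ does not act freely on $|\bY|^{\wedge j}$, one cannot invoke a naive "$E\Sigma_j$-collapse" argument. The way around this — and this is the crux — is that $\bY$ itself is of the form $\bbP^{q-1}(\bX)$, so $\bY^{\wedge j}$ is a wedge over certain index sets of smash products, and the $\Sigma_j$-action permutes wedge summands; on each orbit of summands the isotropy acts through a smaller operad space which is again a disjoint union of points, so by an inductive/combinatorial analysis the quotient map is, summand by summand, a weak equivalence because it is induced by the collapse of a space each of whose components is contractible (indeed a point). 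Concretely, I expect to argue: the map $|\O(j)|\to S^0$ is, in each multidegree and before taking $\Sigma_j$-quotients, the collapse of a disjoint union of points to a point; taking the smash with $|\bY|^{\wedge j}$ and then the $\Sigma_j$-quotient, one gets a map whose source is a quotient of $\bigsqcup_{\text{pts}}|\bY|^{\wedge j}$ and whose target is a quotient of a single copy; checking on fixed-point sets for all subgroups (or just checking it induces iso on all homotopy groups by a Mayer–Vietoris / skeletal induction over the ball-complex structure indexing $U(\Delta^\bfn)$) shows it is a weak equivalence.

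I anticipate the main obstacle is precisely the non-freeness issue just described: giving a clean argument that $(|\O(j)|\wedge|\bY|^{\wedge j})/\Sigma_j\to |\bY|^{\wedge j}/\Sigma_j$ is a weak equivalence without a freeness hypothesis on the $\Sigma_j$-action on $|\bY|^{\wedge j}$. The resolution should come from the fact that $|\O(j)|$ is not merely $\Sigma_j$-contractible-componentwise but is genuinely a disjoint union of \emph{points} (being the realization of the discrete multisemisimplicial set $\Map(U(\Delta^\bfn),\Sigma_j)_+$), so that $\O(j)\owedge\bY^{\wedge j}\to \boS\owedge\bY^{\wedge j}=\bY^{\wedge j}$ is, levelwise, the quotient by the relation that identifies the various "$a$-copies" of each element — and this identification is compatible with the $\Sigma_j$-action in such a way that passing to $\Sigma_j$-quotients does not change the homotopy type, because on each component the map is the identity. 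I would make this precise by a direct comparison of the two quotients using the cell structure, or alternatively by citing the relevant homotopy-invariance of these bar-type constructions as in \cite{MR0420610}; in the write-up I would present it as a skeletal induction on the multi-index $\bfn$ together with Proposition \ref{m33} (the multi-Kan condition, ensuring compatible degeneracies are available throughout).
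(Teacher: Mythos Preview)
Your treatment of parts (i) and (ii) is fine and matches the paper's ``immediate from the definitions.''

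For part (iii), however, there are two genuine problems.

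First, your description of $|\O(j)_k|$ is wrong. You write that it is ``a disjoint union of copies of a point indexed by $\Map(U(\Delta^\bfn),\Sigma_j)$'', but this confuses the set of simplices in a single multidegree $\bfn$ with the realization of the whole multisemisimplicial set. In fact $\O(j)_k$ has simplices in every multidegree, glued along face maps, and its realization is a \emph{connected, weakly contractible} space: this is exactly the content of Lemma~\ref{m39} (applied with $P=\Sigma_j$ given the preorder in which every element is $\leq$ every other). So the correct geometric input is not ``discrete set of points'' but ``$|\O(j)_k|\simeq *$''.

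Second, and more importantly, you are missing the observation that removes the difficulty you spend most of your sketch worrying about. You assert that ``$\Sigma_j$ does not act freely on $|\bY|^{\wedge j}$'', but in $\sss$ the $\Sigma_j$-action on $\bX^{\wedge j}$ \emph{is} free away from the basepoint. This follows from Equation~\eqref{m8}: the symmetry map right-multiplies the coset representative $\alpha$ by the block permutation $\beta$, so a nontrivial element of $\Sigma_j$ moves every non-basepoint element. (This is precisely the payoff of building $\otimes$ via the induced products $I_H^G$.) Once you have freeness, the map
\[
(\O(j)\owedge\bX^{\wedge j})/\Sigma_j\longrightarrow\bX^{\wedge j}/\Sigma_j
\]
is a weak equivalence if and only if $\O(j)\owedge\bX^{\wedge j}\to\bX^{\wedge j}$ is one, and the latter follows from $|\O(j)_k|\simeq *$ together with Proposition~\ref{Aug29.3} and Proposition~\ref{m38} (using the compatible degeneracies supplied by Example~\ref{m40}). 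The general case $q>1$ then follows immediately from the case $q=1$ and Example~\ref{m40}(ii), since $\bbP^{q-1}(\bX)$ again has compatible degeneracies. The fixed-point/skeletal analysis you propose is therefore unnecessary, and the route you sketch (via ``each component is a point'') would not work as stated because $|\O(j)_k|$ is not discrete.
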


Parts (i) and (ii) are immediate from the definitions.  Part (iii) 
will be proved at the end of this section.  

\begin{proof}[Proof of Theorem \ref{Nov12.3}.]
We apply the monadic bar 
construction (\cite[Construction 9.6]{MR0420610}) to obtain simplicial objects 
$B_\bu(\bbP,\bbP,\bR)$
and $B_\bu(\bbP',\bbP,\bR)$ in $\sss$.  We write $\bR_\bu$ for the constant simplicial
object which is $\bR$ in each simplicial degree.  There are maps of simplicial
$\bbP$-algebras
\begin{equation}
\label{m36}
\bR_\bu \xleftarrow{\varepsilon}
B_\bu(\bbP,\bbP,\bR)
\xrightarrow{\Xi_\bu}
B_\bu(\bbP',\bbP,\bR),
\end{equation}
where $\varepsilon$ is induced by the action of $\bbP$ on $\bR$ (see 
\cite[Lemma 9.2(ii)]{MR0420610}).  The map $\varepsilon$ is a homotopy 
equivalence of simplicial
objects (\cite[Proposition 9.8]{MR0420610}) and the map $\Xi_\bu$ is a weak 
equivalence in each simplicial degree by Propositions \ref{m32}, \ref{m33} and
\ref{m34}(iii).   
$B_\bu(\bbP',\bbP,\bR)$ is a simplicial algebra over 
$\bbP'$, 
which by Proposition \ref{m34}(i) is the same thing as a simplicial commutative 
monoid in $\sss$.  Moreover, by Lemma \ref{m35}(i), $\bR_\bu$ and 
$B_\bu(\bbP,\bbP,\bR)$ are simplicial monoids, and $\varepsilon$ 
and $\Xi_\bu$ are maps of simplicial monoids.

The objects of the diagram \eqref{m36} are simplicial objects in $\sss$.  We
obtain a diagram 
\begin{equation}
\label{m37}
|\bR_\bu| \xleftarrow{|\varepsilon|}
|B_\bu(\bbP,\bbP,\bR)|
\xrightarrow{|\Xi_\bu|}
|B_\bu(\bbP',\bbP,\bR)|
\end{equation}
of simplicial objects in $\S p$ (the category of symmetric spectra)
by applying the geometric realization functor $\sss\to \S p$
to the diagram \eqref{m36} in each simplicial degree.
The map $|\varepsilon|$ is a
homotopy equivalence of simplicial objects and the map $|\Xi_\bu|$ is a weak 
equivalence in each simplicial degree. The object $|B_\bu(\bbP',\bbP,\bR)|$ 
is a simplicial commutative symmetric ring spectrum, the objects $|\bR_\bu|$ 
and $|B_\bu(\bbP,\bbP,\bR)|$ are simplicial symmetric ring 
spectra, and the maps $|\varepsilon|$ and $|\Xi_\bu|$ are maps of simplicial 
symmetric ring spectra.

Finally, we apply geometric realization to the diagram \eqref{m37}.
We define $\bMc$ to be $||B_\bu(\bbP',\bbP,\bR)||$.
Now we have a diagram
\begin{equation}
\label{m81}
\bM=|\bR|
\xleftarrow{||\varepsilon||}
||B_\bu(\bbP,\bbP,\bR)||
\xrightarrow{||\Xi_\bu||}
||B_\bu(\bbP',\bbP,\bR)||=\bMc
\end{equation}
in $\S p$.  The map $||\varepsilon||$ is a
homotopy equivalence (cf.\ \cite[Corollary 11.9]{MR0420610}) and
the map $||\Xi_\bu||$ is a weak equivalence by \cite[Theorem E]{Reedy}. 
$\bMc$ is a commutative symmetric ring spectrum, $\bM$ is the symmetric ring
spectrum of \cite[Theorem 18.5]{LM12} (by Lemma \ref{m35}(ii)),
$||B_\bu(\bbP,\bbP,\bR)||$
is a  symmetric ring spectrum, and $||\varepsilon||$ and 
$||\Xi_\bu||$ are maps of symmetric ring spectra.  
\end{proof}

We conclude this section with the proof of part (iii) of Proposition \ref{m34}.
First we need a lemma (which for later use  we state in more generality than we
immediately need). Recall that a preorder is a set with a reflexive and
transitive relation $\leq$.  Examples are $\Sigma_j$, with every element $\leq$
every other,
and $U(K)$ (see Definition \ref{m22}(i)), with $\leq$ induced by 
inclusions of cells.

\begin{lemma}
\label{m39}
Let $P$ be a preorder with an element which is $\geq$ all other elements, and 
let $k\geq 0$. Define a $k$-fold multisemisimplicial set $A$ by 
\[
A_\bfn=\Map_{\mathrm{preorder}}(U(\Delta^\bfn),P).
\]
Then

\rm{(i)} $A$ has compatible degeneracies, and

\rm{(ii)} $A$ is weakly equivalent to a point.
\end{lemma}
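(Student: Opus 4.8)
The plan is to prove both parts by exhibiting explicit compatible degeneracies and an explicit contraction, using the hypothesis that $P$ has a maximum element $p_0$.

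For part (i), I would first note that a map of preorders $U(\Delta^\bfn)\to P$ is the same as a functor, and since $U(\Delta^\bfn)$ is the poset of cells of $\Delta^\bfn$ ordered by inclusion, such a functor is just an order-preserving function. To produce compatible degeneracies I need to promote $A$ to a multisimplicial set $\tilde A$ whose underlying multisemisimplicial set is $A$. The natural candidate is $\tilde A_\bfn=\Map_{\mathrm{preorder}}(U(\Delta^\bfn),P)$ again, now with $\Delta^\bfn$ ranging over all multi-indices and \emph{all} monotone maps (including codegeneracies) acting by restriction along the induced maps $U(\Delta^{\mathbf m})\to U(\Delta^\bfn)$. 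The point is that a codegeneracy $\Delta^{\mathbf m}\to\Delta^\bfn$ induces an order-preserving (in fact surjective) map on cell posets, so precomposition sends preorder maps to preorder maps; and the simplicial identities for these induced maps follow from functoriality of $K\mapsto U(K)$. Thus $\tilde A$ is a genuine multisimplicial set restricting to $A$, which is exactly the statement that $A$ has compatible degeneracies (Definition \ref{m31}).

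For part (ii), since $A$ has compatible degeneracies it is the underlying multisemisimplicial set of $\tilde A$, and by the multisimplicial analogue of \cite[Lemma A.5]{MR0353298} (used already in the proof of Proposition \ref{Aug29.3}) the realizations of $A$ and $\tilde A$ agree, so it suffices to show $\tilde A$ is weakly equivalent to a point. I would do this by writing down a simplicial homotopy (in each coordinate, or all at once) from the identity to the constant map at the function $c_{p_0}\colon U(\Delta^\bfn)\to P$ with value $p_0$. Concretely, for each $n$-simplex of $\Delta^1$ (equivalently each monotone function $\{0,\dots\}\to\{0,1\}$) one builds a map $\tilde A_\bfn\to\tilde A_\bfn$ interpolating between restriction along the constant-$0$ cell-inclusion and restriction along the constant-$1$ inclusion; because $p_0\geq f(\sigma)$ for every $\sigma$ and every $f\in A_\bfn$, the relevant "max with $p_0$" operations are order-preserving and land in $\Map_{\mathrm{preorder}}$, and the simplicial-identity bookkeeping is routine. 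This exhibits $A$ as simplicially contractible, hence $|A|$ is contractible.

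I expect the main obstacle to be purely bookkeeping: verifying that the interpolating maps in part (ii) really do assemble into a (multi)simplicial homotopy satisfying all the face and degeneracy identities, in the multisemisimplicial setting where one only has faces. The clean route is to carry out the contraction at the level of the multisimplicial object $\tilde A$, where all degeneracies are available and the standard simplicial homotopy formalism applies verbatim; the existence of $p_0$ is exactly what makes the "collapse toward the top" homotopy well-defined on $\Map_{\mathrm{preorder}}$. Everything else — the identification of $U(\Delta^\bfn)$ with the cell poset, functoriality of $U$, and the comparison of realizations — is standard and has already appeared in the paper.
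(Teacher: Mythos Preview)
Your argument for (i) is the paper's: both use the codegeneracy maps among the $\Delta^\bfn$ to lift $A$ to a multisimplicial set $\tilde A$.

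For (ii) your route differs. The paper passes to the diagonal simplicial set $d\tilde A$ and, citing \cite[Theorem I.11.2]{MR1711612}, reduces to showing that every semisimplicial map $\partial D^n \to d\tilde A$ extends over $D^n$; the extension simply assigns $p_0$ to every cell not already constrained by the boundary data, which is order-preserving because $p_0$ is maximum. You instead propose an explicit simplicial contraction to the constant map $c_{p_0}$. This works: for instance, on the diagonal, given $\alpha\colon[n]\to[1]$ and $f\in(d\tilde A)_n$, set $H_\alpha(f)(\sigma)=f(\sigma)$ when every vertex of $\sigma$ lies over $\alpha^{-1}(0)$ and $H_\alpha(f)(\sigma)=p_0$ otherwise; this is order-preserving and natural in $[n]$, and gives the required homotopy from the identity to the constant. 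Your approach is more self-contained (no appeal to \cite{MR1711612}) at the cost of actually writing out the homotopy; the paper's extension argument is shorter once the reference is granted. Both hinge on exactly the same fact---one can fill in with $p_0$ wherever needed---so the difference is presentational rather than mathematical. Your phrase ``max with $p_0$'' is a bit misleading (that is always $p_0$), and the phrase ``restriction along the constant-$0$ cell-inclusion'' is obscure; writing the formula above would make the argument transparent.
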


\begin{proof}
For (i), we can give $A$ compatible degeneracies 
by using the codegeneracy maps between the $\Delta^\bfn$.

Part (ii).
Let $\tilde{A}$ be a multisimplicial set whose
underlying multisemisimplicial set is $A$.
Let 
$d\tilde{A}$ be its diagonal.
The multisimplicial analogue of \cite[Lemma A.5]{MR0353298} implies that 
$|A|$ is weakly equivalent to $|\tilde{A}|$, and it is well-known 
that the latter is homeomorphic to $|d\tilde{A}|$.  It therefore 
suffices to show that the simplicial set $d\tilde{A}$ is weakly 
equivalent to a point.  

Let $\Delta^n_{\mathrm{simp}}$ denote the standard 
simplicial $n$ simplex and let $\partial \Delta^n_{\mathrm{simp}}$ denote its 
boundary.  Then it suffices by \cite[Theorem I.11.2]{MR1711612} to show that 
every map 
from $\partial \Delta^n_{\mathrm{simp}}$ to $d\tilde{A}$ extends to
$\Delta^n_{\mathrm{simp}}$.  

Let $D^n$ (respectively $\partial D^n$) be the semisimplicial set consisting of
the nondegenerate simplices of $\Delta^n_{\mathrm{simp}}$ (resp., $\partial
\Delta^n_{\mathrm{simp}}$).
Since $\Delta^n_{\mathrm{simp}}$ (resp., $\partial
\Delta^n_{\mathrm{simp}}$) is the free simplicial set generated by 
$D^n$ (respectively $\partial D^n$), 
it suffices to show that every semisimplicial map from
$\partial D^n$ to $d\tilde{A}$ extends to $D^n$, and this is obvious from
the definition of $A$.
\end{proof}

Note that if $P$ is $\Sigma_j$ with the preorder described above then $A_+$ 
is $\tilde{\O}(j)_k$.

\begin{proof}[Proof of \ref{m34}(iii).]  
We begin with the case $q=1$, so we want to show that the 
map $\Xi:\bbP(\bX)\to\bbP'(\bX)$ is a weak equivalence.
It suffices to show that the map
\[
(\O(j)\owedge \bX^{\wedge j})/\Sigma_j
\to
\bX^{\wedge j}/\Sigma_j
\]
is a weak equivalence for each $j$. Proposition \ref{PropA2} and Remark \ref{RemA3}
show that the $\Sigma_j$ actions are 
free away from the basepoint. We will now employ the following fact: if $X$ is a cellular $G$-spectrum with a free action away from the base point then 
the canonical map from $EG_+\wedge_{G}X$ to $X/G$ is a weak equivalence. Moreover, $(EG)_+\wedge_{G}X$ sits in a  fibration with base $BG$ and fibre $X$.
Hence,  it suffices to show that each map
\[
\O(j)\owedge \bX^{\wedge j}
\to
\bX^{\wedge j}
\]
is a weak equivalence.  
Now the object $\bigl(\O(j)\owedge \bX^{\wedge j}\bigr)_k$ 
comes from
\[
\bigvee_{k_1+\cdots+k_j=k}\, \O(j)_k\owedge 
I_{\Sigma_{k_1}\times\cdots\times\Sigma_{k_j}}^{\Sigma_k}
\bigl(
\bX_{k_1}\wedge\cdots\wedge \bX_{k_j}
\bigr)
\]
and we have 
\[
\O(j)_k\owedge
I_{\Sigma_{k_1}\times\cdots\times\Sigma_{k_j}}^{\Sigma_k}
\bigl(
\bX_{k_1}\wedge\cdots\wedge \bX_{k_j}
\bigr)
\cong
I_{\Sigma_{k_1}\times\cdots\times\Sigma_{k_j}}^{\Sigma_k}
\Bigl(\O(j)_k\owedge
\bigl(
\bX_{k_1}\wedge\cdots\wedge \bX_{k_j}
\bigr)
\Bigr),
\]
so it suffices by Proposition \ref{m38} to show that each map
\[
\O(j)_k\owedge (\bX_{k_1}\wedge\cdots\wedge \bX_{k_j})
\to
\bX_{k_1}\wedge\cdots\wedge \bX_{k_j}
\]
is a weak equivalence, and this follows from Example \ref{m40}, Proposition 
\ref{Aug29.3}, and Lemma \ref{m39}. 

The general case follows from the case $q=1$ and Example \ref{m40}(ii).
\end{proof}

\section{Proof of Theorem \ref{m96}}

\label{Thom}

It is well known that Thom spectra are commutative symmetric
ring spectra (see for example \cite{MR2578805}; we recall this below).
In this section we show that the Thom spectrum $M\STop$ obtained from the bar construction  is weakly
equivalent, in the category of commutative symmetric ring spectra, to the
commutative symmetric ring spectrum $(\bM_\STop)^{\mathrm{comm}}$ given by 
Theorem \ref{Nov12.3}.  

Our first task is to construct the following chain of weak equivalences 
in the category of symmetric spectra.
\begin{equation}
\label{m94}
\bM_{\STop}
\xleftarrow{f_1}
\bY
\xrightarrow{f_2}
\bX
\xrightarrow{f_3}
M\STop
\end{equation}

First recall that $M\STop$ has as $k$-th space the Thom space $T(\STop(k))$.  The
$\Sigma_k$ action on $T(\STop(k))$ is induced by the conjugation  action on
$\STop(k)$.

For the construction of $\bX$ we need some facts about multisimplicial 
sets.  Given a space $Z$ and $k\geq 1$, let $S_\bu^{k\text{-multi}}(Z)$ be 
the $k$-fold multisimplicial set whose simplices in multidegree $\bfn$ are 
the maps $\Delta^\bfn\to Z$.  There is a natural map 
\[
|S_\bu^{k-\mathrm{multi}}(Z)|\to Z
\]
(where $|\ |$ denotes realization of the underlying multisimplicial set)
which is a weak equivalence by \cite{MO124837}
and the multisimplicial analogue of \cite[Lemma A.5]{MR0353298}.  If $Z$ is a
based space, there are natural maps 
\[
\lambda:
\Sigma|S_\bu^{k-\mathrm{multi}}(Z)|
\to
|S^1\wedge S_\bu^{k-\mathrm{multi}}(Z)|
\]
and 
\[
\kappa:
S^1\wedge S_\bu^{k-\mathrm{multi}}(Z)
\to
S_\bu^{(k+1)-\mathrm{multi}}(\Sigma Z)
\]
defined as follows.  Given $t\in[0,1]$, $u\in\Delta^{\bfn}$, and
$g:\Delta^{\bfn}\to Z$, let $\bar{t}$ denote the image of $t$ under the
oriented affine homeomorphism $[0,1]\to \Delta^1$, and define 
\[
\lambda(t\wedge [u,g])
=
[(\bar{t},u),s\wedge g],
\]
where $s$ is the nontrivial simplex of $S^1$.  Define 
\[
\kappa(s\wedge g)(\bar{t},u)=t\wedge g(u).
\]
Then the diagram
\begin{equation}
\label{m97}
\xymatrix
{
\Sigma|S_\bu^{k-\mathrm{multi}}(Z)|
\ar[d]
\ar[r]^-\lambda
&
|S^1\wedge S_\bu^{k-\mathrm{multi}}(Z)|
\ar[d]_{|\kappa|}
\\
\Sigma Z
&
|S_\bu^{(k+1)-\mathrm{multi}}(\Sigma Z)|
\ar[l]
}
\end{equation}
commutes.

Now let $X_k=|S_\bu^{k-\mathrm{multi}}(T(\STop(k)))|$. 
We define the $\Sigma_k$ action on $X_k$ as follows.  For $\alpha\in\Sigma_k$
and $g:\Delta^\bfn\to T(\STop(k)))$, let
$\alpha(\bfn)=(n_{\alpha^{-1}(1)},\ldots,n_{\alpha^{-1}(k)})$
and let $\alpha(g)$ be the
composite
\[
\Delta^{\alpha(\bfn)}
\xrightarrow{\alpha^{-1}}
\Delta^{\bfn}
\xrightarrow{g}
T(\STop(k))
\xrightarrow{\alpha}
T(\STop(k)).
\]
This makes $S_\bu^{k-\mathrm{multi}}(T(\STop(k)))$ an object of 
$\Sigma_k\ssk$, and now Proposition \ref{m98} gives the $\Sigma_k$ action on
$X_k$.
Next define the structure map
\[
\Sigma X_k\to X_{k+1}
\]
to be the composite
\begin{multline*}
\Sigma|S_\bu^{k-\mathrm{multi}}(T(\STop(k)))|
\xrightarrow{\lambda}
|S^1\wedge S_\bu^{k-\mathrm{multi}}(T(\STop(k)))|
\\
\xrightarrow{|\kappa|}
|S_\bu^{(k+1)-\mathrm{multi}}(\Sigma T(\STop(k)))|
\to
|S_\bu^{(k+1)-\mathrm{multi}}(T(\STop(k+1)))|,
\end{multline*}
where the last map is induced by the structure map of $M\STop$.
Let $\bX$ be the symmetric spectrum consisting of the spaces $X_k$ with these
structure maps.
Define $f_3:\bX\to M\STop$ to be the sequence of weak equivalences
\[
|S_\bu^{k-\mathrm{multi}}(T(\STop(k)))|
\to
T(\STop(k)).
\]
The commutativity of diagram \eqref{m97} shows that $f_3$ is a map of symmetric
spectra.

Next let 
$S_\bu^{k-\mathrm{multi,}{\mathord{\pitchfork}}}(T(\STop(k)))$ be the
sub-multisemisimplicial set of $S_\bu^{k-\mathrm{multi}}(T(\STop(k)))$
consisting of maps whose restrictions to each face of $\Delta^{\bfn}$ are
transverse to the zero section (see \cite{MR1201584} for topological transversality).  Let $\bY$ be the subspectrum of $\bX$ with
$k$-th space $|S_\bu^{k-\mathrm{multi,}{\mathord{\pitchfork}}}(T(\STop(k)))|$,
and let $f_2:\bY\to\bX$ be the inclusion.

\begin{lemma}
\label{m104}
$f_2$ is a weak equivalence.
\end{lemma}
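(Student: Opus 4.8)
The statement is that the inclusion $f_2:\bY\to\bX$ is a weak equivalence of symmetric spectra, i.e.\ that for each $k$ the inclusion
\[
|S_\bu^{k\text{-multi},{\mathord{\pitchfork}}}(T(\STop(k)))|
\hookrightarrow
|S_\bu^{k\text{-multi}}(T(\STop(k)))|
\]
is a weak homotopy equivalence. The plan is to show this at the level of homotopy groups by a transversality argument: any compact family of simplices into the Thom space can be perturbed, relative to a subfamily already transverse, to one that is transverse to the zero section on every face. Concretely, I would verify that the pair $(S_\bu^{k\text{-multi}}, S_\bu^{k\text{-multi},{\mathord{\pitchfork}}})$ satisfies a relative lifting/extension property against the inclusions $\partial D^\bfn\hookrightarrow D^\bfn$ of boundaries of representable multisemisimplicial sets (equivalently, the relative horn-filling conditions in each multidegree), and then invoke a Whitehead-type argument to conclude that the inclusion of realizations is a weak equivalence. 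This is the same style of argument as the multi-Kan verification for $R_k$ (Proposition \ref{m33}) and as \cite[Lemma 15.12]{LM12}.

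The key steps, in order, are: (1) Reduce to showing that for each $n$ and each map $\alpha:\partial\Delta^n_{\mathrm{simp}}\to |S_\bu^{k\text{-multi},{\mathord{\pitchfork}}}(T(\STop(k)))|$ together with an extension $\bar\alpha:\Delta^n_{\mathrm{simp}}\to |S_\bu^{k\text{-multi}}(T(\STop(k)))|$, there is a homotopy rel $\partial\Delta^n_{\mathrm{simp}}$ from $\bar\alpha$ to a map landing in the transverse subcomplex; this is the standard criterion for a subcomplex inclusion to be a weak equivalence. (2) Unwind the realization to replace the simplex $\Delta^n_{\mathrm{simp}}$ and the multidegree data by an honest map from a product of simplices $\Delta^n\times\Delta^{\bfm}$ (for appropriate $\bfm$) into $T(\STop(k))$, transverse to the zero section on the part of the boundary coming from $\partial\Delta^n$. (3) Apply parametrized transversality: write $T(\STop(k))$ as the one-point compactification of the total space of the universal $\Z$-oriented topological $\R^k$-microbundle over $B\STop(k)$, push the map off the point at infinity, and then perturb it (rel the already-transverse boundary) to be transverse to the zero section, doing this compatibly across faces by downward induction on the skeleta of $\Delta^n\times\Delta^{\bfm}$; transversality for maps into microbundles is available by the topological transversality theorem (as used elsewhere in \cite{LM12}, e.g.\ in the proof of \cite[Theorem 6.5]{LM12}). (4) Reassemble the perturbed family into a homotopy of multisemisimplicial maps rel boundary, producing the desired homotopy in the realization, and conclude.

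The main obstacle is step (3): making the transversality perturbation \emph{simultaneously} compatible with the face structure, so that after perturbing the map is transverse to the zero section \emph{on every face} of every simplex in the family, not merely generically. This requires doing the perturbation inductively over faces/skeleta, extending a perturbation already chosen on a face to the next cell without destroying the transversality already achieved, which is exactly the kind of bookkeeping handled in \cite[Section 6 and Section 15]{LM12}; I would organize it by an induction on dimension, at each stage using relative topological transversality to extend. Once that is in place, steps (1), (2), and (4) are routine manipulations of multisemisimplicial sets and their realizations, using the weak-equivalence criterion of \cite[Theorem I.11.2]{MR1711612} (or its semisimplicial analogue as in the proof of Lemma \ref{m39}).
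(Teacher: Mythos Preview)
Your proposal is essentially correct in spirit---the heart of the matter is topological transversality, and you have identified that correctly---but it takes a different and more laborious route than the paper, and your step~(2) is not quite right as written.

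The paper's proof is much shorter. It observes that both $S_\bu^{k\text{-multi},{\mathord{\pitchfork}}}(T(\STop(k)))$ and $S_\bu^{k\text{-multi}}(T(\STop(k)))$ satisfy the multi-Kan condition (the latter trivially, the former by transversality extension), so by Proposition~\ref{m32} both have compatible degeneracies. One can then pass to the underlying multi\emph{simplicial} sets and reduce to the diagonal semisimplicial sets. But the diagonal of $S_\bu^{k\text{-multi}}(Z)$ is just the ordinary singular semisimplicial set $S_\bu(Z)$, and the diagonal of the transverse version is the sub-semisimplicial set of singular simplices transverse to the zero section on every face. That this inclusion is a weak equivalence is then a direct consequence of topological transversality (\cite[Section~9.6]{MR1201584}) applied one simplex at a time. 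All the multisemisimplicial bookkeeping you describe in steps~(2)--(4) is thereby avoided.

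Your step~(2) has a genuine difficulty: a map $\Delta^n_{\mathrm{simp}}\to |A|$ for a multisemisimplicial set $A$ does not in general factor through a single product cell $\Delta^n\times\Delta^{\mathbf m}$, so ``unwinding the realization'' to get an honest map from a product of simplices is not straightforward. Your alternative phrasing in terms of relative lifting against $\partial D^\bfn\hookrightarrow D^\bfn$ is closer to the mark, but you would still need to supply the passage from that multisemisimplicial lifting property to a weak equivalence of realizations, which is not entirely formal without degeneracies. The paper's reduction to diagonals via Proposition~\ref{m32} is precisely the device that sidesteps this.
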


\begin{proof}
Since $S_\bu^{k-\mathrm{multi,}{\mathord{\pitchfork}}}(T(\STop(k)))$ and
$S_\bu^{k-\mathrm{multi}}(T(\STop(k)))$ satisfy the multi-Kan condition, they
have compatible degeneracies by Proposition \ref{m32}. It therefore suffices to
show that the inclusion 
$
S_\bu^{k-\mathrm{multi,}{\mathord{\pitchfork}}}(T(\STop(k)))
\subset
S_\bu^{k-\mathrm{multi}}(T(\STop(k)))
$
induces a weak equivalence on the diagonal semi-simplicial sets, and this
follows from 
\cite[Section
9.6]{MR1201584} and 
the definition of homotopy groups (\cite[Definition 
3.6]{MR1206474}). 
\end{proof}

It remains to construct $f_1$.  Let $S\subset T(\STop(k))$ be the zero 
section. First we observe that, if $g:\Delta^\bfn\to T(\STop(k))$ is a map 
whose restriction to each face is transverse to $S$, 
we obtain 
an element $F\in\ad_\STop(\Delta^\bfn)$ by letting 
$F(\sigma,o) $ be $g^{-1}(S)\cap \sigma$ with the orientation determined by $o$.
This construction gives a map 
\[
S_\bu^{k-\mathrm{multi,}{\mathord{\pitchfork}}}(T(\STop(k)))
\to
(\bR_\STop)_k,
\]
in $\Sigma_k\ssk$, and applying geometric realization gives 
a $\Sigma_k$ equivariant map
$Y_k\to (\bM_\STop)_k$; we let $f_1$ be the sequence of these maps.


\begin{lemma}
$f_1$ is a weak equivalence.
\end{lemma}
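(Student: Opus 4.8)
The plan is to prove that, for every $k$, the map of objects of $\Sigma_k\ssk$
\[
\tilde f_1\colon S_\bu^{k-\mathrm{multi},{\mathord{\pitchfork}}}(T(\STop(k)))\to (\bR_\STop)_k
\]
whose realization is $f_1$ in level $k$ is a weak equivalence; geometric realization then shows $f_1$ is a levelwise weak equivalence of symmetric spectra. The source of $\tilde f_1$ satisfies the multi-Kan condition (this was observed in the proof of Lemma \ref{m104}) and the target satisfies it by Proposition \ref{m33}; hence both multisemisimplicial sets have compatible degeneracies by Proposition \ref{m32}, and, exactly as in the proof of Lemma \ref{m104} (using \cite[Section 9.6]{MR1201584} and \cite[Definition 3.6]{MR1206474}), it suffices to show that $\tilde f_1$ induces a weak equivalence on the associated diagonal semisimplicial sets, i.e.\ an isomorphism on the combinatorially defined homotopy groups.

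Next I would identify the two sides. For the source, Lemma \ref{m104} together with the weak equivalence $|S_\bu^{k-\mathrm{multi}}(T(\STop(k)))|\to T(\STop(k))$ recalled at the start of Section \ref{Thom} shows that the source realizes $T(\STop(k))$, with a homotopy class represented by a map $g\colon(\Delta^n)^{k}\to T(\STop(k))$ whose restriction to each face is transverse to the zero section $S$. The map $\tilde f_1$ sends such a $g$ to the $\STop$-ad whose value on a cell $(\sigma,o)$ is the reordered, reoriented preimage $\Inv(g|_\sigma,o)$. This is precisely the Pontryagin--Thom construction carried out cellwise, and Thom transversality for topological manifolds (together with the existence of normal microbundles) shows it is an isomorphism on homotopy groups: surjectivity follows by thickening a given $\STop$-ad $F$ over $\Delta^\bfn$ to a face-transverse map into $T(\STop(k))$ whose zero-section preimage recovers $F$ (the underlying data of $F$ is exactly a coherent family of codimension-$k$ $\STop$-manifolds over the faces of $\Delta^\bfn$, which is what such a thickening remembers), and injectivity follows by applying the same thickening to a bordism between two ads. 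An alternative, shorter route is to observe that, under the equivalences just described, $\tilde f_1$ is the comparison map already used to prove $\bM_\STop\simeq M\STop$ in \cite[Section 17 and Appendix B]{LM12}, so that result applies directly.

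I expect the thickening (surjectivity) step to be the main obstacle: one must reconstruct, from an abstract $\STop$-ad---a coherent diagram of topological manifolds-with-boundary indexed by the cells of $\Delta^\bfn$---an honest face-transverse map into the Thom space, with the normal-microbundle data tracked compatibly over all faces and in a way that matches the reindexing conventions ($\beta$ and $\Inv$) built into $\tilde f_1$ so as to make it $\Sigma_k$-equivariant. The topological input (transversality, normal microbundles) is standard; the delicate part is the orientation and sign bookkeeping---matching the orientation $o'$ and the block permutations to the conventions of \cite{LM12}---which must be done carefully but involves no new ideas.
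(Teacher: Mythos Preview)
Your proposal is correct, and your alternative route is close to what the paper does, but the paper's reduction is organized differently and more efficiently. Rather than pass to the diagonal, the paper restricts to the ``last edge'': for a $k$-fold multisemisimplicial set $A$ it forms the semisimplicial set $A'$ with $A'_n=A_{0,\ldots,0,n}$ and the comparison map $\phi:|A'|\to|A|$. For $A=(\bR_\STop)_k$ this $A'$ is exactly the semisimplicial set $P_k$ whose realization is $(\bQ_\STop)_k$, and for $A=S_\bu^{k\text{-multi}}(Z)$ it is the ordinary singular set $S_\bu(Z)$. One then gets a commutative ladder with top row $(\bM_\STop)_k\xleftarrow{f_1} Y_k\xrightarrow{f_2} X_k\xrightarrow{f_3} T(\STop(k))$ and bottom row $(\bQ_\STop)_k\xleftarrow{g_1}|S_\bu^{\pitchfork}(T(\STop(k)))|\xrightarrow{g_2}|S_\bu(T(\STop(k)))|\xrightarrow{g_3} T(\STop(k))$, with vertical maps $\phi_1$, $\phi_2$, and the identity. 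Since $f_2,f_3,g_2,g_3$ are already known to be weak equivalences, $\phi_2$ is one by the right-hand square; and $g_1$ is the single-variable Pontryagin--Thom comparison of \cite[Appendix B]{LM12} while $\phi_1$ is the $\bQ\to\bM$ comparison of \cite[Section 15]{LM12}, so the left-hand square gives $f_1$. The advantage over your main approach is that no new Pontryagin--Thom argument is needed in the multi-indexed setting: the edge restriction lands directly in the single-simplicial world where both hard inputs are already available. Your diagonal approach would work too, but you would have to redo the thickening argument for face-transverse maps out of $(\Delta^n)^k$, which is exactly the extra labor the paper avoids. Also, your alternative cites \cite[Section 17 and Appendix B]{LM12}; note that Appendix B compares $\bQ_\STop$ (not $\bM_\STop$) with $M\STop$, so you still need the $\bQ\to\bM$ comparison from \cite[Section 15]{LM12} to close the loop.
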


\begin{proof}
For a $k$-fold multisemisimplicial set $A$, let $A'$ be the semisimplicial set
whose $n$-th set is $A_{0,\ldots,0,n}$.  There is an evident map 
\[
\phi:|A'|\to|A|.
\]
If $A$ is $S_\bu^{k-\mathrm{multi}}(Z)$, then $A'$ is $S_\bu(Z)$, and if $A$
is $R_k$ then $A'$ is the semisimplicial set $P_k$ of \cite[Definition
15.4(i)]{LM12}, with realization $(\bQ_\STop)_k$ (\cite[Definitions 
15.4(ii) and 15.8]{LM12}).  Now we have a commutative diagram 
\[
\xymatrix{
(\bM_\STop)_k
&
Y_k
\ar[l]_-{f_1}
\ar[r]^-{f_2}
&
X_k
\ar[r]^-{f_3}
&
T(\STop(k))
\\
(\bQ_\STop)_k
\ar[u]_{\phi_1}
&
|S_\bu^{{\mathord{\pitchfork}}}(T(\STop(k)))|
\ar[l]_-{g_1}
\ar[r]^-{g_2}
\ar[u]_{\phi_2}
&
|S_\bu(T(\STop(k)))|
\ar[r]^-{g_3}
&
T(\STop(k))
\ar[u]_=
}
\]
Here $g_3$ is the usual weak equivalence, and
$g_2$ is a weak equivalence by \cite[Section
9.6]{MR1201584} and
the definition of homotopy groups (\cite[Definition
3.6]{MR1206474}), so $\phi_2$ is a weak equivalence. $g_1$ was shown to be a
weak equivalence in \cite[Appendix B]{LM12}, and $\phi_1$ was shown to be a
weak equivalence in \cite[Section 15]{LM12}, so $f_1$ is a weak equivalence as
required.
\end{proof}

This completes the construction of diagram \eqref{m94}.

Next we recall that $M\STop$ is a commutative symmetric ring spectrum with
product
\[
T(\STop(k))\wedge T(\STop(l))
\to T(\STop(k+l)).
\]
$\bX$ is also a commutative symmetric ring spectrum, with the
product
\begin{multline*}
|S_\bu^{k-\mathrm{multi}}(T(\STop(k)))|
\wedge
|S_\bu^{l-\mathrm{multi}}(T(\STop(l)))|
\\
\to
|S_\bu^{(k+l)-\mathrm{multi}}(T(\STop(k))\wedge T(\STop(l)))|
\to
|S_\bu^{(k+l)-\mathrm{multi}}(T(\STop(k+l)))|,
\end{multline*}
and $\bY$ is a commutative symmetric ring spectrum with the product it inherits
from $\bX$.  The maps $f_2$ and $f_3$ are maps of symmetric ring spectra, so 
to complete the proof of Theorem \ref{m96}, it suffices to show

\begin{lemma}
\label{m99}
$(\bM_\STop)^{\mathrm{comm}}$ and $\bY$ are isomorphic in the homotopy
category of commutative symmetric ring spectra.
\end{lemma}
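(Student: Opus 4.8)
The plan is to exhibit $\bY$ as the value at the object $(\bM_\STop)$ of the same rectification machine used in the proof of Theorem~\ref{Nov12.3}, and then invoke the uniqueness of that construction. First I would observe that the ad theory $\ad_\STop$ is a commutative ad theory (its target $\A_\STop$ carries the permutative structure described in Section~\ref{redef} and after Definition~\ref{comad}), so all the structure of Sections~\ref{m4}--\ref{rect} applies: there is a monad $\bbP$ in $\sss$, an action $\nu:\bbP\bR_\STop\to\bR_\STop$, and $(\bM_\STop)^{\mathrm{comm}}=||B_\bu(\bbP',\bbP,\bR_\STop)||$. The key point is that the multisemisimplicial object $\bY$ carries a $\bbP$-action of its own: for a map $a:U(\Delta^{\bfn_1}\times\cdots\times\Delta^{\bfn_j})\to\Sigma_j$ and maps $g_i:\Delta^{\bfn_i}\to T(\STop(k_i))$ transverse to the zero sections, one multiplies the Thom-space factors in the order prescribed by $a$ on each cell, using the (strictly associative, strictly commutative) Thom-space product. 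Because the Thom-space product is genuinely commutative, permuting the factors changes the resulting map by a self-homeomorphism of $T(\STop(k_1+\cdots+k_j))$, and the $\Inv$ construction of Section~\ref{Thom} intertwines this with the operad action on $\bR_\STop$; this is exactly the compatibility that makes $f_1$ a map of $\bbP$-algebras.

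The key steps, in order, are: (1) check that $\bY\in\sss$ underlies a $\bbP$-algebra, with structure maps $\psi_j^{\bY}:\O(j)\owedge\bY^{\wedge j}\to\bY$ defined by reordering Thom-space factors cellwise; (2) verify that the map $f_1:\bY\to\bM_\STop$ of the last lemma is a map of $\bbP$-algebras, i.e.\ $f_1\circ\psi_j^{\bY}=\psi_j\circ(1\owedge f_1^{\wedge j})$ — this is the ``$\Inv$ intertwines the two actions'' calculation, and it is where one must be careful about the permutation $\beta$ reordering the factors of $\Delta^\bfn$ by dimension; (3) note that since the Thom-space product is already strictly commutative, the $\bbP$-action on $\bY$ factors through the quotient monad $\bbP'$, so $\bY$ is already a commutative monoid in $\sss$, and $f_1$ exhibits a weak equivalence of $\bbP$-algebras $\bY\xleftarrow{\sim}\bY$ (trivially) together with the map $f_1$ to $\bR_\STop$ with its $\bbP$-action; (4) run the bar construction on $f_1$: there is a commutative diagram of simplicial $\bbP$-algebras with $B_\bu(\bbP,\bbP,\bR_\STop)$ mapping both to $\bR_{\STop,\bu}$ and, via $f_1$ and the $\bbP$-algebra structure on $\bY$, compatibly to $\bY_\bu$, and the $\bbP\to\bbP'$ comparison then identifies $||B_\bu(\bbP',\bbP,\bR_\STop)||=(\bM_\STop)^{\mathrm{comm}}$ with something weakly equivalent as a commutative symmetric ring spectrum to $\bY$, because $\bY$ is already a $\bbP'$-algebra and $f_1$ is a weak equivalence of $\bbP$-algebras; concretely one gets $\bY\xleftarrow{\sim}B_\bu(\bbP,\bbP,\bY)\to B_\bu(\bbP',\bbP,\bY)$ and a weak equivalence $B_\bu(\bbP',\bbP,\bR_\STop)\to B_\bu(\bbP',\bbP,\bY)$ induced by $f_1$, all maps of (simplicial) commutative symmetric ring spectra after realization, using \cite[Theorem E]{Reedy} as in Section~\ref{rect}.

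I expect the main obstacle to be step~(2): unwinding the definitions of $\Inv$, the action $\psi_j$ on $\bR_\STop$ via $a_*$ (Definition~\ref{m22}), and the cellwise-reordered Thom-space product on $\bY$, and checking they are compatible on the nose (or up to a controlled homotopy) — in particular getting the signs/orientations and the dimension-reordering permutation $\beta$ to match up. The saving grace is that in the $\STop$ case the target permutative category $\A_\STop$ has $\boxtimes$ given by Cartesian product of manifolds, which is strictly commutative, so $\gamma_{x,y}$ is (up to canonical identification) the identity and the ``sign'' issues that complicate the general case collapse. Once step~(2) is in hand, steps~(3) and~(4) are formal, following verbatim the rectification argument of Section~\ref{rect} with $\bR$ replaced by the pair $(\bR_\STop,\bY)$ linked by $f_1$, exactly as Theorem~\ref{m54} adapts that argument to a map of algebras; this yields the desired chain of weak equivalences of commutative symmetric ring spectra and completes the proof of Theorem~\ref{m96}.
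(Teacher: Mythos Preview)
Your approach has a genuine gap at step~(2): the map $f_1$ (or rather its multisemisimplicial avatar $\bW\to\bR_\STop$) is \emph{not} a map of $\bbP$-algebras for the single-colored monad $\bbP$ of Section~\ref{monadsss}. The problem is exactly the one you flag and then dismiss: the Cartesian product in $\A_\STop$ is not strictly commutative. Concretely, take $j=2$ and $a$ constant at the transposition. On the $\bW$ side your $\bbP$-action factors through $\bbP'$, so it produces the Thom-space product $g_1\cdot g_2$, and then $\Inv$ yields the ad assigning $M_1\times M_2$ (with suitable orientation) to the cell $\sigma_1\times\sigma_2$. On the $\bR_\STop$ side, $a_*(\Inv(g_1),\Inv(g_2))$ assigns $\pm\, M_2\times M_1$. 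These are isomorphic but \emph{unequal} objects of $\A_\STop$, hence the two ads are different elements of $(R_\STop)_{\bfn_1,\bfn_2}$. Your ``saving grace'' is a red herring: $\gamma_{x,y}:X\times Y\to Y\times X$ is the swap homeomorphism, which is a nonidentity morphism in $\A_\STop$ (a degree-$1$ map, per Section~\ref{redef}); it is not the identity, and nothing in the ad formalism quotients by such isomorphisms.

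This is precisely why the paper does \emph{not} use the single-colored monad here. It invokes instead the two-colored monad $\bbP$ of Definition~\ref{m100} acting on the pair $(\bW,\bR_\STop)$, building an auxiliary $\Z$-graded category $\B$ whose objects are the maps $g:\Delta^\bfn\to T(\STop(k))$, and treating $\Inv:\B\to\A_\STop$ exactly as $\Sigr:\Ag\to\Ar$ was treated in Sections~\ref{colored}--\ref{rect2}: a functor that is monoidal only up to a specified natural transformation. The data $\bfd_\bs$ then record, cell by cell, both a permutation of factors \emph{and} a choice of how to group inputs before applying $\Inv$, and the preorder on data encodes the cross-product comparison $\Inv(x_1)\times\cdots\times\Inv(x_l)\to\Inv(x_1\bx\cdots\bx x_l)$. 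The rectification argument of Section~\ref{rect2} then produces the chain of weak equivalences linking $(\bM_\STop)^{\mathrm{comm}}$ and $\bY$ through the two-sided bar constructions on $(\bW,\bR_\STop)$. Your step~(4) is the right kind of endgame, but it needs to be run with this two-colored monad, not the single-colored one.
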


The proof of Lemma \ref{m99} is outsourced to Appendix \ref{Lemma m99}. The results of Appendix \ref{Lemma m99} use material from sections 17 and 18.

\section{Relaxed symmetric Poincar\'e complexes}
\label{relaxed}

For a commutative ring $R$ with the trivial involution,  we would like to
apply Theorem \ref{Nov12.3} to obtain a commutative model for the symmetric 
L-spectrum of $R$.  However, the ad theory $\ad^R$ defined in Section 9 of 
\cite{LM12}, with the product defined in \cite[Definition 9.12]{LM12}, is not 
commutative. The difficulty is that this product is defined using a
noncommutative coproduct
\[
\Delta:W\to W\otimes W
\]
for the standard resolution $W$ of $\Z$ by $\Z [\Z /2]$-modules.
In this section and the next we give an equivalent ad theory which is
commutative.

Fix a ring $R$ with involution. For a complex $C$ of left $R$-modules, let $C^t$ be the complex of right $R$-modules obtained from $C$ by applying the involution of $R$. As usual, give $C^t\otimes_RC$ the $\Z/2$-action which switches the factors. Write $\D$  for the category of homotopy finite chain complexes
as in \cite[Definition 9.2(v)]{LM12}.

\begin{definition}  A {\it relaxed quasi-symmetric complex of dimension $n$} 
is a quadruple $(C,D,\beta,\varphi)$, where $C$ is an object of $\D$, $D$ is an
object of $\D$ with a $\Z/2$ action, $\beta$ is a quasi-isomorphism
$C^t\otimes_R C\to D$ which is also a $\Z/2$ equivariant map, and $\varphi$ 
is an element of $D_n^{\Z/2}$.
\end{definition}

\begin{example}
\label{m44}
(i)
If $(C,\varphi)$ is a quasi-symmetric complex as defined in \cite[Definition
9.3]{LM12}, then the quadruple $(C,(C^t\otimes_R C)^W,\beta,\varphi)$ is a 
relaxed quasi-symmetric complex, where $\beta:C^t\otimes_R C\to 
(C^t\otimes_R C)^W$ 
is induced by the augmentation $W\to \Z$.

(ii) Relaxed quasi-symmetric complexes arise naturally from the construction of
the symmetric signature of a Witt space given in \cite{GBF31}; see
\cite{BLM12}.
\end{example}

\begin{definition}
We define a category $\A_{\text{rel}}^R$ (the rel stands for relaxed) as 
follows.  The objects of $\A_{\text{rel}}^R$ are the relaxed quasi-symmetric
complexes.  A morphism $(C,D,\beta,\varphi)\to (C',D',\beta',\varphi')$ is a
pair $(f:C\to C',g:D\to D')$, where $f$ and $g$ are $R$-linear chain 
maps, $g$ is $\Z/2$ equivariant, 
$g\beta=\beta'(f\otimes f)$, 
and (if $\dim \varphi=\dim\varphi'$)
$g_*(\varphi)=\varphi'$.
\end{definition}

\begin{definition}
\label{quasi}
A morphism $(f,g):(C,D,\beta,\varphi)\to (C',D',\beta',\varphi')$ between 
objects of the same dimension is a {\it quasi-isomorphism} if $f$ (and hence
also $g$) is a quasi-isomorphism.
\end{definition}

$\A_{\text{rel}}^R$ is a  $\Z$-graded 
category, where $d$ was defined above, $i$ takes $(C,D,\beta, \varphi)$ 
to $(C,D,\beta, -\varphi)$
and $\emptyset_n$ is the $n$-dimensional object 
for which $C$ and $D$ are zero in all degrees. Since the set of morphisms between objects of different dimensions is independent of the chains $\varphi$ the category $\A_{\text{rel}}^R$ is a balanced in the obvious way (\cite[Definition 5.1]{LM12}).

\begin{remark}
\label{m45}
The construction of Example \ref{m44}(i) gives a morphism 
\[
\A^R\to \A_{\text{rel}}^R
\]
of $\Z$-graded categories. 
\end{remark}

Next we must say what the $K$-ads with values in $\A_{\text{rel}}^R$ are.  We
need some preliminary definitions and a lemma.  For a balanced pre $K$-ad $F$ 
we will use the notation
\[
F(\sigma,o)=(C_\sigma,D_\sigma,\beta_\sigma,\varphi_{\sigma,o}).
\]

Recall \cite[Definition 9.7]{LM12}.

\begin{definition}
A balanced pre $K$-ad $F$ is 
{\it well-behaved} if $C$ and $D$ are well-behaved.
\end{definition}

Next recall \cite[Definition 12.2]{LM12}.  

\begin{lemma}
\label{m41}
Let $F$ be a well-behaved pre $K$-ad. Then

\rm{(i)} $C^t\otimes_R C$ is well-behaved, and

\rm{(ii)} the map
\[
(\beta_\sigma)_*:
H_*((C^t\otimes_R C)_\sigma/(C^t\otimes_R C)_{\partial\sigma})
\to
H_*(D_\sigma/D_{\partial\sigma})
\]
is an isomorphism for each $\sigma$.
\end{lemma}

\begin{proof}
For part (i), first recall (by \cite[Definitions 9.7(b) and 9.6(ii)]{LM12}) that $C$ takes morphism to cofibrations, that is, split monomorphisms in each degree. Moreover, the canonical map from
$C_{\partial \sigma}$ to $C_\sigma$ is a cofibration for each $\sigma$. 
Let us fix a cell $\sigma$ of  $K$ for the rest of the proof. For each degree $n$, we will construct a set $S_\tau$ and a basis $(b_s)_{s\in S_\tau}$ of $C_\tau$ by induction for all $\tau \subset \sigma$ in a functorial way. We will omit the degree from the notation for this part. For points $\tau$, we simply choose a basis. These add up to a basis of $C_{\partial \rho}$ for 1-cells $\rho$. Given a cell $\tau$  of dimension $k$  we may suppose that we have already constructed a basis of $C_{\partial \tau}$. Since $C_{\partial \tau}\ra C_\tau $ is split injective and all modules, including the quotient module,  are free we may extend this basis to a basis of $C_\tau$. This way, we constructed a basis of $C_{\partial \rho}$ for all $\rho$ of dimension $k+1$ as well: the set of all $b_s$ with $s\in \bigcup_{\tau \subsetneq \rho} S_\tau$ gives a basis because the free functor commutes with colimits. 
\par
We have to show that the map
$$ (C^t\otimes_RC)_{\partial \sigma}=\colim_{\tau \subsetneq \sigma}  C^t_\tau \otimes_R C_\tau \lra C_\sigma^t \otimes C_\sigma $$
is a cofibration. A basis of the target in degree $n$ is indexed by the union $\bigcup_{p+q=n}S_{\sigma,p }\times S_{\sigma,q}$ where we now have to take care of the different degrees in the notation. A basis for the source is the subset given by the union of all pairs coming from $S_{\tau,p} \times S_{\tau,q}$ for $\tau \subsetneq \sigma$. Since the map is induced by the free functor it is split injective. 

For part (ii), first observe that the fact that $C^t\otimes_R C$ and $D$ are
well-behaved implies that they are
Reedy cofibrant (\cite[Definition 15.3.3(2)]{MR1944041}).
The colim that defines $(C^t\otimes_R C)_{\partial\sigma}$ is a hocolim 
by \cite[Theorem 19.9.1(1) and Proposition 15.10.2(2)]{MR1944041},
and similarly for $D_{\partial\sigma}$, so the map 
\[
(\beta_\sigma)_*:H_*((C^t\otimes_R C)_{\partial\sigma})\to
H_*(D_{\partial\sigma})
\]
is an isomorphism by \cite[Theorem 19.4.2(1)]{MR1944041}, and this implies the 
lemma.
\end{proof}

Recall \cite[Example 3.12 ]{LM12}.

\begin{definition}
A balanced pre $K$-ad
$F$ is {\it closed} if, for each $\sigma$, the map from the cellular chains 
$ \cl(\sigma)$ to $D_\sigma$
which takes $\langle \tau,o\rangle$ to $\varphi_{\tau,o}$
is a chain map.
\end{definition}

Note that if $F$ is closed then
$\varphi_{\sigma,o}$ represents an element $[\varphi_{\sigma,o}]\in 
H_*(D_\sigma/D_{\partial\sigma})$. 

\begin{notation}
\label{ff11}
For a balanced pre $K$-ad $F$ and a cell $\sigma$ of $K$, let
\[
j_\sigma:
(C^t\otimes_R C)_\sigma/(C^t\otimes_R C)_{\partial\sigma}
\to
(C_\sigma/C_{\partial\sigma})^t\otimes_R C_\sigma
\]
be the map that takes $[c \otimes c']$ to $[c]\otimes c'$.
\end{notation}

The next definition is a little more complicated than the corresponding
definition in \cite[Section 9]{LM12} in order to satisfy the extra
condition in Definition \ref{comad}.

\begin{definition}
\label{m47}
(i) 
A {\it balanced $K$-ad} is a 
pre $K$-ad $F$ with the following properties:

(a) it is balanced, well-behaved and closed, and

(b) for each $\sigma$ the 
slant product with 
$(j_\sigma)_*(\beta_\sigma)_*^{-1}([\varphi_{\sigma,o}])$
is an isomorphism
\[
H^*(\Hom_R(C_\sigma,R))
\to
H_{\dim \sigma-\deg F-*}(C_\sigma/C_{\partial\sigma}).
\]

(ii)A $K$-ad is a pre $K$-ad which is naturally quasi-isomorphic to a balanced
$K$-ad.

We mention that, by the definition of an ad theory,  a $(K,L)$-ad is just a $(K,L)$-pread which is a $K$-ad.
\end{definition}
Write $\ad_{\text{rel}}^R$ for the set of $K$ ads with values in
$\A_{\text{rel}}^R$.

\begin{remark}
\label{m46}
The morphism of Remark \ref{m45} takes ads to ads.
\end{remark}

\begin{thm}
\label{m42}
$\ad_{\mathrm{rel}}^R$ is an ad theory.
Moreover, 
when $R$ is commutative with the trivial involution,
$\ad_{\text{rel}}^R$ is a commutative ad theory.
\end{thm}

For the proof of Theorem \ref{m42} we need a product operation.

\begin{definition}
(i) For $i=1,2$, let $R_i$ be a ring with involution and let 
$(C^i,D^i,\beta^i,\varphi^i)$
be an object of
$\A_{\text{rel}}^R$.  Define
\[
(C^1,D^1,\beta^1,\varphi^1)\otimes 
(C^2,D^2,\beta^2,\varphi^2)
\]
to be the following object of $\A_{\text{rel}}^{R_1\otimes R_2}$:
\[
(C^1\otimes C^2,D^1\otimes D^2, \gamma, \varphi^1\otimes\varphi^2),
\]
where $\gamma$ is the composite
\begin{multline*}
(C^1\otimes C^2)^t\otimes_{R_1\otimes R_2}\,(C^1\otimes C^2)
\cong 
((C^1)^t\otimes_{R_1} C^1) \otimes ((C^2)^t\otimes_{R_2} C^2)
\\
\xrightarrow{\beta^1\otimes \beta^2}
D^1\otimes D^2.
\end{multline*}

(ii) For $i=1,2$, suppose given
a ball complex $K_i$ and a
pre $K_i$-ad $F_i$ of degree $k_i$ with values in $\A_{\text{rel}}^{R_i}$.
Define a pre $(K_1\times K_2)$-ad $F_1\otimes F_2$ with values in
$\A_{\text{rel}}^{R_1\otimes R_2}$ by
\[
(F_1\otimes F_2)(\sigma\times\tau,o_1\times o_2)
=
i^{k_2\dim \sigma} F_1(\sigma,o_1)\otimes F_2(\tau,o_2).
\]
\end{definition}

\begin{lemma}
\label{m43}
For $i=1,2$, suppose given a ball complex $K_i$ and a
$K_i$-ad $F_i$ with values in $\A_{\mathrm{rel}}^{R_i}$.
Then $F_1\otimes F_2$ is a $(K_1\times K_2)$-ad. \qed
\end{lemma}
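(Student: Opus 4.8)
The plan is to verify Definition \ref{m47} conditions (a) and (b) for $F_1\otimes F_2$ by reducing everything to the corresponding facts for $F_1$ and $F_2$, which hold by hypothesis since each $F_i$ is a $K_i$-ad. First I would check that $F_1\otimes F_2$ is balanced, well-behaved, and closed. Balancedness follows from the definition of the involution $i$ on $\A_{\text{rel}}^{R_1\otimes R_2}$ together with the sign $i^{k_2\dim\sigma}$ in the definition of $(F_1\otimes F_2)(\sigma\times\tau,o_1\times o_2)$, exactly as in the analogous computation in \cite[Section 12]{LM12}. Well-behavedness of the pair $(C^1\otimes C^2, D^1\otimes D^2)$ follows because a tensor product of well-behaved objects of $\D$ is well-behaved: on the $C$-side this is \cite[Definition 9.7]{LM12} applied to $C^1$ and $C^2$, and on the $D$-side one uses that $D^1,D^2$ are well-behaved and that the $\Z/2$-action on $D^1\otimes D^2$ is the diagonal one. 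Closedness of $F_1\otimes F_2$ follows from closedness of $F_1$ and $F_2$ by the usual Leibniz-rule argument for the differential on a tensor product of chain complexes: the map $\cl(\sigma\times\tau)\to D^1_\sigma\otimes D^2_\tau$ sending $\langle\rho_1\times\rho_2, o_1\times o_2\rangle$ to $\varphi^1_{\rho_1,o_1}\otimes\varphi^2_{\rho_2,o_2}$ is a chain map because each factor map is.

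Next I would verify condition (b), the Poincar\'e duality condition. The key observation is that the map $\gamma$ in part (i) of the product definition is, by construction, $\beta^1\otimes\beta^2$ composed with a canonical isomorphism, so that $\gamma_*^{-1}([\varphi^1_{\sigma,o_1}\otimes\varphi^2_{\tau,o_2}])$ is identified with $(\beta^1_*)^{-1}([\varphi^1_{\sigma,o_1}])\otimes(\beta^2_*)^{-1}([\varphi^2_{\tau,o_2}])$ under the K\"unneth isomorphism (here Lemma \ref{m41}(ii) guarantees that the relevant $\beta_*$ maps are isomorphisms in the first place, and the K\"unneth theorem applies because everything in sight is a complex of free modules with the subcomplexes split off, by well-behavedness). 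Then the slant product with this element, as a map
\[
H^*(\Hom_{R_1\otimes R_2}(C^1_\sigma\otimes C^2_\tau, R_1\otimes R_2))
\to
H_{\dim(\sigma\times\tau)-\deg(F_1\otimes F_2)-*}\bigl((C^1_\sigma\otimes C^2_\tau)/(C^1_\sigma\otimes C^2_\tau)_{\partial}\bigr),
\]
factors as the tensor product of the two slant-product maps for $F_1$ and $F_2$ respectively, each of which is an isomorphism by Definition \ref{m47}(b) applied to $F_i$. This uses the compatibility of the slant product with tensor products and the identification $\Hom_{R_1\otimes R_2}(C^1\otimes C^2, R_1\otimes R_2)\cong \Hom_{R_1}(C^1,R_1)\otimes\Hom_{R_2}(C^2,R_2)$, valid because the $C^i_\sigma$ are finitely generated and projective (part of the definition of $\D$). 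Hence the slant product for $F_1\otimes F_2$ is a tensor product of isomorphisms, so an isomorphism, as required.

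I expect the main obstacle to be the bookkeeping of signs and of the various canonical isomorphisms — in particular checking that the sign $i^{k_2\dim\sigma}$ is precisely what makes $F_1\otimes F_2$ balanced and makes the closedness (chain-map) condition work out, and that the K\"unneth and $\Hom$-tensor identifications are compatible with the slant product in the way needed. This is exactly the kind of calculation carried out in \cite[Section 12 and Definition 9.12]{LM12} for the non-relaxed product, and the relaxed case is genuinely parallel because the extra data $(D,\beta)$ simply tensors along, with $\gamma$ defined so that the $D$-side duality isomorphism is transported from the $C$-side. So the proof should amount to citing Lemma \ref{m41}, the well-behavedness stability of tensor products from \cite{LM12}, and the K\"unneth theorem, and then observing that each defining condition for a $K$-ad is a tensor product of the corresponding conditions for $F_1$ and $F_2$.
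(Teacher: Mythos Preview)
Your proposal is correct and is exactly the routine verification the paper has in mind; note that the paper gives no proof at all (the lemma is stated with a \qed), so your sketch simply unpacks the ``obvious'' check that each condition in Definition \ref{m47} is stable under tensor product, parallel to the non-relaxed case in \cite[Section 12]{LM12}.
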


\begin{proof}[Proof of \ref{m42}.]
We only need to verify parts (d),(f) and (g) of \cite[Definition 3.10]{LM12}.

For part (d),  we have to show that a pre $K$-ad is a $K$-ad if it restricts to a $\sigma$-ad for each closed cell $\sigma$ of $K$.  It suffices to consider the case of a $K$-pread $F$ with $K=L\sqcup_{\partial \sigma } \sigma$  whose restriction to $L$ is naturally quasi-isomorphic to a balanced $L$-ad $G$ and to a balanced $\sigma$-ad $H$ on $\sigma$. We have to give a quasi-isomorphism of $F$  to a balanced $K$-ad $I$. 
In the following, we will concentrate on the first complex in the datum of a pread and we will use the same letter for this complex. The second complex and all other entries will the be clear then.
Define the restriction of $I$ to $L$ be $G$. It remains to define $I_\sigma$, a map to $F_\sigma$ and the maps from its lower dimensional cells. 
For each $\tau\subset \partial \sigma$ we are given a quasi isomorphism $g_\tau $ from $G_\tau$ to  $F_\tau$ and an $h_\tau$ from $H_\tau$ to $F_\tau$. Since $G_\tau$ and $H_\tau$ are cofibrant we find a map $f_\tau:G_\tau\ra H_\tau$ such that $h_\tau f_\tau$ is homotopic to $g_\tau$. This only uses the fact that isomorphisms in the homotopy category between cofibrant objects can be represented by chain maps up to homotopy. Similarly, using the fact that 
 the restriction of $G$ to $\partial \sigma$ is balanced we find a map $f_{\partial \sigma}:G_{\partial \sigma} \ra H_{\partial \sigma}$ whose restriction to $\tau\subset \partial \sigma$ is homotopic to $H_{\tau \subset \partial \sigma}f_\tau$. We also find a system of compatible homotopies, that is,   a homotopy between  $h_{\partial \sigma}  f_{\partial \sigma}$ and $g_{\partial \sigma}$. Let $I_\sigma$ be the mapping cylinder of 
$$H_{\partial \sigma \subset \sigma}f_{\partial \sigma} : G_{\partial\sigma } \lra H_\sigma.$$ Then the constructed homotopy and the map $h_\sigma$  complete the required quasi-isomorphism $I_\sigma\ra F_\sigma$. The other maps are obvious.

\par

For part (f), 
let $F$ be a $K'$-ad.  
We may assumed it is balanced.  
Let
\[
F(\sigma,o)=(C_\sigma,D_\sigma,\beta_\sigma,\varphi_{\sigma,o}).
\]
We need to define a $K$-ad $E$ which agrees with $F$ on each
residual subcomplex of $K$. 
As in the proof of \cite[Theorem 6.5]{LM12},
we may assume by induction that $K$ is a ball complex structure for the $n$
disk with one $n$ cell $\tau$, and
that $K'$ is a subdivision of $K$ which agrees with $K$ on the boundary.
We only need to define $E$ on the top cell $\tau$ of $K$.  We define
$E(\tau,o)$ to be $(C_\tau, D_\tau, \beta_\tau, \varphi_{\tau,o})$, where
\begin{itemize}
\item
$C_\tau=\colim_{\sigma\in K'} C_\sigma$, 
\item
$D_\tau=\colim_{\sigma\in K'} D_\sigma$, 
\item
$\beta=\colim_{\sigma\in K'} 
\beta_\sigma$,
and
\item
$
\varphi_{\tau,o}=\sum \varphi_{\sigma,o'}
$,
where $(\sigma,o')$ runs through the $n$-dimensional cells of $K'$ with
orientations induced by $o$.
\end{itemize}

The fact that $E$ satisfies part (a) of Definition \ref{m47} is a consequence 
of \cite[Proposition A.1(ii)]{LM12}.  We will deduce the isomorphism in 
part (b) of Definition \ref{m47} from \cite[Proposition 12.4]{LM12}, and for
this we need some facts from \cite[Section 12]{LM12}.

First recall that
for a well-behaved functor
\[
B:\Cell^\flat(K')\to\D,
\]
we write 
\[
\Nat(\cl,B)
\]
for the chain complex
of natural transformations of graded abelian groups; the differential is given
by
\[
\partial(\nu)=\partial\circ \nu-(-1)^{|\nu|}\nu\circ \partial.
\]

Recall \cite[Definition 12.3]{LM12} and also the map $\Phi$ defined just
before the statement of \cite[Lemma 12.6]{LM12}.
Consider the diagram
\[
\xymatrix{
H_*(\Nat(\cl,D))
\ar[r]^-\Phi
&
H_{*+n}(D_\tau,D_{\partial\tau})
\\
H_*(\Nat(\cl,C^t\otimes_R C))
\ar[u]^\beta
\ar[r]^-\Phi
&
H_{*+n}((C^t\otimes_R C)_\tau,(C^t\otimes_R C)_{\partial\tau}).
\ar[u]^\beta
}
\]
The horizontal maps are isomorphisms by
\cite[Lemma 12.6]{LM12}, and the right-hand vertical map is an isomorphism by 
the proof of Lemma \ref{m41}(ii).  Hence the left-hand vertical map is an
isomorphism. 

The collection $\{\varphi_{\sigma,o}\}$ gives a cycle $\nu$ in
$\Nat(\cl,D)$.  Let $\mu\in \Nat(\cl,C^t\otimes_R C)$ be a representative for
$\beta^{-1}([\nu])$.  Now fix an orientation $o$ for $\tau$.
Let $\psi\in C^t_\tau\otimes_R C_\tau$ be
$\sum \mu(\langle\sigma,o' \rangle)$,
where $(\sigma,o')$ runs through the $n$-dimensional cells of $K'$ with
orientations induced by $o$.  
Then $\psi$ is a representative of 
$(j_\sigma)_*\beta^{-1}([\varphi_{\tau,o}])$, so
it suffices to show that the cap product with
$\psi$ is an isomorphism $H^*(\Hom_R(C_\tau,R)) \to H_{n-\deg 
F-*}(C_\tau/C_{\partial\tau})$, and this follows from 
\cite[Proposition 12.4]{LM12}.

It remains to verify part (g) of \cite[Definition 3.10]{LM12}.  
Let $0,1,\iota$ denote the three cells of the unit
interval $I$, with their standard orientations.  
As in the proof of \cite[Theorem 9.11]{LM12}, it suffices to construct 
a relaxed symmetric Poincar\'e $I$-ad $H$ over $\Z$ which takes both 0 and 1 
to the object $(\Z,\Z,\gamma,1)$, where $\gamma$ is the isomorphism 
$\Z\otimes \Z\to \Z$.  The proof of \cite[Theorem 9.11]{LM12} gives
a symmetric Poincar\'e $I$-ad $G$ with 
$G(0)=G(1)=(\Z,\epsilon)$, where $\epsilon:W\to \Z\otimes \Z$ is the 
composite of the augmentation with $\gamma^{-1}$.
Let us denote the object $G(\iota)$ by $(C,\varphi)$.
Applying Remark \ref{m46} to $G$ gives a relaxed symmetric Poincar\'e $I$-ad 
$G'$ with $G'(\iota)=(C,(C\otimes C)^W,\beta,\varphi)$, where $\beta$ is
induced by the augmentation. 
Let $e_0$ (resp., $e_1$) be the inclusion 
$0\hookrightarrow\iota$ (resp., $1\hookrightarrow \iota$) and for $i=0,1$ let 
$g_i=G(e_i):\Z\to C$.  Then
\[
\partial\varphi=
(g_1\otimes g_1)\circ \epsilon-(g_0\otimes g_0)\circ\epsilon
\]
because $G$ is closed.
We can therefore construct the required $I$-ad $H$ from $G'$ by replacing 
$G'(0)$ and $G'(1)$ by $(\Z,\Z,\gamma,1)$.
\end{proof}

\section{Equivalence of the spectra associated to $\ad^R$ and 
$\ad_{\text{\rm{rel}}}^R$}
\label{eq}

By Remark \ref{m46}, 
the morphism
\[
\A^R\to \A_{\text{rel}}^R
\]
of Remark \ref{m45} induces a map of spectra
\[
\bQ^R\to \bQ_{\text{rel}}^R
\]
(see \cite[Section 15]{LM12})
and a map of symmetric spectra
\[
\bM^R\to \bM_{\text{rel}}^R
\]
(see \cite[Section 17]{LM12}).

\begin{thm}
\label{m55}
The maps 
\[
\bQ^R\to \bQ_{\mathrm{rel}}^R
\]
and 
\[
\bM^R\to \bM_{\mathrm{rel}}^R.
\]
are weak equivalences.
\end{thm}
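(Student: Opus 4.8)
The plan is to show that the morphism of $\Z$-graded categories $\A^R\to\A_{\text{rel}}^R$ from Remark \ref{m45} induces an equivalence on the associated spectra by verifying the hypotheses of a comparison result from \cite{LM12}. Recall from \cite[Section 15]{LM12} that the Quinn spectrum $\bQ$ is built from the ad theory via a sequence of spaces $\ad^k(\Delta^\bfn)$, and two ad theories that are related by a suitable natural transformation have weakly equivalent spectra provided the transformation is a ``weak map of ad theories'' (in the sense used, e.g., in \cite[Section 15]{LM12}); so it suffices to check that for every ball complex $K$ the induced map on the homotopy types of ad-spaces is a weak equivalence, and since the Quinn and symmetric constructions produce weakly equivalent spectra (\cite[Section 17]{LM12}), it is enough to treat $\bQ^R\to\bQ^R_{\text{rel}}$.

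First I would reduce to a statement about the spaces of ads on a disk. Using the fact (from \cite{LM12}, via the ``bounded ad'' / ``closed ad'' machinery) that the homotopy groups of $\bQ$ are computed by bordism of closed ads, it suffices to show: (i) every closed relaxed symmetric Poincar\'e ad over a sphere is bordant to one in the image of $\A^R\to\A_{\text{rel}}^R$ (surjectivity on $\pi_*$), and (ii) if two ordinary symmetric Poincar\'e ads become bordant after passing to $\A_{\text{rel}}^R$, they were already bordant in $\A^R$ (injectivity on $\pi_*$). Both will follow from a single ``straightening'' construction: given a relaxed quadruple $(C,D,\beta,\varphi)$, the quasi-isomorphism $\beta\colon C^t\otimes_R C\to D$ can be used to transport the class $\varphi\in D_n^{\Z/2}$ back through $\beta$. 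The key point, made precise in Lemma \ref{m41}(ii) and its proof, is that $\beta_*$ is an isomorphism on the relevant relative homology groups, so $\beta_*^{-1}([\varphi_{\sigma,o}])$ is well-defined; replacing $D$ by $(C^t\otimes_R C)^W$ and $\varphi$ by a chain-level lift of $\beta^{-1}[\varphi]$ (using that $(C^t\otimes_R C)^W\to C^t\otimes_R C$ is a quasi-isomorphism, by the standard contractibility of $W$) produces an ordinary quasi-symmetric complex, and the Poincar\'e condition of Definition \ref{m47}(b) is exactly the condition \cite[Definition 9.3]{LM12} under this identification.

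Concretely, I would build an explicit homotopy. Given a relaxed $K$-ad $F$, form the mapping-cylinder-type $I\times K$-ad interpolating between $F$ and its straightened version $F'$, where on the cell $\iota\times\sigma$ one uses the factorization of $\beta$ through $(C^t\otimes_R C)^W$ together with a contracting homotopy for $W$; the well-behavedness and closedness conditions are preserved because they only involve $C$ (which is unchanged) and the chain-level structure maps, which vary through quasi-isomorphisms. This shows the map on ad-spaces is a weak equivalence fibrewise over each ball, hence the map of spectra is a weak equivalence; the symmetric-spectrum statement then follows since, for both ad theories, $\bM$ and $\bQ$ are related by the comparison of \cite[Section 17 and Appendix B]{LM12}, which is natural in the ad theory.

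The main obstacle will be step (i)/(ii)'s chain-level bookkeeping: one must check that the straightening $(C,D,\beta,\varphi)\mapsto (C,(C^t\otimes_R C)^W,\text{aug},\tilde\varphi)$ can be done \emph{functorially} and \emph{compatibly with the ball-complex structure} (i.e.\ compatibly with all the face inclusions $C_{\partial\sigma}\to C_\sigma$ and the colimit formulas for ads on disks), so that it assembles into an actual map of ad theories rather than just a pointwise construction; this is where the well-behavedness/Reedy-cofibrancy input of Lemma \ref{m41} and \cite[Section 12]{LM12} is essential, since it guarantees the needed lifts exist and can be chosen coherently. The $\Z/2$-equivariance of the lift $\tilde\varphi$ (it must lie in the fixed points, not merely be fixed up to homotopy) is the one subtlety that requires care, and I expect to handle it by working with the standard $\Z/2$-equivariant contraction of $W$ rather than an arbitrary one.
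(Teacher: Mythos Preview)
Your overall strategy matches the paper's: reduce to showing that the map on bordism groups $\Omega^R_*\to(\Omega_{\mathrm{rel}}^R)_*$ is an isomorphism (this is Proposition~\ref{m48}, and the reduction is via \cite[Theorem 16.1, Remark 14.2(i), Corollary 17.9(iii)]{LM12}), then prove surjectivity and injectivity separately by ``straightening'' a relaxed ad back to an ordinary one. The surjectivity idea---transport $[\varphi]$ through $H_*(D^{\Z/2})\to H_*(D^{h\Z/2})\xleftarrow{\cong}H_*((C^t\otimes_R C)^{h\Z/2})$ and pick a representative $\psi$---is exactly Lemma~\ref{m53}.

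Where you diverge is in what you think the obstacle is. You aim for a straightening that is \emph{functorial over all ball complexes} so as to produce a map of ad theories in the reverse direction; you flag this as the main difficulty. The paper never does this and never needs to. Because the reduction is to bordism groups, only $*$-ads and $I$-ads are in play. For surjectivity the paper does not even seek an exact preimage: it shows $(C,D,\beta,\varphi)$ is \emph{bordant} to $(C,(C^t\otimes_R C)^W,\gamma,\psi)$ via two short moves (Lemma~\ref{m52}): replacing $\varphi$ by any homologous element within the same $D$, and pushing along a map of ads that is the identity on $C$ by a mapping-cylinder bordism. For injectivity one must straighten an $I$-ad so that the boundary really is $-\varphi_0+\varphi_1$ on the nose; the paper isolates exactly this coherence as Lemma~\ref{m50}, which compares the two maps $(C_i^t\otimes_R C_i)^{h\Z/2}\to((C_i^t\otimes_R C_i)^W)^{h\Z/2}$ induced by $\beta$ and by the augmentation and shows they agree in homology using the equivariant diagonal $\Delta:W\to W\otimes W$. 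That is the genuine subtlety, and it is much smaller than the Reedy-cofibrant, all-cells-at-once lift you anticipate.

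One further imprecision: your lift of $\varphi$ goes through $D^{\Z/2}\to D^{h\Z/2}$ first (since $\beta$ lands in $D$, not $D^W$); the chain-level representative then lives in $((C^t\otimes_R C)^W)^{\Z/2}$, which is already $\Z/2$-fixed by construction, so the equivariance worry you raise is handled automatically once you work in homotopy fixed points rather than trying to invert $\beta$ on the nose.
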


\begin{remark}
The method that will be used to prove Theorem \ref{m54} can be used to show 
that 
$\bM^R\to \bM_{\text{rel}}^R$ is weakly equivalent to a map of symmetric ring 
spectra.
\end{remark}

Recall \cite[Definitions 4.1 and 4.2]{LM12}.
By \cite[Theorem 16.1, Remark 14.2(i) and Corollary 17.9(iii)]{LM12},
Theorem \ref{m55} follows from

\begin{prop}
\label{m48}
The map of bordism groups 
\[
\Omega^R_*\to(\Omega_{\mathrm{rel}}^R)_*
\]
is an isomorphism.
\end{prop}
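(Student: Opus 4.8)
The plan is to show that the natural map $\Omega^R_*\to(\Omega_{\mathrm{rel}}^R)_*$ is both surjective and injective by exhibiting, at the level of individual (relaxed) algebraic Poincar\'e complexes, a way to pass back and forth between a relaxed quasi-symmetric complex $(C,D,\beta,\varphi)$ and an ordinary quasi-symmetric complex $(C,\varphi')$ lying in the same bordism class. The key observation is Lemma \ref{m41}(ii): for a well-behaved pre $K$-ad the map $\beta_*$ is an isomorphism on relative homology, so the Poincar\'e duality condition in Definition \ref{m47}(b) only depends on the class $\beta_*^{-1}([\varphi_{\sigma,o}])$ in $H_*((C^t\otimes_R C)_\sigma,(C^t\otimes_R C)_{\partial\sigma})$. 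Thus a relaxed ad carries the same duality information as an ordinary ad whose underlying complex is $C$; the only thing to repair is that $\varphi$ lives in $D$ rather than in $(C^t\otimes_R C)^W$, and that $\beta$ need not be the augmentation-induced map.

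First I would prove surjectivity. Given a relaxed symmetric Poincar\'e complex $(C,D,\beta,\varphi)$ over $R$, I want to produce an ordinary symmetric Poincar\'e complex with the same $C$ that is relaxed-bordant to it. The idea is to use $\beta$ as a comparison: since $\beta:C^t\otimes_R C\to D$ is a $\Z/2$-equivariant quasi-isomorphism, one can (after passing through a cofibrant/acyclic model as in \cite[Section 12]{LM12}, or directly via the well-behaved structure) lift $\varphi\in D_n^{\Z/2}$ to an element of $((C^t\otimes_R C)^W)_n^{\Z/2}$ up to equivariant chain homotopy, and this homotopy gives precisely a relaxed $I$-ad interpolating between $(C,D,\beta,\varphi)$ and $(C,(C^t\otimes_R C)^W,\beta_{\mathrm{aug}},\varphi')$ — the latter being in the image of the functor $\A^R\to\A^R_{\mathrm{rel}}$ of Remark \ref{m45}. (The technology for building such interpolating $I$-ads from chain homotopies is exactly what is used in the proof of part (g) in Theorem \ref{m42} and in \cite[Theorem 9.11]{LM12}.) This shows every class in $(\Omega^R_{\mathrm{rel}})_*$ comes from $\Omega^R_*$.

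For injectivity I would run the same argument one dimension up: a relaxed Poincar\'e $I$-ad (or, more precisely, an ad over a ball complex structure on the disk) between two ordinary symmetric Poincar\'e complexes can be compared, via its own structure map $\beta$ and the equivariant acyclic-model lifting, with an ordinary symmetric Poincar\'e ad between the same two complexes; hence two ordinary complexes that are relaxed-bordant are already bordant. Here I would use Lemma \ref{m41} and the glueing/subdivision machinery (\cite[Proposition A.1(ii)]{LM12} and \cite[Proposition 12.4]{LM12}) to check that all the ad axioms survive the replacement. The main obstacle I anticipate is the careful bookkeeping of $\Z/2$-equivariance and well-behavedness while performing the lifting of $\varphi$ through the quasi-isomorphism $\beta$ over the whole cell structure at once (rather than cell by cell): one needs the lift and the homotopy to be natural in $\sigma$, which is where the Reedy-cofibrancy statements in the proof of Lemma \ref{m41}(ii) and the $\Nat(\cl,-)$ formalism of \cite[Section 12]{LM12} do the real work. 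Once that naturality is in hand, the bordism statement is formal.
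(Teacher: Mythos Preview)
Your plan is essentially the paper's own argument. Surjectivity is exactly Lemma \ref{m53}: map $\varphi\in D^{\Z/2}$ into $D^{h\Z/2}$, pull back along the isomorphism $(\beta^{h\Z/2})_*$ to get $\psi\in (C^t\otimes_R C)^{h\Z/2}$, and then produce the bordism. The paper packages the ``homotopy gives an $I$-ad'' step into a separate Lemma \ref{m52}, which says (i) changing $\varphi$ within its $H_*(D^{\Z/2})$-class, and (ii) applying a map of $*$-ads that is the identity on $C$, both preserve the bordism class; the bordisms are built from the cylinder of $(C,D,\beta,\varphi)$ and a mapping-cylinder pushout, respectively --- precisely the kind of construction you allude to via the proof of part (g).

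For injectivity the paper does not invoke the $\Nat(\cl,-)$ formalism or Propositions A.1(ii)/12.4 (those are used for the gluing axiom in Theorem \ref{m42}, not here). Instead it works directly with relative homology over the three-cell complex $I$: push $[\varphi]$ into $H_*(D^{h\Z/2},\text{ends})$, pull back along $(\beta^{h\Z/2})_*$, and pick a representative $\chi$ with $\partial\chi=-\varphi_0+\varphi_1$. The one genuinely delicate point you should anticipate is Lemma \ref{m50}: checking that the boundary of the pulled-back class really is $-[\varphi_0]+[\varphi_1]$ requires showing that two a priori different maps $(C_i^t\otimes_R C_i)^{h\Z/2}\to((C_i^t\otimes_R C_i)^W)^{h\Z/2}$ (one coming from $D\to D^W$, the other from $\beta$) agree on homology, and the paper uses the equivariant diagonal $\Delta:W\to W\otimes W$ to see this. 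Your ``careful bookkeeping of $\Z/2$-equivariance'' is exactly this step; over $I$ the cell structure is simple enough that no further naturality machinery is needed.
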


The proof of Proposition \ref{m48} will occupy the rest of this section.
The following lemma proves surjectivity.
As usual, for a chain complex $A$ with a $\Z/2$ action, we write $A^{h\Z/2}$
for $(A^W)^{\Z/2}$.  The augmentation induces a map $A^{\Z/2}\to A^{h\Z/2}$.

\begin{lemma}
\label{m53}
Let
\[
(C,D,\beta,\varphi)
\]
be a relaxed symmetric Poincar\'e $*$-ad and let 
\[
\psi\in (C^t\otimes_R C)^{h\Z/2}
\]
represent the image of $\varphi$ under the map
\[
H_*(D^{\Z/2})\to 
H_*(D^{h\Z/2})
\xleftarrow{\cong}
H_*((C^t\otimes_R C)^{h\Z/2})
\]
(where the isomorphism is induced by $\beta$).
Then $(C,\psi)$ is a symmetric Poincar\'e $*$-ad, and $(C,D,\beta,\varphi)$ is
bordant to
\[
(C,(C^t\otimes_R C)^W,\gamma,\psi),
\]
where $\gamma$ is induced by the augmentation. 
\end{lemma}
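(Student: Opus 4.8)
The plan is to produce an explicit bordism $I$-ad interpolating between $(C,D,\beta,\varphi)$ and the object built from the element $\psi$. First I would check the easy part: that $(C,\psi)$ is a symmetric Poincar\'e $*$-ad. Since $\psi\in (C^t\otimes_R C)^{h\Z/2}$ and $\beta$ is a $\Z/2$-equivariant quasi-isomorphism, the homology class $[\psi]$ maps under $\beta_*$ to $[\varphi]\in H_*(D^{\Z/2})$ (by the definition of $\psi$ and Lemma \ref{m41}(ii) applied to the $*$-ad). In particular $\beta_*^{-1}([\varphi])=[\psi]$, so the Poincar\'e duality condition (b) of Definition \ref{m47} for $(C,D,\beta,\varphi)$ — which says that slant product with $\beta_*^{-1}([\varphi])$ is an isomorphism — becomes exactly the statement that slant product with $[\psi]$ is an isomorphism, i.e.\ the duality condition for the ordinary symmetric Poincar\'e complex $(C,\psi)$ (cf.\ \cite[Definition 9.3 and Section 9]{LM12}). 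One must also check $\psi$ is closed; this follows from $\varphi$ being closed together with the fact that $\beta$ is a chain map and a quasi-isomorphism, so that a cycle representing $\beta_*^{-1}[\varphi]$ may be chosen to be an honest cycle, and well-behavedness lets us do this compatibly over the (single) cell of $*$.

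Next, for the bordism statement, I would construct a relaxed symmetric Poincar\'e $I$-ad $H$ with $H(0)=(C,D,\beta,\varphi)$ and $H(1)=(C,(C^t\otimes_R C)^W,\gamma,\psi)$, where $\gamma$ is induced by the augmentation $W\to\Z$. The chain complex $C$ is held constant along $I$; the interesting part is to interpolate the ``$D$'' data. I would take $H(\iota)$ to have underlying $C$-part equal to $C$ (pulled back along the collapse $I\to *$, using the product structure of Lemma \ref{m43} with the trivial $I$-ad on $C$), and ``$D$-part'' equal to the mapping cylinder of a $\Z/2$-equivariant quasi-isomorphism realizing the zig-zag $D \xleftarrow{\beta} C^t\otimes_R C \to (C^t\otimes_R C)^W$. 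More precisely: $\beta$ and $\mathrm{aug}\colon C^t\otimes_R C\to (C^t\otimes_R C)^W$ are both $\Z/2$-equivariant quasi-isomorphisms, so the double mapping cylinder $D'$ of $D \xleftarrow{\beta} C^t\otimes_R C \xrightarrow{\mathrm{aug}} (C^t\otimes_R C)^W$ is a well-behaved object of $\D$ with $\Z/2$-action, receiving equivariant quasi-isomorphisms from $C^t\otimes_R C$, hence (composing with the canonical map) a quasi-isomorphism $\beta'\colon C^t\otimes_R C\to D'$; one arranges the cylinder so that at the two ends it restricts to $\beta$ and to $\gamma$ respectively. The element $\varphi$ and the element $\psi$ (pushed into $D'$) have the same image in $H_*(D')^{\Z/2}$ since $\beta_*[\psi]=[\varphi]$, so one can choose $\Phi\in (D')^{\Z/2}_{n+1}$ with $\partial\Phi = \psi' - \varphi'$ (here I am using that $D'$ is well-behaved, so the relevant relative homology vanishes in the appropriate degree, exactly as in the verification of part (g) in the proof of Theorem \ref{m42}); take this $\Phi$ as the ``$\varphi$-data'' on the top cell $\iota$. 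Well-behavedness and closedness of $H$ follow as in the proof of Theorem \ref{m42} (part (g)), and the Poincar\'e condition on $\iota$ follows because $C_\iota\to C_{\partial\iota}$ and the duality map are controlled by the $*$-ad data, with $\beta'^{-1}_*[\Phi]$ restricting correctly at the ends.

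The main obstacle is the second part: constructing the $I$-ad $H$ so that it is simultaneously \emph{well-behaved} and \emph{closed} while having the prescribed restrictions at the endpoints. The subtlety is the same one handled in the proof of Theorem \ref{m42}(g): one cannot simply write down $D'$ as an abstract mapping cylinder, because the axioms of Definition \ref{m47} require the functors $C$ and $D$ on $\Cell^\flat(I)$ to be well-behaved in the sense of \cite[Definition 9.7]{LM12} (so that $C_{\partial\iota}\to C_\iota$ and $D_{\partial\iota}\to D_\iota$ are inclusions of direct summands, the relevant colimits compute homotopy colimits, etc.). So the real work is to choose functorial, well-behaved models for the cylinder — which can be done exactly as in \cite[proof of Theorem 9.11]{LM12} and the proof of Theorem \ref{m42} above — and then to promote the class-level equality $\beta_*[\psi]=[\varphi]$ to an honest bounding element $\Phi\in (D')^{\Z/2}_{n+1}$ using Lemma \ref{m41}(ii) for the $I$-ad. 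Everything else is a routine check that the resulting $H$ satisfies Definition \ref{m47}.
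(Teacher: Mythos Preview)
Your argument that $(C,\psi)$ is a symmetric Poincar\'e $*$-ad is correct and is essentially the paper's (one-line) argument.

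For the bordism, however, your specific double mapping cylinder has a genuine gap. You need the images $\varphi',\psi'$ of $\varphi$ and $\psi$ to be homologous in $(D')^{\Z/2}$, and you justify this by ``$\beta_*[\psi]=[\varphi]$''. But the relation you actually have is $\beta^{h\Z/2}_*[\psi]=\text{(image of }[\varphi]\text{)}$ in $H_*(D^{h\Z/2})$; it lives on the \emph{homotopy} fixed point side. Taking strict $\Z/2$-fixed points does not preserve quasi-isomorphisms, so $(D')^{\Z/2}$ is the double mapping cylinder of
\[
D^{\Z/2}\xleftarrow{\beta^{\Z/2}}(C^t\otimes_R C)^{\Z/2}\xrightarrow{\text{aug}^{\Z/2}} (C^t\otimes_R C)^{h\Z/2},
\]
and neither leg is a quasi-isomorphism in general. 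For $\varphi'$ and $\psi'$ to be homologous you would need $[\varphi]$ to lie in the image of $\beta^{\Z/2}_*$ on $H_*$, and there is no reason for that. So you cannot produce the bounding element $\Phi$ this way.

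The paper sidesteps this by routing the comparison through $D^W$ rather than through $C^t\otimes_R C$. It first isolates two elementary moves (Lemma \ref{m52}): (i) one may replace $\varphi$ by any representative of $[\varphi]\in H_*(D^{\Z/2})$ without changing the bordism class; (ii) any map of $*$-ads which is the identity on $C$ induces a bordism. Each move is implemented by modifying the cylinder supplied by axiom~(g), so the well-behavedness issues you flag are handled once and for all there. One then applies move~(ii) to the map $D\to D^W$ to reach $(C,D^W,\delta,\omega)$, and to $\beta^W:(C^t\otimes_R C)^W\to D^W$ on the other side to reach $(C,D^W,\delta,\omega')$; here $\omega,\omega'\in (D^W)^{\Z/2}=D^{h\Z/2}$ are the images of $\varphi$ and $\psi$, and they are homologous \emph{by the definition of $\psi$}. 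Move~(i) then finishes. The essential point is that the comparison of $\varphi$ and $\psi$ is forced to take place in $D^{h\Z/2}$, where it holds tautologically, instead of in the strict fixed points of an auxiliary object. Your construction could be repaired by using the double mapping cylinder of $D\to D^W\xleftarrow{\beta^W}(C^t\otimes_R C)^W$ instead, for exactly this reason.
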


For the proof we need another lemma.

\begin{lemma}
\label{m52}
Let $(C,D,\beta,\varphi)$ be a relaxed symmetric Poincar\'e $*$-ad.

{\rm (i)} If $\psi\in D^{\Z/2}$ is any representative for the homology class
$[\varphi]\in H_*(D^{\Z/2})$ then
$(C,D,\beta,\psi)$ is bordant to $(C,D,\beta,\varphi)$.

{\rm (ii)}
If 
\[
(f,g):(C,D,\beta,\varphi)\to (C',D',\beta',\varphi')
\]
is a map of $*$-ads of the same dimension for which $f$ (and hence also $g$) 
is a quasi-isomorphism then $(C,D,\beta,\varphi)$ and 
$(C',D',\beta',\varphi')$ are bordant.
\end{lemma}

The proof of Lemma \ref{m52} is deferred to the end of the section.

\begin{proof}[Proof of Lemma \ref{m53}]
The fact that $(C,\psi)$ satisfies \cite[Definition 9.9]{LM12} (only part (b)
is relevant) is immediate from Definition \ref{m47}(b).

To see that 
$(C,D,\beta,\varphi)$ and $(C,(C^t\otimes_R C)^W,\gamma,\psi)$ are bordant, let 
$\delta$ denote the composite 
\[
C^t\otimes_R C\xrightarrow{\beta} D\to D^W,
\]
let $\omega\in D^{h\Z/2}$ be the image of $\varphi$,
and let $\omega'\in D^{h\Z/2}$ be the image of $\psi$ under the map
$(C\otimes C)^{h\Z/2}\to D^{h\Z/2}$ induced by $\beta$.
Part (ii) of Lemma \ref{m52} shows that
$ (C,D,\beta,\varphi)$ and $(C,D^W,\delta,\omega)$ are bordant, and also that
$(C,(C^t\otimes_R C)^W,\gamma,\psi)$ and $(C,D^W,\delta,\omega')$
are bordant.  But $[\omega]=[\omega']$ in $H_*(D^{h\Z/2})$, so the result 
follows from part (i) of Lemma \ref{m52}.
\end{proof}

Next we show that the map in Proposition \ref{m48} is 1-1.  
So let $(C_0,\varphi_0)$ and
$(C_1,\varphi_1)$ be symmetric Poincar\'e $*$-ads and let $F$ be a relaxed
symmetric Poincar\'e bordism between them.  
Let $0,1,\iota$ denote the three cells of the unit
interval $I$, with their standard orientations.
Denote the object $F(\iota)$ by $(C,D,\beta,\varphi)$. 
It suffices to show that there is a symmetric Poincar\'e  $I$-ad $G$ with
\begin{equation}
\label{m49}
G(0)=(C_0,\varphi_0),\quad 
G(1)=(C_1,\varphi_1),\quad 
G(\iota)=(C,\chi)
\end{equation}
for an element $\chi$ which we will now construct.

$\varphi$ represents an element 
\[
[\varphi]\in H_*(D^{\Z/2},(C_0^t \otimes_R
C_0)^{h\Z/2}\oplus (C_1^t \otimes_R C_1)^{h\Z/2}).
\]
The map $D\to D^W$ induced by the augmentation gives a map 
\begin{multline*}
H_*(D^{\Z/2},(C_0^t \otimes_R
C_0)^{h\Z/2}\oplus (C_1^t \otimes_R C_1)^{h\Z/2}).
\\
\to
H_*(D^{h\Z/2},
((C_0^t \otimes_R C_0)^W)^{h\Z/2}
\oplus 
((C_1^t \otimes_R C_1)^W)^{h\Z/2});
\end{multline*}
let $x$ be the image of $\varphi$ under this map.
The map $\beta:C^t \otimes_R C\to D$ gives an isomorphism
\begin{multline*}
(\beta^{h\Z/2})_*:H_*((C^t \otimes_R C)^{h\Z/2},(C_0^t \otimes_R
C_0)^{h\Z/2}\oplus (C_1^t \otimes_R C_1)^{h\Z/2}).
\\
\to
H_*(D^{h\Z/2},
((C_0^t \otimes_R C_0)^W)^{h\Z/2}
\oplus 
((C_1^t \otimes_R C_1)^W)^{h\Z/2});
\end{multline*}
let $y=(\beta^{h\Z/2})_*^{-1}(x)$.

\begin{lemma}
\label{m50}
The image of $y$ under the boundary map 
\begin{multline*}
H_*((C^t \otimes_R C)^{h\Z/2},(C_0^t \otimes_R
C_0)^{h\Z/2}\oplus (C_1^t \otimes_R C_1)^{h\Z/2})
\\
\xrightarrow{\partial}
H_{*-1}((C_0^t \otimes_R
C_0)^{h\Z/2}\oplus (C_1^t \otimes_R C_1)^{h\Z/2})
\end{multline*}
is $-[\varphi_0]+[\varphi_1]$.
\end{lemma}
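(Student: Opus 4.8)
The plan is to prove the formula by a diagram chase with the long exact homology sequences of the pairs involved, using naturality of the connecting homomorphism. Write $D_0=(C_0^t\otimes_R C_0)^W$ and $D_1=(C_1^t\otimes_R C_1)^W$, so that $F(0)$ and $F(1)$ are the relaxations $(C_0,D_0,\beta_0,\varphi_0)$ and $(C_1,D_1,\beta_1,\varphi_1)$ of the symmetric Poincar\'e $*$-ads $(C_0,\varphi_0)$ and $(C_1,\varphi_1)$ in the sense of Example \ref{m44}(i), with $\beta_0,\beta_1$ the augmentation-induced quasi-isomorphisms, and note that $D_i^{\Z/2}=(C_i^t\otimes_R C_i)^{h\Z/2}$ for $i=0,1$. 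The three pairs in play are $P_1=\big((C^t\otimes_R C)^{h\Z/2},(C_0^t\otimes_R C_0)^{h\Z/2}\oplus(C_1^t\otimes_R C_1)^{h\Z/2}\big)$, $P_2=\big(D^{h\Z/2},D_0^{h\Z/2}\oplus D_1^{h\Z/2}\big)$ and $P_3=\big(D^{\Z/2},D_0^{\Z/2}\oplus D_1^{\Z/2}\big)$, where in each case the subcomplex is obtained by restricting to the endpoints $0$ and $1$ of $I$. There is a map of pairs $P_1\to P_2$ induced by $\beta$, inducing an isomorphism on relative homology by the argument already used above for $(\beta^{h\Z/2})_*$ and an isomorphism on the subcomplexes by well-behavedness (as in Lemma \ref{m41}(ii)); and a map $P_3\to P_2$ induced by the augmentation $D\to D^W$. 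By construction $x$ is the image of $[\varphi]$ under $H_*(P_3)\to H_*(P_2)$, and $y$ is the preimage of $x$ under $H_*(P_1)\xrightarrow{\cong}H_*(P_2)$.

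The first step is to compute the connecting homomorphism of $P_3$ on $[\varphi]$. Since $F$ is closed (Definition \ref{m47}), the map $\cl(\iota)\to D$ sending $\langle\tau,o\rangle$ to $\varphi_{\tau,o}$ is a chain map; evaluating on the cellular relation $\partial\langle\iota,o\rangle=\langle 1,o|_1\rangle-\langle 0,o|_0\rangle$ shows that $\partial\varphi$, computed in $D^{\Z/2}$, is the image of $\varphi_1-\varphi_0$ under the endpoint structure maps $D_0,D_1\to D$. Hence in the long exact sequence of $P_3$ one gets $\partial[\varphi]=-[\varphi_0]+[\varphi_1]$ in $H_{*-1}(D_0^{\Z/2}\oplus D_1^{\Z/2})=H_{*-1}\big((C_0^t\otimes_R C_0)^{h\Z/2}\oplus(C_1^t\otimes_R C_1)^{h\Z/2}\big)$.

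Now I would run the chase. Applying the connecting homomorphism $\partial$ of $P_2$ to the identity $(\beta^{h\Z/2})_*y=x$ and invoking naturality of $\partial$ for $P_1\to P_2$ and for $P_3\to P_2$ yields $(\beta^{h\Z/2})_*(\partial y)=\partial x=\mathrm{aug}_*(\partial[\varphi])=\mathrm{aug}_*(-[\varphi_0]+[\varphi_1])$ in $H_{*-1}(D_0^{h\Z/2}\oplus D_1^{h\Z/2})$. To finish, one compares the two maps $(C_i^t\otimes_R C_i)^{h\Z/2}\to D_i^{h\Z/2}$ that have appeared: the one induced by $\beta_i$ and the one induced by the augmentation $D_i^{\Z/2}\to D_i^{h\Z/2}$ (using $D_i^{\Z/2}=(C_i^t\otimes_R C_i)^{h\Z/2}$); these induce the same map on the class of the cycle $\varphi_i$. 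Granting that, the displayed identity becomes $(\beta^{h\Z/2})_*(\partial y)=(\beta^{h\Z/2})_*(-[\varphi_0]+[\varphi_1])$, and since $(\beta^{h\Z/2})_*$ is injective on the homology of the subcomplexes, $\partial y=-[\varphi_0]+[\varphi_1]$, as claimed.

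The main obstacle is the bookkeeping just flagged: unwinding the definitions of the relaxation (Example \ref{m44}(i)) and of $(-)^{h\Z/2}$ to see that $\beta_i$ and the augmentation $D_i^{\Z/2}\to D_i^{h\Z/2}$ agree on $[\varphi_i]$ — at worst up to a chain homotopy, which suffices since each $\varphi_i$ is a cycle — and, along the way, pinning down the orientation signs in the closedness computation so that the answer comes out as $-[\varphi_0]+[\varphi_1]$ rather than its negative. The rest is the formal naturality of the long exact sequences of pairs.
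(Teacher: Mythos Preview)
Your outline is the paper's proof: both reduce, via naturality of the connecting homomorphism for the maps $P_1\to P_2\leftarrow P_3$, to the assertion that for $i=0,1$ the two maps
\[
(C_i^t\otimes_R C_i)^{h\Z/2}\longrightarrow \bigl((C_i^t\otimes_R C_i)^W\bigr)^{h\Z/2}
\]
(one coming from $\beta$, the other from the augmentation $D\to D^W$) agree on homology. You correctly flag this as the crux but stop short of proving it. The paper supplies the missing argument: under the identification $(A^W)^{h\Z/2}\cong (A^{W\otimes W})^{\Z/2}$ these two maps are induced by the two partial augmentations $e_1,e_2:W\otimes W\to W$, and the equivariant coproduct $\Delta:W\to W\otimes W$ of \cite[p.~175]{MR560997} satisfies $e_1\Delta=e_2\Delta=\id$; since $\Delta$ is a $\Z/2$-chain homotopy equivalence, the map it induces on $(\,\cdot\,)^{\Z/2}$-homology is an isomorphism, forcing the maps induced by $e_1$ and $e_2$ to coincide.

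Your instinct that the two maps agree ``at worst up to a chain homotopy'' is sound and gives an alternative finish: $e_1$ and $e_2$ are $\Z/2$-equivariant chain maps between bounded-below complexes of free $\Z[\Z/2]$-modules lifting the identity of $\Z$, hence equivariantly chain homotopic by the comparison theorem. But this has to be argued, not merely asserted; as written your proof has exactly the gap you yourself identify.
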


Before proving this we conclude the proof of Proposition
\ref{m48}.  The lemma implies that there is a representative $\chi$ of $y$ with

\begin{equation}
\label{m51}
\partial\chi=-\varphi_0+\varphi_1.  
\end{equation}
It suffices to show that, with this choice of
$\chi$, the symmetric Poincar\'e pre $I$-ad $G$ given by equation \eqref{m49} 
is an ad. Equation \eqref{m51} says that $G$ is closed, and part (b) of
\cite[Definition 9.9]{LM12} follows from Definition \ref{m47}(b) and the fact
that the image of $[\chi]$ under the map
\begin{multline*}
H_*((C^t \otimes_R C)^{h\Z/2},(C_0^t \otimes_R
C_0)^{h\Z/2}\oplus (C_1^t \otimes_R C_1)^{h\Z/2})
\\
\to
H_*((C^t \otimes_R C)^W,(C_0^t \otimes_R
C_0)^W\oplus (C_1^t \otimes_R C_1)^W)
\\
\xleftarrow{\cong}
H_*(C^t \otimes_R C,(C_0^t \otimes_R
C_0)\oplus (C_1^t \otimes_R C_1))
\end{multline*}
is the same as the image of $[\varphi]$ under the map 
\begin{multline*}
H_*(D^{\Z/2},(C_0^t \otimes_R
C_0)^{h\Z/2}\oplus (C_1^t \otimes_R C_1)^{h\Z/2})
\\
\to
H_*(D,(C_0^t \otimes_R
C_0)^W\oplus (C_1^t \otimes_R C_1)^W)
\\
\xrightarrow{\beta_*^{-1}}
H_*(C^t \otimes_R C,(C_0^t \otimes_R
C_0)\oplus (C_1^t \otimes_R C_1)).
\end{multline*}
\qed

\begin{proof}[Proof of Lemma \ref{m50}]
We know that the image of $[\varphi]$ under the boundary map
\begin{multline*}
H_*(D^{\Z/2},(C_0^t \otimes_R
C_0)^{h\Z/2}\oplus (C_1^t \otimes_R C_1)^{h\Z/2})
\\
\xrightarrow{\partial}
H_{*-1}((C_0^t \otimes_R
C_0)^{h\Z/2}\oplus (C_1^t \otimes_R C_1)^{h\Z/2})
\end{multline*}
is 
$-[\varphi_0]+[\varphi_1]$,
so it suffices to show that for $i=0,1$ the maps 
\[
(C_i^t \otimes_R C_i)^{h\Z/2}
\to
((C_i^t \otimes_R C_i)^W)^{h\Z/2}
\]
induced by $D\to D^W$ and by $\beta$ give the same map in homology.  
If we think of these as maps 
\[
a_i,b_i:((C_i^t \otimes_R C_i)^W)^{\Z/2}
\to
((C_i^t \otimes_R C_i)^{W\otimes W})^{\Z/2}
\]
(with diagonal $\Z/2$ action on $W\otimes W)$ then $a_i$ and $b_i$ are 
induced by the maps
\[
e_1,e_2:W\otimes W\to W
\]
given by the augmentations on the two factors.
Now the $\Z/2$
equivariant map
\[
\Delta:W\to W\otimes W
\]
of \cite[page 175]{MR560997} has the property that $e_1\circ \Delta$ and 
$e_2\circ \Delta$ are both the identity map, so if 
\[
d:((C_i^t \otimes_R C_i)^{W\otimes W})^{\Z/2}
\to
((C_i^t \otimes_R C_i)^W)^{\Z/2}
\]
is the map induced by $\Delta$ then $d\circ a_i$ and $d \circ b_i$ are both the
identity map.  But $\Delta$ is a $\Z/2$ chain homotopy equivalence, so $d$ is a
homology isomorphism and it follows that $a_i$ and $b_i$ induce the same map in
homology as required.
\end{proof}

It remains to prove Lemma \ref{m52}.
Let $F$ be the cylinder of $(C,D,\beta,\varphi)$
(which was constructed in the last paragraph of the proof of Theorem 
\ref{m42}).  Then $F(0)$ and $F(1)$ are both
$(C,D,\beta,\varphi)$.  Write
\[
F(\iota)=(C_\iota,D_\iota,\beta_\iota,\varphi_\iota)
\]
and let
\[
(h,k):(C,D,\beta,\varphi)
\to
(C_\iota,D_\iota,\beta_\iota,\varphi_\iota)
\]
be the map $F(1)\to F(\iota)$.

For part (i), the hypothesis gives an element $\rho\in D^{\Z/2}$ with
$\partial\rho=\psi-\varphi$.  Let $\rho'\in D_\iota^{\Z/2}$ be the image of
$\rho$ under $k:D\to D_\iota$.  Define an $I$-ad 
$G$ by 
\[
G(0)=(C,D,\beta,\varphi),\quad
G(1)=(C,D,\beta,\psi),\quad
G(\iota)=(C_\iota,D_\iota,\beta_\iota,\varphi_\iota+\rho').
\]
Then $G$ is the desired bordism.

For part (ii), we first show that $(C,D,\beta,\varphi)$ is bordant to 
$(C,D',\beta_1,\varphi')$, where $\beta_1$ is the composite
\[
C^t \otimes_R C\xrightarrow{\beta} D\xrightarrow{g} D'
\]
The idea is to construct a suitable mapping cylinder.
Let $D_1$ be the pushout of the diagram
\[
\xymatrix{
D
\ar[r]^k
\ar[d]_g
&
D_\iota
\\
D'
&
}
\]
Let $\varphi_1$ be the image of $\varphi_\iota$ in $D_1$ and let 
$\beta_2$ be the composite
\[
C_\iota^t\otimes_R C_\iota\to D_\iota\to D_1.
\]
Define an $I$-ad $H$ by
\[
H(0)=(C,D,\beta,\varphi),\quad
H(1)=(C,D',\beta_1,\varphi'),\quad
H(\iota)=(C_\iota,D_1,\beta_2,\varphi_1).
\]
Then $H$ is the desired bordism.

To conclude the proof we show that $(C,D',\beta_1,\varphi')$ is bordant to 
$(C',D',\beta',\varphi')$.  Let $C_1$ be the pushout of the diagram
\[
\xymatrix{
C
\ar[r]^h
\ar[d]_f
&
C_\iota
\\
C'
&
}
\]
Let $(C'_\iota,D'_\iota,\beta'_\iota,\varphi'_\iota)$ be the cylinder of
$(C',D',\beta',\varphi')$.  Let $\beta_3$ be the map
\[
C_1^t\otimes_R C_1
\to
D'_\iota.
\]
Define an I-ad $H_1$ by 
\[
H_1(0)=(C,D',\beta_1,\varphi'),\quad
H_1(1)=(C',D',\beta',\varphi'),\quad
H_1(\iota)=(C_1,D'_\iota,\beta_3,\varphi'_\iota).
\]
Then $H_1$ is the desired bordism.
\qed

\section{The symmetric signature revisited}
\label{revis}


Fix a group $\pi$, a simply-connected free $\pi$-space $Z$,
and a homomorphism $w:\pi\to \{\pm 1\}$, and recall the symmetric spectrum
$\bM_{\pi, Z,w}$ (\cite[Sections 7 and 17]{LM12}) which represents
$w$-twisted Poincar\'e bordism over $Z/\pi$. 

Let $R$ denote the group ring $\Z[\pi]$ with the $w$-twisted involution
(\cite[page 196]{MR566491}).

We now restrict our attention to strict ball complexes. Recall from section \ref{redef} that a ball complex is {\it strict} if each
component of the intersection of two cells is a single cell.
In \cite[Section 10]{LM12} we gave a functor
\[
\Sig:\A_{\pi,Z,w}\to \A^R
\]
which induces a natural transformation
\[
\Sig: \ad_{\pi,Z,w}(K) \to \ad^R(K)
\]
for strict ball complexes $K$.

\begin{remark}
(i) The restriction to strict ball
complexes was not mentioned in \cite{LM12} but is necessary, because if $K$ 
has a cell $\tau$ whose boundary is not strict and if $F$ is a $K$-ad
$(X_\sigma, f_\sigma,\xi_{\sigma,o})$ then 
the map 
\begin{equation}
\label{aa11}
\colim_{\sigma\subset\partial\tau} S_*(X_\sigma)
\to
S_*(X_{\partial\tau})
\end{equation}
is not a monomorphism (because simplices with support in $\sigma\cap\sigma'$
but not in a cell of $\sigma\cap\sigma'$ will have two representatives in the 
colimit), and hence $\Sig\circ F$ is not well-behaved.

(ii) If $K$ is strict then the map \eqref{aa11} has a left inverse for all
$\tau$
(because its image is the subcomplex of $S_*(X_\tau)$ generated by the
simplices that land in some $X_\sigma$ with $\sigma\subset \partial\tau$, and
the left inverse takes each such simplex to a representative for it in the
colimit system; all such representatives are identified because $K$ is strict).
Hence the map \eqref{aa11} is the inclusion of a direct summand, as required
for $\Sig\circ F$ to be well-behaved.

(iii) The restriction to strict ball complexes does not affect the results
about the symmetric signature in
\cite{LM12} because the only ball complexes that occur in \cite[Sections 15,
17--19]{LM12} are products of simplices, and these are strict.
\end{remark}

In this section we give a functor
\[
\Sigr:\A_{\pi,Z,w}\to\A_{\text{rel}}^R
\]
which induces a natural transformation
\[
\Sigr:\ad_{\pi,Z,w}(K) \to 
\ad_{\text{rel}}^R(K)
\]
for strict ball complexes $K$.

Let $(X,f,\xi)$ be an object of $\A_{\pi,Z,w}$ (\cite[Definition 7.3]{LM12}).

In the special case where $\pi$ is the trivial group and $Z$ is a point, the
definition is easy:  
\[
\Sigr(X,f,\xi)=(S_*X,S_*(X\times X),\beta,\varphi),
\]
where $\beta$ is the cross product $S_*X\otimes S_*X\xrightarrow{\times} 
S_*(X\times X)$ and
$\varphi$ is the image of $\xi$ under the diagonal map.  

The definition in the general case is similar.
Recall that we write
$\widetilde{X}$ for the pullback of $Z$ along $f$ and $\Z^w$ for $\Z$ with the
right $R$ action determined by $w$.
Also recall
\cite[Definition 7.1]{LM12}.

\begin{definition}
\label{symsigdef}
{\rm (i)} Give $S_*(\widetilde{X})$ the left $R$ module structure determined by
the action of $\pi$ on $\widetilde{X}$ and give $S_*(\widetilde{X}\times
\widetilde{X})$ 
and
$S_*(\widetilde{X})\otimes
S_*(\widetilde{X})$
the left $R$ module structures determined by the diagonal actions
of $\pi$.

{\rm 
(ii)}
Define 
\[
\Sigr(X,f,\xi)= 
(S_*(\widetilde{X}),
\Z^w\otimes_R S_*(\widetilde{X}\times \widetilde{X}),
\beta,
\varphi),
\]
where $\beta$ is the composite
\[
S_*(\widetilde{X})^t\otimes_R
S_*(\widetilde{X})
\cong
\Z^w\otimes_R (S_*(\widetilde{X})\otimes S_*(\widetilde{X}))
\xrightarrow{1\otimes \times}
\Z^w\otimes_R S_*(\widetilde{X} \times \widetilde{X} )
\]
and $\varphi$ is the image of $\xi$ under the map
\[
S_*(X,\Z^f)
=
\Z^w \otimes_R S_*(\widetilde{X})
\to
\Z^w \otimes_R S_*(\widetilde{X}\times \widetilde{X})
\]
(where the unmarked arrow is induced by the diagonal map).
\end{definition}

\begin{remark}
(i) 
For set-theoretic reasons one should modify this definition as in 
\cite[Section 10]{LM12}; we leave this to the reader.

(ii)  For strict ball complexes, $\Sigr$ takes ads to ads, because the 
composite of the cross product with the Alexander-Whitney map is naturally 
chain homotopic to the map induced by the diagonal.
\end{remark}

Next we compare $\Sig$ to $\Sigr$.

Let us denote by $\delta$ both the map 
$\A^R\to \A_{\text{rel}}^R$
of Remark \ref{m45} and the map
$\bM^R\to \bM_{\text{rel}}^R$ which it induces. 

\begin{prop}
\label{homprop}
The diagram 
\[
\xymatrix{
&
\bM_{\pi, Z,w}
\ar[ld]_\Sig
\ar[rd]^\Sigr
&
\\
\bM^R
\ar[rr]^-\delta
&
&
\bM_{\mathrm{rel}}^R
}
\]
commutes in the homotopy category of symmetric spectra.
\end{prop}

We will derive this from a more general result.  Recall Definition
\ref{quasi}.

\begin{prop}
\label{m56}
Let $\Theta$ be an ad theory, let $R$ be a ring with involution, and let 
$S_1$ and $S_2$ be morphisms of ad theories $\Theta\to 
\ad_\mathrm{rel}^R$.  Suppose that there is a natural quasi-isomorphism
$\nu:S_1\to S_2$.  Then the maps $\bM_\Theta\to \bM_\mathrm{rel}^R$ induced by
$S_1$ and $S_2$ are homotopic.
\end{prop}

\begin{proof}[Proof of Proposition \ref{homprop}].
The extended Eilenberg-Zilber map
\[
W\otimes S_*(Y\times Z)
\to
S_*(Y)\otimes S_*(Z)
\]
(\cite[proof of Proposition 5.8]{GBF31})
gives a map
\[
S_*(Y\times Z)
\to
((S_*(Y)\otimes S_*(Z))^W,
\]
and this
gives a natural quasi-isomorphism
\[
\nu: \Sigr\to\delta\circ\Sig.
\]
\end{proof}

The rest of this section is devoted to the proof of Proposition \ref{m56}.
The basic idea is similar to the proof of Theorem \ref{Nov12.3}.

\begin{definition}
Let $P$ be the poset whose two elements are the functors $S_1$ and
$S_2$, with $S_1\leq S_2$.
\end{definition}

Recall Definition \ref{m22}(i), and note that $U(K)$ has a poset structure 
given by inclusions of cells.  Our next definition is analogous to
Definition \ref{m22}(ii).

\begin{definition}
Let $k\geq 0$, let $\bfn$ be a $k$-fold multi-index, and let
$F\in \pre_{\Theta}^k(\Delta^\bfn)$.
Let
\[
b:U(\Delta^{\bfn})\to P
\]
be a map of posets.

(i)
For an object $(\sigma,o)$ of $\Cell(\Delta^{\bfn})$ define
the object $ b_*(F)(\sigma,o) $ of $\A_{\text{rel}}^R$ to be
$ b(\sigma)(F(\sigma,o))$.

(ii)
For a morphism $f:(\sigma,o)\to (\sigma',o')$ of $\Cell(\Delta^{\bfn})$
define the morphism
\[
b_*(F)(f): b_*(F)(\sigma,o)\to b_*(F)(\sigma',o')
\]
to be
\[
\begin{cases}
S_1(f) & \text{if $b(\sigma',o')=S_1$},
\\
S_2(f) & \text{if $b(\sigma,o)=S_2$},
\\
\nu\circ S_1(f) &\text{otherwise}.
\end{cases}
\]

\end{definition}

\begin{lemma}
$b_*$ takes ads to ads.
\end{lemma}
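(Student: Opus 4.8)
The plan is to verify the two clauses of Definition~\ref{m47} for $b_*(F)$ by comparing it with the two pre-$K$-ads $\Sigr(F)$ and $(\delta\circ\Sig)(F)$, both of which are $K$-ads (the first because $\Sigr$ takes ads to ads, the second because $\Sig$ does and, by Remark~\ref{m46}, so does $\delta$). Since $\Sigr$ is the minimum of the poset $P$ and $b$ is a poset map, the set $K_0$ of cells $\sigma$ with $b(\sigma)=\Sigr$ is down-closed, hence a subcomplex, and the definition of $b_*$ shows that the restriction of $b_*(F)$ to $K_0$ is exactly $\Sigr(F)$; off $K_0$ the pre-$K$-ad $b_*(F)$ is obtained from $(\delta\circ\Sig)(F)$ by reinstating the $\Sigr$-model for the $D$-component over the cells of $K_0$ and inserting $\nu$ into the relevant structure maps. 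This is the direct analogue of the interpolation used in Lemma~\ref{m3}.

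The organizing device is a map of pre-$K$-ads $\Theta\colon b_*(F)\to(\delta\circ\Sig)(F)$ which is the identity on each $C$-component, is $\nu$ on the $D$-component at the cells of $K_0$, and is the identity on the $D$-component elsewhere. That $\Theta$ commutes with the structure maps is a short check against the three cases in the definition of $b_*(F)(f)$, using only the naturality of $\nu$; and since each component of $\nu$ is a quasi-isomorphism (the extended Eilenberg--Zilber map is a chain homotopy equivalence) and is compatible with the involutions, $\Theta$ is a levelwise quasi-isomorphism, the identity on $C$, intertwining the $i$'s. I should note that the defining identities of a morphism of $\A_{\text{rel}}^R$ applied to $\nu$ — that $\nu$ intertwines the maps $\beta$ and carries the elements $\varphi$ of $\Sigr(F)$ to those of $(\delta\circ\Sig)(F)$ — are exactly the facts that make $\Theta$ well defined.

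Granting for the moment that $b_*(F)$ is well-behaved, the remaining points go through formally. Balancedness is inherited from $(\delta\circ\Sig)(F)$ via the $i$-equivariance of $\Theta$. For closedness, the map $\cl(\sigma)\to D_\sigma$ attached to $b_*(F)$ agrees on $K_0$ with the one for $\Sigr(F)$ and off $K_0$ with the one for $(\delta\circ\Sig)(F)$: indeed, naturality of $\nu$ together with the two identities just mentioned show that the image of $\varphi_{\tau,o}$ in $D_\sigma$ is the same whether computed in $b_*(F)$ or in $(\delta\circ\Sig)(F)$, so closedness is inherited from the two known $K$-ads. For clause~(b), at a cell $\sigma\in K_0$ the condition is literally the one for $\Sigr(F)$, while at a cell $\sigma\notin K_0$ the same two identities force $(\beta^{\delta\Sig}_\sigma)^{-1}_*[\varphi^{\delta\Sig}_{\sigma,o}]=(\beta^{\Sigr}_\sigma)^{-1}_*[\varphi^{\Sigr}_{\sigma,o}]$ in $H_*(C_\sigma/C_{\partial\sigma})$, so the required slant-product isomorphism for $b_*(F)$ coincides with the one known to hold for $\Sigr(F)$; the relative form is handled, as in the proof of Lemma~\ref{m41}, by noting that $\Theta$ is a levelwise quasi-isomorphism of well-behaved functors, so the colimits computing the $\partial\sigma$-parts are homotopy colimits and the relative maps stay quasi-isomorphisms.

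The \emph{main obstacle} is thus the well-behavedness clause of Definition~\ref{m47}(a): that the $C$- and $D$-components of $b_*(F)$ are well-behaved functors into $\D$. The $C$-component is that of $\Sigr(F)$, and over $K_0$ the $D$-component is the $\Sigr$-model, so the only thing to check is that $D_{\partial\sigma}\to D_\sigma$ is the inclusion of a direct summand for a cell $\sigma\notin K_0$ whose boundary meets $K_0$; there $D_{\partial\sigma}$ is a colimit that glues the $\Sigr$-model for $D$ over $K_0\cap\partial\sigma$ to the $\delta\circ\Sig$-model over the complementary cells, along the maps $\nu$. I would establish the summand property by the subdivision induction used in the proof of Theorem~\ref{m42} to reduce to the case in which $\sigma$ is the unique top cell, and then analyse this colimit directly, using the explicit cellular-chain description of the relaxed symmetric construction, in which the transition maps entering the colimit are degreewise split; the summand statement then becomes bookkeeping organized by the subcomplex $K_0\cap\partial\sigma$. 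This last step is where essentially all the real work lies, since it is the only place where the precise interaction of the extended Eilenberg--Zilber map with the direct-summand structure has to be used rather than merely its being a quasi-isomorphism.
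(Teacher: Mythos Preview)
The paper's proof is a single sentence: ``This follows from the fact that $\nu$ is a quasi-isomorphism.'' Your comparison map $\Theta\colon b_*(F)\to(\delta\circ\Sig)(F)$ correctly unpacks this for balancedness, closedness, and condition~(b) of Definition~\ref{m47}: these are precisely the clauses that transfer along a map which is the identity on $C$ and a quasi-isomorphism on $D$, and that is all the paper is asserting.

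Where you go beyond the paper is in isolating well-behavedness of the $D$-functor as a separate obstacle and sketching an inductive argument for it. The paper's one-line proof does not address this point at all---the quasi-isomorphism property of $\nu$ does not by itself yield the direct-summand conditions of \cite[Definition~9.7]{LM12}. Your plan (reduce to a single top cell, then analyse the colimit $D_{\partial\sigma}$ using the explicit splitting structure of the extended Eilenberg--Zilber map) is reasonable, but as you acknowledge it is where the real work would lie and you do not carry it out. So your proposal is more careful than the paper's at the cost of flagging an obligation the paper simply elides; whether that obligation is a routine check about singular chains (as the authors evidently believe) or genuinely requires the analysis you outline depends on details of the well-behavedness definition that are not spelled out in this paper.
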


\begin{proof}
Suppose $F$ is a $K$-ad in $\Theta$. Then $\nu$ provides a quasi-isomorphism $\kappa$ from $S_1(F)$ to $b_*(F)$: let $\kappa(F)_{(\sigma,o)}$ be the identity if $b(\sigma,o)=S_1$ and let it coincide with $\nu(F)_{(\sigma,o)}$ else. Since $S_1(F)$ is quasi-isomorphic to a balanced $K$-ad the same is true for $b_*(F)$. This implies that $b_*(F)$ is a $K$-ad.
\end{proof}

Recall that we write $\bR$ for the object of $\sss$ associated to an ad theory
(Example \ref{Aug29.1}).
Then $b_*$ gives a map 
\[
((\bR_\Theta)_k)_\bfn
\to
((\bR_{\text{rel}}^R)_k)_\bfn.
\]


\begin{definition}
(i) For $k\geq 0$ define
an object $\P_k$ of $\Sigma_k\ssk$ by
\[
(\P_k)_\bfn=\Map_{\text{posets}}(U(\Delta^\bfn),P)_+
\]
(where the $+$ denotes a disjoint basepoint);  the morphisms in
$(\Delta_\inj^{\text{op}})^{\times k}$ act in the evident way, and the
morphisms of the form $(\alpha,\mathrm{id})$ with $\alpha\in \Sigma_k$ act by
permuting the factors in $\Delta^\bfn$.

(ii) Define $\P$ to be the object of $\ss$ with $k$-th term
$\P_k$.
\end{definition}

Next we give $\P\owedge \bR_{\Theta}$ the structure of a multisemisimplicial
symmetric spectrum (cf.\ Definition \ref{m29}).  Recall Definition
\ref{m24} and let $s$ be the 1-simplex of $S^1$.
Define
\[
\omega:S^1\wedge (\P\owedge \bR_{\Theta})_k
\to
(\P\owedge \bR_{\Theta})
\]
as follows: for
$b\in (\P_k)_\bfn$ and $x\in
((\bR_{\Theta})_k)_\bfn$, let
\[
\omega(s\wedge (b\wedge x))
=
(b\circ \Pi)\wedge \omega(s\wedge x).
\]

It follows from the definitions that we obtain a map
\[
\beta:\P\owedge \bR_{\Theta}
\to 
\bR_{\text{rel}}^R
\]
in $\sss$ by 
\[
\beta(b\wedge F)=b_*(F).
\]

For each $k$ and $\bfn$, define elements $c_{k,\bfn},d_{k,\bfn}\in (\P_k)_\bfn$
to be the constant functions $U(\Delta^\bfn)\to P$ whose values are 
respectively $S_1$ and $ S_2$. Then define maps
\[
c,d:\boS\to \P
\]
in $\ss$
by taking the nontrivial simplex of $(\boS_k)_\bfn$ to $c_{k,\bfn}$, resp.,
$d_{k,\bfn}$. Finally, define maps
\[
{\mathbf c},
{\mathbf d}:
\bR_{\Theta}\to \P\owedge \bR_{\Theta}
\]
in $\sss$ by letting $\mathbf c$ be the composite
\[
\bR_{\Theta}\cong
\boS\owedge \bR_{\Theta}
\xrightarrow{c\wedge 1}
\P\owedge \bR_{\Theta}
\]
and similarly for $\mathbf d$.  

Now $\beta\circ\mathbf c$ is the map $S_1$ and $\beta\circ\mathbf d$
is the map $ S_2$, so to complete the proof of Proposition \ref{m56}
it suffices to show: 

\begin{lemma}
\label{m57}
$\mathbf c$ and $\mathbf d$ are homotopic in $\sss$.
\end{lemma}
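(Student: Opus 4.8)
The plan is to exhibit an explicit simplicial homotopy in $\sss$ between $\mathbf c$ and $\mathbf d$, using the fact that the poset $P$ has exactly two elements with $\Sigr \leq \delta\circ\Sig$. The key observation is that $P$ has a maximum element (namely $\delta\circ\Sig$), so $P$ is "contractible as a poset" in the sense that $\Map_{\text{posets}}(U(\Delta^\bfn), P)$ is weakly equivalent to a point; indeed this is exactly the situation of Lemma \ref{m39} (with $P$ in place of the preorder there), which moreover tells us that $\P_k$ has compatible degeneracies. So the object $\P\owedge\bR_{\pi,Z,w}$ should be thought of as a cylinder object for $\bR_{\pi,Z,w}$ in $\sss$, with $\mathbf c$ and $\mathbf d$ playing the roles of the two endpoint inclusions.

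First I would make this precise by producing, for each $k$ and each $k$-fold multi-index $\bfn$, a simplicial homotopy. Concretely, given a map of posets $U(\Delta^1 \times \Delta^\bfn) \to P$ one restricts along the two inclusions $\Delta^\bfn \hookrightarrow \Delta^1\times\Delta^\bfn$ (over the two vertices of $\Delta^1$) to get two maps $U(\Delta^\bfn)\to P$. Because $\Delta^1$ has the cell structure with cells $0,1,\iota$ and because $P$ has a top element, the assignment sending every cell of the form $0\times\sigma$ to $\Sigr$ and every cell $1\times\sigma$ or $\iota\times\sigma$ to $\delta\circ\Sig$ is a well-defined map of posets $U(\Delta^1\times\Delta^\bfn)\to P$; call it $H_{k,\bfn}$. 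The collection $\{H_{k,\bfn}\}$ assembles, together with the identity on $\bR_{\pi,Z,w}$, into a map
\[
\bR_{\pi,Z,w} \to \P \owedge \bR_{\pi,Z,w}
\]
of objects of $\sss$ defined "over $\Delta^1$", whose two restrictions to the endpoints are exactly $\mathbf c$ and $\mathbf d$. One then needs to check that this is compatible with the suspension maps $\omega$ — this is immediate from the formula $\omega(s\wedge(b\wedge x)) = (b\circ\Pi)\wedge\omega(s\wedge x)$ together with the fact that $\Pi$ and the projection $U(\Delta^1\times K)\to U(K)$ of Definition \ref{m24} interact correctly with the product cell structure — and that it is $\Sigma_k$-equivariant, which holds because the $\Sigma_k$ action permutes only the $\Delta^\bfn$ factors and leaves the $\Delta^1$ factor (and hence the homotopy direction) untouched.

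The main obstacle, and the point requiring the most care, is the bookkeeping that shows $\{H_{k,\bfn}\}$ genuinely defines a map in $\sss$ rather than merely a levelwise map of multisemisimplicial sets: one must verify that it commutes with the face maps in $(\Delta_\inj^{\text{op}})^{\times k}$ and with the suspension structure maps simultaneously, and that under the isomorphism $\bR_{\pi,Z,w}\cong\boS\wedge\bR_{\pi,Z,w}$ the two ends really are $c\wedge 1$ and $d\wedge 1$. All of this is a routine but slightly tedious diagram chase using the explicit formulas of this section, Definition \ref{m24}, and Lemma \ref{m5}; no new idea beyond the two-element-poset-with-a-top observation is needed. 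Since $\mathbf c$ and $\mathbf d$ are maps of $\sss$ and the homotopy just constructed is a homotopy in $\sss$ (i.e., through maps respecting the $\bS$-module structure), the lemma follows, and with it Proposition \ref{m56}.
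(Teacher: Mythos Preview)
Your approach is genuinely different from the paper's and, as written, has a gap.

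The paper does not build an explicit homotopy at all. Instead it defines the collapse map $e:\P\to\boS$ (sending every nontrivial simplex to the nontrivial element of $S^0$) and the induced map
\[
\mathbf e:\P\owedge\bR_{\pi,Z,w}\xrightarrow{e\owedge 1}\boS\owedge\bR_{\pi,Z,w}\cong\bR_{\pi,Z,w}.
\]
Then $\mathbf e\circ\mathbf c=\mathbf e\circ\mathbf d=\id$, and $\mathbf e$ is a weak equivalence by Proposition~\ref{Aug29.3} and Lemma~\ref{m39}. Since the goal (Proposition~\ref{m56}) only requires equality in the homotopy category, this suffices: two sections of a common weak equivalence agree once weak equivalences are inverted.

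Your idea of using the top element of $P$ is the same underlying observation, but you try to upgrade it to a combinatorial homotopy, and this is where the argument becomes unclear. The elements $H_{k,\bfn}\in\Map_{\text{posets}}(U(\Delta^1\times\Delta^\bfn),P)$ lie in $(\P_{k+1})_{(1,\bfn)}$, i.e.\ at level $k+1$, not at level $k$; adding a $\Delta^1$ factor changes the multisemisimplicial type. There is no evident cylinder object in $\sss$ into which the collection $\{H_{k,\bfn}\}$ assembles, and the phrase ``map of objects of $\sss$ defined `over $\Delta^1$'\,'' is left informal. In multisemisimplicial sets (no degeneracies) there is no internal interval playing the role of $\Delta^1$, so ``simplicial homotopy'' does not have an immediate meaning here; one has to pass to geometric realization or argue via weak equivalences. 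The paper's retraction argument sidesteps this entirely, and is both shorter and avoids all the compatibility checks (with faces, $\Sigma_k$-actions, and suspension) that your sketch defers to ``routine but slightly tedious''.
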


\begin{proof}[Proof of Lemma \ref{m57}]
For each $k\geq 0$ and each $\bfn$ let $e_{k,\bfn}:(\P_k)_\bfn\to S^0$ be the
map which takes every simplex except the basepoint to the nontrivial element of
$S^0$, and let 
\[
e:\P\to\boS
\]
be the map given by the $e_{k,\bfn}$. Let
\[
{\mathbf e}: \P\owedge \bR_{\Theta}
\to \bR_{\Theta}
\]
be the composite
\[
\P\owedge \bR_{\Theta}
\xrightarrow{e\wedge 1}
\boS\owedge \bR_{\Theta}
\cong
\bR_{\Theta}.
\]
Then ${\mathbf e}\circ{\mathbf c}$ and ${\mathbf e}\circ{\mathbf d}$ are both
equal to the identity.
But $\mathbf e$ is a weak equivalence by  
Proposition \ref{Aug29.3} and Lemma \ref{m39}, and the result follows.
\end{proof}

\section{Background for the proof of Theorem \ref{m54}}
\label{colored}

\begin{notation}
In order to distinguish the product in $\Ag$ from the Cartesian product of
categories, we will denote the former by $\boxtimes$ from now on.
\end{notation}

We now turn to the proof of Theorem \ref{m54}, which will follow the
general outline of the proof of Theorem \ref{Nov12.3}.  The key ingredient in
that proof was the action of the monad $\bbP$ on $\bR$. That action was
constructed
from the family of operations given in Definition \ref{m22}(ii), and this
family in turn was constructed from the family of functors $\eta_\bigstar$ 
given in
Definition \ref{Nov13}(ii).  
For our present purpose we need 
the functors $\eta_\bigstar$
and also a 
family of functors 
\[
\bfd_\bs:\A_1\times\cdots\times\A_j
\to
\Ar,
\]
where each $\A_i$ is equal to $\Ag$ or $\Ar$; these will be
built from the symmetric monoidal structures of  $\Ag$ and $\Ar$ and 
the functor
\[
\Sigr: \A_{e,*,1}\to  \Ar.
\]
It is convenient to represent this situation by a function $r$ from
$\{1,\ldots,j\}$ to a two element set $\{u,v\}$, with $\A_i=\Ag$ if $r(i)=u$
and $\A_i=\Ar$ if $r(i)=v$.

%

\begin{example}
\label{m68}
A typical example is the functor
\[
\Ar\times(\Ag)^{\times 5}\times \Ar
\to
\Ar
\]
which takes $(x_1,\ldots,x_7)$ to
\[
i^\epsilon \Sigr(x_4\boxtimes x_3)\otimes x_7\otimes
\Sigr(x_6\boxtimes x_2\boxtimes x_5)\otimes x_1,
\]
where $i^\epsilon$ is the sign that arises from permuting $(x_1,\ldots,x_7)$
into the order $(x_4,x_3,x_7,x_6,x_2,x_5,x_1)$.
In Definition \ref{m67}(iv) we will represent such a functor by a surjection 
$h$ which
keeps track of which inputs go to which output factors and a permutation $\eta$
which keeps track of the order in which the inputs to each $\Sigr$ factor are 
multiplied. In the present example $h$ is the surjection $\{1,\ldots,7\}\to
\{1,2,3,4\}$ with 
\[
h^{-1}(1)=\{3,4\}, h^{-1}(2)=7, h^{-1}(3)=\{2,5,6\},
h^{-1}(4)=1
\]
and $\eta$ is the permutation $(256)(34)$.
\end{example}


In order to get the signs right we need a preliminary definition.

\begin{definition}
\label{m71}
(i)
For totally ordered sets $S_1,\ldots,S_m$, define
\[
\coprod_{i=1}^n S_i
\]
to be the disjoint union with the order relation given as follows:
$s<t$ if either $s\in S_i$ and $t\in S_j$ with $i<j$, or $s,t\in S_i$ with $s<t$
in the order of $S_i$.

(ii)
For a surjection 
\[
h:\{1,\ldots,j\}\to\{1,\ldots,m\}
\]
define $\theta(h)$ to be the 
permutation
\[
\{1,\ldots,j\}
\cong
h^{-1}(1)\coprod\cdots\coprod h^{-1}(m)
\cong
\{1,\ldots,j\};
\]
here the first map restricts to the identity on each $h^{-1}(i)$ and the second
is the unique ordered bijection.
\end{definition}

In Example \ref{m68}. $\theta(h)$ takes $1,\ldots,7$ respectively to
$7,4,1,2,5,6,3$.

\begin{definition}
\label{m67}
Let $j\geq 0$ and let $r:\{1,\ldots,j\}\to \{u,v\}$.  Let $\A_i$ denote $\Ag$
if $r(i)=u$ and $\Ar$ if $r(i)=v$.

(i) Let $1\leq m\leq j$. A surjection 
\[
h:\{1,\ldots,j\}\to\{1,\ldots,m\}
\]
is {\it adapted} to $r$ if 
$r$ is constant on 
each set $h^{-1}(i)$ and  
$h$ is  monic on $r^{-1}(v)$.

(ii) Given a surjection 
\[
h:\{1,\ldots,j\}\to\{1,\ldots,m\}
\]
which is adapted to $r$, define
\[
h_\blacklozenge: 
\A_1\times\cdots\times\A_j
\to
(\Ar)^{\times m}
\]
by
\[
h_\blacklozenge(x_1,\ldots,x_j)
=(i^{\epsilon}y_1,\ldots,y_m),
\]
where
$i^\epsilon$ is the sign that arises from putting the objects
$x_1,\ldots,x_j$ into the order $x_{\theta(h)^{-1}(1)},\ldots,
x_{\theta(h)^{-1}(j)}$
and 
\[
y_i=
\begin{cases}
\Sigr(\boxtimes_{l\in h^{-1}(i)} \,x_l) & \text{if $h^{-1}(i)\subset 
r^{-1}(u)$},
\\
x_{h^{-1}(i)} & \text{if $h^{-1}(i)\in r^{-1}(v)$}.
\end{cases}
\]

(iii)
A {\it datum} of type $r$ is a pair
\[
(h,\eta),
\]
where $h$ is a surjection which is 
adapted to $r$ and $\eta$ is an element of $\Sigma_j$ with the property
that $h\circ\eta=h$.

(iv) Given a datum 
\[
\bfd=(h,\eta),
\]
of type $r$,
define 
\[
\bfd_\bs :
\A_1\times\cdots\times\A_j
\to
\Ar
\]
to be the composite
\[
\A_1\times\cdots\times\A_j
\xrightarrow{\eta}
\A_{\eta^{-1}(1)}\times\cdots\times\A_{\eta^{-1}(j)}
=
\A_1\times\cdots\times\A_j
\xrightarrow{h_\blacklozenge}
(\Ar)^{\times m}
\xrightarrow{\otimes}
\Ar,
\]
where $\eta$ permutes the factors with the usual sign.
\end{definition}

We also need natural transformations between the functors $\bfd_\bs$.
First observe that $\mbox{sig}_{rel}$ is lax monoidal: there is a natural transformation
from the functor
\[
(\Ag)^{\times l}
\xrightarrow{\Sigr^{\times l}}
(\Ar)^{\times l}
\xrightarrow{\otimes}
\Ar
\]
to the functor
\[
(\Ag)^{\times l}
\xrightarrow{\boxtimes}
\Ag
\xrightarrow{\Sigr}
\Ar
\]
given by the maps
\[
S_*({X_1})
\otimes\cdots\otimes
S_*({X_l})
\xrightarrow{\times}
S_*({X_1}\times\cdots\times
{X_l})
\]
and 
\begin{multline*}
S_*(X_1\times X_1)
\otimes\cdots\otimes
S_*(X_l\times X_l)
\xrightarrow{\times}
S_*(X_1\times X_1\times\cdots\times X_l\times X_l)
\\
\cong
S_*((X_1\times\cdots\times X_l)\times (X_1\times\cdots\times X_l)).
\end{multline*}
Combining this with the symmetric monoidal structures of $\Ag$ and 
$\Ar$, we obtain a natural transformation $\bfd_\bs\to\bfd'_\bs$ whenever
$\bfd\leq\bfd'$, as defined in:

\begin{definition}
\label{m63}
For data of type $r$, define
\[
(h,\eta)
\leq
(h',\eta')
\]
if each set $h^{-1}(i)$ is contained in some set 
${h'}^{-1}(l)$.
\end{definition}

Our next definition is analogous to Definition \ref{m16} (the presence of the
letter $v$ in the symbols $P_{r;v}$ and $\O(r;v)$ will be explained in a 
moment).

\begin{definition}
\label{m60}
Let $r:\{1\ldots,j\}\to \{u,v\}$. 

(i) Let $P_{r;v}$ be the preorder whose elements are the data of type $r$, with
the order relation given by Definition \ref{m63}.

(ii) 
Define
an object $\O(r;v)_k$ of $\Sigma_k\ssk$ by
\[
(\O(r;v)_k)_\bfn=\Map_{\mathrm{preorder}}(U(\Delta^\bfn),P_{r;v})_+
\]
(where the $+$ denotes a disjoint basepoint);  the morphisms in
$(\Delta_\inj^{\text{op}})^{\times k}$ act in the evident way, and the
morphisms of the form $(\alpha,\mathrm{id})$ with $\alpha\in \Sigma_k$ act by
permuting the factors in $\Delta^\bfn$.

(iii) Define $\O(r;v)$ to be the object of $\ss$ with $k$-th term
$\O(r;v)_k$.
\end{definition}

\begin{remark}
\label{m59}
(i)
Given $r:\{1\ldots,j\}\to \{u,v\}$, let 
$m=|r^{-1}(v)|+1$, let
\[
h:\{1,\ldots,j\}\to \{1,\ldots,m\}
\]
be any surjection which is adapted to $r$, and let $e$ be the identity element
of $\Sigma_j$.
Then 
the datum $(h,e)$ 
is $\geq$ every element in $P_{r;v}$.

(ii)
Lemma \ref{m39} shows that each of the objects
$\O(r;v)_k$ has compatible degeneracies and is weakly equivalent to a point.
\end{remark}

We also need a preorder corresponding to the family of functors
\[
\eta_\bigstar:(\Ag)^{\times j}\to \Ag:
\]

\begin{notation}
\label{m69}
Let $r_u(j)$ (resp., $r_v(j)$) denote the constant function 
$\{1,\ldots,j\}\to\{u,v\}$ with value $u$ (resp., $v$).
\end{notation}

\begin{definition}
\label{m64}
Let $r:\{1\ldots,j\}\to \{u,v\}$.

(i) If $r=r_u(j)$, let 
$P_{r;u}$ be the set $\Sigma_j$ with the preorder in which every element is
$\leq$ every other, and let $\O(r;u)_k$ be the object $\O(j)_k$ 
of Definition \ref{m16}.

(ii)  Otherwise let $P_{r;u}$ be the empty set and let $\O(r;u)_k$ be the
multisemisimplicial set with a point in every multidegree. 

(iii) In either case, let 
$\O(r;u)$ be the object of $\ss$ with $k$-th term
$\O(r;u)_k$.
\end{definition}

In the next section we will use the objects $\O(r;v)$ and $\O(r;u)$ to 
construct a monad.
In preparation for that, we show that the collection of preorders $P_{r;v}$ 
and $P_{r;u}$ 
has suitable
composition maps.  Specifically, we show that it is a colored
operad (also called a multicategory) in the category of preorders. 

We refer the reader to 
\cite[Section2]{EM} for the definition of multicategory; we will mostly follow
the notation and terminology given there. In our case there are
two objects $u$ and $v$, and we think of a function $r:\{1,\ldots,j\}\to
\{u,v\}$ as a sequence of $u$'s and $v$'s.  

\begin{remark}
\label{m65}
Let $r:\{1,\ldots,j\}\to \{u,v\}$ and let 
$\A_i$ denote $\Ag$
if $r(i)=u$ and $\Ar$ if $r(i)=v$.
Let us write $\A_r$ for the category $\A_1\times\cdots\times\A_j$ and
$\A_{r;u}$ (resp., $\A_{r;v}$) for the category of functors $\A_r\to \Ag$
(resp., $\A_r\to \Ar$).  Define a multicategory with objects  $u$ and $v$ as follows. The multimorphisms  from $r$ to $u$ are given by    $\A_{r;u}$ and from $r$ to $v$ are given by $\A_{r;v}$. The actions of the symmetric groups are the obvious ones and the composition
is provided by the composition of functors.
Moreover, Definitions 
\ref{Nov13}(ii)
and \ref{m67}(iv) give inclusion functors
\[
\Phi_{r;u}:P_{r;u}\to \A_{r;u}
\]
and
\[
\Phi_{r;v}:P_{r;v}\to \A_{r;v}.
\]
We like to promote these to a functor between multicategories.  The source also has two objects, $u$, $v$ say and the functor is the identity on objects.  The set of  multimorphisms with target $u$ are empty unless the source only involves $u$'s in which case it is $P_{r;u}$ with $r=r_u(j)$. The set of multimorphisms with target $v$ and source $r$ is the  $P_{r;v}$.  The following definitions are chosen so that the inclusion functors
preserve the $\Sigma_j$ actions and the composition operations.
\end{remark}

We define the right $\Sigma_j$ action on a
multimorphism with $j$ sources as follows.
Let $\alpha\in\Sigma_j$ and $r:\{1,\ldots,j\}\to
\{u,v\}$.  
Define $r^\alpha$ to be the composite
\[
\{1,\ldots,j\}
\xrightarrow{\alpha}
\{1,\ldots,j\}
\xrightarrow{r}
\{u,v\}.
\]
If $r=r_u(j)$ then $r^\alpha=r$ 
and the map
\[
\alpha: P_{r;u} \to P_{r^\alpha;u}
\]
is the right action of $\Sigma_j$ on itself.
If $(h,\eta)\in P_{r,v}$ define $(h,\eta)\alpha\in P_{r^\alpha;v}$ to be 
$(h\circ\bar{\alpha},\bar{\alpha}^{-1}\circ\eta\circ\alpha)$, where 
$\bar{\alpha}\in \Sigma_j$ is the permutation whose restriction to each 
$\alpha^{-1}h^{-1}(i)$ is the order-preserving bijection to $h^{-1}(i)$.

We define the composition operation as follows.  
If the composition involves only $u$'s then it is the
composition in the operad ${\mathcal M}$ of \cite[Definition
3.1(i)]{MR0420610}.  Otherwise
let $i,j_1,\ldots,j_i\geq 0$,
let $r:\{1,\ldots,i\}\to\{u,v\}$, and for $1\leq l\leq i$ let
$r_l:\{1,\ldots,j_l\}\to\{u,v\}$; assume that if $r(l)=u$ then $r_l$ is
$r_u(j_l)$.
Let $(h,\eta)\in P_{r;v}$ and for $1\leq l\leq i$ let $x_l\in P_{r_l;r(l)}$.
If $r(l)=v$ then $x_l$ has the form $(h_l,\eta_l)$,
otherwise $x_l$ is an element $\eta_l\in\Sigma_{j_l}$ and we write $h_l$ for 
the map 
$\{1,\ldots, j_l\} \to \{1\}$.
Define the composition operation $\Gamma$ by
\begin{equation}
\label{m70}
\Gamma((h,\eta),x_1,\ldots,x_l)=(H,\theta),
\end{equation}
where $\theta$
is the composite $\gamma_{\mathcal M}(\eta,\eta_1,\ldots,\eta_i)$ in the operad
${\mathcal M}$ of \cite[Definition 3.1(i)]{MR0420610} and $H$ is the following multivariable composite $h\circ(h_1,\ldots,h_i)$: 
we are going to make the composite $H$ explicit with $h$ as in Example \ref{m68}, the general formula should be clear then. The source of the morphism $(h,\eta)$ is $r=(v,u,u,u,u,u,v)$ and the target is $v$. It is convenient to write $h$ in the form
$$ h=(h^{-1}\{ 1\} , h^{-1}\{ 2 \} ,  h^{-1}\{ 3\}, h^{-1}\{ 4\} )= (\{ 3,4\} ,\{ 7\} , \{ 2,5,6\} , \{1\} ).$$
Then the  multivariable composite $H$ for arbitrary $h_1, h_2, \ldots, h_7$ takes the form
$$ ( h^{-1}_3\{ 1\} \cup h^{-1}_4\{ 1\} , h_7, h^{-1}_2\{ 1\} \cup  h^{-1}_5\{ 1\} \cup  h^{-1}_6\{ 1\} ,h_1).$$
In this notation, the $j_4$ elements in the source of $h_4$ have to be shifted by $j_3$, the ones of $h_7$ should be shifted by $j_3+j_4$ and so on. Moreover,  the round parentheses of $h_7$ and $h_1$  should be ignored. The formula may look strange but notice that $h_k$ is the constant map to $\{ 1\}$ for all $2\leq k \leq 6$ because for these we have $r(k)=u$.

\begin{prop}
With these definitions, the collection of preorders $P_{r;u}$ and $P_{r;v}$ are the multimorphisms in 
a multicategory from $r$ to $u$ and from $r$ to $v$ respectively. The functors $\Phi_{r;u}$ and $\Phi_{r;v}$ define a multifunctor between multicategories.
\end{prop}

\begin{proof}
This is immediate from Remark \ref{m65}.
\end{proof}

\section{A monad in $\ss\times\ss$}
\label{proof}

In this section we construct a monad in $\ss\times \ss$ which acts on the
pair $(\bR_{e,*,1},\bR_{\text{rel}}^\Z)$. The arguments in this and in the next sections are along the same lines with $\A ssoc$ and $\C omm$ replaced by monads encoding (commutative) monoid maps or (symmetric) monoidal transformations.

\begin{definition}
\label{m82}
Let $j\geq 0$
and let $\bX,\bY\in\ss$.

(i)
For $\alpha\in\Sigma_j$ and $r:\{1,\ldots,j\}\to \{u,v\}$, 
define 
\[
\tilde{\alpha}:
\O(r;v)\to
\O(r^\alpha;v)
\]
by
\[
(\tilde{\alpha}(a))(\sigma)=(a(\sigma))\alpha
\]
where
$a\in\Map_{\mathrm{preorder}}(U(\Delta^\bfn),P_{r;v})_+$ and 
$\sigma\in U(\Delta^\bfn)$.

(ii)  Define 
\[
(\bX,\bY)^{\otimes r}=\bZ_1\otimes\cdots\otimes \bZ_j,
\]
where $\bZ_i$ denotes $\bX$ if $r(i)=u$ and $\bY$ if $r(i)=v$.

(iii) 
For $\alpha\in\Sigma_j$ define
\[
\bar{\alpha}:
\bigvee_r
\O(r;v)
\owedge
(\bX,\bY)^{\otimes r}
\to
\bigvee_r
\O(r;v)
\owedge
(\bX,\bY)^{\otimes r}
\]
to be the map which takes the $r$-summand to the $r^\alpha$-summand by means of
the map
\[
\O(r;v)
\owedge
(\bX,\bY)^{\otimes r}
\xrightarrow{\tilde{\alpha}\owedge \alpha}
\O(r^\alpha;v)
\owedge
(\bX,\bY)^{\otimes r^\alpha}.
\]
\end{definition}

Note that the maps $\bar{\alpha}$ give $\bigvee_r
\O(r;v)
\owedge
(\bX,\bY)^{\otimes r}$
a right $\Sigma_j$ action.

Recall Notation \ref{m69}.

\begin{definition}
(i)
Define a functor $\bbO:\ss\times\ss\to\ss\times \ss$ by
\[
\bbO(\bX,\bY)
=
(\bbO_1(\bX),\bbO_2(\bX,\bY)),
\]
where
\[
\bbO_1(\bX)=
\bigvee_{j\geq 0}
\bigl(
\O(r_u(j);u)
\owedge
\bX^{\otimes j}
\bigr)/\Sigma_j
\]
and
\[
\bbO_2(\bX,\bY)=
\bigvee_{j\geq 0}
\bigvee_{r :\{1,\ldots ,j\} \ra \{u,v\}}
\bigl(\O(r;v)
\owedge
(\bX,\bY)^{\otimes r}
\Bigr)/\Sigma_j.
\]

(ii) Define a natural transformation
\[
\iota:(\bX,\bY)\to \bbO(\bX,\bY)
\]
to be $(\iota_1,\iota_2)$,
where $\iota_1$ is the composite
\[
\bX\xrightarrow{\cong}
\boS\owedge \bX
=
\O(r_u(1);u)\owedge \bX
\hookrightarrow
\bbO(\bX)
\]
and $\iota_2$ is the composite
\[
\bY\xrightarrow{\cong}
\boS\owedge \bY
=
\O(r_v(1);v)\owedge \bY
\hookrightarrow
\bbO(\bY).
\]
\end{definition}

For the structure map $\mu:\bbO\bbO\to\bbO$ we need a composition operation for
the collection of objects $\O(r;u)$ and $\O(r;v)$.
Recall Definition \ref{m71}(i) and the map $\Gamma$ defined in Equation
\eqref{m70}.

\begin{definition}
\label{m73}
Let $i,j_1,\ldots,j_i\geq 0$,
let $r:\{1,\ldots,i\}\to\{u,v\}$, and for $1\leq l\leq i$ let
$r_l:\{1,\ldots,j_l\}\to\{u,v\}$; assume that if $r(l)=u$ then $r_l$ is
$r_u(j_l)$.
Let 
\[
R:\{1,\ldots,\sum j_l\}\to\{u,v\}
\]
be the composite
\[
\{1,\ldots,\sum j_l\}
\cong
\coprod_{l=1}^i  \{1,\ldots,j_l\}
\to
\{u,v\},
\]
where the first map is the unique order-preserving bijection and the second
restricts on each $\{1,\ldots,j_l\}$ to $r_l$.  
Define a map
\[
\gamma:
\O(r;v)
\owedge
\bigl(\O(r_1;r(1))\otimes\cdots\otimes O(r_i;r(i))\bigr)
\to
\O(R;v)
\]
in $\ss$ by the formulas
\[
\gamma(a\wedge [e, b_1\wedge \ldots\wedge b_i])
(\sigma_1\times\cdots\times \sigma_i)
=
\Gamma(a(\sigma_1\times\cdots\times
\sigma_i),b_1(\sigma_1),\ldots, b_i(\sigma_i))
\]
(where $e$ is the identity element of the relevant symmetric group)
and
\[
\gamma(a\wedge [\alpha, b_1\wedge \ldots\wedge b_i])
=
(\alpha,\mathrm{id})_*\gamma((\alpha^{-1},\mathrm{id})_*a\wedge [e, b_1\wedge
\ldots\wedge b_i]).
\]
\end{definition}

This operation satisfies the analogues of Lemmas \ref{m14}, \ref{m20}, and
\ref{m72}.

Now we can define 
\[
\mu:\bbO\bbO\to\bbO
\]
to be $(\mu_1,\mu_2)$, where $\mu_1$ is given by Definition \ref{m25}(iv) and
$\mu_2$ is defined in a similar way using Definition \ref{m73}.

\begin{prop}
The transformations $\mu$ and $\iota$ define a monad structure on $\bbO$.
\qed
\end{prop}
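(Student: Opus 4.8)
The plan is to verify the two monad identities — associativity $\mu\circ\bbO\mu=\mu\circ\mu\bbO$ and the unit laws $\mu\circ\bbO\iota=\mathrm{id}=\mu\circ\iota\bbO$ — componentwise, using that $\bbO=(\bbO_1,\bbO_2)$, $\mu=(\mu_1,\mu_2)$ and $\iota=(\iota_1,\iota_2)$ are defined coordinatewise. The first coordinate $\bbO_1$, together with its $\mu_1$ and $\iota_1$, is literally the monad $\bbO$ of Section \ref{mm}: $\O(r_u(j);u)=\O(j)$, and $\mu_1$, $\iota_1$ are the transformations of Definition \ref{m25}. Hence the first coordinate of each identity is exactly Proposition \ref{m27} and requires nothing new. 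The entire argument therefore concentrates on $\bbO_2$.

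The substance is the passage from the composition operation $\gamma$ of Definition \ref{m73} to the three structural properties it must satisfy, namely the analogues of Lemmas \ref{m14}, \ref{m20} and \ref{m72}: an associativity square for a triple of levels of the $\O(r;\cdot)$'s (with the interchange map $\chi$ inserted on one side, exactly as in Lemma \ref{m14}); a unit property saying that composing along the constant datum $(h,e)$, or along the identity of $\Sigma_{j}$ in a $u$-colored slot, is the identity; and equivariance with respect to the right $\Sigma_j$-actions $\bar\alpha$ of Definition \ref{m82}(iii). I would establish all three uniformly by observing that the family $\{\O(r;u)_k,\O(r;v)_k\}_{k}$ is obtained by applying the functor $\bfn\mapsto\Map_{\mathrm{preorder}}\!\bigl(U(\Delta^\bfn),-\bigr)_+$ (with $\Sigma_k$ permuting the coordinates of $\Delta^\bfn$) to the multicategory of preorders $\{P_{r;u},P_{r;v}\}$, while $\gamma$ is induced, level by level in $\bfn$, by the composite operation $\Gamma$ of Equation \eqref{m70} on data together with the operad composition $\gamma_{\mathcal M}$ on the permutation parts. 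Since that functor is (lax) compatible with the relevant products and the construction is natural in $\bfn$, the three identities for $\gamma$ are pulled back from the multicategory axioms recorded in Remark \ref{m65} and the proposition preceding it.

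With these properties of $\gamma$ in hand, the monad identities for $\bbO_2$ follow by the same diagram chase as Proposition \ref{m27}: one expands both $\mu_2\circ\bbO_2\mu_2$ and $\mu_2\circ\mu_2\bbO_2$ using the $\bbO_2$-analogue of Definition \ref{m25}(iv), rewrites each side via $\chi$ into a common normal form $\gamma\owedge 1$ followed by the outer action of $\gamma$, and invokes associativity of $\gamma$; the unit laws drop out of the unit property of $\gamma$ and the definitions of $\iota_1$, $\iota_2$. Two points are genuinely new relative to the uncolored case, and both are immediate from the explicit formulas: first, $\bbO_2$ is a wedge over the coloring functions $r$, so one checks that $\mu_2$ respects this decomposition — composing data of types $r_1,\dots,r_i$ along a datum of type $r$ lands in the summand indexed by the composite coloring $R$ of Definition \ref{m73}; second, the quotients by $\Sigma_{j_1},\dots,\Sigma_{j_i}$ and by $\Sigma_{j_1+\cdots+j_i}$ are compatible with $\gamma$, which is exactly why $\gamma$ is defined on $[\alpha,-]$-classes by the second formula of Definition \ref{m73}, i.e. so as to be $\Sigma$-equivariant.

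The main obstacle is purely bookkeeping: keeping three layers of indices straight ($i$; the $j_l$; and, for the triple-composition diagram, the further $l_{lm}$) while simultaneously tracking the coloring functions, the right $\Sigma$-actions $\bar\alpha$, and the passage of the interchange map $\chi$ past the colored composition. There is no conceptual content beyond Section \ref{mm}; the real task is to confirm that Definitions \ref{m73} and \ref{m82} were set up precisely so that the proof of Proposition \ref{m27} transcribes verbatim, which is guaranteed by the multicategory packaging of Section \ref{colored}.
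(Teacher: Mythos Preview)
Your proposal is correct and takes essentially the same approach as the paper. The paper gives no explicit proof (just \qed), relying on the sentence immediately preceding the proposition—``This operation satisfies the analogues of Lemmas \ref{m14}, \ref{m20}, and \ref{m72}''—together with the multicategory framework of Section \ref{colored}; your write-up is a careful elaboration of exactly that reasoning, reducing the first coordinate to Proposition \ref{m27} and the second to the colored analogues of those three lemmas.
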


We conclude this section by giving the action of $\bbO$ on the pair 
$(\bR_{e,*,1},\bR_{\text{rel}}^\Z)$.  Recall Remark \ref{m65}.

\begin{definition}
\label{m101}
Let $k_1,\ldots,k_j$ be non-negative integers and let $\bfn_i$ be a
$k_i$-fold multi-index for $1\leq i\leq j$.  Let $r:\{1,\ldots,j\}\to \{u,v\}$,
and for $1\leq i\leq j$ let $\pre_i$ denote $\pre_{e,*,1}$ if $r(i)=u$ and
$\pre_{\text{rel}}^\Z$ if $r(i)=v$.
For any map of preorders
\[
a: U(\Delta^{\bfn_1}\times\cdots\times \Delta^{\bfn_j})\to P_{r;v}
\]
define
\[
a_*:
\pre_1^{k_1}(\Delta^{\bfn_1})
\times
\cdots
\times
\pre_j^{k_j}(\Delta^{\bfn_j})
\to
(\pre_{\text{rel}}^\Z)^{k_1+\cdots+k_j}(\Delta^{(\bfn_1,\ldots,\bfn_j)})
\]
by 
\begin{multline*}
a_*(F_1,\ldots, F_j)(\sigma_1\times\cdots\times\sigma_j,o_1\times\cdots\times 
o_j)
\\
=
i^{\epsilon(\zeta)}
\Phi_{r;v}(a(\sigma_1\times\cdots\times\sigma_j))
(F_1(\sigma_1,o_1), \ldots, F_j(\sigma_j,o_j)),
\end{multline*}
where $\Phi_{r;v}$ was defined in Remark \ref{m65} and $\zeta$ is the block permutation that takes blocks 
${\mathbf b}_1$, \dots, ${\mathbf b}_j$, ${\mathbf c}_1$, \dots, ${\mathbf c}_j$
of size
$k_1$, \dots, $k_j$, $\dim \sigma_1$, \dots, $\dim \sigma_j$ into the order 
${\mathbf b}_1$, ${\mathbf c}_1$, \dots, ${\mathbf b}_j$, ${\mathbf c}_j$.
\end{definition}

\begin{lemma}
If $F_i\in \ad_i^{k_i}(\Delta^{\bfn_i})$ for $1\leq i\leq j$ then
$$a_*(F_1,\ldots, F_j)
\in (\ad_{\mathrm{rel}}^\Z)^{k_1+\cdots+k_j}(\Delta^{(\bfn_1,\ldots,\bfn_j)}).$$
\end{lemma}

\begin{proof}
This is a straightforward consequence of the fact that 
the natural transformation
from the functor
\[
(\Ag)^{\times l}
\xrightarrow{\Sigr^{\times l}}
(\Ar)^{\times l}
\xrightarrow{\otimes}
\Ar
\]
to the functor
\[
(\Ag)^{\times l}
\xrightarrow{\boxtimes}
\Ag
\xrightarrow{\Sigr}
\Ar
\]
given by the cross product is a quasi-isomorphism.
\end{proof}

\begin{definition}
\label{m85}
Let $j\geq 0$ and let $r:\{1,\ldots,j\}\to\{u,v\}$.
Define a map
\[
\phi_r: \O(r;v)
\owedge 
(\bR_{e,*,1},\bR_{\text{rel}}^\Z)^{\otimes r}
\to
\bR_{\text{rel}}^\Z
\]
in $\ss$ by the formulas
\[
\phi_r(a\wedge [e, F_1\wedge\cdots\wedge F_j])
=
a_*(F_1\wedge\cdots\wedge F_j)
\]
(where $e$ denotes the identity element of the relevant symmetric group)
and
\[
\phi_r(a\wedge [\alpha, F_1\wedge\cdots\wedge F_j])
=
(\alpha,\mathrm{id})_*\phi_r((\alpha^{-1},\mathrm{id})_*a\wedge [e,
F_1\wedge\cdots\wedge F_j]).
\]
\end{definition}

Next observe that the maps $\phi_r$ induce a map

\begin{equation}
\label{m74}
\Bigl(
\bigvee_r
\bigl(\O(r;v)
\owedge
(\bR_{e,*,1},\bR_{\mathrm{rel}}^\Z)^{\otimes r}
\Bigr)/\Sigma_j
\to
\bR_{\mathrm{rel}}^\Z
\end{equation}
for each $j\geq 0$.
We define 
\[
\nu:\bbO(\bR_{e,*,1},\bR_{\text{rel}}^\Z)
\to
(\bR_{e,*,1},\bR_{\text{rel}}^\Z)
\]
to be the pair $(\nu_1,\nu_2)$, where
$\nu_1$ is given by Definition \ref{m75} and $\nu_2$ is given by
the maps \eqref{m74}.

\begin{prop}
$\nu$ is an action of $\bbO$ on $(\bR_{e,*,1},\bR_{\mathrm{rel}}^\Z)$.
\end{prop}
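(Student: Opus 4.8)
The plan is to imitate the proof of Proposition~\ref{m30}, working separately on the two components of $\nu=(\nu_1,\nu_2)$. For the first component there is nothing new: $\bbO_1$ is the monad $\bbO$ of Section~\ref{mm} attached to the commutative ad theory $\ad_{e,*,1}$, and $\nu_1$ is the map of Definition~\ref{m75}, so the unit and associativity diagrams for $\nu_1$ are exactly the content of Proposition~\ref{m30}. For $\nu_2$ the unit diagram is immediate: $\iota_2$ lands in the summand indexed by $r_v(1)$, the preorder $P_{r_v(1);v}$ consists of a single datum $\bfd$, and the corresponding functor $\Phi_{r_v(1);v}(\bfd)$ is $\id_{\Ar}$, so $\phi_{r_v(1)}$ restricted to that summand is the canonical isomorphism $\boS\owedge\bR_{\mathrm{rel}}^\Z\cong\bR_{\mathrm{rel}}^\Z$.

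The associativity diagram for $\nu_2$ is proved as in Proposition~\ref{m30}: it suffices, for all $i,j_1,\dots,j_i\geq 0$ and all $r,r_1,\dots,r_i$ as in Definition~\ref{m73}, to show that the composite
\begin{multline*}
\O(r;v)\owedge\Bigl(\bigl(\O(r_1;r(1))\owedge(\bR_{e,*,1},\bR_{\mathrm{rel}}^\Z)^{\otimes r_1}\bigr)\otimes\cdots\otimes\bigl(\O(r_i;r(i))\owedge(\bR_{e,*,1},\bR_{\mathrm{rel}}^\Z)^{\otimes r_i}\bigr)\Bigr)
\\
\xrightarrow{1\owedge\chi}
\Bigl(\O(r;v)\owedge\bigl(\O(r_1;r(1))\otimes\cdots\otimes\O(r_i;r(i))\bigr)\Bigr)\owedge(\bR_{e,*,1},\bR_{\mathrm{rel}}^\Z)^{\otimes R}
\\
\xrightarrow{\gamma\owedge 1}
\O(R;v)\owedge(\bR_{e,*,1},\bR_{\mathrm{rel}}^\Z)^{\otimes R}
\xrightarrow{\phi_R}
\bR_{\mathrm{rel}}^\Z
\end{multline*}
agrees with the composite that first applies $\phi_{r_1}\otimes\cdots\otimes\phi_{r_i}$ to the inner factors and then $\phi_r$; one then descends to $\Sigma_j$-quotients exactly as in Proposition~\ref{m62}. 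Evaluating both sides on a class $[e,F_1\wedge\cdots\wedge F_N]$, $N=j_1+\cdots+j_i$, over a cell of the relevant product simplex, and using Remark~\ref{m65}, the left composite produces $\Phi_{R;v}\bigl(\Gamma(\bfd,\bfd_1,\dots,\bfd_i)\bigr)$ evaluated on the objects $F_l(\sigma_l,o_l)$, where $\bfd$ and the $\bfd_l$ are the values of the maps of preorders being composed, while the right composite evaluates the multicategory composite of the functors $\Phi_{r;v}(\bfd),\Phi_{r_1;r(1)}(\bfd_1),\dots,\Phi_{r_i;r(i)}(\bfd_i)$ on the same objects; these agree because $\Phi_{r;u}$ and $\Phi_{r;v}$ form a map of multicategories and the multicategory composite of functors is literally iterated composition of functors. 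The passage from $[e,-]$ to $[\alpha,-]$ is then forced by the second defining formulas in Definitions~\ref{m73} and \ref{m85} together with the analogue of Lemma~\ref{m72} noted after Definition~\ref{m73}.

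The one genuine computation, and the main obstacle, is getting the signs to match. One must check that the block-permutation sign $i^{\epsilon(\zeta)}$ of Definition~\ref{m101}, the sign $i^{\epsilon}$ built into $h_\blacklozenge$ via the permutation $\theta(h)$ (Definitions~\ref{m71} and \ref{m67}(ii)), and the sign of the permutation $\eta$ in Definition~\ref{m67}(iv) all combine correctly under the operad composition $\gamma_{\mathcal M}$ and the multivariable composite $h\circ(h_1,\dots,h_i)$ appearing in Equation~\eqref{m70}. Concretely, for $H=h\circ(h_1,\dots,h_i)$ the permutation $\theta(H)$ differs from the evident product of $\theta(h)$, the $\theta(h_l)$, and the interleaving block permutations $\zeta$ by an even rearrangement, once the block sizes $k_1,\dots,k_N,\dim\sigma_1,\dots,\dim\sigma_N$ are tracked carefully; hence the two sides carry the same sign. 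This is the same Koszul-sign bookkeeping as in the proofs of \cite[Theorem~18.5]{LM12} and Lemma~\ref{m10}, and I would isolate it as a single sign lemma, leaving everything else to the formal associativity of composition in the multicategory of Remark~\ref{m65}.
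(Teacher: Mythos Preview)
Your proposal is correct and follows the same approach as the paper: both rest on Remark~\ref{m65}, i.e., on the fact that the imbeddings $\Phi_{r;u}$ and $\Phi_{r;v}$ into the multicategory of functors preserve the $\Sigma_j$ actions and the composition operations. The paper's proof is a single sentence invoking that remark, whereas you unpack the unit and associativity diagrams explicitly and flag the Koszul-sign bookkeeping as a separate verification; the paper regards those signs as already absorbed into the statement that $\Phi$ is a map of multicategories, so your isolated ``sign lemma'' is not an additional ingredient but rather part of what Remark~\ref{m65} asserts.
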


\begin{proof}
This is a straightforward consequence of Remark \ref{m65}.
\end{proof}

\section{A monad in $\sss\times\sss$}
\label{msss}

First we give $ \O(r;v) \owedge (\bX,\bY)^{\wedge r} $ the structure of a
multisemisimplicial symmetric spectrum when $\bX,\bY\in \sss$.  The definition
is analogous to Definition \ref{m29}.  Recall Definition \ref{m24}.

\begin{definition}
Let $j,k\geq 0$. 
Let $s$ be the 1-simplex of $S^1$.   
Define
\[
\omega:S^1\wedge (\O(r;v)\owedge (\bX,\bY)^{\wedge r})_k
\to
(\O(r;v)\owedge (\bX,\bY)^{\wedge r})_{k+1}
\]
as follows: for
$a\in (\O(r;v)_k)_\bfn$ and $x\in
(((\bX,\bY)^{\wedge r})_k)_\bfn$, let
\[
\omega(s\wedge (a\wedge x))
=
(a\circ \Pi)\wedge \omega(s\wedge x).
\]
\end{definition}

\begin{definition}
\label{m100}
Define a functor $\bbP:\ss\times\ss\to\ss\times \ss$ by
\[
\bbP(\bX,\bY)
=
(\bbP_1(\bX),\bbP_2(\bX,\bY)),
\]
where
\[
\bbP_1(\bX)=
\bigvee_{j\geq 0}
\bigl(
\O(r_u(j);u)
\owedge
\bX^{\wedge j}
\bigr)/\Sigma_j
\]
and
\[
\bbP_2(\bX,\bY)=
\bigvee_{j\geq 0}
\Bigl(
\bigvee_r 
\bigl(\O(r;v)
\owedge
(\bX,\bY)^{\wedge r}
\Bigr)/\Sigma_j.
\]
\end{definition}

The proof that $\bbP$ inherits a monad structure and an action on
$(\bR_{e,*,1},\bR_{\text{rel}}^\Z)$ is the same as the corresponding proof in Section
\ref{monadsss}.

For use in the next section we record a lemma.  Let $\C$ be the category whose
objects are triples $(\bX,\bY,f)$, where $\bX$ and $\bY$ are monoids in $\sss$
and $f$ is a map $\bX\to\bY$ in $\sss$ which is not required to be a monoid 
map; the morphisms are commutative diagrams
\[
\xymatrix{
\bX
\ar[d]_f
\ar[r]
&
\bX'
\ar[d]^{f'}
\\
\bY
\ar[r]
&
\bY',
}
\]
where the horizontal arrows are monoid maps.

\begin{lemma}
\label{m78}
{\rm (i)} There is a functor $\Upsilon$ from $\bbP$ algebras to $\C$ which takes
$(\bX,\bY)$ to a map $\bX\to\bY$; in particular, $\bX$ and $\bY$ have natural
monoid structures.

{\rm (ii)} $\Upsilon(\bR_{e,*,1},\bR_{\mathrm{rel}}^\Z)$ is the map
\[
\Sigr:\bR_{e,*,1}\to \bR_{\mathrm{rel}}^\Z,
\]
where $\bR_{e,*,1}$ and $\bR_{\mathrm{rel}}^\Z$ have the monoid structures given by Lemma
\ref{m35}(i).
\end{lemma}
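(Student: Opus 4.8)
The plan is to mimic the proof of Lemma \ref{m35}(i): I would exhibit a monad $\mathbb B$ on $\sss\times\sss$ whose algebras are precisely the objects of $\C$, together with a map of monads $\Theta:\mathbb B\to\bbP$ (where $\bbP$ is the monad of Definition \ref{m100}); then $\Upsilon$ is the functor which restricts a $\bbP$-algebra structure along $\Theta$. Concretely, set $\mathbb B(\bX,\bY)=(\mathbb B_1(\bX),\mathbb B_2(\bX,\bY))$ with $\mathbb B_1=\mathbb A$ the free-monoid monad $\mathbb A(\bX)=\bigvee_{j\geq 0}\bX^{\wedge j}$ of Lemma \ref{m35}, and $\mathbb B_2(\bX,\bY)=\bigvee_{j\geq 0}\bigvee_r (\bX,\bY)^{\wedge r}$, with unit and multiplication built from concatenation of words together with the ``$\Sigr$-on-each-$u$, then multiply'' evaluation — i.e. from the multivariable composition of surjections of Definition \ref{m73}, restricted to the identity surjections. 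A direct check shows that a $\mathbb B$-algebra is the same thing as an object of $\C$: the $\mathbb B_1$-structure makes $\bX$ a monoid, the all-$v$ summands of the $\mathbb B_2$-structure (which form a sub-monad isomorphic to $\mathbb A$ applied to the second variable) make $\bY$ a monoid, the $j=1$, $r=(u)$ summand $\bX\to\bY$ gives a map $f$ in $\sss$, and the monad axioms force the evaluation of every word of $u$'s and $v$'s to be assembled from $f$ and the multiplication of $\bY$, so no further compatibility is imposed.

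Next I would construct $\Theta:\mathbb B\to\bbP$. On the first factor this is exactly the monad map of Lemma \ref{m35}(i), induced by $\boS\to\O(r_u(j);u)=\O(j)$, $\bar s\mapsto e_{j,k,\bfn}$. On the second factor, for each $r:\{1,\ldots,j\}\to\{u,v\}$ let $\delta_r=(\mathrm{id}_{\{1,\ldots,j\}},e)\in P_{r;v}$ be the datum whose surjection is the identity and whose permutation is the identity; this is adapted to $r$ (an injective surjection is vacuously adapted), and by Definition \ref{m67}(iv) the functor $\Phi_{r;v}(\delta_r)$ sends $(z_1,\ldots,z_j)$ to $w_1\otimes\cdots\otimes w_j$ with $w_i=\Sigr(z_i)$ if $r(i)=u$ and $w_i=z_i$ if $r(i)=v$ (no sign, since $\theta(\mathrm{id})$ is the identity permutation) — which is exactly the ``free'' evaluation used to define $\mathbb B_2$; in particular for $j=1$, $r=(u)$ it is $\Sigr$, and for $r=r_v(j)$ it is the $\otimes$-multiplication of $\Ar$. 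Define $\boS\to\O(r;v)$ by sending the nontrivial simplex to the constant function at $\delta_r$; these assemble to $\mathbb B_2\to\bbP_2$. Checking that $\Theta$ respects unit and multiplication reduces, via Equation \eqref{m70} and Definition \ref{m73}, to the identities $\gamma_{\mathcal M}(e,\ldots,e)=e$ and ``a multivariable composite of identity surjections is the identity surjection,'' together with associativity of $\otimes$ — a routine verification, parallel to the proofs of Lemmas \ref{m14}, \ref{m20} and \ref{m72} and their analogues in Section \ref{proof}.

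This proves part (i): a $\bbP$-algebra $(\bX,\bY)$ becomes a $\mathbb B$-algebra by restriction along $\Theta$, hence an object $(\bX,\bY,f)$ of $\C$, and a morphism of $\bbP$-algebras restricts to a morphism of $\mathbb B$-algebras, which is a morphism in $\C$ since $\mathbb B_1$-algebra maps are monoid maps. For part (ii), I would apply $\Upsilon$ to $(\bR_{e,*,1},\bR_{\mathrm{rel}}^\Z)$ with the action $\nu=(\nu_1,\nu_2)$ of the previous section. The induced monoid structure on $\bR_{e,*,1}$ is $\nu_1$ pulled back along the Lemma \ref{m35} map, which is by construction the monoid structure of Lemma \ref{m35}(i). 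The monoid structure on $\bR_{\mathrm{rel}}^\Z$ comes from the $r=r_v(j)$ summands of $\nu_2$ precomposed with the constant-at-$(e,e)$ inclusions; unwinding Definitions \ref{m85} and \ref{m101} with $\Phi_{r_v(j);v}(e,e)=\otimes$ recovers the multiplication \eqref{m15} for the (multiplicative) ad theory $\ad_{\mathrm{rel}}^\Z$, i.e. the monoid structure of Lemma \ref{m35}(i) applied to $\ad_{\mathrm{rel}}^\Z$. Finally $f$ is the restriction of $\nu_2$ to the $j=1$, $r=(u)$ summand $\O(r_u(1);v)\owedge\bR_{e,*,1}\cong\boS\owedge\bR_{e,*,1}\cong\bR_{e,*,1}$; by Definitions \ref{m85} and \ref{m101} (with $j=1$ the sign $\epsilon(\zeta)$ vanishes and $\Phi_{r_u(1);v}$ of the unique datum is $\Sigr$) this sends a $K$-ad $F$ to $\Sigr\circ F$, which is precisely the map $\bR_{e,*,1}\to\bR_{\mathrm{rel}}^\Z$ induced by the morphism of ad theories $\Sigr$ of Section \ref{revis} (with $R=\Z$). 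All of this is an easy consequence of the definitions once $\Theta$ has been constructed.

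The main obstacle I anticipate is the bookkeeping in verifying that $\Theta$ is a map of monads — specifically, that the family $\{\delta_r\}$ is stable under the composition operation $\Gamma$ and the right $\Sigma_j$-actions of Section \ref{colored}. This amounts to checking that composing ``apply $\Sigr$ to each $u$-input, then $\otimes$-multiply'' operations, in the nested multicategory sense of Remark \ref{m65}, again produces an operation of the same form; associativity of $\otimes$ handles the algebra, but one must carefully track the surjections $H=h\circ(h_1,\ldots,h_i)$ and the block permutations $\theta(h)$, exactly as in the proof that $\gamma$ satisfies the operad axioms.
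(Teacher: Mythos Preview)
Your approach has a genuine gap: the proposed map $\Theta:\mathbb B\to\bbP$ is not a monad map. The specific error is the claim that ``a multivariable composite of identity surjections is the identity surjection.'' Look at the paragraph just before Equation \eqref{m70}: when $r(l)=u$, the inner input $x_l$ lies in $P_{r_u(j_l);u}=\Sigma_{j_l}$, and the paper sets $h_l$ equal to the \emph{constant} map $\{1,\ldots,j_l\}\to\{1\}$, not the identity. So if you take outer $r=(u)$ with datum $\delta_{(u)}=(\mathrm{id}_1,e)$ and inner $e\in\Sigma_2$, the composite $\Gamma(\delta_{(u)},e)$ has $H:\{1,2\}\to\{1\}$, whereas $\Theta\circ\mu_{\mathbb B}$ on the same element produces $\delta_{(u,u)}=(\mathrm{id}_2,e)$. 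These disagree in $P_{(u,u);v}$.

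This is not a cosmetic problem. Your monad $\mathbb B$, with $\mathbb B_2(\bX,\bY)=\bigvee_j\bigvee_r(\bX,\bY)^{\wedge r}$ and concatenation multiplication, has algebras in which $f$ is \emph{forced} to be a monoid map: computing the action on the $(u,u)$-summand via outer $r=(u)$, inner $\bX^{\wedge 2}$ gives $\theta_{(u,u)}(x_1\wedge x_2)=f(x_1\cdot x_2)$, while computing via outer $r=(v,v)$, inner $(u),(u)$ gives $\theta_{(u,u)}(x_1\wedge x_2)=f(x_1)\cdot f(x_2)$. But $\C$ explicitly does \emph{not} require $f$ to be a monoid map, and indeed $\Sigr$ is not one (strict multiplicativity would require $S_*(X_1\times X_2)=S_*(X_1)\otimes S_*(X_2)$; the whole apparatus of Section \ref{colored} exists precisely because only the cross product relates them). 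So either your $\mathbb B$-algebras are not $\C$, or $\Theta$ fails---and both fail, for the same underlying reason.

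The paper's proof avoids this by not attempting a single monad map. It extracts the three pieces of data separately: the monoid structure on $\bX$ from $\mathbb A\to\bbP_1$ (Lemma \ref{m35}), the monoid structure on $\bY$ from a map $\mathbb A(\bY)\to\bbP_2(\bX,\bY)$ landing only in the all-$v$ summands $r=r_v(j)$ (using the datum $(h_0,e)=(\mathrm{id}_j,e)\in P_{r_v(j);v}$), and the map $f$ from the $r=r_u(1)$ summand alone. These pieces are compatible with the monad structure of $\bbP$ on their respective domains, but they do not assemble into a monad map from your $\mathbb B$, and the paper does not claim they do.
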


\begin{proof}  
Recall Notation \ref{m69}, and 
let $e$ denote the identity element of $\Sigma_j$. 

Part (i).
The map $f:\bX\to\bY$ is the composite
\[
\bX
\cong
\boS\owedge \bX
=
\O(r_u(1);v) \owedge \bX
\hookrightarrow
\bbP_2(\bX,\bY)
\to \bY.
\]

The monoid structure on $\bX$ is given by Lemma \ref{m35}; it remains to give
the monoid structure on $\bY$.   It suffices to give an action on 
$\bY$ of the monad $\mathbb A$ defined in the proof of Lemma \ref{m35}, and for
this in turn it suffices to give a suitable natural transformation $\mathbb
A\to\bbP_2$.

For each $j\geq 0$ let $h_0$ be the identity map of $\{1,\ldots,j\}$.  Then 
$h_0$
is adapted to $r_v(j)$, so we obtain an element $(h_0,e)\in P_{r_v(j);v}$.

For each $j,k\geq 0$ and each $k$-fold multi-index $\bfn$, define an
element
\[
b_{j,k,\bfn}\in (\O(r_v(j);v)_k)_\bfn
\]
to be the constant function $U(\Delta^\bfn)\to P_{r_v(j);v}$ whose value is 
$(h_0,e)$.  Next define a map
\[
\boS\to \O(r_v(j);v)
\]
by taking the nontrivial simplex of $(\boS_k)_\bfn$ to $b_{j,k,\bfn}$.

Now the composite
\begin{multline*}
{\mathbb A}(\bY)=\bigvee_{j\geq 0} \bY^{\wedge j}
\cong
\bigvee_{j\geq 0} \boS\owedge \bY^{\wedge j}
\\
\to
\bigvee_{j\geq 0}
\Bigl(
\bigvee_r 
\bigl(\O(r;v)
\owedge
(\bX,\bY)^{\wedge r}
\Bigr)/\Sigma_j
=\bbP_2(\bX,\bY)
\end{multline*}
is the desired map.

Part (ii) is an easy consequence of the definitions.
\end{proof}

\section{Rectification}
\label{rect2}

In this section we prove Theorem \ref{m54}.  The argument is
analogous to that in Section \ref{rect}.

First we consider a monad in $\sss\times\sss$ which is simpler than $\bbP$.

\begin{definition}
\label{m86}
(i) Define $\bbP'(\bX,\bY)$ to be 
\[
(
\bigvee_{j\geq 0} \,  \bX^{\wedge j}/\Sigma_j
,
\bigvee_{j\geq 0} \,
\bigl(\bigvee_r \,
(\bX,\bY)^{\wedge r}
\bigr)
/\Sigma_j
).
\]

(ii)  For each $j\geq 0$ and each $r:\{1,\ldots,j\}\to\{u,v\}$, let
\[
\xi_j:\O(r_u(j);u)\to \boS
\]
and 
\[
\zeta_r:\O(r;v)\to \boS
\]
be the maps which take each nontrivial simplex of the $k$-th object to the 
nontrivial simplex of $\oS_k$ in the same multidegree.
Define a natural transformation 
\[
\Xi: \bbP\to\bbP'
\]
to be the pair $(\Xi_1,\Xi_2)$, where $\Xi_1$ is
the wedge of the composites
\[
\bigl(\O(r_u(j);u)\owedge \bX^{\wedge j}\bigr)/\Sigma_j
\xrightarrow{\xi_j\owedge 1}
\bigl(\boS\owedge \bX^{\wedge j}\bigr)/\Sigma_j
\xrightarrow{\cong}
\bX^{\wedge j}/\Sigma_j.
\]
and $\Xi_2$ is the wedge of the composites 
\begin{multline*}
\Bigl(
\bigvee_r 
\O(r;v)
\owedge
(\bX,\bY)^{\wedge r}
\Bigr)/\Sigma_j
\xrightarrow{ \bigvee \zeta_r \owedge 1}
\Bigl(
\bigvee_r 
\boS
\owedge
(\bX,\bY)^{\wedge r}
\Bigr)/\Sigma_j
\\
\xrightarrow{\cong}
\Bigl(
\bigvee_r 
(\bX,\bY)^{\wedge r}
\Bigr)/\Sigma_j.
\end{multline*}
\end{definition}

\begin{prop}
\label{m79}
{\rm (i)} An algebra over $\bbP'$ is the same thing as a pair of 
commutative monoids $(\bX,\bY)$ in $\sss$ together with a monoid map
$\bX\to\bY$.

{\rm (ii)} $\Xi$ is a map of monads.

{\rm (iii)} Suppose that each $X_k$ and each $Y_k$ has compatible 
degeneracies (see Definition \ref{m31}).  Let $\bbP^q$ denote the $q$-th 
iterate of $\bbP$.  Then each map
\[
\Xi:\bbP^q(\bX,\bY)\to \bbP'\bbP^{q-1}(\bX,\bY)
\]
is a weak equivalence.
\end{prop}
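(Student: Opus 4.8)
The plan is to follow the proof of Proposition~\ref{m34}, of which this is the two--colored analogue. Parts (i) and (ii) are immediate from the definitions in Section~\ref{rect2}: part (i) is the quotient version of the description of $\bbO$-algebras coming from Remark~\ref{m65} and Lemma~\ref{m78}, and part (ii) is a direct check. The content is part (iii), which I would prove first for $q=1$ and then reduce the general case to that.

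For $q=1$ the map $\Xi=(\Xi_1,\Xi_2)$ splits into two pieces. The first, $\Xi_1\colon\bbP_1(\bX)\to\bbP'_1(\bX)$, is literally the map $\Xi$ of Section~\ref{rect}, since $\O(r_u(j);u)_k=\O(j)_k$ by Definition~\ref{m64}(i); so it is a weak equivalence by Proposition~\ref{m34}(iii). For the second piece $\Xi_2$, I would argue exactly as in the proof of Proposition~\ref{m34}(iii). First, by Equation~\eqref{m8} (together with the observation that $\alpha\in\Sigma_j$ carries the $r$-summand of $\bigvee_r\O(r;v)\owedge(\bX,\bY)^{\wedge r}$ to the $r^\alpha$-summand), the $\Sigma_j$-action on $\bigvee_r\O(r;v)\owedge(\bX,\bY)^{\wedge r}$ is free away from the basepoint, so it suffices to show that for each $r$ the map $\O(r;v)\owedge(\bX,\bY)^{\wedge r}\to(\bX,\bY)^{\wedge r}$ induced by $\zeta_r$ is a weak equivalence. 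Decomposing the $k$-th object of $\O(r;v)\owedge(\bX,\bY)^{\wedge r}$, just as in the proof of Proposition~\ref{m34}(iii), as a wedge over $k_1+\cdots+k_j=k$ of objects $\O(r;v)_k\owedge I^{\Sigma_k}_{\Sigma_{k_1}\times\cdots\times\Sigma_{k_j}}(B)$ with $B$ the smash of the relevant $k_i$-th terms of $\bX$ and $\bY$, and using that $\O(r;v)_k\owedge(-)$ commutes with the induction up to natural isomorphism, Proposition~\ref{m38} reduces the problem to showing that each map $\O(r;v)_k\owedge B\to B$ is a weak equivalence. Since $\O(r;v)_k$ has compatible degeneracies and is weakly equivalent to a point by Remark~\ref{m59}(ii), since $B$ has compatible degeneracies (being a smash of objects that do, by the hypothesis on $\bX$ and $\bY$), and since $\zeta_r$ realizes to the collapse map $|\O(r;v)_k|\to S^0$, this follows from Proposition~\ref{Aug29.3}.

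For the general case I would argue as in the last line of the proof of Proposition~\ref{m34}(iii): by Remark~\ref{m59}(ii) and the argument of Example~\ref{m40}(ii), if $X_k$ and $Y_k$ have compatible degeneracies for all $k$ then so do the $k$-th objects of both components of $\bbP^{q-1}(\bX,\bY)$, and applying the case $q=1$ with $(\bX,\bY)$ replaced by $\bbP^{q-1}(\bX,\bY)$ gives the result.

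The only ingredient that is genuinely new compared with the one--colored situation, and the step I would check most carefully, is the freeness of the $\Sigma_j$-action on $\bigvee_r\O(r;v)\owedge(\bX,\bY)^{\wedge r}$: here $\alpha\in\Sigma_j$ not only permutes smash factors (as in \eqref{m8}) but also carries the $r$-summand to the $r^\alpha$-summand and acts on $\O(r;v)$ through the maps $\tilde\alpha$ of Definition~\ref{m82}(i). On a summand with $r\neq r^\alpha$ there can be no fixed points for trivial reasons, while on a summand fixed by $\alpha$ one still has the block--permutation argument underlying \eqref{m8} together with the corresponding freeness of $\tilde\alpha$. I expect this to go through without difficulty, but it is the point at which the colored bookkeeping of Section~\ref{colored} actually enters the argument.
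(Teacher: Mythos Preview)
Your treatment of part (iii) is correct and is exactly what the paper does: the paper simply says ``the proof of part (iii) is the same as for Proposition~\ref{m34}(iii) (but using Remark~\ref{m59}(ii)),'' and your write-up is a faithful unpacking of that sentence, including the freeness check you flag at the end. Part (ii) is likewise fine.

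There is, however, a genuine gap in your treatment of part (i). You write that it is ``immediate from the definitions'' by analogy with Proposition~\ref{m34}(i), invoking Remark~\ref{m65} and Lemma~\ref{m78}. But Lemma~\ref{m78} is about $\bbP$-algebras, and there the map $\bX\to\bY$ is explicitly \emph{not required to be a monoid map}; the category $\C$ in that lemma is set up precisely to accommodate this. The new content in Proposition~\ref{m79}(i), over and above its one-colored analogue, is exactly that for a $\bbP'$-algebra the map $f\colon\bX\to\bY$ \emph{is} a monoid map, and this is not a formality. The paper proves it by exhibiting two inclusions $i_1,i_2\colon\bX^{\wedge j}\hookrightarrow\bbP'_2\bbP'(\bX,\bY)$ (one through $\bbP'_1(\bX)$ via the $r_u(1)$-summand, one through $\bbP'_2(\bX,\bY)^{\wedge j}$ via the $r_v(j)$-summand) and checking, using the monad axioms for $\bbP'$, that the two resulting composites to $\bY$ compute respectively $f\circ(\text{mult})$ and $(\text{mult})\circ f^{\wedge j}$ while both equalling a common map $H$. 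Nothing in Remark~\ref{m65} or Lemma~\ref{m78} supplies this; you need to actually carry out that diagram chase. Add it and the proof is complete.
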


\begin{proof}
Part (i).  Let $(\bX,\bY)$ be an algebra over $\bbP'$.  The fact that $\bX$
and $\bY$ are commutative monoids is immediate from the definitions.  The map
$f:\bX\to\bY$ is constructed as in the proof of Lemma \ref{m78}(i).  

To show
that $f$ is a monoid map, we first observe that there are two inclusions of
$\bX^{\wedge j}$ into $\bbP'_2\bbP'(\bX,\bY)$.  Let $i_1$ be the composite
\[
\bX^{\wedge j}
\hookrightarrow
\bbP'_1(\bX)
\hookrightarrow
\bbP'_2\bbP'(\bX,\bY),
\]
where the second arrow is the inclusion of the summand indexed by
$j=1,r=r_u(1)$.
Let $i_2$ be the composite
\[
\bX^{\wedge j}
\hookrightarrow
\bbP'_2(\bX,\bY)^{\wedge j}
\hookrightarrow
\bbP'_2\bbP'(\bX,\bY),
\]
where the first arrow is the $j$-fold smash of the inclusion of the 
$r_u(1)$ summand, and the second arrow is the inclusion of the $r_v(j)$
summand.

Consider the commutative diagram
\[
\xymatrix{
\bX^{\wedge j}
\ar[rr]^-=
\ar[rd]^{i_1}
\ar[dd]
&&
\bX^{\wedge j}
\ar[d]
\\
& 
\bbP'_2\bbP'(\bX,\bY)
\ar[d]_{\bbP'_2\nu}
\ar[r]^-\mu
&
\bbP'_2(\bX,\bY)
\ar[d]^\nu
\\
\bX
\ar[r]
&
\bbP'_2(\bX,\bY)
\ar[r]^-\nu
&
\bY.
}
\]
Let $H$ denote the composite of the right-hand vertical arrows.  Then the
diagram shows that the composite
\begin{equation}
\label{m89}
\bX^{\wedge j}
\to\bX
\xrightarrow{f}
\bY
\end{equation}
is $H$.

Next consider the commutative diagram
\[
\xymatrix{
\bX^{\wedge j}
\ar[rr]^-=
\ar[rd]^{i_2}
\ar[dd]_{f^{\wedge j}}
&&
\bX^{\wedge j}
\ar[d]
\\
& 
\bbP'_2\bbP'(\bX,\bY)
\ar[d]_{\bbP'_2\nu}
\ar[r]^-\mu
&
\bbP'_2(\bX,\bY)
\ar[d]^\nu
\\
\bY^{\wedge j}
\ar[r]
&
\bbP'_2(\bX,\bY)
\ar[r]^-\nu
&
\bY.
}
\]
This diagram shows that the composite
\begin{equation}
\label{m90}
\bX^{\wedge j}
\xrightarrow{f^{\wedge j}}
\bY^{\wedge j}
\to
\bY
\end{equation}
is also $H$. 
Therefore the composites \eqref{m89} and \eqref{m90} are equal as reqired.

Part (ii) is immediate from the definitions, and the proof of part
(iii) is the same as for Proposition \ref{m34}(iii) (but using Remark 
\ref{m59}(ii)).
\end{proof}

\begin{proof}[Proof of Theorem \ref{m54}]
The proof follows the outline of 
the proof of Theorem \ref{Nov12.3} (given in 
Section \ref{rect}); we refer the reader to that proof for some of the 
details.  We have a diagram of  simplicial $\bbP$-algebras
\[
(\bR_{e,*,1},\bR_{\text{rel}}^\Z)_\bu \xleftarrow{\varepsilon}
B_\bu(\bbP,\bbP,(\bR_{e,*,1},\bR_{\text{rel}}^\Z))
\xrightarrow{\Xi_\bu}
B_\bu(\bbP',\bbP,(\bR_{e,*,1},\bR_{\text{rel}}^\Z)).
\]
By Lemma \ref{m78}, this gives a diagram
\begin{equation}
\label{m76}
\xymatrix{
(\bR_{e,*,1})_\bu
\ar[d]_\Sigr
&
B_\bu(\bbP_1,\bbP,(\bR_{e,*,1},\bR_{\text{rel}}^\Z))
\ar[l]_-\varepsilon
\ar[r]^-{(\Xi_1)_\bu}
\ar[d]
&
B_\bu(\bbP'_1,\bbP,(\bR_{e,*,1},\bR_{\text{rel}}^\Z))
\ar[d]
\\
(\bR_{\text{rel}}^\Z)_\bu
&
B_\bu(\bbP_2,\bbP,(\bR_{e,*,1},\bR_{\text{rel}}^\Z))
\ar[l]_-\varepsilon
\ar[r]^-{(\Xi_2)_\bu}
&
B_\bu(\bbP'_2,\bbP,(\bR_{e,*,1},\bR_{\text{rel}}^\Z)),
}
\end{equation}
in which all objects are simplicial monoids and all horizontal arrows are
monoid maps.  By Proposition \ref{m79}(i),
the right column is a simplicial monoid map between simplicial commutative
monoids.  Moreover, each map $\varepsilon$ is a homotopy 
equivalence of simplicial objects, and (using Proposition 
\ref{m79}(iii)) $(\Xi_1)_\bu$ and $(\Xi_2)_\bu$ are weak
equivalences in each simplicial degree.

The objects of the diagram \eqref{m76} are simplicial objects in $\sss$.  We
obtain a diagram 
\begin{equation}
\label{m77}
\xymatrix{
|(\bR_{e,*,1})_\bu|
\ar[d]_{|\Sigr|}
&
|B_\bu(\bbP_1,\bbP,(\bR_{e,*,1},\bR_{\text{rel}}^\Z))|
\ar[l]_-{|\varepsilon|}
\ar[r]^-{|(\Xi_1)_\bu|}
\ar[d]
&
|B_\bu(\bbP'_1,\bbP,(\bR_{e,*,1},\bR_{\text{rel}}^\Z))|
\ar[d]
\\
|(\bR_{\text{rel}}^\Z)_\bu|
&
|B_\bu(\bbP_2,\bbP,(\bR_{e,*,1},\bR_{\text{rel}}^\Z))|
\ar[l]_-{|\varepsilon|}
\ar[r]^-{|(\Xi_2)_\bu|}
&
|B_\bu(\bbP'_2,\bbP,(\bR_{e,*,1},\bR_{\text{rel}}^\Z))|
}
\end{equation}
of simplicial objects in $\S p$ (the category of symmetric spectra)
by applying the geometric realization functor $\sss\to \S p$
to the diagram \eqref{m76} in each simplicial degree.
All objects are simplicial monoids and all horizontal arrows are
monoid maps, and the right column is a simplicial monoid map between simplicial
commutative monoids.  The maps $|\varepsilon|$ are homotopy equivalences of 
simplicial objects and the maps $|(\Xi_1)\bu|$ and $|(\Xi_1)\bu|$ are weak
equivalences in each simplicial degree. 

Finally, we apply geometric realization to the diagram \eqref{m77}.
We define $\mathbf A$ to be
$||B_\bu(\bbP_1,\bbP,(\bR_{e,*,1},\bR_{\text{rel}}^\Z))||$,
$\mathbf B$ to be
$||B_\bu(\bbP_2,\bbP,(\bR_{e,*,1},\bR_{\text{rel}}^\Z))||$,
$\mathbf C$ to be
$||B_\bu(\bbP'_1,\bbP,(\bR_{e,*,1},\bR_{\text{rel}}^\Z))||$,
and
$\mathbf D$ to be
$||B_\bu(\bbP'_2,\bbP,(\bR_{e,*,1},\bR_{\text{rel}}^\Z))||$.
This gives the diagram of Theorem \ref{m54}.
\end{proof}

\begin{remark}
\label{m80}
The symmetric ring spectrum $\mathbf C$ is the same as the symmetric ring
spectrum $\bMc_{e,*,1}$ given by Theorem \ref{Nov12.3}.  There is a ring map 
\[
(\bM_{\text{rel}}^\Z)^{\mathrm{comm}}
\to
\mathbf D
\]
which is a weak equivalence (because there is a commutative diagram whose first
row is Diagram \eqref{m81} and whose second row is the second row of the
diagram in Theorem \ref{m54}).
\end{remark}

\section{Improved versions of geometric and symmetric Poincar\'e bordism.}
\label{Nov12.1}

In order to state our next theorem we need some background. 

Let $\S p$ denote the category of symmetric spectra. 

Recall (from \cite[Definition 13.2(i) and the second paragraph of Section 
19]{LM12}) the strict
monoidal category $\T$ whose objects are the
triples $(\pi,Z,w)$, where $\pi$ is a group, $Z$ is a simply-connected free
$\pi$-space, and $w$ is a homomorphism $\pi\to \{\pm 1\}$.
There is a monoidal 
functor 
\[
\Mgeom: \T\to \S p
\]
which takes $(\pi,Z,w)$ to $\bM_{\pi, Z,w}$ 
(\cite[Definition 19.1 and Theorem 19.2]{LM12}).

Let $\R$ be the category of rings with involution. We like to say that there is 
 a functor 
\[
\Msym: \R\to\S p
\]
which takes $R$ to $\bM_{\text{rel}}^R$. As explained in Appendix C, the ad theory $\ad_{\text{rel}}^R$ does not depend on $R$ in a functorial way. However, there is functorial  refinement which is constructed in Appendix C and which we may use instead: there is an ad theory $\ad^R_{\mathrm{Rel,sch}}$ which depends on $R$ in a natural way. 
The same proof as in 
\cite[Theorem 
19.2, Theorem 18.5]{LM12}) then shows that its geometric realization $\Msym$ is monoidal.

There is a functor $\rho:\T\to\R$ which takes $(\pi,Z,w)$ to $\Z[\pi]$ 
with the $w$-twisted involution (\cite[Definition 13.2(ii)]{LM12}).
In Section \ref{revis} we constructed a natural transformation
\[
\Sigr: \Mgeom\to \Msym\circ \rho.
\]
More precisely, in the notation of appendix $C$ for objects in $\ad^R_{\mathrm{Rel,sch}}$, we have
$$\Sigr (X,f,\xi, \Phi )=(S_*(\tilde{X}),(S_*(\tilde{X})^t\otimes S_*(\tilde{X}))^W, \gamma, \phi)$$ where $\gamma$ is the obvious map and $\phi\in (\Z^\omega \otimes_R(S_*(\tilde{X})^t\otimes S_*(\tilde{X}))^W)^{\Z/2}$ is induced by the Alexander-Whitney map. The lifting function $\Phi$ gives an isomorphism between $S_*(\tilde{X})$ and the free $R$-module on the set of all singular simplexes in $X$ (see \cite[Section 10]{LM12}). This means that $\Sigr (X,f,\xi, \Phi )$ refines to a schematic Relaxed quasi-symmetric complex in a functorial way (for the action of morphisms in $\T$ on the lifting structure see \cite[Section 13]{LM12}).  The new map $\Sigr$ coincides with the map $\Sigr$ of Section \ref{revis} under the natural transformation from $\A^R_{\text{Rel,sch}}$ to $\A^R_{\text{rel}}$.\par 
However, $\Sigr$ is not a monoidal transformation, 
and $\Mgeom$ and $\Msym$ are not symmetric monoidal functors
(we recall the definitions of monoidal transformation and symmetric monoidal
functor below).  Our next result shows that 
there is a monoidal transformation between symmetric monoidal functors which is
weakly equivalent to $\Sigr$.

\begin{thm}
\label{Nov13.3}
There are symmetric monoidal functors 
$\bP:\T\to\S p$, $\bL:\R\to 
\S p$, and a monoidal natural transformation $\Sig:\bP\to \bL\circ \rho$
such that

{\rm (i)} $\bP$ is weakly equivalent as a monoidal functor to $\Mgeom$; specifically,
there is a monoidal functor $\bA:\T\to\S p$ and monoidal weak equivalences
\[
\Mgeom
\leftarrow \bA\to 
\bP.
\]

{\rm (ii)} $\bL$ is weakly equivalent as a monoidal functor to $\Msym$; specifically,
there is a monoidal functor $\bB:\R\to\S p$ and monoidal weak equivalences
\[
\Msym
\leftarrow \bB\to 
\bL.
\]

{\rm (iii)} The natural transformations $\Sig:\bP\to\bL\circ \rho$ and
$\Sigr:\Mgeom\to\Msym\circ \rho$ are weakly equivalent in $\S p$; specifically, there is a
natural transformation $\bA\to\bB\circ\rho$ which makes the following
diagram strictly commute
\[
\xymatrix{
\Mgeom
\ar[d]_{\Sigr}
&
\bA
\ar[l]
\ar[r]
\ar[d]
&
\bP
\ar[d]^\Sig
\\
\Msym\circ \rho
&
\bB\circ \rho
\ar[l]
\ar[r]
&
\bL\circ \rho.
}
\]
\end{thm}

\begin{remark}
\label{Aug27}
(i) Theorem \ref{Nov13.3} implies that $\bL(R)$ is a strictly commutative 
symmetric ring spectrum when $R$ is commutative. Also, $\bP(e,*,1)$ is a 
strictly commutative symmetric ring spectrum and $\Sig:\bP(e,*,1)\to \bL(\Z)$ 
is a map of symmetric ring spectra.  This is compatible with Theorem \ref{m54}:
there is a commutative diagram
\[
\xymatrix{
\mathbf{C}
\ar[d]
\ar[r]
&
\bP(e,*,1)
\ar[d]_\Sig
\\
\mathbf{D}
\ar[r]
&
\bL(\Z)
}
\]
in which the horizontal arrows are ring maps, and they are weak equivalences 
by the argument given in Remark \ref{m80}.

(ii) The fact that $\Sig$ is a monoidal functor is a spectrum-level version of 
Ranicki's multiplicativity formula for the symmetric signature
(\cite[Proposition 8.1(i)]{MR566491}).  It seems
likely that his multiplicativity formula for the surgery obstruction 
(\cite[Proposition 8.1(ii)]{MR566491})
can also
be given a spectrum-level interpretation.
\end{remark}

We recall the definitions of symmetric monoidal functor and monoidal
transformation.  The theorem says that
$\bL$ (and similarly $\bP$) is a monoidal functor with the additional property
that the diagram
\[
\xymatrix{
\bL(R)\wedge \bL(S)
\ar[r]
\ar[d]
&
\bL(R\otimes S)
\ar[d]
\\
\bL(S)\wedge \bL(R)
\ar[r]
&
\bL(S\otimes R)
}
\]
strictly commutes.  Moreover, $\Sig$ has the property that the diagrams
\[
\xymatrix{
&
\bS
\ar[ld]
\ar[rd]
&
\\
\bP(e,*,1)
\ar[rr]^\Sig
&
&
\bL(\Z)
}
\]
and
\[
\xymatrix{
\bP(\pi,Z,w)\wedge\bP(\pi',Z',w')
\ar[r]^-{\Sig\wedge\Sig}
\ar[dd]
&
\bL(\Z[\pi]^w)\wedge\bL(\Z[\pi']^{w'})
\ar[d]
\\
&
\bL(\Z[\pi]^w\otimes\Z[\pi']^{w'})
\ar[d]
\\
\bP(\pi\times \pi',Z\times Z', w\cdot w')
\ar[r]^-\Sig
&
\bL(\Z[\pi\times\pi']^{w\cdot w'})
}
\]
strictly commute.

\section{Proof of Theorem \ref{Nov13.3}}
\label{l2}

The proof is a modification of the proof of Theorem \ref{m54}; the main
difference is that we need more elaborate notation.

\begin{notation}
\label{m83}
(i)
For an object $x$ of $\T$ or $\R$, write $\A_x$ (meaning $\A^x_{\text{Rel,sch}}$ for $x\in \R$) for the corresponding
$\Z$-graded category and $\bR_x$ for the associated object of $\sss$.

(ii)
Given a $j$-tuple $(x_1,\ldots,x_j)$, where each $x_i$ is an object of $\T$ or
$\R$, write 
\[
[x_1,\ldots,x_j]
\]
for $y_1\otimes\cdots\otimes y_j$, where $y_i$ is $x_i$ if $x_i$ is
an object of $\R$ and $\rho(x_i)$ if $x_i$ is an object of $\T$.

(iii)  Given a $j$-tuple $(f_1,\ldots,f_j)$, where each $f_i$ is a morphism in
$\T$ or $\R$, write
\[
[f_1,\ldots,f_j]
\]
for $g_1\otimes\cdots\otimes g_j$, where $g_i$ is $f_i$ if $f_i$ is a morphism
in $\R$ and $\rho(f_i)$ if $f_i$ is a morphism in $\T$.
\end{notation}

The reader should see
Proposition \ref{m88}(i)
below for motivation for the following
definitions.

\begin{definition}
(i) Let $y$ be an object of $\T$.  An {\it entity} 
of type $(r_u(j),y)$ is a
$j+1$-tuple $(x_1,\ldots,x_j,f)$, where $f$ is a morphism in $\T$ from
$x_1\boxtimes\cdots\boxtimes x_j$ to $y$.

(ii) Let $\sE_{r_u(j),y}$ denote the set of entities of type $(r_u(j),y)$.

(iii)
Let $z$ be an object of $\R$ and let $r:\{1,\ldots,j\}\to\{u,v\}$ be a
function.  An {\it entity} 
of type $(r,z)$ is a $j+1$-tuple 
$(x_1,\ldots,x_j,f)$,
where each $x_i$ is an object of $\T$ or $\R$ and $f$ is a morphism in $\R$
from $[x_1,\ldots,x_j]$ to $z$.

(iv) Let $\sE_{r,z}$ denote the set of entities of type $(r,z)$.
\end{definition}

\begin{notation}
(i)
Let $\fS$ denote the union of the set of objects of $\T$ and the set of objects
of $\R$.

(ii) Let $\Pi\sss$ be the infinite product of copies of $\sss$, indexed over
$\fS$.
\end{notation}

We will define a monad in $\Pi\sss$.

First we need to define the relevant right $\Sigma_j$ actions. Recall 
Definition \ref{m82}(i).

\begin{definition}
Let $\{\bX_x\}_{x\in\fS}$ be an object of $\Pi\sss$ and let $j\geq 0$.

(i) Given an object $y$ of $\T$ and $\alpha\in\Sigma_j$, define
a map $\bar{\alpha}$ from
\[
\bigvee_{(x_1,\ldots,x_j,f)\in\sE_{r_u(j),y}}
\O(r_u(j);u)
\owedge
(\bX_{x_1}\wedge\cdots\wedge\bX_{x_j})
\]
to itself
to be the map which takes the summand indexed by $(x_1,\ldots,x_j,f)$ to the
summand indexed by $(x_{\alpha(1)},\ldots,x_{\alpha(1)},f\circ\alpha)$ by means
of the map
\[
\O(r_u(j);u)
\owedge
(\bX_{x_1}\wedge\cdots\wedge\bX_{x_j})
\xrightarrow{\alpha\owedge\alpha}
\O(r_u(j);u)
\owedge
(\bX_{x_{\alpha(1)}}\wedge\cdots\wedge\bX_{x_{\alpha(j)}}).
\]

(ii) Given an object $z$ of $\R$ and $\alpha\in\Sigma_j$, define
a map $\bar{\alpha}$ from
\[
\bigvee_r
\bigvee_{(x_1,\ldots,x_j,f)\in\sE_{(r,z)}}
\O(r;v)
\owedge
(\bX_{x_1}\wedge\cdots\wedge\bX_{x_j})
\]
to itself to be the map which takes the summand indexed by
$(x_1,\ldots,x_j,f)\in \sE_{(r,z)}$ 
to the
summand indexed by $(x_{\alpha(1)},\ldots,x_{\alpha(1)},f\circ\alpha)\in
\sE_{(r^\alpha,z)}$ by means
of the map
\[
\O(r;v)
\owedge
(\bX_{x_1}\wedge\cdots\wedge\bX_{x_j})
\xrightarrow{\tilde{\alpha}\owedge\alpha}
\O(r^\alpha;v)
\owedge
(\bX_{x_{\alpha(1)}}\wedge\cdots\wedge\bX_{x_{\alpha(j)}}).
\]
\end{definition}

Note that this definition gives right $\Sigma_j$ actions on the objects
mentioned. 

\begin{definition}
Let $\{\bX_x\}_{x\in\fS}$ be an object of $\Pi\sss$.

(i) 
Given an object $y$ of $\T$,
define
\[
\bbP_y(\{\bX_x\}_{x\in\fS})
\]
to be
\[
\bigvee_{j\geq 0}
\Bigl(
\bigvee_{(x_1,\ldots,x_j,f)\in\sE_{r_u(j),y}}
\O(r_u(j);u)
\owedge
(\bX_{x_1}\wedge\cdots\wedge\bX_{x_j})
\Bigr)/\Sigma_j.
\]

(ii) Given an object $z$ of $\R$, define
\[
\bbP_z(\{\bX_x\}_{x\in\fS})
\]
to be
\[
\bigvee_{j\geq 0}
\Bigl(
\bigvee_r
\bigvee_{(x_1,\ldots,x_j,f)\in\sE_{(r,z)}}
\O(r;v)
\owedge
(\bX_{x_1}\wedge\cdots\wedge\bX_{x_j})
\Bigr)/\Sigma_j.
\]

(iii) Define $\bbP:\Pi\sss\to \Pi\sss$ to be the functor whose projection on
the $y$ factor (where $y$ is an object of $\T$) is $\bbP_y$ and whose 
projection on the $z$ factor (where $z$ is an object of $\R$) is $\bbP_z$.
\end{definition}

\begin{definition}
Let $\{\bX_x\}_{x\in\fS}$ be an object of $\Pi\sss$.

(i) For an object $y$ of $\T$, define
\[
\iota_y:\bX_y\to \bbP_y(\{\bX_x\}_{x\in\fS})
\]
to be the composite
\[
\bX_y
\cong
\boS\owedge \bX_y
=
\O(r_u(1);u)\owedge \bX_y
\hookrightarrow
\bbP_y(\{\bX_x\}_{x\in\fS}),
\]
where the last map is the inclusion of the summand corresponding to the entity 
$(y,\id)$.

(ii) For an object $z$ of $\R$, define
\[
\iota_z:\bX_z\to \bbP_z(\{\bX_x\}_{x\in\fS})
\]
to be the composite
\[
\bX_z
\cong
\boS\owedge \bX_z
=
\O(r_v(1);v)\owedge \bX_y
\hookrightarrow
\bbP_z(\{\bX_x\}_{x\in\fS}),
\]
where the last map is the inclusion of the summand corresponding to the entity
$(z,\id)$.

(iii) Define 
\[
\iota:\{\bX_x\}_{x\in\fS}\to \bbP(\{\bX_x\}_{x\in\fS})
\]
to be the map whose projection on the $y$ factor (where $y$ is an object
of $\T$) is $\iota_y$ and whose
projection on the $z$ factor (where $z$ is an object of $\R$) is $\iota_z$.
\end{definition}

In order to define the structure map $\mu:\bbP\bbP\to\bbP$ we need a
composition operation on entities.  For part (ii) we use Notation 
\ref{m83}(iii) and the notation of Definition \ref{m73}.

\begin{definition}
\label{m84}
Let $i\geq 0$, and for each $l$ with $1\leq l\leq i$ let $j_l\geq 0$.

(i) Let $y$ be an object of $\T$ and let 
\[
\bfe=(x_1,\ldots,x_i,f)\in \sE_{r_u(i),y}.
\]
For each $l$ with $1\leq l\leq i$ let
\[
\bfe_l=(x^{(l)}_1,\ldots,x^{(l)}_{j_l},f^{(l)})\in \sE_{r_u(j_l),x_l}.
\]
Define 
\[
\bfe\circ(\bfe_1,\ldots,\bfe_i)\in \sE_{r_u(j_1+\cdots+j_i),y}
\]
to be
\[
(x^{(1)}_1,\ldots,x^{(i)}_{j_i},g),
\]
where $g$ is the composite
\[
x^{(1)}_1\boxtimes\cdots\boxtimes x^{(i)}_{j_i}
\xrightarrow{f^{(1)}\boxtimes\cdots \boxtimes f^{(i)}}
x_1\boxtimes\cdots\boxtimes x_i
\xrightarrow{f} 
y.
\]

(ii) Let $z$ be an object of $\R$, let
$r:\{1,\ldots,i\}\to\{u,v\}$, and let 
\[
\bfe=(x_1,\ldots,x_i,f)\in \sE_{r,z}.
\]
For each $l$ with $1\leq l\leq i$ let 
$r_l:\{1,\ldots,j_l\}\to\{u,v\}$; assume that if $r(l)=u$ then $r_l$ is
$r_u(j_l)$.  Let
\[
\bfe_l=(x^{(l)}_1,\ldots,x^{(l)}_{j_l},f^{(l)})\in \sE_{r_l,x_l}.
\]
Define
\[
\bfe\circ(\bfe_1,\ldots,\bfe_i)\in \sE_{R,z}
\]
to be
\[
(x^{(1)}_1,\ldots,x^{(i)}_{j_i},g),
\]
where $g$ is the composite
\[
[x^{(1)}_1,\ldots,x^{(i)}_{j_i}]
\xrightarrow{[f^{(1)},\ldots,f^{(i)}]}
[x_1,\ldots,x_i]
\xrightarrow{f}
z.
\]
\end{definition}

Now we can define $\mu:\bbP\bbP\to\bbP$.  We begin with the projection on the
$y$-factor,
\[
\mu_y:\bbP_y\bbP\to\bbP_y,
\]  
where $y$ is an object of $\T$. 
A collection of entities $\bfe,\bfe_1,\ldots,\bfe_i$ as in Definition
\ref{m84}(i) determines a summand 
\[
\O(r_u(i);u)
\owedge
\Bigl(
\bigl(
\O(r_u(j_1);u)\owedge
(\bX_{x^{(1)}_1}\wedge\cdots\wedge \bX_{x^{(1)}_{j_1}})
\bigr)
\wedge\cdots
\Bigr)
\]
in $\bbP_y\bbP(\{\bX_x\}_{x\in\fS})$.  We define the restriction of $\mu_y$ 
to this summand to be the map
to the summand of $\bbP_y(\{\bX_x\}_{x\in\fS})$ 
indexed by $\bfe\circ(\bfe_1,\ldots,\bfe_i)$ which is induced (after
passage to quotients) by the composite
\begin{multline*}
\O(r_u(i);u)
\owedge
\Bigl(
\bigl(
\O(r_u(j_1);u)\owedge
(\bX_{x^{(1)}_1}\wedge\cdots\wedge \bX_{x^{(1)}_{j_1}})
\bigr)
\wedge\cdots
\Bigr)
\\
\to
\Bigl(\O(r_u(i);u)\owedge
\bigl(
\O(r_u(j_1);u)\otimes \cdots
\bigr)
\Bigr)
\owedge
(\bX_{x^{(1)}_1}\otimes\cdots\otimes \bX_{x^{(i)}_{j_i}})
\\
\xrightarrow{\gamma\owedge 1}
\O(r_u(j_1+\cdots+j_i);u)\owedge
(\bX_{x^{(1)}_1}\otimes\cdots\otimes \bX_{x^{(i)}_{j_i}}).
\end{multline*}
The projection of $\mu$ on the $z$ factor (where $z$ is an object of $\R$) is
defined similarly (using Definition \ref{m73}).

Next we give the action of $\bbP$ on the object $\{\bR_x\}_{x\in\fS}$.
Let $y$ be an object of $\T$ and let $(x_1,\ldots,x_j,f)$ be an entity of type
$(r_u(j),y)$.  
A slight modification of Definition \ref{m17}
gives a map
\[
\O(r_u(j);u)
\owedge
(\bR_{x_1}\wedge\cdots\wedge\bR_{x_j})
\to
\bR_{x_1\boxtimes\cdots\boxtimes x_j},
\]
and composing with the map induced by $f$ gives a map
\begin{equation}
\label{m87}
\O(r_u(j);u)
\owedge
(\bR_{x_1}\wedge\cdots\wedge\bR_{x_j})
\to
\bR_y.
\end{equation}
We define
\[
\nu_y:\bbP_y(\{\bR_x\}_{x\in\fS})
\to \bR_y
\]
to be the map whose restriction to the summand indexed by $(x_1,\ldots,x_j,f)$
is the map \eqref{m87}.
We define
\[
\nu_z:\bbP_z(\{\bR_x\}_{x\in\fS})
\to \bR_z
\]
similarly when $z$ is an object of $\R$
(using a slight modification of Definition \ref{m85}), and we define
\[
\nu:\bbP(\{\bR_x\}_{x\in\fS})
\to
\{\bR_x\}_{x\in\fS}
\]
to be the map with projections $\nu_y$ and $\nu_z$.

\begin{lemma}
$\nu$ is an action of $\bbP$ on $\{\bR_x\}_{x\in\fS}$.
\qed
\end{lemma}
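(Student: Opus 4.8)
The statement is that the maps $\nu_y$ and $\nu_z$ assembled above give an action of the monad $\bbP$ on $\{\bR_x\}_{x\in\fS}$; i.e., we must verify the two monad-action axioms, the unit triangle
\[
\xymatrix{
\{\bR_x\}
\ar[r]^-\iota
\ar[rd]_=
&
\bbP(\{\bR_x\})
\ar[d]^\nu
\\
&
\{\bR_x\}
}
\]
and the associativity square relating $\nu\circ\mu$ to $\nu\circ\bbP\nu$. Since $\bbP$ has one factor for each $x\in\fS$ and the factors of type $y\in\T$ and of type $z\in\R$ behave the same way (the only difference being whether one uses $\O(r_u(j);u)$ together with Definition \ref{m17}, or $\O(r;v)$ together with Definition \ref{m85}), it suffices to check the two axioms one factor at a time. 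The plan is to reduce each to the corresponding already-proved fact: the unit axiom for $\nu_y$ is immediate because $\iota_y$ lands in the summand indexed by the entity $(y,\id)$, on which $\nu_y$ is the identity (and likewise for $\nu_z$). For associativity, I would unwind both composites on the summand of $\bbP\bbP(\{\bR_x\})$ indexed by a collection of entities $\bfe,\bfe_1,\dots,\bfe_i$ as in Definition \ref{m84}; the point is that the composition operation $\bfe\circ(\bfe_1,\dots,\bfe_i)$ was defined precisely so that composing with the structure morphisms $f,f^{(l)}$ is compatible with the operadic composition $\gamma$ on the $\O(r;\bullet)$'s.

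**Key steps.** First, note that $\{\bR_x\}_{x\in\fS}$ lies in the subcategory of $\Pi\sss$ on which we have an honest action, and that by Remark \ref{m65} the imbeddings $\Phi_{r;u},\Phi_{r;v}$ carry the preorder composition $\Gamma$ (Equation \eqref{m70}) to the composition of functors $\A_r\to\A_\bullet$; this is the structural fact that makes everything fit. Second, for the unit axiom: $\iota_y$ is by definition the composite $\bX_y\cong\boS\owedge\bX_y=\O(r_u(1);u)\owedge\bX_y\hookrightarrow\bbP_y$, inclusion of the summand for the entity $(y,\id)$, and $\nu_y$ restricted to that summand is the map \eqref{m87} for $(y,\id)$, which is $\bR_y\to\bR_{y}$ induced by $\id_y$ composed with the canonical identification $\O(r_u(1);u)\owedge\bR_y\cong\bR_y$ — that is, the identity; same for $\iota_z,\nu_z$. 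Third, for associativity: fix entities $\bfe=(x_1,\dots,x_i,f)$ and $\bfe_l=(x^{(l)}_1,\dots,x^{(l)}_{j_l},f^{(l)})$. The composite $\nu_y\circ\mu_y$ on the corresponding summand is, by the definition of $\mu_y$, the operadic composition $\gamma$ followed by the map \eqref{m87} for $\bfe\circ(\bfe_1,\dots,\bfe_i)$, whose structure morphism is $f\circ(f^{(1)}\boxtimes\cdots\boxtimes f^{(i)})$. The composite $\nu_y\circ\bbP_y\nu$ first applies the maps $\nu$ at each inner factor (these include the maps induced by each $f^{(l)}$ on $\bR_{x^{(l)}_1\boxtimes\cdots}\to\bR_{x_l}$), then $\nu_y$ for $\bfe$ (which uses $\gamma$ and then $f$). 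Equality of the two now follows from: (a) functoriality of $\bR_{(-)}$ in morphisms of $\T$ (resp.\ $\R$), so that the composed induced maps agree with the map induced by the composite morphism, and (b) the associativity of $\gamma$ — exactly the analogue for the objects $\O(r;u),\O(r;v)$ of Lemma \ref{m14}, which was already asserted to hold for the composition of Definition \ref{m73}. One then quotients by the $\Sigma$-actions, which is harmless. The $z$-factor argument is identical with Definition \ref{m85} in place of \ref{m17}.

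**Main obstacle.** The only real bookkeeping is matching the signs and the block permutations $\zeta,\theta(h)$ that appear in Definitions \ref{m101} and \ref{m67} across the two ways of composing; this is what forced the careful definitions of $\theta(h)$ (Definition \ref{m71}) and of the composition of data $\Gamma$ (Equation \eqref{m70}). But since those definitions were set up to make $\Phi_{r;u},\Phi_{r;v}$ preserve composition (Remark \ref{m65}), and since the sign rules for $\eta_\bigstar$ and for $\bfd_\bs$ are, by construction, the ones coming from the symmetric monoidal structures of $\Ag$ and $\Ar$, the verification reduces to a diagram chase that is formally identical to the proof of Proposition \ref{m30} (together with Proposition \ref{m62} obtained from it by passage to quotients and the suspension-compatibility of Lemma \ref{m5}). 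I therefore expect the proof to be, as the paper's "\qed" indicates, a routine but slightly tedious check with no genuine difficulty — the hard conceptual work having been done in setting up the multicategory of preorders $P_{r;u},P_{r;v}$ in Section \ref{colored}.
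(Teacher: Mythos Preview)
Your proposal is correct and matches the paper's intent: the paper gives no proof at all (just \qed), treating the result as an immediate consequence of the setup in Section \ref{colored}---in particular of Remark \ref{m65}, which guarantees that the imbeddings $\Phi_{r;u},\Phi_{r;v}$ preserve the multicategory structure, and of the analogues of Lemmas \ref{m14}, \ref{m20}, \ref{m72} already noted after Definition \ref{m73}. Your unpacking of the unit and associativity axioms, and your observation that the only content is the sign/block-permutation bookkeeping already absorbed into the definitions of $\theta(h)$ and $\Gamma$, is exactly the routine check the \qed is standing in for.
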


Now we need the analogue of Lemma \ref{m78}.
Let $\C$ be the category whose
objects are triples $({\mathbf F},{\mathbf G},{\mathbf t})$, where ${\mathbf
F}$ is a monoidal functor $\T\to \sss$, 
${\mathbf G}$ is a monoidal functor $\R\to \sss$, and $\mathbf t$ is
a natural transformation ${\mathbf F}\to {\mathbf G}\circ \rho$
which is not required to be a monoidal transformation;
the morphisms are commutative diagrams
\[
\xymatrix{
{\mathbf F}
\ar[d]_{\mathbf t}
\ar[r]
&
{\mathbf F}'
\ar[d]^{{\mathbf t}'}
\\
{\mathbf G}\circ\rho
\ar[r]
&
{\mathbf G}'\circ\rho,
}
\]
where the horizontal arrows are monoidal transformations.

Let us write $\bR_{\mathrm{geom}}$ (resp., $\bR_{\mathrm{sym}}$) for the 
functor $\T\to\sss$ (resp., $\R\to\sss$) which takes $x$
to $\bR_x$.

\begin{lemma}
\label{m93}
{\rm (i)} There is a functor $\Upsilon$ from $\bbP$ algebras to $\C$ which
takes $\{\bX_x\}_{x\in\fS}$ to a triple $({\mathbf F},{\mathbf G},{\mathbf t})$
with ${\mathbf F}(y)=\bX_y$ and ${\mathbf G}(z)=\bX_z$.

{\rm (ii)} $\Upsilon(\{\bR_x\}_{x\in\fS})$ is the triple
\[
(\bR_{\mathrm{geom}},
\bR_{\mathrm{sym}},
\Sigr)
\]
\end{lemma}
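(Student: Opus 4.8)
The plan is to follow the pattern of the proofs of Lemmas \ref{m35}(i) and \ref{m78}, now carried out in $\Pi\sss$. Given a $\bbP$-algebra $\{\bX_x\}_{x\in\fS}$ with structure map $\nu$, whose components I write $\nu_y:\bbP_y(\{\bX_x\})\to\bX_y$ for objects $y$ of $\T$ and $\nu_z:\bbP_z(\{\bX_x\})\to\bX_z$ for objects $z$ of $\R$, I would set $\mathbf F(y)=\bX_y$ and $\mathbf G(z)=\bX_z$, and read off all the remaining structure by restricting $\nu$ to suitable summands of $\bbP$. For a morphism $f:y\to y'$ in $\T$ the entity $(y,f)$ of type $(r_u(1),y')$ gives a summand $\O(r_u(1);u)\owedge\bX_y=\boS\owedge\bX_y\cong\bX_y$ of $\bbP_{y'}(\{\bX_x\})$, and I would define $\mathbf F(f)$ to be the inclusion of this summand followed by $\nu_{y'}$; the functoriality of $\mathbf G$ is obtained the same way from the entities $(z,f)$ of type $(r_v(1),z')$. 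The monoidal structure map $\bX_{y_1}\wedge\cdots\wedge\bX_{y_j}\to\bX_{y_1\boxtimes\cdots\boxtimes y_j}$ comes from the entity $(y_1,\dots,y_j,\id)$ of type $(r_u(j),y_1\boxtimes\cdots\boxtimes y_j)$, after precomposing the inclusion of its summand with the unit section $\boS\to\O(r_u(j);u)$ from the proof of Lemma \ref{m35} (the nontrivial simplex going to the constant function at the identity of $\Sigma_j$) and composing with $\nu$; the case $j=0$ gives a unit map $\boS\to\mathbf F(\varepsilon)$. Using instead the entities $(z_1,\dots,z_j,\id)$ of type $(r_v(j),z_1\otimes\cdots\otimes z_j)$ together with the section $\boS\to\O(r_v(j);v)$ built from the datum $(h_0,e)$, $h_0=\id$, as in the proof of Lemma \ref{m78}(i), I would make $\mathbf G$ monoidal. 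Finally, for an object $y$ of $\T$ the entity $(y,\id_{\rho(y)})$ of type $(r_u(1),\rho(y))$ gives a summand $\boS\owedge\bX_y\cong\bX_y$ of $\bbP_{\rho(y)}(\{\bX_x\})$, and I would define $\mathbf t_y:\mathbf F(y)\to(\mathbf G\circ\rho)(y)=\bX_{\rho(y)}$ to be its inclusion followed by $\nu_{\rho(y)}$. On morphisms, $\Upsilon$ is the evident map, since a morphism of $\bbP$-algebras automatically commutes with all of this structure.

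The next step is to verify that $(\mathbf F,\mathbf G,\mathbf t)$ is indeed an object of $\C$: that $\mathbf F$ and $\mathbf G$ are functors, that the monoidal structure maps are associative, unital and natural in their arguments, and that $\mathbf t$ is a natural transformation (it need not, and will not, be a monoidal transformation). Each of these reduces, via the unit and associativity axioms for $\nu$, to a property of the composition operation $\gamma$ of Definition \ref{m73} (the analogues of Lemmas \ref{m14}, \ref{m20} and \ref{m72}), to the composition-of-entities identities of Definition \ref{m84} --- in particular that $(y',g)\circ(y,f)=(y,g\circ f)$, that $(y,\id_y)$ acts as a unit, and that composing $(y,\id_{\rho(y)})$ with $(y',f)$ over a morphism $f:y'\to y$ of $\T$ produces the entity $(y',\rho(f))$ --- and to the compatibility of the unit sections $\boS\to\O(r_u(j);u)$ and $\boS\to\O(r_v(j);v)$ with the composition $\gamma$. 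This is exactly the bookkeeping already done for Lemmas \ref{m35}(i) and \ref{m78}, now carried through the larger indexing set $\fS$, and I expect this to be essentially the only real work in the proof; it is also the place where the sign and permutation conventions (entering through $\gamma$ in Definition \ref{m73} and through $\theta(h)$ in Definition \ref{m71}) must be matched with care.

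For part (ii) I would unwind the definition of $\nu$ on $\{\bR_x\}_{x\in\fS}$. By Equation \eqref{m87} and the analogous formula defining $\nu_z$, the monoidal structure maps produced in part (i) for $\mathbf F=\{\bR_y\}$ and $\mathbf G=\{\bR_z\}$ are induced by the products $\phi$ of Definitions \ref{m17} and \ref{m85} at the unit datum, so they coincide with the monoidal structures of the functors $\Mgeom$ and $\Msym$ of \cite[Theorem 19.2]{LM12}; hence $\mathbf F=\bR_{\mathrm{geom}}$ and $\mathbf G=\bR_{\mathrm{sym}}$. For $\mathbf t$, the operation attached to the unit datum $(h_0,e)$ with $h_0:\{1\}\to\{1\}$ is, by Definition \ref{m67}(ii) and (iv), literally the functor $\Sigr:\A_y\to\A_{\rho(y)}$, so composing with the map induced by $\id_{\rho(y)}$ identifies $\mathbf t_y$ with the map $\Sigr:\bR_y\to\bR_{\rho(y)}$ of Section \ref{revis}. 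Thus $\Upsilon(\{\bR_x\}_{x\in\fS})=(\bR_{\mathrm{geom}},\bR_{\mathrm{sym}},\Sigr)$, as claimed.
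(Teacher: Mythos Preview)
Your proposal is correct and follows essentially the same approach as the paper: define $\mathbf F$, $\mathbf G$, and $\mathbf t$ by restricting $\nu$ to summands indexed by the obvious entities, and verify the required axioms using the monad associativity law together with the entity-composition identities of Definition~\ref{m84}. The paper spells out the naturality of $\mathbf t$ via two explicit commutative diagrams (with maps $i_1,i_2,j_1,j_2,j_3$) rather than phrasing it as an entity-composition identity, and for part (ii) simply says it is ``an easy consequence of the definitions,'' but your more conceptual description and your unwinding of part (ii) amount to the same argument.
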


\begin{proof}
Part (i).  Let $\bX= \{\bX_x\}_{x\in\fS}$ be a $\bbP$ algebra.  Define a functor
\[
{\mathbf F}: \T\to \sss
\]
on objects by ${\mathbf F}(y)=\bX_y$  and on morphisms 
by letting ${\mathbf F}(f:y\to y')$ be the composite
\[
\bX_y
\cong
\O(r_u(1);u)\owedge\bX_y
\hookrightarrow
\bbP_{y'}(\{\bX_x\}_{x\in\fS})
\xrightarrow{\nu_{y'}}
\bX_{y'},
\]
where the unlabeled arrow is the inclusion of the summand indexed by the entity
$(y,f)$. The functoriality of ${\mathbf F}$ follows from the commutativity of the diagram
\[
\xymatrix{ 
\bbP \bbP \bX \ar[r]^{\mu_{\bX}}\ar[d]^{\bbP \nu}& \bbP \bX \ar[d]^\nu\\ \bbP \bX \ar[r]^\nu & \bX
}
\]
and the definition of $\mu$ in Definition \ref{m84}.
We define 
\[
{\mathbf G}: \R\to \sss
\]
similarly.  The proof that ${\mathbf F}$ and $\mathbf G$ are monoidal functors
is similar to the argument, in
the proof in Lemma \ref{m78}(i), that $\bX$ and $\bY$ are monoids. The functoriality is as above.

It remains to give the natural transformation 
\[
{\mathbf t}: {\mathbf F} \to {\mathbf G}\circ\rho.
\]
For an object $y$ of $\T$, let ${\mathbf t}_y$ be the composite
\[
\bX_y
\cong
\O(r_u(1);v)\owedge\bX_y
\hookrightarrow
\bbP_{\rho(y)}(\{\bX_x\}_{x\in\fS})
\xrightarrow{\nu_{\rho(y)}}
\bX_{\rho(y)},
\]
where the unlabeled arrow is the inclusion of the summand indexed by the entity
$(y,\id)\in\sE_{r_u(1),\rho(y)}$.
To show
that $\mathbf t$ is a natural transformation, let $f:y\to y'$ be a morphism in
$\T$, and let $z=\rho(y)$, $z'=\rho(y')$.
Let $i_1$ be the composite
\begin{multline*}
\bX_y
\cong
\O(r_u(1);u)\owedge \bX_y
\hookrightarrow
\bbP_{y'}(\{\bX_x\}_{x\in \fS})
\cong
\O(r_u(1);v)\owedge \bbP_{y'}(\{\bX_x\}_{x\in \fS})
\\
\hookrightarrow
\bbP_{z'}\bbP(\{\bX_x\}_{x\in \fS}),
\end{multline*}
where the first arrow is the inclusion of the summand indexed by the
entity $(y,f)$ and the second is the inclusion of the summand indexed by
$(y',\id)$.
Let $j_1$ be the composite
\[
\bX_{y'}
\cong
\O(r_u(1);v)\owedge\bX_{y'}
\hookrightarrow
\bbP_{z'}(\{\bX_x\}_{x\in \fS}),
\]
where the inclusion is indexed by $(y',\id)$,
and let $j_2$ be the composite
\[
\bX_y
\cong
\O(r_u(1);v)\owedge\bX_y
\hookrightarrow
\bbP_{z'}(\{\bX_x\}_{x\in \fS}),
\]
where the inclusion is indexed by $(y,\rho(f))$.

Consider the commutative diagram
\[
\xymatrix{
\bX_y
\ar[rr]^-=
\ar[rd]^{i_1}
\ar[dd]_{{\mathbf F}(f)}
&&
\bX_y
\ar[d]^{j_2}
\\
& 
\bbP_{z'}\bbP(\{\bX_x\}_{x\in \fS})
\ar[d]_{\bbP_{z'}\nu}
\ar[r]^-\mu
&
\bbP_{z'}(\{\bX_x\}_{x\in \fS})
\ar[d]^\nu
\\
\bX_{y'}
\ar[r]^-{j_1}
&
\bbP_{z'}(\{\bX_x\}_{x\in \fS})
\ar[r]^-\nu
&
\bX_{z'}.
}
\]
Let $H$ denote the composite of the right-hand vertical arrows.  Then the
diagram shows that the composite
\begin{equation}
\label{m91}
\bX_y
\xrightarrow{{\mathbf F}(f)}
\bX_{y'}
\xrightarrow{\mathbf t}
\bX_{z'}
\end{equation}
is $H$.

Let $i_2$ be the composite
\begin{multline*}
\bX_y
\cong
\O(r_u(1);v)\owedge \bX_y
\hookrightarrow
\bbP_{z}(\{\bX_x\}_{x\in \fS})
\cong
\O(r_v(1);v)\owedge \bbP_{z}(\{\bX_x\}_{x\in \fS})
\\
\hookrightarrow
\bbP_{z'}\bbP(\{\bX_x\}_{x\in \fS}),
\end{multline*}
where the first inclusion is indexed by $(y,\id)$ and
the second is indexed by $(z,\rho(f))$.
Let $j_2$ be as above and let $j_3$ be the composite
\[
\bX_z
\cong
\O(r_v(1);v)\owedge \bX_z
\hookrightarrow
\bbP_{z}(\{\bX_x\}_{x\in \fS}),
\]
where the inclusion is indexed by $(z,\rho(f))$.

Consider the commutative diagram
\[
\xymatrix{
\bX_y
\ar[rr]^-=
\ar[rd]^{i_1}
\ar[dd]_{{\mathbf t}}
&&
\bX_y
\ar[d]^{j_2}
\\
& 
\bbP_{z'}\bbP(\{\bX_x\}_{x\in \fS})
\ar[d]_{\bbP_{z'}\nu}
\ar[r]^-\mu
&
\bbP_{z'}(\{\bX_x\}_{x\in \fS})
\ar[d]^\nu
\\
\bX_z
\ar[r]^-{j_3}
&
\bbP_{z'}(\{\bX_x\}_{x\in \fS})
\ar[r]^-\nu
&
\bX_{z'}.
}
\]
This diagram shows that the composite
\begin{equation}
\label{m92}
\bX_y
\xrightarrow{{\mathbf t}}
\bX_z
\xrightarrow{{\mathbf G}(f)}
\bX_{z'}
\end{equation}
is also $H$, so the composites \eqref{m91} and \eqref{m92} are equal as reqired.

Part (ii) is an easy consequence of the definitions.
\end{proof}

Finally, we have the analogues of Definition \ref{m86} and Proposition
\ref{m79}.

\begin{definition}
Let $\{\bX_x\}_{x\in\fS}$ be an object of $\Pi\sss$.

(i)
Given an object $y$ of $\T$,
define
\[
\bbP'_y(\{\bX_x\}_{x\in\fS})
\]
to be
\[
\bigvee_{j\geq 0}
\Bigl(
\bigvee_{((x_1,\ldots,x_j,f)\in\sE_{r_u(j),y}}
\bX_{x_1}\wedge\cdots\wedge\bX_{x_j}
\Bigr)/\Sigma_j.
\]

(ii) Given an object $z$ of $\R$, define
\[
\bbP'_z(\{\bX_x\}_{x\in\fS})
\]
to be
\[
\bigvee_{j\geq 0}
\Bigl(
\bigvee_r
\bigvee_{(x_1,\ldots,x_j,f)\in\sE_{(r,z)}}
\bX_{x_1}\wedge\cdots\wedge\bX_{x_j}
\Bigr)/\Sigma_j.
\]

(iii) Define $\bbP':\Pi\sss\to \Pi\sss$ to be the functor whose projection on
the $y$ factor (where $y$ is an object of $\T$) is $\bbP'_y$ and whose
projection on the $z$ factor (where $z$ is an object of $\R$) is $\bbP'_z$.
\end{definition}

A routine modification of Definition \ref{m86}(ii) gives a natural
transformation
\[
\Xi:\bbP\to\bbP'.
\]

\begin{prop}
\label{m88}
{\rm (i)} An algebra over $\bbP'$ is the same thing as a pair of
symmetric monoidal functors ${\mathbf F}$ and $\mathbf G$ with a monoidal
transformation ${\mathbf F}\to {\mathbf G}\circ \rho$.

{\rm (ii)} $\Xi$ is a map of monads.

{\rm (iii)} Suppose that each $(X_x)_k$ has compatible
degeneracies (see Definition \ref{m31}).  Let $\bbP^q$ denote the $q$-th 
iterate of $\bbP$.  Then each map
\[
\Xi:\bbP^q(\{\bX_x\}_{x\in\fS})\to \bbP'\bbP^{q-1}(\{\bX_x\}_{x\in\fS})
\]
is a weak equivalence.
\end{prop}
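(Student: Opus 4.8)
The plan is to follow the proofs of Propositions \ref{m79} and \ref{m34} essentially verbatim, using Lemma \ref{m93} in place of Lemma \ref{m78} and Remark \ref{m59}(ii) in place of the corresponding facts about $\O(j)_k$. Part (ii) will be immediate from the definitions: $\Xi$ is defined summand by summand by collapsing the contractible factors $\O(r_u(j);u)$ and $\O(r;v)$ to $\boS$ via the maps $\xi_j$ and $\zeta_r$, and these collapse maps are compatible with the operad-composition maps $\gamma$, with the entity composition of Definition \ref{m84}, and with the natural maps $\wedge\to\otimes$ out of which the monad structures of $\bbP$ and $\bbP'$ are built; hence $\Xi$ intertwines $\iota$ and $\mu$.

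For part (i): given a $\bbP'$-algebra $\{\bX_x\}_{x\in\fS}$, the action map, restricted to the summands indexed by small $j$ and identity entities, equips $y\mapsto\bX_y$ and $z\mapsto\bX_z$ with unit and multiplication maps; the monad axioms (restricted to these summands) give the associativity and unit axioms, and the $\Sigma_j$-quotients in the definition of $\bbP'$ force the symmetry axiom, so these assemble into symmetric monoidal functors $\mathbf F\colon\T\to\sss$ and $\mathbf G\colon\R\to\sss$. The transformation $\mathbf t\colon\mathbf F\to\mathbf G\circ\rho$ is built from the summand indexed by $(y,\id)\in\sE_{r_u(1),\rho(y)}$ exactly as in the proof of Lemma \ref{m93}(i), which also shows it is natural. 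It remains to check that $\mathbf t$ is a \emph{monoidal} transformation, i.e.\ that the unit triangle and the binary interchange square commute, and this is done exactly as the proof that $f$ is a monoid map in Proposition \ref{m79}(i): one writes the two natural inclusions of $\bX_{x_1}\wedge\cdots\wedge\bX_{x_j}$ into the $\rho(x_1\boxtimes\cdots\boxtimes x_j)$-component of $\bbP'\bbP'(\{\bX_x\})$ — one through the $r_u(1)$-summand of $\bbP'_1$, the other through a $j$-fold smash of $r_u(1)$-summands inside an $r_v(j)$-summand — and identifies the two resulting composites using the associativity diagram for the action together with Definition \ref{m84}. Conversely, a pair of symmetric monoidal functors together with a monoidal transformation $\mathbf F\to\mathbf G\circ\rho$ reassembles into a $\bbP'$-algebra by the evident formulas (with Notation \ref{m83}(ii) used to interpret the mixed smash products appearing in $\bbP'_z$), and the two constructions are inverse to one another.

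Part (iii) will be proved exactly as Proposition \ref{m34}(iii). For $q=1$, \eqref{m8} and Definition \ref{m82}(i) show the $\Sigma_j$-actions are free away from the basepoint, so it suffices to show that each map $\O(r_u(j);u)\owedge\bX^{\wedge j}\to\bX^{\wedge j}$ and each map $\O(r;v)\owedge(\bX,\bY)^{\wedge r}\to(\bX,\bY)^{\wedge r}$ is a weak equivalence; decomposing the (mixed) smash powers via the functors $I_H^G$ and applying Proposition \ref{m38} reduces this to showing that smashing a smash product of the $X_{k_i}$ and $Y_{k_i}$ with $\O(r_u(j);u)_k$, respectively $\O(r;v)_k$, does not change the weak homotopy type, which follows from Example \ref{m40}, Proposition \ref{Aug29.3}, Lemma \ref{m39} and Remark \ref{m59}(ii). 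The general case follows from the case $q=1$ together with the observation (the analogue of Example \ref{m40}(ii)) that each $\bigl(\O(r;v)\owedge(\bX,\bY)^{\wedge r}\bigr)_k$ inherits compatible degeneracies when each $(X_x)_k$ has them. I expect the only real obstacle to be the bookkeeping in part (i): keeping straight which entity indexes which summand when the two interchange inclusions into $\bbP'\bbP'$ are identified, now with the extra layer of the objects $y$, $z$, $\rho(y)$ and the mixed bracket $[\,\cdot\,]$ of Notation \ref{m83}; conceptually this is identical to the corresponding step in Proposition \ref{m79}(i), and once the indexing is set up carefully the diagram chase is routine.
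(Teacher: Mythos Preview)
Your proposal is correct and follows essentially the same route as the paper: part (ii) is deferred to the definitions, part (iii) is reduced to the argument of Proposition \ref{m34}(iii) with Remark \ref{m59}(ii) supplying the contractibility and compatible degeneracies of $\O(r;v)_k$, and for part (i) the paper spells out exactly the two inclusions $i_1,i_2$ of $\bX_{y_1}\wedge\cdots\wedge\bX_{y_j}$ into $\bbP'_{\rho(y_1\boxtimes\cdots\boxtimes y_j)}\bbP'$ that you describe. One cosmetic point: in this section the pair notation $\bbP'_1,\bbP'_2,(\bX,\bY)^{\wedge r}$ from Sections \ref{msss}--\ref{rect2} has been replaced by the $\fS$-indexed notation $\bbP'_y,\bbP'_z$ and entity-indexed smash products, so your write-up should use the latter.
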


\begin{proof}
Part (i). 
Let $\{\bX_x\}_{x\in\fS}$ be an algebra over $\bbP'$.  The fact that ${\mathbf
F}$ and $\mathbf G$ are symmetric monoidal functors
is an easy consequence of the definitions.  The natural transformation
${\mathbf t}:{\mathbf F}\to {\mathbf G}\circ \rho$
is constructed as in the proof of Lemma \ref{m93}(i).  The proof that $\mathbf
t$ is monoidal is similar to the proofs of Proposition \ref{m79}(i) and Lemma
\ref{m93}(i), using the maps
\begin{multline*}
i_1: \bX_{y_1} \wedge \cdots\wedge \bX_{y_j}
\hookrightarrow
\bbP'_{y_1\boxtimes\cdots\boxtimes y_j}(\{\bX_x\}_{x\in\fS})
\hookrightarrow
\bbP'_{\rho(y_1\boxtimes\cdots\boxtimes y_j)}
\bbP'(\{\bX_x\}_{x\in\fS}),
\end{multline*}
(where the first inclusion is indexed by $(y_1,\ldots,y_j,\id)$ and the second
by $(y_1\boxtimes\cdots\boxtimes y_j,\id))$,
and 
\begin{multline*}
i_2:  
\bX_{y_1} \wedge \cdots\wedge \bX_{y_j}
\hookrightarrow
\bbP'_{\rho(y_1)}(\{\bX_x\}_{x\in\fS})\wedge \cdots\wedge 
\bbP'_{\rho(y_j)}(\{\bX_x\}_{x\in\fS})
\\
\hookrightarrow
\bbP'_{\rho(y_1\boxtimes\cdots\boxtimes y_j)}
\bbP'(\{\bX_x\}_{x\in\fS}),
\end{multline*}
(where the first map is the smash product of the inclusions indexed by
$(y_i,\id)$ and the second is indexed by $(\rho(y_1),\ldots,\rho(y_j),
\rho(y_1)\otimes\cdots\otimes\rho(y_j)\to \rho(y_1\boxtimes\cdots\boxtimes
y_j))$).

Part (ii) is immediate from the definitions, and the proof of part
(iii) is the same as for Proposition \ref{m34}(iii) (but using Remark
\ref{m59}(ii)).
\end{proof}

Now the proof of Theorem \ref{Nov13.3} is the same as the proof of Theorem
\ref{m54} given in Section \ref{rect2}, with only the notation changed.

\appendix

\section{ A property of the smash product of symmetric spectra.}\label{free}

For an object $\bX$ of $\sss$ let 
\[
\op:X_k\to X_{k+1}
\]
be the map
which takes $x$ to $\omega(s\wedge x)$ (where $s$ is the
1-simplex of $S^1$). More precisely, $\op$ is the $k$-fold multisemisimplicial map which takes $(X_k)_{n_1,\ldots, n_k}$ to 
$(X_k)_{1,n_1,\ldots, n_k}$ obtained from the suspension map of the spectrum $\bX$.
\begin{definition}
An object $\bX$ of $\sss$ is {\it monomorphic} if 
$\op$ is a monomorphism for every $k$.  It is {\it strongly monomorphic} if it
is monomorphic and
has the following property: if
$x\in X_k$ and $\alpha\in \Sigma_{k+1}$ with $\alpha(1)\neq
1$, and if $\alpha\op x$ is in the image of $\op$, then there is a
$\beta\in \Sigma_k$ such that $\beta^{-1}(1)=\alpha^{-1}(1)-1$ and $\beta x$
is in the image of $\op$.
\end{definition}

The purpose of this appendix is to prove the following fact, which is used
in Section \ref{rect}:


\begin{prop}\label{PropA2}
\label{f32}
Let $\bX$ be strongly monomorphic and suppose that the only element of $X_0$ is
the basepoint.  Then the $\Sigma_j$ action on $\bX^{\wedge j}$ which permutes
the factors is free away from the basepoints.
\end{prop}

\begin{remark}\label{RemA3}
(i) The main  object 
$\bR$ given in Example \ref{Aug29.1} is strongly monomorphic: for an ad $F\in R_k$, the suspension map was defined by $\omega (s \wedge F)=\lambda^*(F)$ where $\lambda $ is the incidence-compatible isomorphism from $\C ell(\Delta^1 \times K, \partial \Delta^1 \times K)$ to $\C ell( K)$. Suppose we are  given a multisimplex of the form $\alpha \bar{\omega}F$ which is in the image of $\bar{\omega}$ as above. Then it  necessarily defines a functor with source 
$$\C ell(\Delta^1 \times K_1 \times \Delta^1 \times K_2, (\partial \Delta^1 \times K_1 \times \Delta^1 \times K_2)\cup (\Delta^1 \times K_1 \times \partial \Delta^1 \times K_2) )$$
for some products of simplices $K_1$,$K_2$. Thus a permutation $\beta$ which exchanges the factors $K_1$ and $\Delta^1$ has the property that $\beta F$ is in the image of $\bar{\omega}$.

(ii) The analog of Proposition \ref{f32} for simplicial or topological symmetric
spectra is also true, with essentially the same proof.
\end{remark}

The proof of Proposition \ref{f32} will be given after the proof of our next
result, which is the main ingredient in the proof of Proposition \ref{f32}.

\begin{prop}
\label{f1}
If $\bX_1,\ldots,\bX_j$ are strongly monomorphic then 
$
\bX_1\wedge\cdots\wedge\bX_j
$
is monomorphic.
\end{prop}

We will give an example at the end of this appendix to show that
Propositions \ref{f32} and \ref{f1} both fail if ``strongly monomorphic'' is 
replaced by ``monomorphic.''

Before we can give the proof of
Proposition \ref{f1} we will need quite a bit of background, culminating with
Lemma \ref{f8}.

If $A$ is a multisemisimplicial set and $a\in A_{\mathbf n}$ for some $\mathbf
n$, we will say {\it $a$ is a point of $A$}.  If $\bX$ is an object of $\sss$
and $x$ is a point of $X_k$ for some $k$ we will say {\it $x$ is a point of 
$\bX$} and write $|x|=k$.

We will use the fact that (by Remark \ref{Aug29.2}) 
points in the $k$-fold smash product
$
\bX_1\wedge\cdots\wedge\bX_j
$
are equivalence classes of symbols of the form
\[
\theta\wedge x_1\wedge\cdots\wedge x_j
\]
where $x_i$ is a point of $\bX_i$ for each $i$ and $\theta\in
\Sigma_{|x_1|+\cdots+|x_j|}$.
Our first task is to describe
the equivalence relation 
$\sim$ explicitly, and for this we need the operations in Definition \ref{f3} 
below.

\begin{notation}\label{not1}
(i)
Let $\alpha\in\Sigma_l$, let $L\geq 1$, and let $k\geq L+l-1$.  We will 
write 
$\alpha^{[L]}$ for the element of $\Sigma_k$ which permutes
$L,L+1,\ldots,L+l-1$ in the same way that $\alpha$ permutes $1,\ldots,l$.

(ii)
Let $\bX$ be an object of $\sss$. 
Let 
\[
\psi:\bX\wedge \bS\to\bX
\]
be the right action (that is, $\psi=\omega\circ\tau$, where $\tau$ is defined
in Equation \eqref{m8}), 
and let
\[
\pp:X_k\to X_{k+1}
\]
be the map that takes $x$ to $\psi(x\wedge s)$, where $s$ is the
1-simplex of $S^1$.  Note that 
\begin{equation}
\label{f5}
\pp x=\rho_{1,k}\op x, 
\end{equation}
where 
$\rho_{1,k}$ is
defined after Equation \eqref{m8}.
\end{notation}

\begin{remark}
\label{f11}
(i)
If $x$ is a point of $\bX$ and $\alpha\in\Sigma_{|x|}$ then 
$\op \alpha x=\alpha^{[2]}\op x$ and $\pp\alpha x=\alpha^{[1]}\pp x$.

(ii) $\pp$ commutes with $\op$.

(iii)
Suppose that $\bX$ is strongly monomorphic $\sss$.
Using Equation \ref{f5} we see that if $x$ is a point of $\bX$ and
$\gamma$ is an element of $\Sigma_{|x|+1}$ for which $\gamma(1)\neq |x|+1$ and
$\gamma\op x$ is in the image of $\pp$, then there is a $\delta\in\Sigma_{|x|}$
for which $\delta^{-1}(|x|)=\gamma^{-1}(|x|+1)-1$ and $\delta x$ is in the image
of $\pp$.
\end{remark}

\begin{definition}
\label{f3}
Let $\bX_1,\ldots,\bX_j$ be monomorphic objects of $\sss$.
Let $x_i$ be a point of $\bX_i$ for $1\leq i\leq j$, and let
$\theta\in\Sigma_{|x_1|+\cdots+|x_j|}$.

(i) For $1\leq m\leq j$ and $\alpha$ an element of $\Sigma_{|x_m|}$ other than
the identity define
\[
A_{m,\alpha}(\theta\wedge x_1\wedge\cdots\wedge x_j)=
(\theta \circ (\alpha^{[1+\sum_{i<m} |x_i|]})^{-1})\wedge y_1\wedge\cdots \wedge y_j,
\]
where $\alpha^L$ was defined in Notation \ref{not1} and
\[
 y_i=
\begin{cases}
x_i & \text{if $i\neq m$}\\
\alpha x_m & \text{if $i=m$.}
\end{cases}
\]

(ii) Suppose that
$m<j$ and that $x_m=\pp z$ for some $z$ (in which case $z$ is uniquely
determined since $\bX_m$ is monomorphic) define
\[
B_m(\theta\wedge x_1\wedge\cdots\wedge x_j)=
\theta\wedge y_1\wedge\cdots \wedge y_j,
\]
where
\[
 y_i=
\begin{cases}
x_i & \text{if $i\neq m,m+1$}\\
z & \text{if $i=m$} \\
\op x_{m+1} & \text{if $i=m+1$.}
\end{cases}
\]

(iii) If $m<j$ and $x_{m+1}=\op z$ for some $z$ define
\[
C_m(\theta\wedge x_1\wedge\cdots\wedge x_j)=
\theta\wedge y_1\wedge\cdots \wedge y_j,
\]
where 
\[
 y_i=
\begin{cases}
x_i & \text{if $i\neq m,m+1$}\\
\pp x_{m} & \text{if $i=m$} \\
z & \text{if $i=m+1$}
\end{cases}
\]
\end{definition}

Now we can give an explicit description of the equivalence relation $\sim$, as
follows:
$\theta\wedge 
x_1\wedge\cdots\wedge x_j\sim
\kappa\wedge y_1\wedge\cdots\wedge y_j$
if and only if there is a composable sequence $D_1,\ldots,D_n$ of operations of 
the types given in Definition \ref{f3} with 
\[
\kappa\wedge y_1\wedge\cdots\wedge y_j
=
D_n\cdots D_1(\theta\wedge x_1\wedge\cdots\wedge x_j).
\]
We will only indicate why these operation generate: the operations $A_{m,\alpha}$ take care of the equivalences described in Remark \ref{Aug29.2}, the operations $B_m$ and $C_m$ generate the equivalences which come from the coequalizer  diagram in Definition \ref{m19}.

In the situation of an equivalence, we will say that the $n$-tuple $P=(D_1,\ldots,D_n)$ is 
a {\it path} from $\theta\wedge x_1\wedge\cdots\wedge x_j$ to 
$\kappa\wedge y_1\wedge\cdots\wedge y_j$ and we will write
\[
P(\theta\wedge x_1\wedge\cdots\wedge x_j)
=
\kappa\wedge y_1\wedge\cdots\wedge y_j.
\]  
The {\it length} of $P$ is the
number of $A$-operations in $P$ plus twice the number of $B$-operations and 
twice the number of $C$-operations.

\begin{notation}
If $\alpha$ is the identity element then $A_{m,\alpha}$ will be interpreted as
the empty path.
\end{notation}

We record some useful calculations in our next two lemmas.
We define the {\it standard path of length
$3p$ starting at
$\theta\wedge \op x_1\wedge x_2\wedge\cdots\wedge x_j$} with $p<j$  to be the path
\[
A_{1,\rho_{1,|x_1|}},B_1,A_{2,\rho_{1,|x_2|}},B_2,\ldots,A_{p,\rho_{1,|x_p|}},B_p
\]
Note that the $B$-operations in this sequence are always possible because of
Equation \eqref{f5} and that $A_{i,\rho_{1,|x_i|}}$ is interpreted as
the empty path when $|x_i|=0$.

\begin{lemma}
\label{f4}
Let $P$ be the standard path of length $3p$ starting at $\theta\wedge \op
x_1\wedge x_2\wedge\cdots\wedge x_j$.  Then
\[
P(\theta\wedge\op x_1\wedge\cdots\wedge x_j)
=
\theta(\rho_{1,|x_1|+\cdots+|x_p|}^{[1]})^{-1}
\wedge x_1\wedge\cdots\wedge \op x_{p+1}\wedge
\cdots
\]
\end{lemma}

\begin{proof}
This follows by induction on $p$ from Equation \ref{f5} and the equation
$
\rho_{1,t}^{[s+1]}
\rho_{1,s}^{[1]}
=\rho_{1,s+t}^{[1]}
$ in the group $\Sigma_{s+t+1}$.
\end{proof}

\begin{remark}
\label{f6}
The operations in Definition \ref{f3} often
commute with each other.  Specifically, $A_{m,\alpha}$ commutes with
$A_{n,\beta}$ for $m\neq n$ and with $B_n$ and $C_n$ for $m\neq n,n-1$;
moreover $B_m$ commutes with every $B_n$ and every $C_n$ and $C_m$
commutes with every $C_n$.
\end{remark}

\begin{lemma}
\label{f4.1}
Let $P$ be as in Lemma \ref{f4}, and let $Q$ be any path starting at
$\theta\wedge \op
x_1\wedge x_2\wedge\cdots\wedge x_j$ of the form
\[
A_{1,\alpha_1},B_1,A_{2,\alpha_2},B_2,\ldots,A_{p,\alpha_p},B_p
\]
where $\alpha_i(1)=|x_i|+1$ for each $i$.  For each $i$ let $\beta_i$ be the
element of $\Sigma_{|x_i|}$ with $\beta_i^{[2]}=
\rho_{1,|x_i|}^{-1} \alpha_i$ (which exists because $\rho_{1,|x_i|}^{-1}
\alpha_i$ takes 1 to 1).  Then

{\rm (i)}
the path 
\[
A_{1,\beta_1^{[2]}},A_{2,\beta_2},\ldots,
A_{p,\beta_p},P
\]
has the same effect on $\theta\wedge \op
x_1\wedge x_2\wedge\cdots\wedge x_j$ as $Q$, and
\end{lemma}

{\rm (ii)} 
the path
\[
P,A_{1,\beta_1},A_{2,\beta_2},\ldots,A_{p,\beta_p}
\]
also has the same effect on $\theta\wedge \op
x_1\wedge x_2\wedge\cdots\wedge x_j$ as $Q$.

\begin{proof}
The proof in each case is by induction on $p$.

For part (i), the case $p=1$ is immediate.  For $p>1$, we observe that
\[
A_{1,\beta_1^{[2]}},A_{2,\beta_2},\ldots,
A_{p-1,\beta_{p-1}},P,A_{p,\alpha_p},B_p
\]
has the same effect as 
\[
A_{1,\beta_1^{[2]}},A_{2,\beta_2},\ldots,
A_{p-1,\beta_{p-1}},P,A_{p,\beta_p^{[2]}},A_{p,\rho_{1,|x_p|}},B_p
\]
and by Lemma \ref{f4} this has the same effect as
\[
A_{1,\beta_1^{[2]}},A_{2,\beta_2},\ldots,
A_{p,\beta_{p}},P,A_{p,\rho_{1,|x_p|}},B_p
\]
as required.

For part (ii), we use the equation
$\rho_{1,|x_p|}\beta_p^{[2]}= \beta_p^{[1]}\rho_{1,|x_p|}$. 
This shows that 
\[
P,A_{1,\beta_1},A_{2,\beta_2},\ldots,A_{{p-1},\beta_{p-1}},A_{p,\alpha_{p}},
B_{p}
\]
has the same effect as 
\[
P,A_{1,\beta_1},A_{2,\beta_2},\ldots,A_{{p-1},\beta_{p-1}},
A_{p,\rho_{1,|x_p|}},
A_{p,\beta_p^{[1]}},B_p
\]
which has the same effect as
\[
P,A_{1,\beta_1},A_{2,\beta_2},\ldots,A_{{p-1},\beta_{p-1}},
A_{p,\rho_{1,|x_p|}},
B_p,
A_{p,\beta_p}
\]
and by Remark \ref{f6} this has the same effect as
\[
P, A_{p,\rho_{1,|x_p|}},
B_p,
A_{1,\beta_1},A_{2,\beta_2},\ldots,A_{{p},\beta_{p}}
\]
as required.
\end{proof}

\begin{notation}
Given a symbol $\mathbf x=\theta\wedge x_1\wedge\cdots\wedge x_j$ as above, meaning a representative of the equivalence relation, we
write $\op \mathbf x$ for $\theta^{[2]}\wedge \op 
x_1\wedge\cdots\wedge x_j$.
\end{notation}

\begin{definition}
Given symbols
$\mathbf x$
and $\mathbf y$, and a path $P$ from 
$\op \mathbf x$
to 
$\op \mathbf y$, we will say that $\mathbf x$ and $\mathbf y$ are {\it 
$P$-related}.
\end{definition}

With this terminology, Proposition \ref{f1} is true if the following statement
is true for all $P$:
\[
\tag{*} \text{
if $\mathbf x$ and $\mathbf y$ are $P$-related then $\mathbf x\sim
\mathbf y$.}
\] 

Before proving Proposition \ref{f1} we need two more lemmas.

\begin{lemma}
\label{f7}
Let $\mathbf x=\theta\wedge x_1\wedge\cdots\wedge x_j$ and let $P$ be a path
starting at $\mathbf x$.
Suppose that $P$ consists of a single operation $D$ and that

{\rm (i)} if $D=A_{1,\alpha}$ then $\alpha(1)=1$, and

{\rm (ii)} if $D=B_1$ then $|x_1|\neq 0$.

Then
there is a unique $\mathbf y$ with $P(\op\mathbf x)=\op\mathbf y$, and
$\mathbf x\sim \mathbf y$.  In particular, Statement {\rm (*)} is true for
$P$.
\end{lemma}

\begin{proof}
If $D=A_{1,\alpha}$ with $\alpha(1)=1$ then
there is a $\beta$ with $\alpha=\beta^{[2]}$, and
\[
P(\op\mathbf x)=
P(\theta^{[2]}\wedge \op 
x_1\wedge\cdots\wedge x_j)
=
\theta^{[2]}(\beta^{[2]})^{-1}\wedge\op \beta x_1\wedge\cdots\wedge x_j,
\]
so we can let 
\[
\mathbf y=
\theta\beta^{-1}\wedge\beta x_1\wedge\cdots\wedge x_j
\]
which is the unique choice since $\bX_1$ is monomorphic.
We have $\mathbf y=A_{1,\beta}\mathbf x$, so $\mathbf x\sim
\mathbf y$.

Next suppose $D=B_1$ and that $|x_1|\neq 0$. Since $B_1$ can be applied to 
$\mathbf x$ and we have (by definition of $B_1$) that $x_1=\pp z$, and so 
\begin{equation}
\label{f33}
\op x_1= \op \pp z= \pp \op z
\end{equation}
for some $z$, and then
\[
P(\op\mathbf x)= \theta^{[2]}\wedge z\wedge \op x_2\cdots
\]
Combining Equations
\ref{f33} and \ref{f5} we have $\rho_{1,|x_1|}\op z=\op x_1$, so since 
$\bX_1$ is strongly monomorphic there is a $\beta\in\Sigma_{|x_1|}$ with 
$\beta^{-1}(1)=1$ and 
\[
\beta z=\op w
\]
for some $w$. There is a 
$\tilde{\beta}\in\Sigma_{|x_1|-1}$ with $\beta=\tilde{\beta}^{[2]}$ and we have 
\begin{equation}
\label{f35}
z=(\tilde{\beta}^{[2]})^{-1}\op w=\op \tilde{\beta}^{-1}w.
\end{equation}
Now let
\[
\mathbf y= \theta\wedge \tilde{\beta}^{-1}w \wedge \op x_2\cdots
\]
We have $\op \mathbf y=P(\op \mathbf x)$ by Equation \ref{f35}.
We claim that
\[
x_1=\pp\tilde{\beta}^{-1}w
\]
Assuming this for the moment, we have $\mathbf y=B_1\mathbf x$, so $\mathbf
x\sim
\mathbf y$ as required.  
Since $\bX_1$ is monomorphic the claim follows from the equations
\[
\op x_1=\pp z=\pp \op \tilde{\beta}^{-1}w
=\op\pp\tilde{\beta}^{-1}w,
\]
where we have used Equations \ref{f33} and \ref{f35} and Remark \ref{f11}(ii).

The remaining cases are easy.
\end{proof}

\begin{lemma}
\label{f8}
Let 
$Q$ be a path which can be written as a composite $Q_1,Q_2$, where Statement
{\rm (*)} is true for $Q_2$ and every operation in
$Q_1$ satisfies the hypothesis of Lemma \ref{f7}.
Then Statement {\rm (*)} is true for $Q$. 
\end{lemma}

\begin{proof}
By an iterated application of Lemma \ref{f7}, 
$Q_1(\op\mathbf x)$ has the form $\op\mathbf z$ for some $\mathbf z$,
and $\mathbf x\sim\mathbf z$.  But also $\mathbf z\sim\mathbf y$ since
the symbols
$\mathbf z$ and $\mathbf y$ are $Q_2$-related.
\end{proof}

\begin{proof}[Proof of Proposition \ref{f1}]

We will prove that Statement (*) holds for all $P$. So suppose that $P$ is a
path of length $r$ from $\op\mathbf x$ to
$\op\mathbf y$.

If $r=0$ then $\mathbf x=\mathbf y$ since $\bX_1$ is monomorphic.

Suppose that $r>0$ and that the result holds for all paths of length $<r$.

Let \[
\mathbf x=\theta\wedge x_1\wedge\cdots\wedge x_j
\]
and let 
\[
\mathbf y=\kappa\wedge y_1\wedge\cdots\wedge y_j.
\]
Let $p$ be the
largest number for which $P$ begins with a path of the form
\[
A_{1,\alpha_1},B_1,A_{2,\alpha_2},B_2,\ldots,A_{p,\alpha_p},B_p
\]
where $\alpha_i(1)=|x_i|+1$ for each $i$.
Denote this path by $P_1$  
($p$ is allowed to be 0 in which case $P_1$ is the empty path). 

Lemmas \ref{f4.1}(i) and \ref{f4} imply that $P\neq
P_1$:
if $|x_1|=0$ this is
because $x_1$ cannot equal $\op y_1$, and 
otherwise it's because
$\theta^{[2]}(\beta_1^{[2]})^{-1} \cdots
(\beta_p^{2+\sum_{i< p}|x_i|]})^{-1}
(\rho_{1,\sum_{i\leq p}|x_i|}^{[1]})^{-1}$
does not take 1 to 1 and so cannot be
equal to $\kappa^{[2]}$.
Let $D$ be the next operation in $P$ and let $P_2$ be the part of $P$ 
after $D$.
There are three cases.

\medskip
\noindent
{\bf Case 1.} Suppose that $D=A_{m,\gamma}$ for some $\gamma$, with $m\neq 
p+1$.  

If $m>p+1$ then $D$ commutes with all the earlier operations in $P$ (if any), 
so $P$ has the same effect as
\[
D,P_1,P_2.
\]
Since the length of $P_1,P_2$ is $r-1$ 
we are done by Lemma \ref{f8} (taking $Q_1=D$ in that lemma).

If $m\leq p$, Lemma \ref{f4.1}(ii) shows that $P_1,D$ has the same effect
as
\[
Q=(A_{1,\alpha'_1},B_1,A_{2,\alpha'_2},B_2,\ldots,A_{p,\alpha'_p},B_p)
\]
where 
\[
\alpha'_i=
\begin{cases}
\alpha_i & \text{if $i\neq m$,} \\
\rho_{1,|x_m|}\gamma^{[2]}\rho_{1,|x_m|}^{-1}\alpha_m & \text{if $i=m$.}
\end{cases}
\]
Then $P$ has the same effect as $Q,P_2$, which has length $r-1$.

\medskip
\noindent
{\bf Case 2.} Suppose that $D$ is $B_m$ or $C_m$ for some $m$.

If $m>p$ then $D$ commutes with all earlier operations in $P$ (if any), so we
are done by Lemma \ref{f8} (taking $Q_1=D$; note that if $p=0$ and $|x_1|=0$ 
then we cannot have $D=B_1$ because of the way $p$ was chosen).

Let $P_1'$ be the standard path of length $3p$ starting at $\op \mathbf x$ and
let $Q$ be the path $P_1',D,P_2$.
Lemma \ref{f4.1}(i) says that $P_1$ has the same effect as
\[
A_{1,\beta_1^{[2]}},A_{2,\beta_2},\ldots,
A_{p,\beta_p},P_1'
\]
By Lemma \ref{f7}, there is a symbol $\mathbf z$ such that the path 
\[
A_{1,\beta_1^{[2]}},A_{2,\beta_2},\ldots,
A_{p,\beta_p}
\]
takes $\op \mathbf x$ to $\op\mathbf z$ and $\mathbf
x\sim \mathbf z$.  Then $\mathbf z$ is $Q$-related to $\mathbf y$, and to
complete Case 2 it suffices to show that $\mathbf z\sim\mathbf y$.

If $D$ is $B_m$ or $C_m$  with $m<p$, Lemma \ref{f4} shows that $P_1',D$ 
has the same effect as $D,P_1'$.  Hence $Q$ has the same effect as 
$D,P_1',P_2$, and we are done by Lemma \ref{f8} since $P_1',P_2$ has length 
$<r$ (note that if $D=B_1$ in this situation then $|x_1|$ cannot be 0
since it would not be possible to apply $B_1$ to $P_1'(\mathbf z)$).

If $D=C_p$ then $P_1',D$ has the same effect as
\[
R=
(A_{1,\rho_{1,|x_1|}},B_1,A_{2,\rho_{1,|x_2|}},B_2,\ldots,A_{p,\rho_{1,|x_p|}})
\]
since $C_p$ and $B_p$ are inverses of each other.  Then $Q$ has the same 
effect as $R,P_2$, which has length $r-2$.

If $D=B_p$, Lemma \ref{f4} shows that $P_1',D$
has the same effect as $D,P_1',A_{p+1,\gamma}$, where $\gamma$ is the
transposition $(12)$.  Hence $Q$ has the same effect as
$D,P_1',A_{p+1,\gamma},P_2$, and we are done by Lemma \ref{f8} since 
$P_1',A_{p+1,\gamma},P_2$ has length $r-1$ (this is why operations of type 
$B$
count for 2 in the definition of length); note that we cannot have $D=B_1$ and
$|x_1|=0$ in this situation since $p$ would be 1 and then $P_1$ would be
$B_1$, and it would not be possible to apply the sequence $P_1,D$ to 
$\mathbf x$.

\medskip
\noindent
{\bf Case 3.}  Suppose that $D=A_{p+1,\gamma}$ for some $\gamma$.

First suppose $\gamma(1)=1$, which implies $\gamma=\tilde\gamma^{[2]}$ for some
$\tilde\gamma$.  If $p=0$ we're done by Lemma \ref{f7}.  
If $p>0$ 
Lemmas \ref{f4.1}(i) and \ref{f4} show that $P_1,D$ has the same effect as 
$A_{p+1,\gamma'},P_1$, so $P$ has the same effect as $A_{p+1,\gamma'},P_1,P_2$
and we're done by Lemma \ref{f8}.

For the rest of Case 3 we suppose that
\begin{equation}
\label{f12}
\gamma(1)\neq 1.
\end{equation}

Lemmas \ref{f4.1}(i) and \ref{f4}
show that $P$ cannot equal $P_1,D$: if $|x_1|=0$ this is because 
$x_1$ cannot equal $\op y_1$, and
otherwise it's because the permutation
\[
\theta^{[2]}(\beta_1^{[2]})^{-1} \cdots
(\beta_p^{2+\sum_{i< p}|x_i|]})^{-1}
(\rho_{1,\sum_{i\leq p}|x_i|}^{[1]})^{-1}(\gamma^{[1+\sum_{i\leq 
p}|x_i|]})^{-1}
\]
does not take 1 to 1 and so cannot be
equal to $\kappa^{[2]}$.
Let $E$ be the next operation in $P$ and let $P_2'$ be the part of $P$
after $E$; then we have
\begin{equation}
\label{f26}
P=(P_1,D,E,P_2')
\end{equation}
There are three cases to consider.

\medskip
\noindent
{\bf Case 3.1.}  $E=A_{m,\delta}$ for some $\delta$.

If $m=p+1$ we can combine $D$ and $E$ into a single operation, which gives
a path of length $r-1$ from $\mathbf x$ to $\mathbf y$.

Otherwise we can commute $E$ past $D$, which reduces to Case 1.

\medskip
\noindent
{\bf Case 3.2.} $E=B_m$ for some $m$.

If $m\neq p,p+1$ we can commute $E$ past $D$, which reduces to Case 2.

If $m=p$ then $D,E$ has the same effect as $B_m,A_{p+1,\gamma^{[2]}}$, which
reduces to Case 2.

So suppose $m=p+1$.  
Then 
\begin{equation}
\label{f13}
\gamma(1)\neq |x_{p+1}|+1,
\end{equation}
because of the way $p$ was chosen.
Denote $P_1(\mathbf x)$ by  $\mathbf z$, and let
\begin{equation}
\label{f28}
\mathbf z=\lambda\wedge z_1\wedge\cdots\wedge z_j.
\end{equation}
Then
\begin{equation}
\label{f17}
z_{p+1}=\op x_{p+1}
\end{equation}
by Lemmas \ref{f4.1} and \ref{f4}.

Because the operation $B_{p+1}$ can be applied to
$A_{p+1,\gamma}(\mathbf z)$, we have that
\begin{equation}
\label{f14}
\gamma z_{p+1}=\pp w
\end{equation}
for some $w$, and then
\begin{equation}
\label{f16}
ED(\mathbf z)
=
\lambda(\gamma^{[|z_1|+\cdots+|z_p|+1]})^{-1}\wedge \cdots w\wedge \op 
z_{p+2}\cdots
\end{equation}

Equations \eqref{f13}, \eqref{f17} and \eqref{f14} allow us to 
apply Remark \ref{f11}(iii) to get a $\delta\in\Sigma_{|x_{p+1}|}$ with the
properties that
$\delta^{-1}(|x_{p+1}|)=\gamma^{-1}(|x_{p+1}|+1)-1$ and 
\begin{equation}
\label{f15}
\delta x_{p+1}=\pp v
\end{equation}
for some $v$.
Let 
\begin{equation}
\label{f18}
\varepsilon=\gamma(\delta^{[2]})^{-1}.
\end{equation}
Then $\varepsilon$ takes
$|x_{p+1}|+1$ to itself, so there is an 
$\tilde\varepsilon\in\Sigma_{|x_{p+1}|}$ with
$\varepsilon=\tilde\varepsilon^{[1]}$.

We claim that the operation $B_{p+1}$ can be applied to
$A_{p+1,\delta^{[2]}}(\mathbf z)$ and that 
$A_{p+1,\delta^{[2]}},B_{p+1},A_{p+1,\tilde\varepsilon}$ has the same effect on 
$\mathbf z$ as $D,E$.  
Let us assume this for the moment.  If $p=0$ then (by Equation
\eqref{f26}) $P$ has the same effect as 
$A_{1,\delta^{[2]}},B_1,A_{1,\tilde\varepsilon},P_2'$; since the length of
$A_{1,\tilde\varepsilon},P_2'$ is $r-2$ we're done by Lemma \ref{f8} (note that $|x_1|\neq 0$ because of Equation
\ref{f12}).
If $p\neq 0$ then $P$ has the same effect as 
$P_1,A_{p+1,\delta^{[2]}},B_{p+1},A_{p+1,\tilde\varepsilon},P_2'$, 
which 
(by Lemmas \ref{f4.1} and \ref{f4})
has the same effect as 
$A_{p+1,\delta},P_1,B_{p+1},A_{p+1,\tilde\varepsilon},P_2'$. Case 2 applies 
to part of this path after the first operation,
so we're done by Lemma \ref{f8}.

It remains to verify the claim. By Equations \eqref{f17} and \eqref{f15} and
Remark \ref{f11}(ii) we have 
\[
\delta^{[2]}z_{p+1}=\op\delta x_{p+1}= \op \pp v=\pp \op v,
\]
so (using Equation \eqref{f28}) $B_{p+1}$ can be applied to
$A_{p+1,\delta^{[2]}}(\mathbf z)$ and we have
\begin{multline*}
A_{p+1,\tilde\varepsilon}B_{p+1}A_{p+1,\delta^{[2]}}(\mathbf z)
\\ =
\lambda((\delta^{[2]})^{-1}(\tilde\varepsilon^{[1]})^{-1})^{[|z_1|+\cdots+|z_p|+1]}
\wedge \cdots \tilde\varepsilon \op v\wedge \op z_{p+2}\cdots.
\end{multline*}
By Equation \eqref{f16} it suffices to show 
$\tilde\varepsilon^{[1]}\delta^{[2]}=\gamma$ (which follows
from Equation \ref{f18}) and $w=\tilde\varepsilon\op v$.  Because $\bX_{p+1}$ 
is monomorphic, for the latter equation it suffices to show 
$\pp w=\pp \tilde\varepsilon^{[1]}\op v$, and this in turn follows from 
\begin{multline*}
\pp w= \gamma z_{p+1}= \gamma \op x_{p+1}= \varepsilon \delta^{[2]} \op x_{p+1}
\\ =\varepsilon \op \delta x_{p+1}=
\varepsilon \op \pp v =\varepsilon \pp\op v=
\pp \tilde\varepsilon^{[1]}\op v,
\end{multline*}
where we have used (in this order) Equations \eqref{f14}, \eqref{f17}, 
\eqref{f18}, Remark \ref{f11}(i), Equation \ref{f15}, and Remark \ref{f11}(ii).

\medskip
\noindent
{\bf Case 3.3.} $E=C_m$ for some $m$.

If $m\neq p,p+1$ we can commute $E$ past $D$, which reduces to Case 2.

If $m=p+1$ then $D,E$ has the same effect as $C_m,A_{p+1,\gamma^{[1]}}$, which
reduces to Case 2.

So suppose $m=p$ (which implies $p\neq 0$).
Let $P_0$ be 
$A_{1,\alpha_1},B_1,A_{2,\alpha_2},B_2,\ldots,A_{p-1,\alpha_p},B_{p-1}$
if $p>1$ and the empty path if $p=1$;
note that
\begin{equation}
\label{f24}
P=(P_0,A_{p,\alpha_p},B_p,D,E,P_2').
\end{equation}
We
denote $A_{p,\alpha_p}P_0(\mathbf x)$ by  $\mathbf z$ and let
\begin{equation}
\label{f27}
\mathbf z=\lambda\wedge z_1\wedge\cdots\wedge z_j.
\end{equation}

Because the operation $B_p$ can be applied to $\mathbf z$, we have
\begin{equation}
\label{f19}
z_p=\pp w
\end{equation}
for some $w$.
Because $C_p$ can be applied to $A_{p+1,\gamma}B_p{\mathbf z}$, we have
\begin{equation}
\label{f25}
\gamma\op z_{p+1}=\op v
\end{equation}
for some $v$, and 
\begin{equation}
\label{f20}
EDB_p(\mathbf z)
=
\lambda(\gamma^{-1})^{[|z_1|+\cdots+|z_p|+1]})\wedge \cdots \pp w \wedge v
\cdots
\end{equation}

Since $\bX_{p+1}$ is strongly monomorphic, Equations \eqref{f12}
and \eqref{f25} imply that there is 
a $\delta\in\Sigma_{|x_{p+1}|}$ with the
properties that
$\delta^{-1}(1)=\gamma^{-1}(1)-1$ and
\begin{equation}
\label{f22}
\delta z_{p+1}=\op u
\end{equation}
for some $u$.

Let $\varepsilon$ be the transposition $(12)$.
Let
\begin{equation}
\label{f23}
\eta=\gamma(\delta^{[2]})^{-1}(\varepsilon^{[1]})^{-1}.
\end{equation}
Then $\eta$ takes
$1$ to 1, so there is an $\tilde\eta\in\Sigma_{|x_{p+1}|}$
with
$\eta=\tilde\eta^{[2]}$.
We claim that the sequence of operations
\[
A_{p+1,\delta},C_p,A_{p,\varepsilon^{[|z_p|+1]}},B_p,A_{p+1,\tilde\eta}
\]
can be applied to $\mathbf z$ and that it has the same effect as
$B_p,D,E$.
Assuming this for the moment, we see (using Equation \eqref{f24}) that $P$ 
has the
same effect as 
\[
P_0,A_{p,\alpha_p},
A_{p+1,\delta},C_p,A_{p,\varepsilon^{[|z_p|+1]}},B_p,A_{p+1,\tilde\eta},P_2'
\]
which (by commuting $A_{p+1,\delta}$ past the earlier operations) has the 
same effect as
\[
A_{p+1,\delta},P_0,A_{p,\alpha_p},C_p,A_{p,\varepsilon^{[|z_p|+1]}},B_p,A_{p+1,\tilde\eta},P_2'
\]
which (moving $C_p$ past $A_{p,\alpha_p}$) has the same effect as
\[
A_{p+1,\delta},P_0,C_p,A_{p,\alpha_p^{[1]}},A_{p,\varepsilon^{[|z_p|+1]}},B_p,A_{p+1,\tilde\eta},P_2'
\]
which (since $C_p$ commutes with the operations in $P_0$) has the same effect
as
\[
A_{p+1,\delta},C_p,P_0,A_{p,\alpha_p^{[1]}},A_{p,\varepsilon^{[|z_p|+1]}},B_p,A_{p+1,\tilde\eta},P_2'
\]
The part of this path after the first two operations 
has length $r-1$, so we're done by Lemma \ref{f8}.

It remains to verify the claim. 
Equations \eqref{f27}, \eqref{f19} and \eqref{f22}  give
\begin{equation}
\label{f29}
A_{p+1,\delta}(\mathbf z)
= \lambda((\delta^{[2]})^{-1})^{[|z_1|+\cdots+|z_p|+1]}
\wedge
\cdots \psi w \wedge \op u \cdots
\end{equation}
so $C_p$ can be applied to $A_{p+1,\delta}(\mathbf z)$.  Now
$\varepsilon^{[|z_p|+1]}(\pp\pp w)=\pp\pp w$, so Equation \eqref{f29} gives
\begin{multline}
\label{f30}
A_{p,\varepsilon^{[|z_p|+1]}}C_pA_{p+1,\delta}(\mathbf z)
\\ =
\lambda((\delta^{[2]})^{-1}(\varepsilon^{[1]})^{-1})^{[|z_1|+\cdots+|z_p|+1]}
\wedge \cdots \pp\pp w\wedge u\cdots
\end{multline}
Then $B_p$ can be applied to this and we have
\begin{multline*}
\label{f31}
A_{p+1,\tilde\eta}B_pA_{p,\varepsilon^{[|z_p|+1]}}C_pA_{p+1,\delta}(\mathbf z)
\\ =
\lambda((\delta^{[2]})^{-1}(\varepsilon^{[1]})^{-1}
(\tilde\eta^{[2]})^{-1})^{[|z_1|+\cdots+|z_p|+1]}
\wedge \cdots \pp w\wedge \tilde\eta\op u\cdots
\end{multline*}
Comparing this to Equation \eqref{f20}, we see that it suffices to show
$\gamma=\tilde\eta^{[2]}\varepsilon^{[1]}\delta^{[2]}$ (which follows from
Equation \eqref{f23}) and $\tilde\eta\op u=v$.  Since $\bX_{p+1}$ is 
monomorphic,
the latter equation follows from the equations
\[
\op\tilde\eta\op u=\eta\op\op u=\gamma\op \delta^{-1}\op u=\gamma\op z_{p+1}=\op v
\]
where we have used Remark \ref{f11}(i) and Equations \ref{f23}, \ref{f22} and
\ref{f25}.

This completes the proof of Proposition \ref{f1}
\end{proof}

\begin{proof}[Proof of Proposition \ref{f32}]
First we need some notation to distinguish the
action of $\Sigma_j$ on $\bX^{\wedge j}$ from the action of $\Sigma_k$ on the
$k$-th object of $\bX^{\wedge j}$: given $\nu\in\Sigma_j$, $\eta\in\Sigma_k$  
and a point 
$\mathfrak x$ of $\bX^{\wedge j}$ with $|\mathfrak x|=k$, we write $\mathfrak 
x^\nu$ (resp., $\eta\mathfrak x$) for the point obtained by applying $\nu$
(resp., $\eta$) to $\mathfrak x$.

Now suppose there is a point $\mathfrak x$ in
$\bX^{\wedge j}$ which
is not a basepoint and a nontrivial
$\nu\in\Sigma_j$ such that $\mathfrak x^\nu=\mathfrak x$.
Let $\mathfrak x$ be represented by
\[
\mathbf x=\theta\wedge x_1\wedge\cdots\wedge x_j
\]
and note that $|x_i|>0$ for all $i$ by our hypothesis on $X_0$.
Then $\mathfrak x^\nu$ is represented by
\[
\mathbf y=\theta\eta^{-1}\wedge x_{\nu^{-1}(1)}\wedge \cdots\wedge 
x_{\nu^{-1}(j)},
\]
where $\eta$ permutes blocks of size $|x_1|,\ldots,|x_j|$ in the same way that
$\nu$ permutes $1,\ldots,j$.  There must be a path from $\mathbf x$ to $\mathbf
y$, and it cannot consist entirely of operations of type $A$, since $\eta$ is
not in $\Sigma_{|x_1|}\times\cdots\times\Sigma_{|x_j|}$.  Thus we may assume without loss of generality that some $x_i$ is in
the image of $\op$, and this implies (using operations of type A and C) that 
there is a point $\mathfrak w$ of
$\bX^{\wedge j}$ and a $\zeta\in\Sigma_{|\mathfrak x|}$ with $\mathfrak
x=\zeta\op\mathfrak w$. 
Now $\nu$ commutes with both $\zeta$ and $\op$, so the fact that $\mathfrak
x^\nu=\mathfrak x$ implies that
\[
\zeta\op(\mathfrak w^\nu)=\zeta\op\mathfrak w.
\]
Using Proposition \ref{f1} we see that $\mathfrak w^\nu=\mathfrak w$, and thus
$\mathfrak w$ is a nontrivial fixed point of the $\Sigma_j$ action with 
$|\mathfrak w|<|\mathfrak x|$.  Continuing in this way would give a nontrivial
fixed point in the 0-th object of $\bX^{\wedge j}$, which is impossible by our
hypothesis on $X_0$.
\end{proof}

We conclude with an example which shows that Propositions \ref{f32} and \ref{f1}
fail if ``strongly monomorphic'' is replaced by ``monomorphic.''  

Recall Definition \ref{z1}.  Let $\bX$ be the subobject of $\bS$ with $X_0=*$
and $X_k=S^k$ for $k>0$.  

We will denote the point of $\bX\wedge \bX$ represented by a
symbol $\theta\wedge x\wedge y$ by $[\theta\wedge x\wedge y]$.
Let $x$ be the nontrivial simplex in $X_1$.  Note that
$\bX$ is not strongly monomorphic because the transposition $(12)$ takes $\op
x$ to itself, so $[(12)\op x]$ is in the image of $\op$, but $x$ is not in the
image of $\op$.

Let $\tau$ be the operation which switches the two factors of $\bX\wedge \bX$. Now we claim that 
\[
([\op x\wedge x])^\tau=[\op x\wedge x],
\]
which gives a counterexample for Proposition \ref{f32}.  We have
\begin{align*}
([\op x\wedge x])^\tau &=[\rho_{1,2} \wedge x\wedge \op x]
=[(23)(12) \wedge x\wedge \op x]
= [(23)(12) \wedge \pp x\wedge x]
\\ &= [(23) \wedge \pp x\wedge x] \text{\ because $(12)$ acts trivially on $(X_2)_{1,1}$}
\\
&= [(23) \wedge x\wedge \op x]
= [x\wedge \op x]
=[ \pp x\wedge x]
= [\op x\wedge x].
\end{align*}

Next we observe that 
$
[(12)\wedge x\wedge x]
\neq
[x\wedge x]
$, because there is
no nontrivial path beginning at $(12)\wedge x\wedge x$.  But
we claim that 
\[
\op[((12)\wedge x\wedge x)]
= 
\op([x\wedge x]),
\]
which gives a counterexample for Proposition \ref{f1}.  We have
\begin{align*}
\op([(12)\wedge x\wedge x])
&=[(23)\wedge \op x\wedge x]
=[(23)\wedge \pp x\wedge x]
\\
&=[(23)\wedge x\wedge \op x]
=[x\wedge \op x]
=[\pp x\wedge x]
\\
&=[\op x\wedge x]
=\op([x\wedge x]).
\end{align*}

\section{The proof of Lemma \ref{m99}}\label{Lemma m99}

Let $\bW$ denote the multisemisimplicial spectrum whose $k$-th object is 
$S_\bu^{k-\mathrm{multi,}{\mathord{\pitchfork}}}(T(\STop(k)))$, so that
$|\bW|=\bY$.
We begin by showing that the monad $\bbP$ of Definition \ref{m100} acts on the
pair $(\bW,\bR_\STop)$.

Let us define a $\Z$-graded category $\B$ as follows.  The 
objects of $\B$ are pairs 
\[
(g:\Delta^\bfn\to T(\STop(k)),o),
\]
where both $\bfn$ and $k$ are allowed to vary and $o$ is an orientation of
$\Delta^\bfn$; the grading is given by $d(g,o)=\dim(\Delta^\bfn)-k$. We assume that the preimage $g^{-1}S$ of the zero section $S$ is a topological manifold and write
$ \Inv(g,o)$ for the resulting oriented manifold in $\A_{\STop}$.
The  morphisms are commutative diagrams
\[
\xymatrix{
\Delta^\bfn
\ar[d]^\phi
\ar[r]^-g
&
T(\STop(k))
\ar[d]^\alpha
\\
\Delta^{\bfn'}
\ar[r]^-{g'}
&
T(\STop(k))
}
\]
in which $\phi$ is a composite of coface maps and permutations of the 
factors and $\alpha$ is a permutation; we require $\phi$ to be orientation 
preserving if the dimensions are equal.
$\B$ is a symmetric monoidal $\Z$-graded category with product $\bx$, where
\[
(g,o)\bx (g',o')
\]
is the pair consisting of the composite
\[
\Delta^\bfn\times\Delta^{\bfn'}
\xrightarrow{g\times g'}
T(\STop(k))
\times
T(\STop(k'))
\to
T(\STop(k+k'))
\]
and the orientation $o\times o'$.
The symmetry isomorphism $\gamma$ is
\[
\xymatrix{
\Delta^\bfn\times\Delta^{\bfn'}
\ar[d]^\phi
\ar[r]^-{g\times g'}
&
T(\STop(k))
\times
T(\STop(k'))
\ar[r]
&
T(\STop(k+k'))
\ar[d]^\alpha
\\
\Delta^{\bfn'}\times\Delta^\bfn
\ar[r]^-{g'\times g}
&
T(\STop(k'))
\times
T(\STop(k))
\ar[r]
&
T(\STop(k'+k))
}
\]
where $\phi$ and $\alpha$ are the evident permutations.

In the construction of Definition \ref{m67}, if we replace $\Ag$ by $\B$, $\Ar$
by $\A_\STop$, $\boxtimes$ by $\bx$ and $\Sigr$ by $\Inv$ we 
obtain a functor
\[
\bfd_\bs:\A_1\times\cdots\times\A_j\to \A_\STop,
\]
for each datum $\bfd$,
where $\A_i$ denotes $\B$ if $r(i)=u$ and $\A_\STop$ if $r(i)=v$.

Next we have the analogue of Definition \ref{m101}.

\begin{definition}
Let $k_1,\ldots,k_j$ be non-negative integers and let $\bfn_i$ be a
$k_i$-fold multi-index for $1\leq i\leq j$.  Let $r:\{1,\ldots,j\}\to \{u,v\}$,
and for $1\leq i\leq j$ let $\bZ_i$ denote $\bW$ if $r(i)=u$ and
$\bR_\STop$ if $r(i)=v$.  
Then for each map of
preorders
\[
a: U(\Delta^{\bfn_1}\times\cdots\times \Delta^{\bfn_j})\to P_{r;v}
\]
we define
\[
a_*: 
((\bZ_1)_{k_1})_{\bfn_1}
\times
\cdots\times
((\bZ_j)_{k_j})_{\bfn_j}
\to
((\bR_\STop)_{k_1+\cdots+k_j})_{(\bfn_1,\ldots,\bfn_j)}
\]
by 
\begin{multline*}
a_*(z_1,\ldots,z_j)(\sigma_1\times\cdots\times\sigma_j,o_1\times\cdots\times
o_j)
\\
=
i^{\epsilon(\zeta)}
a(\sigma_1\times\cdots\times\sigma_j)_\bs
(z_1(\sigma_1,o_1), \ldots, z_j(\sigma_j,o_j)),
\end{multline*}
where 
\begin{itemize}
\item
if $r(i)=u$ then
$z_i(\sigma_i,o_i))$ denotes $(z_i|_{\sigma_i},o_i)$, and
\item
$\zeta$ is the block permutation that takes blocks
${\mathbf b}_1$, \dots, ${\mathbf b}_j$, ${\mathbf c}_1$, \dots, ${\mathbf
c}_j$
of size
$k_1$, \dots, $k_j$, $\dim \sigma_1$, \dots, $\dim \sigma_j$ into the order
${\mathbf b}_1$, ${\mathbf c}_1$, \dots, ${\mathbf b}_j$, ${\mathbf c}_j$.
\end{itemize}
\end{definition}

As in Section \ref{msss}, this definition leads to a map
\begin{equation}
\label{m102}
\bbP_2(\bW,\bR_\STop)
\to
\bR_\STop.
\end{equation}
Since $\bW$ is a commutative multisemisimplicial symmetric ring spectrum, we
have a map
\begin{equation}
\label{m103}
\bbP_1(\bW)
\xrightarrow{\Xi_1}
\bigvee_{j\geq 0} \,  \bW^{\wedge j}/\Sigma_j
\to
\bW,
\end{equation}
where $\Xi_1$ is given in Definition \ref{m86}(ii).  

The maps \eqref{m102} and
\eqref{m103} give the required action of $\bbP$ on $(\bW,\bR_\STop)$.  Now the
proof of Theorem \ref{m54} (given in Section \ref{rect2}) gives a map of
commutative symmetric ring spectra
\[
|B_\bu(\bbP_1',\bbP,(\bW,\bR_\STop))|
\to
|B_\bu(\bbP_2',\bbP,(\bW,\bR_\STop))|
\]
which is a weak equivalence by Lemma \ref{m104}.  As in Remark \ref{m80}, there
is a weak equivalence of commutative symmetric ring spectra
\[
(\bM_\STop)^{\mathrm{comm}}
\to
|B_\bu(\bbP_2',\bbP,(\bW,\bR_\STop))|.
\]
To complete the proof, we observe that there is a weak equivalence of
commutative symmetric ring spectra
\[
|B_\bu(\bbP_1',\bbP,(\bW,\bR_\STop))|
=
|B_\bu(\bbP_1',\bbP_1,\bW)|
\to
|B_\bu(\bbP_1',\bbP_1',\bW)|
\to
|\bW|=\bY,
\]
where the first arrow is a weak equivalence by Proposition \ref{m79}(iii) and 
the second by \cite[Proposition 9.8 and Corollary 11.9]{MR0420610}.

\section{A functorial version of $\ad^R_{\mathrm{rel}}$}
\label{ff14}

In 
\cite[Section 13]{LM12} we explained why $\ad^R$ (as defined in
\cite[Section 9]{LM12}, which is the definition we have used in the present
paper) is not a functor of
$R$, and how to to modify the
definition of $\ad^R$ to make it a functor (unfortunately, in \cite{LM12} we
also denoted the modified version by $\ad^R$; in this appendix we will be more
careful with the notation).  Our goal in this appendix is to give a 
functorial version of $\ad^R_{\mathrm{rel}}$, which is needed in Theorem \ref{Nov13.3}.  

Unfortunately, it seems that we cannot just adapt the method of \cite[Section 
13]{LM12} to this situation, because $\ad^R_{\mathrm{rel}}$ isn't even
``approximately'' functorial: given a ring homomorphism $R\to S$ and 
an object $(C,D,\beta,\varphi)$ of $\A_{\text{rel}}^R$, there does not seem 
to be a reasonable way to create an object $(C',D',\beta',\varphi')$ of 
$\A_{\text{rel}}^S$ from this data
(we could let $C'=S\otimes_R C$, but the obvious candidate for $D'$ does not come with a quasi-isomorphism).
So in Subsection \ref{ff2} (after some preliminary motivation in Subsection
\ref{ff1}) we give a variant of $\ad^R_{\mathrm{rel}}$, which we denote by
$\ad_{\text{Rel}}^R$, and in 
Subsection \ref{ff2.5} we show that $\ad_{\text{Rel}}^R$ is
approximately functorial (i.e., functorial up to isomorphism).
In Subsection \ref{ff3} we show that the ad
theories $\ad^R$ and $\ad_{\text{Rel}}^R$ are equivalent, that is, there is a
morphism of ad theories from
$\ad_{\text{Rel}}^R$ to
$\ad^R_{\mathrm{rel}}$ which induces an isomorphism of bordism groups.  
In 
Subsection \ref{ff4} we give an enhanced version
of the material in \cite[Section 13]{LM12}, and in Subsection \ref{ff5}
we use this 
to create a variant of 
$\ad_{\text{Rel}}^R$, which we
denote by $\ad^R_\Rel$.  In Subsection \ref{ff51} we 
show (using
Subsection \ref{ff2.5})
that $\ad^R_\Rel$ is
a functor of $R$.

\begin{remark}
We could have used $\ad^R_{\mathrm{Rel}}$ throughout this paper instead of 
$\ad^R_{\mathrm{rel}}$, but that would have added extra complexity and  functoriality is only an issue at the end of the paper in Theorem \ref{Nov13.3}.
\end{remark}

\subsection{Background}
\label{ff1}

As motivation for the definition of $\ad_{\text{Rel}}^R$, we need

\begin{lemma}
\label{ff15}
Let $M$ be a right $R$ module and $N$ a left $R$ module, and let
$R^{\mathrm{op}}\otimes R$ act on $R$ on the right in the usual way.  Then the
map
\[
a:R\otimes_{R^{\mathrm{op}}\otimes R} (M\otimes N)
\to
M\otimes_R N
\]
given by $a(r\otimes m\otimes n)=mr\otimes n$ is an isomorphism.
\end{lemma}

\begin{proof}
This follows immediately from the isomorphisms
\[
R\otimes_{R^{\mathrm{op}}\otimes R} (M\otimes N)
\cong
M\otimes_R R\otimes_R N\cong M\otimes_R N.
\]
\end{proof}

Now fix a ring $R$ with involution.

\begin{definition}
Let $P$ and $Q$ be left $R^{\mathrm{op}}\otimes R$ modules.  A map 
\[
b:P\to Q
\]
is {\it quasi-linear} if $b((r\otimes s)p)=(\bar{s}\otimes \bar{r})b(p)$.
\end{definition}

\begin{lemma}
\label{aa1}
Let $b:P\to Q$ be a quasi-linear map of left $R^{\mathrm{op}}\otimes R$
modules.  Then the map
\[
\hat{b}:R\otimes_{R^{\mathrm{op}}\otimes R} P
\to
R\otimes_{R^{\mathrm{op}}\otimes R} Q
\]
given by $\hat{b}(r\otimes p)=\bar{r}\otimes b(p)$
is well-defined. \qed
\end{lemma}

\subsection{The ad theory $\ad_{\text{Rel}}^R$}
\label{ff2}

Recall the definition of homotopy finite
(\cite[Definition 9.2(iv)]{LM12}).

\begin{definition}  
\label{111} A {\it Relaxed quasi-symmetric complex of dimension $n$}
is a quadruple $(C,E,\gamma,\phi)$, where $C$ is a homotopy finite%
\footnote{In \cite[Section 9]{LM12} we also required $C$ to be free, but that
turns out not to be necessary; see Remark \ref{aa100}.}
chain complex over $R$, $E$ is a
homotopy finite chain complex over $R^{\mathrm{op}}\otimes R$ with a 
$\Z/2$ action for which the generator acts quasi-linearly, $\gamma$ is a 
$\Z/2$ equivariant
$R^{\mathrm{op}}\otimes R$-linear 
quasi-isomorphism $C^t\otimes C\to E$, and $\phi$ is an $n$-dimensional 
element of 
$(R\otimes_{R^{\mathrm{op}}\otimes R} E)^{\Z/2}$ (where the $\Z/2$ action is
given by Lemma \ref{aa1}).
\end{definition}

For the following example, 
note that if $A$ is a left $R^{\mathrm{op}}\otimes R$ module which is
nonzero in only finitely many dimensions then $A^W$ is (additively)
a direct sum of copies of $A$,
and hence the natural map
\[
R\otimes_{R^{\mathrm{op}}\otimes R} (A^W)
\to
(R\otimes_{R^{\mathrm{op}}\otimes R} A)^W
\]
is an isomorphism because tensor product preserves direct sums.

\begin{example}
\label{aa2}
If $(C,\varphi)$ is a quasi-symmetric complex as defined in \cite[Definition
9.3]{LM12}, and if $C$ is nonzero in only finitely many dimensions, 
then the quadruple $(C,(C^t\otimes C)^W,\gamma,\phi)$ is a 
Relaxed quasi-symmetric complex, where $\gamma:C^t\otimes C\to 
(C^t\otimes C)^W$ 
is induced by the augmentation $W\to \Z$ and $\phi$ is the image of $\varphi$
under the composite
\[
((C^t\otimes_R C)^W)^{\Z/2}
\cong
((R\otimes_{R^{\mathrm{op}}\otimes R} (C^t\otimes C))^W)^{\Z/2}
\cong
(R\otimes_{R^{\mathrm{op}}\otimes R} ((C^t\otimes C)^W))^{\Z/2}
\]
(where the first isomorphism is Lemma \ref{ff15}).
\end{example}

\begin{definition}
We define a category $\A_{\text{Rel}}^R$ as 
follows.  The objects of $\A_{\text{Rel}}^R$ are the Relaxed quasi-symmetric
complexes.  A morphism $(C,E,\gamma,\phi)\to (C',E',\gamma',\phi')$ is a
pair $(f:C\to C',g:E\to E')$, where $f$ is an $R$-linear chain map and $g$ is a
$\Z/2$ equivariant $R^{\mathrm{op}}\otimes R$-linear chain map, such that
$g\gamma=\gamma'(f\otimes f)$, 
and (if $\dim \phi=\dim\phi'$)
$(1\otimes g)_*(\phi)=\phi'$.
\end{definition}

$\A_{\text{Rel}}^R$ is a balanced (\cite[Definition 5.1]{LM12}) $\Z$-graded 
category, where $i$ takes $(C,E,\gamma, \phi)$ 
to $(C,E,\gamma, -\phi)$
and $\emptyset_n$ is the $n$-dimensional object 
for which $C$ and $E$ are zero in all degrees.

\begin{remark}
\label{aa4}
There is a morphism $\A_{\text{Rel}}^R\to \A_{\text{rel}}^R$ of $\Z$-graded
categories which takes
$(C,E,\gamma,\phi)$ to $(C, R\otimes_{R^{\mathrm{op}}\otimes R} E,
\beta,\phi)$, where $\beta$ is the composite
\[
C^t\otimes_R C
\cong
R\otimes_{R^{\mathrm{op}}\otimes R} (C^t\otimes C)
\xrightarrow{1\otimes \gamma}
R\otimes_{R^{\mathrm{op}}\otimes R} E.
\]
\end{remark}

\begin{remark}
\label{aa3}
Let $\A^R_{\mathrm{fin}}$ be the full subcategory of $\A^R$ consisting of
objects $(C,\varphi)$ with $C$ finite (not just homotopy finite).
Let $\A^R_{\mathrm{Rel,fin}}$ be the full subcategory of $\A^R_\mathrm{Rel}$ 
consisting of
objects $(C,E,\gamma,\phi)$ with $C$ and $E$ finite. 
The construction of Example \ref{aa2} gives a morphism 
\[
\A^R_{\mathrm{fin}}\to \A_{\text{Rel,fin}}^R
\]
of $\Z$-graded categories. 
\end{remark}

Next we must say what the $K$-ads with values in $\A_{\text{Rel}}^R$ are.  
For a balanced pre $K$-ad $F$ 
we will use the notation
\[
F(\sigma,o)=(C_\sigma,E_\sigma,\gamma_\sigma,\phi_{\sigma,o}).
\]

Recall \cite[Definition 9.7]{LM12}.

\begin{definition}
A balanced pre $K$-ad $F$ is 
{\it well-behaved} if $C$ and $E$ are well-behaved.
\end{definition}

\begin{definition}
(i) 
A {\it balanced $K$-ad} is a pre $K$-ad with the following properties.

(a) it is balanced, closed, and well-behaved, and 

(b) the composite of $F$ with the morphism of Remark \ref{aa4} satisfies part
(i)(b) of Definition \ref{m47}

(ii) A {\it $K$-ad} is a pre $K$-ad which is naturally isomorphic to a balanced
$K$-ad.
\end{definition}

We write $\ad_{\text{Rel}}^R(K)$ for the set of $K$-ads with values in
$\A_{\text{Rel}}^R$.

\begin{thm}
\label{ff9}
$\ad_{\mathrm{Rel}}^R$ is an ad theory.
\end{thm}

This follows from the proof of Theorem \ref{m42} with minor changes.

\begin{remark}
\label{aa5}
The morphisms of Remarks \ref{aa4} and \ref{aa3} take ads to ads.
\end{remark}

\subsection{$\ad_{\mathrm{Rel}}^R$ is approximately functorial}
\label{ff2.5}

The result in this subsection will be used in Subsection \ref{ff51}.

\begin{definition}
\label{ff12}
Let $h:R\to S$ be a ring homomorphism.  Define a functor 
\[
h_\mathrm{Rel}:\A^R_{\mathrm{Rel}} \to \A^S_{\mathrm{Rel}}
\]
as follows.  For an object
$(C,E,\gamma,\phi)$ 
of $\A^R_{\mathrm{Rel}}$, let 
\[h_\mathrm{Rel}(C,E,\gamma,\phi)=
(S\otimes_R C,(S^{\mathrm{op}}\otimes
S)\otimes_{R^{\mathrm{op}}\otimes R} E,\gamma',\phi'),
\]
where 
$\gamma'$
is the composite
\[
(S\otimes_R C)^t\otimes (S\otimes_R C)
\cong
(S^{\mathrm{op}}\otimes
S)\otimes_{R^{\mathrm{op}}\otimes R} (C^t\otimes C)
\xrightarrow{1\otimes \gamma}
(S^{\mathrm{op}}\otimes
S)\otimes_{R^{\mathrm{op}}\otimes R} E
\]
(which is a quasi-isomorphism by the K\"unneth spectral
sequence \cite[Theorem 5.6.4]{WEIB}, using the fact that 
$C$ and $E$ are homotopy finite) and 
$\phi'$
is the image of $\phi$ under the
composite
\[
(R\otimes_{R^{\mathrm{op}}\otimes R} E)^{\Z/2}
\to
(S\otimes_{R^{\mathrm{op}}\otimes R} E)^{\Z/2}
\cong
(S\otimes_{S^{\mathrm{op}}\otimes S} (S^{\mathrm{op}}\otimes
S)\otimes_{R^{\mathrm{op}}\otimes R} E)^{\Z/2}.
\]
\end{definition}

The reader can
check
that if $k:S\to T$ is another ring homomorphism then 
$(kh)_\mathrm{Rel}(C,E,\gamma,\phi)$
is isomorphic to but not equal to 
$k_\mathrm{Rel}h_\mathrm{Rel}(C,E,\gamma,\phi)$.

\begin{prop}
\label{ff10}
$h_\mathrm{Rel}$ takes ads to ads.
\end{prop}

\begin{proof}
Let $F\in \ad^R_{\mathrm{Rel}}(K)$.  We may assume that $F$ is balanced. Write
\[
F(\sigma,o)=({C}_\sigma,{E}_\sigma,\gamma_\sigma,\phi_{\sigma,o}).
\]
Let $\Psi$ be the functor of Remark \ref{aa4}.  Let $G=\Psi\circ F$; then $G$
is an element of $\ad^R_{\mathrm{rel}}(K)$.
Write 
\[
G(\sigma,o)=({C}_\sigma,D_\sigma,\beta_\sigma,\phi_{\sigma,o}).
\]
Let $H=\Psi\circ h_\mathrm{Rel}\circ F$; we need to show that $H$ is an ad.
It's immediate that $H$ is balanced, well-behaved and closed, so it only
remains to show that it satisfies part (ii) of Definition \ref{m47}.
Write
\[
H(\sigma,o)=(S\otimes_R{C}_\sigma,D^S_\sigma,\beta^S_\sigma,
\psi_{\sigma,o})
\]
and fix an 
oriented cell $(\sigma,o)$ of $K$.
Recall Notation \ref{ff11} and let
\begin{multline*}
k_\sigma: ((S\otimes_R C)^t\otimes_S (S\otimes_R C))_\sigma/
((S\otimes_R C)^t\otimes_S (S\otimes_R C))_{\partial\sigma}
\\
\to
((S\otimes_R C_\sigma)/(S\otimes_R C_{\partial\sigma}))
\otimes_S (S\otimes_R C_\sigma)
\end{multline*}
be the analogous map.
We need to show that the slant product with 
$(k_\sigma)_*(\beta^S_\sigma)_*^{-1}([\psi_{\sigma,o}])$
is an isomorphism
\[
H^*(\Hom_S(S\otimes_R {C}_\sigma,S))
\to
H_{\dim \sigma-\deg F-*}(S\otimes_R {C}_\sigma/S\otimes_R
{C}_{\partial\sigma}).
\]
First we observe that
(using the definition of $h_\mathrm{Rel}$) the 
image of 
$(j_\sigma)_*(\beta_\sigma)_*^{-1}([\phi_{\sigma,o}])$
in $H_*((S\otimes_R {C}_\sigma)/(S\otimes_R {C})_{\partial\sigma})\otimes S\otimes_R
{C}_\sigma$
is
$(k_\sigma)_*(\beta^S_\sigma)_*^{-1}([\psi_{\sigma,o}])$.  Now the desired 
isomorphism 
follows from our next lemma.

\end{proof}

\begin{lemma}
\label{aa10}
Let $h:R\to S$ be a homomorphism of rings with involution.  
Let $A$ and $B$ be homotopy finite chain complexes over $R$.
Let $x$ be a cycle in $A^t\otimes_R B$ with 
the property that the 
slant product with $x$ is an isomorphism
\[
H^*(\Hom_R(B,R))\to H_{\dim x-*} A^t.
\]
Let $y$ be the image of $x$ under the map
\[
A^t\otimes_R B
\to
(S\otimes_R A)^t
\otimes_S
(S\otimes_R B).
\]
Then the slant product with $y$ is an isomorphism
\[
H^*(\Hom_S(
S\otimes_R B ,S))\to H_{\dim x-*}  
(S\otimes_R{A})^t.
\]
\end{lemma}

\bigskip

\begin{proof}[Proof of Lemma \ref{aa10}]

By naturality of the slant product, we may assume that $A$ and $B$ are finite.
Because $B$ is 
additively a direct sum of
finitely many copies of $R$, the map
\[
\Upsilon:\Hom_R({B},R)\otimes_R S\to \Hom_S(S\otimes_R {B},S)
\]
defined by $\Upsilon(f\otimes s)(t\otimes b)=tf(b)s$
is an isomorphism. 

Consider the diagram
\[
\xymatrix{
H^*(\Hom_S(
S\otimes_R B ,S))
\ar[r]^-{\setminus y}
&
H_{\dim x-*}  
(S\otimes_R A)^t
\\
H^*(\Hom_R({B},R)\otimes_R S)
\ar[r]
\ar[u]_\cong^{\Upsilon}
&
H_{\dim x-*} 
(A^t\otimes_R S)
\ar[u]_\cong
}
\]
where the bottom arrow is induced by the chain map 
$(\setminus x)\otimes 1$.
It's straightforward to check that the diagram commutes, and the lower arrow
is an isomorphism by the K\"unneth spectral sequence 
\cite[Theorem 5.6.4]{WEIB}.
\end{proof}

\subsection{Comparison of $\ad_{\mathrm{Rel}}^R$ and
$\ad_{\mathrm{rel}}^R$}
\label{ff3}

\begin{prop}
\label{aa6}
The morphism $\ad_{\mathrm{Rel}}^R\to \ad_{\mathrm{rel}}^R$ induces an 
isomorphism of bordism groups.
\end{prop}

\begin{proof}
Let $\A_{\text{Rel,fin}}^R$ be the full subcategory of $\A_{\text{Rel}}^R$
consisting of objects $(C,E,\gamma,\phi)$ for which $C$ is finite.
Consider the diagram
\[
\xymatrix{
\A_{\mathrm{Rel}}^R
\ar[r]
&
\A_{\mathrm{rel}}^R
\\
\A_{\text{Rel,fin}}^R
\ar[u]^a
&
\\
\A^R_{\mathrm{fin}}
\ar[u]^b
\ar[r]^c
&
\A^R
\ar[uu]^d
}
\]
where $a$ is induced by the inclusion of categories, $b$ is given by 
Remark \ref{aa3},
$c$ is induced by the inclusion of categories, and
$d$ is given by Remark \ref{m45}.  This diagram commutes up to natural
isomorphism, so it induces a commutative diagram of bordism groups:
\[
\xymatrix{
(\Omega_{\mathrm{Rel}}^R)_*
\ar[r]
&
(\Omega_{\mathrm{rel}}^R)_*
\\
(\Omega_{\text{Rel,fin}}^R)_*
\ar[u]^{\Omega^a_*}
&
\\
(\Omega^R_{\mathrm{fin}})_*
\ar[u]^{\Omega^b_*}
\ar[r]^{\Omega^c_*}
&
(\Omega^R)_*
\ar[uu]^{\Omega^d_*}
}
\]

$\Omega_*^d$ is an isomorphism by Proposition \ref{m48}, and the proof of that
proposition, with minor modifications, shows that $\Omega_*^b$ is an
isomorphism.

To see that $\Omega_*^a$ is onto, let $(C,E,\gamma,\phi)$ represent an element
of $\ad_{\mathrm{Rel}}^R(*)$.  Since $C$ is homotopy finite, there is a chain
homotopy equivalence $f:C'\to C$ with $C'$ finite.  Let $\gamma'=\gamma\circ
(f\otimes f)$.  Then $(C',E,\gamma',\phi)$ is an element of
$\ad^R_{\mathrm{Rel,fin}}(*)$, and $(f,\id)$ is a
morphism from $(C',E,\gamma',\phi)$ to $(C,E,\gamma,\phi)$.  Let $F$ be the
cylinder object of $(C,E,\gamma,\phi)$ (\cite[Definition 3.10(g)]{LM12}).
Let $0,1,\iota$ denote the three cells of the unit interval $I$, with their
standard orientations.
Replacing $F(0)$ with $(C',E,\gamma',\phi)$ gives an $I$-ad which is a bordism
between $(C,E,\gamma,\phi)$ and $(C',E,\gamma',\phi)$.

To complete the proof it suffices to show that $\Omega_*^c$ is an isomorphism,
since this will imply that $\Omega_*^a$ is a monomorphism.

To see that $\Omega_*^c$ is onto, let $(C,\varphi)$ represent an element of
$\ad^R(*)$.  There is a chain homotopy equivalence $f:C\to C'$ with $C'$
finite. Then $(C',(f\otimes f)\circ \varphi)$ represents an element of
$\ad^R_{\mathrm{fin}}(*)$, and $(f,\phi)$ is a morphism from $(C,\varphi)$ to
$(C',(f\otimes f)\circ \varphi)$. Let $F$ be the cylinder object of
$(C',(f\otimes f)\circ \varphi)$.  Replacing $F(0)$ with $(C,\varphi)$ gives a
bordism between $(C,\varphi)$ and $(C',(f\otimes f)\circ \varphi)$.

To see that $\Omega_*^c$ is a monomorphism, let $F\in \ad^R(I)$
with $F(0)$ and $F(1)$ in $\A_{\text{fin}}^R$.  
Write 
\[
F(0)= (C_0,\varphi_0), \quad F(1)= (C_1,\varphi_1),\quad
F(\iota)= (C_\iota,\varphi_\iota).
\]
Since $C_\iota$ is homotopy finite, there is a chain homotopy equivalence
$f:C_\iota\to B$ with $B$ finite.  Let $g_0$ be the composite $C_0\to
C_\iota\xrightarrow{f} B$ and similarly for $g_1:C_1\to B$.  Let 
$\mathrm{cl}(I)$ be the
cellular chain complex of $I$ and let $j_0$ (resp., $j_1$)
be the composite
$\Z\cong \mathrm{cl}(0)\to \mathrm{cl}(I)$ (resp., $\Z\cong \mathrm{cl}(1)\to
\mathrm{cl}(I)$, where the second map is the
inclusion.
Let $B'$ be the colimit 
\[
\xymatrix{
& 
C_0 
\ar[ld]_{j_1\otimes \mathrm{id}}
\ar[rd]^{g_0}
&
& 
C_1 
\ar[ld]_{g_1}
\ar[rd]^{j_1\otimes \mathrm{id}}
&
\\
\mathrm{cl}(I)\otimes C_0
&
&
B
&
&
\mathrm{cl}(I)\otimes C_1
}
\]
The composites
$C_0\xrightarrow{j_0\otimes \mathrm{id}}\mathrm{cl}(I)\otimes C_0\to B'$ and 
$C_1\xrightarrow{j_0\otimes \mathrm{id}}\mathrm{cl}(I)\otimes C_1\to B'$
are strong monomorphisms
(\cite[Definition
9.6]{LM12}).
Let $n$ be the degree of $F$ and let 
$\psi\in B'_{1-n}$ be the image of 
$\iota\otimes\varphi_0+f_*(\phi_\iota)-\iota\otimes \varphi_1$.

Define an $I$-ad $G$ by
\[
G(0)=F(0),\quad G(1)=F(1),\quad G(\iota)=(B',\psi).
\]
Then $G$ is the desired bordism.
\end{proof}

\begin{remark}
\label{aa100}
The proof that $\Omega_*^c$ is an isomorphism also shows that the requirement
in \cite[Section 9]{LM12} that $C$ should be free over $R$ is not needed.  
That is, if we define $\ad^R$ as in \cite[Section 9]{LM12} and $(\ad^R)^\dagger$
by requiring only that $C$ be homotopy finite then the forgetful map
$\ad^R\to (\ad^R)^\dagger$ induces an isomorphism of bordism groups.
\end{remark}

\subsection{Enhanced version of \cite[Section 13]{LM12}}
\label{ff4}

In \cite[Section
13]{LM12} we gave a model for the category of free $R$ modules which is
functorial in $R$.  In this subsection give a similar model for the category 
of all $R$ modules.  Our terminology and notation will be different from
\cite[Section 13]{LM12}.

We define the category of {\it schematic free $R$ modules} as follows.  An 
object is
a set $\mathbb M$.  This should be thought of as representing the free $R$
module generated by $\mathbb M$, which we denote by $R\langle \mathbb
M\rangle$.  We define a map $\mathbb M\to\mathbb M'$ to be a map of $R$-modules 
$ R\langle \mathbb M\rangle \to R\langle \mathbb M'\rangle$.

We define the category of {\it schematic $R$ modules} as follows.
An object of this category is
a triple $(\mathbb M,\mathbb N,T)$, where $\mathbb M$ and
$\mathbb N$ are schematic free $R$ modules and $T$ is a map
$\mathbb N\to \mathbb M$.
Such a triple should be thought of as representing the quotient of
$R\langle \mathbb M \rangle$ by the image of $T$; we
write $R\langle(\mathbb M,\mathbb N,T)\rangle$ for this quotient.
A map $(\mathbb M,\mathbb N,T)\to (\mathbb M',
\mathbb N',T')$ is defined to be an $R$-module map
$R\langle(\mathbb M,\mathbb N,T)\rangle
\to 
R\langle(\mathbb M', \mathbb N',T')\rangle$.

\begin{lemma}
The functor from schematic $R$ modules to $R$ modules
which takes $(\mathbb M,\mathbb N,T)$ to $R\langle(\mathbb M,\mathbb
N,T)\rangle$
is an equivalence of categories.
\end{lemma}

\begin{proof}
The functor is the identity on morphism sets, so it's only necessary to show 
that every $R$ module $P$ is isomorphic to one of the form $R\langle(\mathbb
M,\mathbb N,T)\rangle$.  Choose an exact sequence $Q_1\to Q_2\to P\to 0$ where $Q_1$
and $Q_2$ are free, let $\mathbb M$ and $\mathbb N$ be bases for $Q_1$ and
$Q_2$, and let $T$ be the map induced by $Q_1\to Q_2$.
\end{proof}

A {\it schematic chain complex}
$\mathbb C$ 
over $R$ is a sequence of schematic $R$ modules and 
maps, and we write $R\langle \mathbb C \rangle$ for the 
corresponding 
sequence of $R$ modules and maps.  A map $\mathbb C\to\mathbb C'$ of schematic
chain complexes is a map of $R$ chain complexes $R\langle \mathbb C \rangle\to
R\langle \mathbb C' \rangle$.


Let $h:R_1\to R_2$ be a homomorphism.  For a schematic free $R_1$ {module} 
$\mathbb
M$ we write $h_\sch\mathbb M$ for $\mathbb M$ thought of as a schematic free 
$R_2$
{module}.  There is a canonical isomorphism
\begin{equation}
\label{aa12}
R_2\langle h_\sch\mathbb M\rangle
\cong
R_2\otimes_{R_1} R_1\langle \mathbb M\rangle
\end{equation}
which takes an element $m$ of $\mathbb M$ to $1\otimes m$.
For a {map} $T:\mathbb M\to \mathbb N$ we write $h_\sch T$ for the {map}
$h_\sch\mathbb M\to h_\sch\mathbb N$ defined by the following diagram.
\[
\xymatrix{
R_2\langle h_\sch\mathbb M\rangle
\ar[r]^-{h_\sch T}
\ar[d]_\cong
&
R_2\langle h_\sch\mathbb N\rangle
\ar[d]_\cong
\\
R_2\otimes_{R_1} R_1\langle \mathbb M\rangle
\ar[r]^-{1\otimes T}
&
R_2\otimes_{R_1} R_1\langle \mathbb N\rangle
}
\]
For a schematic $R_1$ {module} 
$({\mathbb M},{\mathbb
N},T)$,  we define $h_\sch({\mathbb M},{\mathbb
N},T)$ to be the schematic $R_2$ {module} $(h_\sch{\mathbb M},h_\sch{\mathbb
N},h_\sch T)$.  The isomorphism \ref{aa12} induces a canonical isomorphism
\begin{equation}
\label{aa13}
R_2\langle h_\sch({\mathbb M},{\mathbb N},T)\rangle
\cong
R_2\otimes_{R_1} R_1\langle ({\mathbb M},{\mathbb N},T)\rangle
\end{equation}
For a {map} of $R_1$ modules $U:({\mathbb M},{\mathbb 
N},T)\to ({\mathbb M'},{\mathbb N'},T')$, we define $h_\sch U$ to be the 
{map} defined by the following diagram.
\[
\xymatrix{
R_2\langle h_\sch({\mathbb M},{\mathbb N},T)\rangle
\ar[r]^-{h_\sch U}
\ar[d]_\cong
&
R_2\langle h_\sch({\mathbb M}',{\mathbb N}',T')\rangle
\ar[d]_\cong
\\
R_2\otimes_{R_1} R_1\langle ({\mathbb M},{\mathbb N},T)\rangle
\ar[r]^-{1\otimes U}
&
R_2\otimes_{R_1} R_1\langle ({\mathbb M}',{\mathbb N}',T')\rangle
}
\]
This gives a functor $h_\sch$ from schematic $R_1$ {
modules} to schematic $R_2$ {modules}.  If $h':R_2\to R_3$ is a homomorphism 
we have
$(h'\circ h)_\sch=h'_\sch\circ h_\sch$, and thus the category of schematic $R$ {bf 
modules} is a functor of $R$.

\subsection{The ad theory $\ad^R_\Rel$}
\label{ff5}

First we translate Definition \ref{111} into the language of {schematic 
modules}:

\begin{definition}
(i)
A {\it schematic Relaxed quasi-symmetric complex of dimension $n$}
is a quadruple $(\mathbb C,\mathbb E,\gamma,\phi)$, where $\mathbb C$ is a
schematic $R$ chain complex, $\mathbb E$ is a schematic $(R^{\mathrm{op}}\otimes
R)$ chain complex, $R\langle \mathbb
C\rangle$ is homotopy 
finite,
$(R^{\mathrm{op}}\otimes R)\langle  \mathbb E\rangle$ is a
homotopy finite chain complex over $R^{\mathrm{op}}\otimes R$ with a
$\Z/2$ action for which the generator acts quasi-linearly, $\gamma$ is a
$\Z/2$ equivariant
$R^{\mathrm{op}}\otimes R$-linear
quasi-isomorphism $(R\langle \mathbb
C\rangle)^t\otimes R\langle \mathbb
C\rangle \to (R^{\mathrm{op}}\otimes R)\langle  \mathbb E\rangle$, and $\phi$ 
is an $n$-dimensional
element of
$(R\otimes_{R^{\mathrm{op}}\otimes R} (R^{\mathrm{op}}\otimes R)\langle
\mathbb E\rangle)^{\Z/2}$ (where the $\Z/2$ action is
given by Lemma \ref{aa1}).

(ii) We define a category $\A_\Rel^R$ as
follows.  The objects of $\A_\Rel^R$ are the schematic Relaxed quasi-symmetric
complexes.
A morphism $(\mathbb 
C,\mathbb E,\gamma,\phi)\to (\mathbb
C',\mathbb E',\gamma',\phi')$ 
is a pair $(f:R\langle\mathbb C\rangle\to R\langle\mathbb C'\rangle,
g:(R^{\mathrm{op}}\otimes R)\langle \mathbb E\rangle
\to
(R^{\mathrm{op}}\otimes R)\langle \mathbb E'\rangle)$, where $f$ 
is an $R$-linear chain map and $g$ 
is a
$\Z/2$ equivariant $R^{\mathrm{op}}\otimes R$-linear chain map, such that
$g\gamma=\gamma'(f\otimes f)$,
and (if $\dim \phi=\dim\phi'$)
$(1\otimes g)_*(\phi)=\phi'$.
\end{definition}

$\A_\Rel^R$ is a balanced $\Z$-graded
category, where $i$ takes $(\mathbb C,\mathbb E,\gamma, \phi)$
to $(\mathbb C,\mathbb E,\gamma, -\phi)$.

There is a morphism 
\[
\Lambda:\A_\Rel^R\to \A_{\text{Rel}}^R
\]
of $\Z$-graded
categories which takes
$(\mathbb C,\mathbb E,\gamma,\phi)$ 
to 
$(R\langle\mathbb C\rangle,(R^{\mathrm{op}}\otimes R)\langle \mathbb 
E\rangle,\gamma,\phi)$; this is an equivalence of categories.

\begin{definition}
\label{ff13}
A $K$-ad with values in $\A_\Rel^R$ is a pre $K$-ad $F$ for which $\Lambda\circ
F$ is a $K$-ad.
\end{definition}

We write $\ad_\Rel^R(K)$ for the set of $K$-ads with values in $\A_\Rel^R$.

\begin{prop}
{\rm{(i)}} $\ad_\Rel^R$ is an ad theory.

{\rm{(ii)}} $\Lambda$ induces a morphism of ad theories which is an
isomorphism on bordism groups.
\end{prop}

This is an easy consequence of Theorem \ref{ff9} and the following lemma.

\begin{lemma}
\label{ff8}
Let $K$ be a ball complex and $L$ a subcomplex.  Given a commutative diagram
\[
\xymatrix{
\Cell(L)
\ar[r]^-F
\ar[d]
&
\A
\ar[d]_I
\\
\Cell(K) 
\ar[r]^-G
&
\A'
}
\]
in which $I$ is an equivalence of categories,
there is a functor $H: \Cell(K)\to \A$ such that 
$H|_{\Cell(L)}=F$ and $I\circ H$ is naturally isomorphic to $G$.
\qed
\end{lemma}

\subsection{$\ad_\Rel^R$ is a functor of $R$.}
\label{ff51}

Let $h:R\to S$ be a homomorphism of rings with involution.

\begin{definition}
Define 
a functor
\[
h_\Rel:\A^R_\Rel \to \A^S_\Rel
\]
as follows.  For an object
$(\mathbb C,\mathbb E,\gamma,\phi)$ of $\A^R_\Rel$, 
Let
\[
h_\mathrm{Rel}(C,E,\gamma,\phi)=
(h_\sch{\mathbb C}, (h\otimes h)_\sch {\mathbb E},
\delta,\psi),
\]
where $h_\sch$ and $(h\otimes h)_\sch$ are given in Subsection \ref{ff4}, and (
letting
$C=R\langle {\mathbb C}\rangle$ and $E=
(R^{\mathrm op}\otimes R)\langle {\mathbb E}\rangle$, and
using the notation of Definition \ref{ff12} and the isomorphism of Equation
\eqref{aa13}),
$\delta$ is defined by the diagram
\[
\xymatrix{
S\langle h_\sch{\mathbb C}\rangle^t\otimes S\langle h_\sch{\mathbb C}\rangle
\ar[r]^\delta
\ar[d]_\cong
&
(S^{\mathrm{op}}\otimes S)\langle(h\otimes h)_\sch{\mathbb E}\rangle
\ar[d]_\cong
\\
(S\otimes_R C)^t\otimes (S\otimes_R C)
\ar[r]^{\gamma'}
&
(S^{\mathrm{op}}\otimes
S)\otimes_{R^{\mathrm{op}}\otimes R} E
}
\] 
and $\psi$ is the image of $\phi'$ under the isomorphism
\[
(S\otimes_{S^{\mathrm{op}}\otimes S} (S^{\mathrm{op}}\otimes
S)\otimes_{R^{\mathrm{op}}\otimes R} E)^{\Z/2}
\cong
(S\otimes_{S^{\mathrm{op}}\otimes S}(S^{\mathrm{op}}\otimes 
S)\langle(h\otimes h)_\sch {\mathbb E}\rangle)^{\Z/2}.
\]
\end{definition}

\bigskip

\begin{prop}
\label{aa8}
$h_\Rel$ takes ads to ads.
\end{prop}

\begin{proof}
Let $K$ be a ball complex and let
$F\in \ad^R_\Rel(K)$.  By Definition \ref{ff13}, we only need to show that 
$\Lambda\circ h_\Rel\circ F$ is in $\ad^R_\mathrm{Rel}(K)$.  But 
$\Lambda\circ h_\Rel\circ F$ is naturally isomorphic to 
$h_\mathrm{Rel}\circ\Lambda\circ F$, which is in $\ad^R_\mathrm{Rel}(K)$
by Definition \ref{ff13} and Proposition \ref{ff10}.
\end{proof}

The reader can check
that if $k:S\to T$ is another ring homomorphism then
$(kh)_\Rel$
is equal to
$k_\Rel h_\Rel$, so $\ad_\Rel^R$ is a functor of $R$ as required.


\providecommand{\bysame}{\leavevmode\hbox to3em{\hrulefill}\thinspace}
\providecommand{\MR}{\relax\ifhmode\unskip\space\fi MR }
\providecommand{\MRhref}[2]{%
  \href{http://www.ams.org/mathscinet-getitem?mr=#1}{#2}
}
\providecommand{\href}[2]{#2}

\end{document}